\documentclass[final]{colt2024} 

\usepackage{times}

\title[Minimax Linear Regression]{Minimax Linear Regression under the Quantile Risk}



\coltauthor{\Name{Ayoub {El Hanchi}} \Email{aelhan@cs.toronto.edu}\\
  \Name{Chris J. Maddison} \Email{cmaddis@cs.toronto.edu}\\
  \Name{Murat A. Erdogdu} \Email{erdogdu@cs.toronto.edu}\\
  \addr University of Toronto \& Vector Institute}


\usepackage{xspace} 
\usepackage{mathtools} 
\usepackage{thmtools, thm-restate} 
\usepackage{bbm} 
\usepackage{catchfilebetweentags} 
\usepackage{mathrsfs} 
\usepackage{enumitem}

\let\brack\undefined 
\DeclarePairedDelimiter{\brack}{\lbrack}{\rbrack} 
\let\brace\undefined 
\DeclarePairedDelimiter{\brace}{\lbrace}{\rbrace}
\DeclarePairedDelimiter{\paren}{\lparen}{\rparen}
\DeclarePairedDelimiter{\abs}{\lvert}{\rvert}
\newcommand{\defeq}{\vcentcolon=}
\newcommand{\eqdef}{=\vcentcolon}
\newcommand{\eps}{\varepsilon}
\newcommand{\N}{\mathbb{N}}
\newcommand{\R}{\mathbb{R}}

\renewcommand{\S}{\mathbb{S}} 

\DeclareMathOperator*{\argmax}{\arg\!\max} 
\DeclareMathOperator*{\argmin}{\arg\!\min}
\newcommand{\st}{\,\middle|\,}
\DeclarePairedDelimiter{\floor}{\lfloor}{\rfloor}
\DeclarePairedDelimiter{\ceil}{\lceil}{\rceil}
\DeclarePairedDelimiter{\norm}{\lVert}{\rVert}
\DeclarePairedDelimiterX{\inp}[2]{\langle}{\rangle}{#1, #2} 
\DeclareMathOperator{\Tr}{Tr}

\DeclareMathOperator{\rank}{rank}

\DeclareMathOperator{\im}{Im}
\DeclareMathOperator{\Prob}{P}
\DeclareMathOperator{\Exp}{E}
\DeclareMathOperator{\Var}{Var}
\newcommand{\iid}{i.i.d.\@\xspace}
\newcommand{\indep}{\perp\!\!\!\!\perp} 
\usepackage{algorithm}
\usepackage{algpseudocode}
\newcommand{\lambdamax}{\lambda_{\normalfont\textrm{max}}}
\newcommand{\lambdamin}{\lambda_{\normalfont\textrm{min}}}

\DeclareMathOperator{\esssup}{ess\,sup}
\DeclareMathOperator{\essinf}{ess\,inf}

\begin{document}

\maketitle

\begin{abstract}%
    We study the problem of designing minimax procedures in linear regression under the quantile risk. We start by considering the realizable setting with independent Gaussian noise, where for any given noise level and distribution of inputs, we obtain the \emph{exact} minimax quantile risk for a rich family of error functions and establish the minimaxity of OLS. This improves on the lower bounds obtained by \citet{lecueLearningSubgaussianClasses2016} and \citet{mendelsonLocalVsGlobal2017} for the special case of square error, and provides us with a lower bound on the minimax quantile risk over larger sets of distributions. 

Under the square error and a fourth moment assumption on the distribution of inputs, we show that this lower bound is tight over a larger class of problems. Specifically, we prove a matching upper bound on the worst-case quantile risk of a variant of the procedure proposed by \citet{lecueRobustMachineLearning2020}, thereby establishing its minimaxity, up to absolute constants. We illustrate the usefulness of our approach by extending this result to all $p$-th power error functions for $p \in (2, \infty)$.

Along the way, we develop a generic analogue to the classical Bayesian method for lower bounding the minimax risk when working with the quantile risk, as well as a tight characterization of the quantiles of the smallest eigenvalue of the sample covariance matrix.
\end{abstract}

\begin{keywords}%
    minimax procedures, linear regression, sample covariance matrix, quantile risk.
\end{keywords}

\section{Introduction}
We study the problem of designing minimax procedures in linear regression under the quantile risk over large classes of distributions. Specifically, for some $d \in \N$, there is an input random vector $X \in \R^{d}$ and an output random variable $Y \in \R$, and we are provided with $n \in \N$ \iid samples $(X_i, Y_i)_{i=1}^{n}$ from their joint distribution $P$, with the goal of constructing a predictor of $Y$ given $X$. We consider the set of linear predictors $\brace*{x \mapsto \inp{w}{x} \mid w \in \R^{d}}$, and measure the error of a predictor $w \in \R^{d}$ on an input/output pair $(X, Y)$ through $e(\inp{w}{X} - Y)$ for an error function of our choice $e: \R \to \R$. We evaluate the overall error of a predictor $w \in \R^{d}$ through the expected error $E(w) \defeq \Exp\brack*{e(\inp{w}{X} - Y)}$, and define $\mathcal{E}(w) \defeq E(w) - \inf_{v \in \R^{d}} E(v)$. 

For a user-chosen failure probability $\delta \in (0, 1)$, we evaluate the performance of a procedure $\hat{w}_{n, \delta}: (\R^{d} \times \R)^{n} \to \R^{d}$ on a particular distribution $P$ through its quantile risk
\begin{equation}
\label{eq:quantile_risk}
    R_{n, \delta}(P, \hat{w}_{n, \delta}) \defeq Q_{\mathcal{E}(\hat{w}_{n,\delta})}(1 - \delta) = \inf\brace*{t \geq 0 \st \Prob\paren*{\mathcal{E}(\hat{w}_{n, \delta}) \leq t} \geq 1-\delta},
\end{equation}
where we shortened $\hat{w}_{n, \delta}((X_i, Y_i)_{i=1}^{n})$ to $\hat{w}_{n, \delta}$.
We consider the scenario where all that is known about $P$ is that it belongs to a class of distributions $\mathcal{P}$ on $\R^{d} \times \R$. This justifies evaluating the overall performance of a procedure through its worst-case risk
\begin{equation*}
        R_{n,\delta}(\mathcal{P}, \hat{w}_{n, \delta}) \defeq \sup_{P \in \mathcal{P}} R_{n, \delta}(P, \hat{w}_{n, \delta}).
\end{equation*}
Our goal is to characterize the minimax risk $R^{*}_{n, \delta}(\mathcal{P}) \defeq \inf_{\hat{w}_{n, \delta}} R_{n,\delta}(\mathcal{P}, \hat{w}_{n, \delta})$ and design minimax procedures for rich classes of distributions and error functions. 

\paragraph{Note on terminology.} In this paper, we reserve the terms `risk' and `loss' to refer to the corresponding decision-theoretic concepts, see e.g.\ \citet{lehmann2006theory} for background on these notions. To avoid any confusion, we have used the terms `error' and `expected error' to refer to what is commonly called `prediction loss' and `prediction risk' in statistical learning theory.

\paragraph{Motivation.} Our motivation for studying this problem has its roots in the work of \citet{catoniChallengingEmpiricalMean2012}, who showed that the empirical mean is no longer minimax over the set of all distributions with finite variance under the square loss if one replaces the classical notion of risk, given by the expected loss, with the quantile risk, given by the $1-\delta$ quantile of the loss, for any user-chosen failure probability $\delta \in (0, 1)$. Since then, minimax procedures were discovered for this problem \citep{devroyeSubGaussianMeanEstimators2016,leeOptimalSubGaussianMean2022a}, and there has been a lot of effort to construct minimax procedures under this new notion of risk for a variety of statistical problems \citep{lugosiRiskMinimizationMedianofmeans2019,lugosiSubGaussianEstimatorsMean2019,mendelsonRobustCovarianceEstimation2020}. We view our work as part of this larger effort.

\paragraph{Known results.} To understand why previous results are insufficient to accomplish our stated goal, let us briefly review the most relevant ones. Most previous work has focused on the case of square error $e(t) = t^{2}/2$ \citep{audibertRobustLinearLeast2011,hsuLossMinimizationParameter2016,lugosiRiskMinimizationMedianofmeans2019,lecueRobustMachineLearning2020}. In this case, a natural class of distributions to consider is
\begin{equation}
\label{eq:class_2}
    \mathcal{P}_{2}(P_{X}, \sigma^2) \defeq \brace*{P \st (X, Y) \sim P : X \sim P_{X} \text{ and } \esssup(\Exp\brack{\xi^{2} \mid X}) \leq \sigma^2},
\end{equation}
for a given distribution $P_{X}$ of inputs, noise level $\sigma^2 \in (0, \infty)$, and where $\xi \defeq Y - \inp{w^{*}}{X}$ is the noise and $w^{*}$ is the unique minimizer of the expected error $E(w)$ under square error. The best lower bound on the minimax risk over this class has been obtained by considering the subclass
\begin{equation*}
    \mathcal{P}_{\normalfont\text{Gauss}}(P_{X}, \sigma^2) \defeq \brace{P \mid (X, Y) \sim P : (X, \eta) \sim P_{X} \times \mathcal{N}(0, \sigma^2), Y = \inp{w^{*}}{X} + \eta \text{ for } w^{*} \in \R^{d}}.
\end{equation*}
The following results yield the best upper and lower bounds on the minimax risk over $\mathcal{P}_{2}(P_{X}, \sigma^2)$.
\begin{proposition}[\cite{lecueLearningSubgaussianClasses2016,mendelsonLocalVsGlobal2017}]
\label{prop:lit_lower_bound}
    Suppose that $e(t) = t^{2}/2$. There exist absolute constants $C, c > 0$ such that for all $\delta \in (0, 1/8)$, it holds that
    \begin{equation*}
        R^{*}_{n,\delta}(\mathcal{P}_{\normalfont\textrm{Gauss}}(P_{X}, \sigma^2)) \geq \begin{dcases*}
            \infty & if $n \leq d/C$, \\
            c \cdot \frac{\sigma^{2}(d + \log(1/\delta))}{n} & otherwise.
        \end{dcases*} 
    \end{equation*}
\end{proposition}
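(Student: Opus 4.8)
The plan is to prove the two cases separately: that $R^{*}_{n,\delta} = \infty$ whenever $n < d$, and that $R^{*}_{n,\delta} \geq c\,\sigma^{2}(d + \log(1/\delta))/n$ whenever $n \geq d$; since $n \leq d/2$ forces $n < d$ and the finite bound is trivially implied by the infinite one on the range $d/2 < n < d$, this yields the statement with $C = 2$. Two reductions set the stage. First, because $e(t) = t^{2}/2$ and the noise is independent Gaussian, $\inf_{v} E(v)$ is attained at $\wstar$ with $\mathcal{E}(w) = \tfrac{1}{2}(w - \wstar)^{\top}\Sigma(w - \wstar)$ for $\Sigma \defeq \Exp[XX^{\top}]$; assuming, as the statement implicitly requires, that $\Sigma$ is nonsingular, the reparametrization $X \mapsto \Sigma^{-1/2}X$, $w \mapsto \Sigma^{1/2}w$ lets me take $\Sigma = I_{d}$, so $\mathcal{E}(w) = \tfrac{1}{2}\norm{w - \wstar}^{2}$. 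Second, I would use only the following quantile-risk analogue of the Bayes bound: if $\pi$ is a prior on $\wstar$ and $t \geq 0$ satisfies $\Exp_{\wstar \sim \pi}\Prob(\mathcal{E}(\hat{w}_{n,\delta}) \leq t) < 1 - \delta$ for every procedure $\hat{w}_{n,\delta}$, then $R^{*}_{n,\delta}(\mathcal{P}_{\textrm{Gauss}}(P_{X},\sigma^{2})) \geq t$ --- for if some procedure had worst-case risk below $t$, then $\Prob(\mathcal{E}(\hat{w}_{n,\delta}) \leq t) \geq 1 - \delta$ under every $P$ in the support of $\pi$, and averaging contradicts the hypothesis.

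\emph{The case $n < d$.} Condition on $X_{1},\dots,X_{n}$, let $V_{0} \defeq \brace{u \in \Rd : \inp{u}{X_{i}} = 0 \text{ for all } i}$, which has dimension at least $d - n \geq 1$, and write $\Pi_{0}$ for the orthogonal projection onto $V_{0}$; the data depend on $\wstar$ only through $\wstar - \Pi_{0}\wstar$. Hence under $\wstar \sim \mathcal{N}(0, \tau^{2} I_{d})$ the conditional law of $\Pi_{0}\wstar$ given everything the procedure observes is still $\mathcal{N}(0, \tau^{2}\Pi_{0})$, and since $\Pi_{0}\hat{w}_{n,\delta}$ is then deterministic while $\norm{\hat{w}_{n,\delta} - \wstar}^{2} \geq \norm{\Pi_{0}\hat{w}_{n,\delta} - \Pi_{0}\wstar}^{2}$, Anderson's inequality gives $\Exp_{\wstar}\Prob(\mathcal{E}(\hat{w}_{n,\delta}) \leq M) \leq \Prob(\tau^{2}\chi^{2}_{1} \leq 2M)$, which falls below $1 - \delta$ once $\tau$ is large enough. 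Applying the reduction with $t = M$ shows the worst-case risk exceeds every $M$, so $R^{*}_{n,\delta} = \infty$.

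\emph{The case $n \geq d$.} Here I would add two lower bounds. For the $\sigma^{2} d/n$ term, take $\wstar \sim \mathcal{N}(0, (\sigma^{2}/n) I_{d})$ and condition on the design matrix $\mathbb{X} \in \R^{n \times d}$: the posterior of $\wstar$ given $(Y,\mathbb{X})$ is Gaussian with covariance $\Sigma_{\textrm{post}} = \sigma^{2}(n I_{d} + \mathbb{X}^{\top}\mathbb{X})^{-1}$, so by Anderson's inequality $\Prob(\norm{\hat{w}_{n,\delta} - \wstar}^{2} \leq s \mid Y,\mathbb{X}) \leq \Prob(\norm{G}^{2} \leq s)$ with $G \sim \mathcal{N}(0, \Sigma_{\textrm{post}})$. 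On the event $\brace{\Tr(\mathbb{X}^{\top}\mathbb{X}) \leq 2nd}$, of probability at least $1/2$ by Markov since $\Exp\Tr(\mathbb{X}^{\top}\mathbb{X}) = nd$ when $\Sigma = I_{d}$, Cauchy--Schwarz gives $\Tr(\Sigma_{\textrm{post}}) \geq \sigma^{2} d/(3n)$ while always $\norm{\Sigma_{\textrm{post}}}_{\textrm{op}} \leq \sigma^{2}/n$, so $\Var(\norm{G}^{2}) = 2\Tr(\Sigma_{\textrm{post}}^{2}) \leq (2\sigma^{2}/n)\Tr(\Sigma_{\textrm{post}})$ and Chebyshev makes $\Prob(\norm{G}^{2} \leq \tfrac{1}{2}\Tr\Sigma_{\textrm{post}})$ of order $1/d$; combining this with the good-design event gives $\Exp_{\wstar}\Prob(\mathcal{E}(\hat{w}_{n,\delta}) \leq c_{1}\sigma^{2} d/n) \leq \tfrac{1}{2} + O(1/d) < 1 - \delta$ once $d$ exceeds an absolute constant, whence $R^{*}_{n,\delta} \geq c_{1}\sigma^{2} d/n$ (for smaller $d$ this term is absorbed into the next one, since $\log(1/\delta) > \log 8$). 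For the $\sigma^{2}\log(1/\delta)/n$ term, use the two-point prior $\wstar \in \brace{0, \rho e_{1}}$, equiprobable, with $\rho^{2} = \tfrac{1}{2}\sigma^{2}\log(1/\delta)/n$: if some procedure had worst-case risk below $\rho^{2}/8 = \mathcal{E}(\rho e_{1})/4$ at both candidates, a threshold test on $\norm{\hat{w}_{n,\delta}}$ would distinguish the two data laws $P_{0}, P_{1}$ with both error probabilities at most $\delta$, so $\operatorname{TV}(P_{0},P_{1}) \geq 1 - 2\delta$; but $\operatorname{KL}(P_{0} \,\|\, P_{1}) = \tfrac{1}{2\sigma^{2}}\Exp\norm{\rho\mathbb{X}e_{1}}^{2} = \tfrac{\rho^{2}n}{2\sigma^{2}} = \tfrac{1}{4}\log(1/\delta)$ (using $\Exp\norm{\mathbb{X}e_{1}}^{2} = n$), which by the Bretagnolle--Huber inequality forces $\operatorname{TV}(P_{0},P_{1}) \leq 1 - \tfrac{1}{2}\delta^{1/4}$, a contradiction for $\delta \in (0, 1/8)$; hence $R^{*}_{n,\delta} \geq \rho^{2}/8 = \sigma^{2}\log(1/\delta)/(16n)$. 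Taking the maximum of the two bounds and using $\max\{a,b\} \geq \tfrac{1}{2}(a+b)$ completes the case $n \geq d$.

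I expect the main obstacle to be the $\sigma^{2} d/n$ bound when $P_{X}$ is \emph{arbitrary}: the sample second-moment matrix $\tfrac{1}{n}\mathbb{X}^{\top}\mathbb{X}$ need not concentrate, so all spectral control must be squeezed out of the single identity $\Exp\Tr(\mathbb{X}^{\top}\mathbb{X}) = nd$ via Markov, which is why the clean Gaussian-posterior computation has to be run on an $\Omega(1)$-probability ``good design'' event, and why the prior variance is tuned to $\sigma^{2}/n$ --- this caps $\norm{\Sigma_{\textrm{post}}}_{\textrm{op}}$ and makes $\norm{G}^{2}$ concentrate around its mean regardless of $\lambda_{\min}(\mathbb{X}^{\top}\mathbb{X})$, which could otherwise be tiny. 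Carrying the failure probability $\delta$ through the averaging step is precisely the quantile-risk version of the classical Bayes lower bound alluded to in the introduction.
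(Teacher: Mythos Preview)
The proposition is quoted from the cited references and the paper does not give a stand-alone proof of it; instead, the paper recovers (and sharpens) the bound through its own machinery. Theorem~\ref{thm:lin_reg}, proved via the Bayesian quantile method of Theorem~\ref{thm:bayes} (a sequence of Gaussian priors, Anderson's lemma, and a limit $\lambda_k\to 0$), yields the \emph{exact} minimax risk $Q_{\widetilde{\mathcal{E}}(Z)}(1-\delta)$ with $Z\mid(X_i)\sim\mathcal{N}(0,\tfrac{\sigma^2}{n}\widehat{\Sigma}_n^{-1})$. Proposition~\ref{prop:bounds} then lower-bounds this quantile by two terms, and Lemma~\ref{lem:bounds} shows $Q_{\Tr(\widetilde{\Sigma}_n^{-1})}(1-\delta)\geq d(1-\delta)$ (AM--HM plus Markov on $\Tr(\widetilde{\Sigma}_n)$) and $Q_W(1-\delta)\geq\log(1/\delta)$ (Jensen on the exponential). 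The case $n<d$ falls out of Lemma~\ref{lem:infinite}: $\widehat{\Sigma}_n$ is then always singular, so $\eps_n=1$.

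Your argument is correct and takes a genuinely different, more classical route. For the $\sigma^2 d/n$ term you keep the Gaussian-prior/Anderson idea but fix the prior variance at $\sigma^2/n$ rather than sending it to infinity; your trace control on the good-design event is exactly the AM--HM step of Lemma~\ref{lem:bounds}, only applied to $(nI+\mathbb{X}^\top\mathbb{X})^{-1}$ instead of $\widetilde{\Sigma}_n^{-1}$, and you close with Chebyshev on $\norm{G}^2$ where the paper uses Gaussian lower-tail concentration. For the $\sigma^2\log(1/\delta)/n$ term you depart from the paper entirely, using a two-point prior and Bretagnolle--Huber; the paper instead extracts this term from the one-dimensional Gaussian tail of $\norm{Z}_2$ in the direction of the smallest eigenvalue of $\widetilde{\Sigma}_n$ (the $Q_W$ term). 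Your $n<d$ null-space argument is equivalent to the paper's $\eps_n=1$ observation but phrased pointwise. What your approach buys is brevity and self-containment --- no limiting prior, no new quantile-risk framework; what the paper's approach buys is the exact minimax risk (hence a matching OLS upper bound) and extension to all symmetric strictly convex error functions, neither of which your decomposition would deliver.
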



\begin{proposition}[\cite{oliveiraTrimmedSampleMeans2023}]
\label{prop:lit_upper_bound}
    Suppose that $e(t) = t^{2}/2$. There exists a procedure $\hat{w}_{n, \delta}$ and absolute constants $C, c > 0$ such that the following holds. If
    \begin{equation*}
        n \geq c \cdot \theta^2(P_{X}) (d + \log(1/\delta)), \quad \text{ where } \quad \theta(P_{X}) \defeq \sup_{w \in \R^{d}\setminus\brace*{0}}\frac{\Exp\brack*{\inp{w}{X}^{2}}^{1/2}}{\Exp\brack*{\abs{\inp{w}{X}}}},
    \end{equation*}
    then
    \begin{equation*}
        R_{n,\delta}(\mathcal{P}_2(P_{X}, \sigma^2), \hat{w}_{n, \delta}) \leq 
            C \cdot \theta^{2}(P_{X}) \cdot \frac{\sigma^{2} \cdot (d + \log(1/\delta))}{n}.
    \end{equation*}
\end{proposition}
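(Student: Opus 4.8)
The first step is a standard reduction. Under $e(t) = t^{2}/2$, writing $\Sigma \defeq \Exp\brack*{XX^{\top}}$ and using that first-order optimality of $\wstar$ forces $\Exp\brack*{\xi X} = 0$, one computes $E(w) = \tfrac12\norm{w - \wstar}_{\Sigma}^{2} + \tfrac12\Exp\brack*{\xi^{2}}$, hence $\mathcal{E}(w) = \tfrac12\norm{w - \wstar}_{\Sigma}^{2}$ for every $w \in \Rd$ (here $\Exp\brack*{\xi^{2}} = \Exp\brack*{\Exp\brack*{\xi^{2}\mid X}} \leq \sigma^{2} < \infty$). By the definition of the quantile risk in \eqref{eq:quantile_risk} it therefore suffices to exhibit a procedure $\hat{w}_{n,\delta}$ for which, for \emph{every} $P \in \mathcal{P}_{2}(P_{X}, \sigma^{2})$, with probability at least $1-\delta$ one has $\norm{\hat{w}_{n,\delta} - \wstar}_{\Sigma}^{2} \leq C\,\theta^{2}(P_{X})\,\sigma^{2}(d + \log(1/\delta))/n$; taking the supremum over the class then gives the proposition, since the constant does not depend on $P$. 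Because any reasonable procedure is equivariant under invertible linear reparametrizations $X \mapsto AX$ of the inputs (the squared residuals $(Y_i - \inp{w}{X_i})^2$ are invariant), and because $\theta(P_{X})$ is invariant under such reparametrizations, we may assume for the analysis that $\Sigma = I_{d}$. In this isotropic position $\theta^{-1}$ is exactly the best constant in the one-sided norm comparison $\Exp\brack*{\abs{\inp{u}{X}}} \geq \theta^{-1}\norm{u}$, $u \in \Rd$ — a small-ball-type condition — and this is all we may use about $P_{X}$.

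Next, the procedure: I would take a trimmed-mean version of the median-of-means tournament of \citet{lugosiRiskMinimizationMedianofmeans2019} and \citet{lecueRobustMachineLearning2020} (variants of trimmed-mean empirical risk minimization can be analyzed the same way). Fix a trimming fraction $\eps$ of order $(d + \log(1/\delta))/n$, a small absolute constant by the hypothesis on $n$; for $w, v \in \Rd$ let $\widehat{\Phi}(w, v)$ be the $\eps$-trimmed mean of $\brace*{(Y_{i} - \inp{w}{X_{i}})^{2} - (Y_{i} - \inp{v}{X_{i}})^{2}}_{i=1}^{n}$ (the empirical average after deleting the $\eps n$ largest and $\eps n$ smallest values), and output any $\hat{w}_{n,\delta} \in \argmin_{w}\sup_{v}\widehat{\Phi}(w, v)$. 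Substituting $Y_{i} - \inp{w}{X_{i}} = \xi_{i} - \inp{w - \wstar}{X_{i}}$ and expanding, the quantities being trimmed are differences of quadratic terms $\inp{w - \wstar}{X_{i}}^{2}$, $\inp{v - \wstar}{X_{i}}^{2}$ and linear-in-noise (``multiplier'') terms $\xi_{i}\inp{w - \wstar}{X_{i}}$, $\xi_{i}\inp{v - \wstar}{X_{i}}$, so the whole analysis reduces to controlling, uniformly over directions, a robust empirical version of each kind of term.

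The two uniform estimates are: \textbf{(i)} a lower isometry — once $n \geq c\,\theta^{2}(d + \log(1/\delta))$, with probability at least $1 - \delta/2$ the trimmed quadratic form $u \mapsto (\text{lower-trimmed mean of } \brace*{\inp{u}{X_{i}}^{2}})$ dominates $c'\norm{u}^{2}$ uniformly over $u \in \Rd$; and \textbf{(ii)} a multiplier bound — with probability at least $1 - \delta/2$, the $\eps$-trimmed mean of $\brace*{\xi_{i}\inp{u}{X_{i}}}$ is at most $C\,\theta\,\sigma\,\norm{u}\sqrt{(d + \log(1/\delta))/n}$, uniformly over $u$. For (i) I would use the small-ball method: from the norm comparison one extracts $\Prob\paren*{\abs{\inp{u}{X}} \geq c\theta^{-1}\norm{u}} \geq c\,\theta^{-2}$ via $\Exp\brack*{\abs{\inp{u}{X}}} \leq t + \Exp\brack*{\inp{u}{X}^{2}}^{1/2}\Prob\paren*{\abs{\inp{u}{X}} > t}^{1/2}$ optimized over $t$, then upgrades this pointwise estimate to a uniform-in-$u$ lower bound by controlling a Rademacher average of smooth truncations of the relevant indicators plus a bounded-differences term for the $\log(1/\delta)$ contribution; the delicate point is that the sharp form of this argument needs only the advertised sample size, of order $\theta^{2}(d + \log(1/\delta))$, and not a larger power of $\theta$. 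For (ii) the crucial observation is that $\Exp\brack*{(\xi\inp{u}{X})^{2}} = \Exp\brack*{\inp{u}{X}^{2}\,\Exp\brack*{\xi^{2}\mid X}} \leq \sigma^{2}\norm{u}^{2}$, so although the products $\xi\inp{u}{X}$ may be heavy-tailed with no higher moments, they have uniformly bounded variance; trimming converts a variance bound into a $\sqrt{\log(1/\delta)/n}$ deviation, and a chaining argument over the sphere supplies the $\sqrt{d/n}$ term, both encapsulated by a uniform concentration inequality for trimmed means over a function class of the type established by \citet{oliveiraTrimmedSampleMeans2023}. Granting (i) and (ii), one combines them via the min--max optimality of $\hat{w}_{n,\delta}$: evaluating the criterion at $v = \wstar$ and using a quantile-addition inequality for trimmed means to separate the quadratic and multiplier contributions, the negative quadratic part keeps $\sup_{v}\widehat{\Phi}(\wstar, v)$ bounded by $\rho^{2} \defeq C\theta^{2}\sigma^{2}(d+\log(1/\delta))/n$ (optimize $2C\theta\sigma t\sqrt{(d+\log(1/\delta))/n} - c't^{2}$ over $t = \norm{v - \wstar} \geq 0$), hence $\widehat{\Phi}(\hat{w}_{n,\delta}, \wstar) \leq \rho^{2}$, while (i) and (ii) give $\widehat{\Phi}(\hat{w}_{n,\delta}, \wstar) \geq c'\norm{\hat{w}_{n,\delta} - \wstar}^{2} - C\theta\sigma\norm{\hat{w}_{n,\delta} - \wstar}\sqrt{(d+\log(1/\delta))/n}$; solving the resulting quadratic inequality in $\norm{\hat{w}_{n,\delta} - \wstar}$ yields $\norm{\hat{w}_{n,\delta} - \wstar}^{2} \leq C''\theta^{2}\sigma^{2}(d + \log(1/\delta))/n$, and undoing the whitening finishes the proof.

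I expect the main obstacle to be the two uniform estimates, and within them the multiplier step (ii): since the trimmed mean is not a linear statistic, the symmetrization, contraction, and chaining underlying a uniform concentration inequality must be re-derived for it, and the bound has to come out with the correct \emph{simultaneous} dependence on $d$, $\log(1/\delta)$, and $\theta$ — in particular, a single $\delta$-dependent trimming level must work in every direction at once despite the heavy-tailed design. A secondary, also genuine, difficulty is pushing the small-ball lower isometry (i) through already at $n$ of order $\theta^{2}(d + \log(1/\delta))$, which rules out the lossy VC-type uniform deviation bound and forces the sharp form of the small-ball argument; the bookkeeping around the nonlinearity of trimming (choosing the trimming fractions so that the quantile-addition inequality applies) is tedious but routine once (i) and (ii) are in hand.
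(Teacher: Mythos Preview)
This proposition is a \emph{cited} result (attributed to \citet{oliveiraTrimmedSampleMeans2023}), and the paper does not supply its own proof of it; it is stated in the introduction purely to set up the gap between the best known upper and lower bounds. So there is no ``paper's proof'' to compare against.

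That said, your plan is essentially the correct one and aligns with how the cited literature (and the paper's own related Theorem~\ref{thm:guarantee_1}) handles such results: the min--max/trimmed procedure, the reduction to a uniform lower isometry on the quadratic process plus a uniform multiplier bound on the linear-in-noise process, and the final quadratic-inequality argument to extract $\norm{\hat{w}-\wstar}_{\Sigma}$. The paper's proof of Theorem~\ref{thm:guarantee_1} follows exactly this template (see Lemmas~\ref{lem:g_0}, \ref{lem:g_2} and Corollary~\ref{cor:g_1}), but replaces your small-ball step (i) with a fourth-moment-based bound on the truncated sample covariance (Proposition~\ref{prop:lower_2}), which is precisely the point of departure that lets them remove the $\theta^{2}(P_{X})$ factor. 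Your identification of the multiplier step (ii) as the main technical hurdle is accurate; in the paper this is handled by Lemma~\ref{lem:trunc_7}, a generic concentration bound for suprema of truncated empirical processes that packages the symmetrization/contraction/Klein--Rio ingredients you describe.
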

In the prescribed regime $(n, \delta)$ stated in Proposition \ref{prop:lit_upper_bound}, and on the set of distributions for which $\theta(P_{X})$ is upper bounded by an absolute constant, the combination of Propositions \ref{prop:lit_lower_bound} and \ref{prop:lit_upper_bound} proves the minimaxity, up to an absolute constant, of the procedure in Proposition \ref{prop:lit_upper_bound} over $\mathcal{P}_2(P_{X}, \sigma^2)$.

Unfortunately, this minimaxity result is unsatisfactory for two important reasons. First, the set of distributions for which $\theta(P_{X})$ is bounded by an absolute constant is both difficult to characterize and too small to cover classes of problems of interest. Indeed, by using the relationship between $\theta(P_{X})$ and the small-ball constants \citep{lecueLearningMOMPrinciples2019}, and using the lower bounds derived on the latter by \cite{saumardOptimalityEmpiricalRisk2018}, it is possible to derive dimension-dependent lower bounds on $\theta(P_{X})$ for standard linear regression problems with bounded inputs. Second, this minimaxity result is specific to the square error function. While procedures with guarantees have been studied for other error functions \citep{chinotRobustStatisticalLearning2020}, no lower bounds are known outside of Proposition \ref{prop:lit_lower_bound}. 

\paragraph{Contributions.}
Below we summarize our main results related to linear regression.
\begin{itemize}[noitemsep, topsep=2pt]
    \item We compute the \emph{exact} minimax quantile risk over the class $\mathcal{P}_{\normalfont\text{Gauss}}(P_{X}, \sigma^2)$ for a rich set of error functions, and show that OLS is minimax in this setting (Theorem \ref{thm:lin_reg}). We deduce from this result the asymptotic minimax quantile risk over this class (Proposition \ref{prop:asymp}).
    \item Focusing on the non-asymptotic setting with $e(t)=t^2/2$, we complement our exact computation with tight upper and lower bounds (Proposition \ref{prop:bounds}). We then recover the lower bound of Proposition \ref{prop:lit_lower_bound} and identify a setting in which it is tight (Corollary \ref{cor:suff}). We give an analogous result under the error function $e(t)=\abs{t}^{p}/[p(p-1)]$ for $p \in (2,\infty)$ (Proposition \ref{prop:lower_bound_p_norm}).

    \item We then turn to finding minimax procedures on larger classes of distributions. For the square error, we establish the minimaxity, up to an absolute constant, of a variant of the min-max regression procedure \citep{audibertRobustLinearLeast2011,lecueRobustMachineLearning2020} over the class $\mathcal{P}_{2}(P_{X}, \sigma^2)$, and under a fourth moment assumption on $P_{X}$ (Theorem \ref{thm:guarantee_1}). 
    
    \item Finally, we study minimax linear regression under the error function $e(t)=\abs{t}^{p}/[p(p-1)]$ for $p \in (2, \infty)$. Guided by our results, we identify a rich class of distributions analogous to $\mathcal{P}_{2}(P_{X}, \sigma^2)$, and show that the min-max regression procedure is minimax, up to a constant that depends only on $p$, and under a fourth moment assumption on $P_{X}$ (Theorem \ref{thm:guarantee_2}).
\end{itemize}
%
%
Our contributions on linear regression are supported by the following more general results. 

\begin{itemize}[noitemsep, topsep=0pt]
    \item We consider the quantile risk in full generality. We develop an analogue to the Bayesian method for lower bounding the minimax quantile risk (Theorem \ref{thm:bayes}). We then prove that the minimaxity of procedures under the quantile risk is invariant to strictly increasing left-continuous transformations of the loss (Proposition \ref{prop:invar}). 
    
    \item We illustrate the generality of our methods by applying them to two unrelated problems: multivariate mean estimation with Gaussian data, in which we recover a strengthening of the recent result of \cite{depersinOptimalRobustMean2022a} (Proposition \ref{prop:mean}), and variance estimation with Gaussian data and known mean, where we show that, surprisingly, the sample variance is suboptimal, and design a new minimax estimator (Proposition \ref{prop:var}).
    \item We conclude by studying the smallest eigenvalue of the sample covariance matrix. We prove a new tight asymptotic lower bound on its quantiles, and a nearly matching fully non-asymptotic upper bound (Proposition \ref{prop:asymp_lower}), both under a fourth moment assumption on $P_{X}$.
\end{itemize}


\paragraph{Organization} The rest of the paper is organized as follows. In Section \ref{sec:gaussian}, we present our results on the minimax quantile risk over the class $\mathcal{P}_{\normalfont\text{Gauss}}(P_{X}, \sigma^2)$. In Section \ref{sec:general}, we present new upper bounds on the worst-case quantile risk of the min-max regression procedure for the error functions $e(t) = \abs{t}^{p}/[p(p-1)]$ for $p \in [2,\infty)$, showing its minimaxity over suitable classes of distributions up to constants. In Section \ref{sec:quantile} we study the quantile risk in full generality. Finally, in Section \ref{sec:smallest}, we present our results on the smallest eigenvalue of the sample covariance matrix.

\paragraph{Notation.} We call a function $f: \R \to \R$ increasing if $x \leq x'$ implies $f(x) \leq f(x')$. If $f: \R \to \R$ is an increasing function, we define its pseudo-inverse $f^{-}: [-\infty, \infty] \to [-\infty, \infty]$ by $f^{-}(y) \defeq \inf\brace*{x \in \R \st f(x) \geq y}$. For a random variable $X: \Omega \to \R$, we denote its CDF by $F_{X}$ and its quantile function by $Q_{X} \defeq F^{-}_{X}$. We allow random variables of the form $X: \Omega \to [0, \infty]$, but we restrict the definition of their CDFs to $[0, \infty)$. Without loss of generality, we assume throughout that the support of the distribution of inputs $P_{X}$ is not contained in any hyperplane. We write $\Sigma = \Exp\brack*{XX^{T}}$ for the covariance matrix of the random vector $X$. We write $a \asymp b$ to mean that there exist absolute constants $C, c > 0$ such that $c \cdot b \leq a \leq C \cdot b$. 


\section{Minimax quantile risk over \texorpdfstring{$\mathcal{P}_{{\normalfont\textrm{Gauss}}}(P_{X}, \sigma^2)$}{TEXT}}
\label{sec:gaussian}
The following is the main result of this section.

\begin{theorem}
\label{thm:lin_reg}
    Let $P_{X}$ be a distribution on $\R^{d}$ and $\sigma^{2} \in (0, \infty)$. Assume that $e$ is strictly convex, differentiable, and symmetric i.e.\ $e(t) = e(-t)$ for all $t \in \R$. Define, for $(X, \eta) \sim P_{X} \times \mathcal{N}(0, \sigma^2)$,
    \begin{equation*}
        \widetilde{E}(\Delta) \defeq \Exp\brack*{e(\inp{\Delta}{X} + \eta)}, \quad\quad \widetilde{\mathcal{E}}(\Delta) \defeq \widetilde{E}(\Delta) - \widetilde{E}(0).
    \end{equation*}
    If $P_{X}$ is such that $\widetilde{E}$ is finite everywhere and differentiable at $0$ with $\nabla \widetilde{E}(0) = \Exp\brack*{\nabla e(\eta)}$, then
    \begin{equation*}
        R_{n, \delta}^{*}(\mathcal{P}_{\normalfont\text{Gauss}}(P_{X}, \sigma^2)) = Q_{\widetilde{\mathcal{E}}(Z)}(1 - \delta),
    \end{equation*}
    where the random variable $Z$ is jointly distributed with $(X_i)_{i=1}^{n} \sim P_{X}^{n}$ such that $Z \mid (X_i)_{i=1}^{n} \sim \mathcal{N}(0, \frac{\sigma^{2}}{n} \widehat{\Sigma}_{n}^{-1})$ on the event that the sample covariance matrix $\widehat{\Sigma}_{n} \defeq n^{-1} \sum_{i=1}^{n} X_{i}X_{i}^{T}$ is invertible, otherwise  $\widetilde{\mathcal{E}}(Z) \defeq \infty$. Moreover, all procedures satisfying the following condition are minimax
    \begin{equation*}
        \hat{w}_{n, \delta}((X_i, Y_i)_{i=1}^{n}) \in \argmin_{w \in \R^{d}} \frac{1}{n}\sum_{i=1}^{n} (\inp{w}{X_i} - Y_i)^{2}.
    \end{equation*}
\end{theorem}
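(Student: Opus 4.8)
The plan is to use the Bayesian-type lower bound machinery (Theorem~\ref{thm:bayes}) together with an explicit computation for the Gaussian subclass, and then match it with the performance of OLS. The key observation is that on $\mathcal{P}_{\normalfont\text{Gauss}}(P_{X}, \sigma^2)$, a distribution $P$ is indexed by a single parameter $\wstar \in \Rd$, so we may place a prior on $\wstar$ and compute the resulting posterior and Bayes quantile risk. First I would fix the design $(X_i)_{i=1}^n$ (conditioning throughout on the inputs, which is legitimate since $P_X$ is common to all members of the class) and, for a Gaussian prior $\wstar \sim \mathcal{N}(0, \tau^2 I)$, observe that the posterior of $\wstar$ given $(X_i, Y_i)_{i=1}^n$ is Gaussian with a covariance that, as $\tau^2 \to \infty$, converges to $\frac{\sigma^2}{n}\widehat{\Sigma}_n^{-1}$ on the event that $\widehat\Sigma_n$ is invertible. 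The excess error of a predictor $w$ under a fixed $\wstar$ and the stated Gaussian noise is exactly $\widetilde{\mathcal{E}}(w - \wstar)$, by the definition of $\widetilde E$ and the independence of $\eta$ from $X$; here the hypotheses that $\widetilde E$ is finite and differentiable at $0$ with $\nabla\widetilde E(0) = \Exp[\nabla e(\eta)]$ are what guarantee that $\Delta = 0$ is the unique minimizer of $\widetilde{\mathcal{E}}$, so the excess error is well-defined and nonnegative. Thus the conditional (on the design) Bayes quantile risk of a procedure $\hat w$ is governed by the quantile of $\widetilde{\mathcal{E}}(\hat w - \wstar)$ under the joint law of $(\wstar, (Y_i))$.

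The next step is to minimize this conditional Bayes quantile risk over procedures. By symmetry and strict convexity of $e$, and hence of $\widetilde{\mathcal E}$ (which I would verify passes through the expectation), Anderson's Lemma applied to the Gaussian posterior shows that, conditionally on $(X_i, Y_i)_{i=1}^n$, the choice of $\hat w$ minimizing $\Prob(\widetilde{\mathcal E}(\hat w - \wstar) \le t)$ for every $t$ simultaneously is the posterior mean, which is the ridge estimator and converges to the OLS estimator as $\tau^2 \to \infty$. This pins down the optimal posterior-predictive law of $\widetilde{\mathcal E}(\hat w - \wstar)$ as that of $\widetilde{\mathcal E}(Z)$ with $Z \mid (X_i) \sim \mathcal N(0, \frac{\sigma^2}{n}\widehat\Sigma_n^{-1})$ in the $\tau^2 \to \infty$ limit, exactly the random variable in the statement; on the event $\widehat\Sigma_n$ singular, the prior mass on directions in the kernel forces the excess error to be $\infty$ with positive conditional probability, consistent with the convention $\widetilde{\mathcal E}(Z) \defeq \infty$ there. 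Integrating over the design and invoking Theorem~\ref{thm:bayes} (taking a sequence of priors $\tau^2 \to \infty$ to pass from Bayes to minimax, the quantile analogue of the classical argument) yields $R^*_{n,\delta}(\mathcal P_{\normalfont\text{Gauss}}(P_X,\sigma^2)) \ge Q_{\widetilde{\mathcal E}(Z)}(1-\delta)$.

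For the matching upper bound, I would show that OLS (any minimizer of the empirical square loss) \emph{achieves} $Q_{\widetilde{\mathcal E}(Z)}(1-\delta)$ on every $P \in \mathcal{P}_{\normalfont\text{Gauss}}(P_X,\sigma^2)$. This is direct: under such a $P$ with parameter $\wstar$ and Gaussian noise $\eta$, the OLS error $\hat w_{\mathrm{OLS}} - \wstar$ equals $\widehat\Sigma_n^{-1}\frac1n\sum_i X_i \eta_i$ on the invertibility event, which conditionally on $(X_i)$ is exactly $\mathcal N(0, \frac{\sigma^2}{n}\widehat\Sigma_n^{-1})$ since the $\eta_i$ are i.i.d.\ $\mathcal N(0,\sigma^2)$ independent of the design; on the complementary event the excess error is $\infty$. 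Hence the law of $\mathcal E(\hat w_{\mathrm{OLS}})$ under $P$ coincides with the law of $\widetilde{\mathcal E}(Z)$ regardless of $\wstar$, so its $1-\delta$ quantile equals $Q_{\widetilde{\mathcal E}(Z)}(1-\delta)$, and the worst case over the class is the same. Combining the two bounds gives the exact minimax value and the minimaxity of OLS.

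The main obstacle I anticipate is the lower-bound half: making the quantile analogue of the ``Bayes risk increases to minimax risk'' argument rigorous, i.e.\ confirming that Theorem~\ref{thm:bayes} applies with a sequence of flat Gaussian priors and that the Bayes quantile risks converge to $Q_{\widetilde{\mathcal E}(Z)}(1-\delta)$ rather than to something strictly smaller. The delicate points are the interchange of the $\tau^2\to\infty$ limit with the quantile functional (which is only left-continuous, so one must argue monotone convergence of the relevant CDFs carefully), the justification that the posterior mean is \emph{simultaneously} optimal for all thresholds $t$ via Anderson's Lemma applied to $\widetilde{\mathcal E}$ (requiring $\widetilde{\mathcal E}$ to have convex, symmetric sublevel sets, which follows from symmetry and convexity of $e$ but should be stated), and the handling of the singular-design event uniformly in $\tau^2$. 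Everything downstream — the OLS computation and the final comparison — is routine Gaussian linear algebra.
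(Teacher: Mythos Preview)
Your proposal is correct and matches the paper's proof essentially line for line: a sequence of Gaussian priors on $\wstar$ with variance tending to infinity, the ridge posterior, Anderson's Lemma applied to the convex symmetric sublevel sets of $\widetilde{\mathcal E}$ to identify the posterior mean as simultaneously optimal, monotone convergence of the resulting CDFs to $F_{\widetilde{\mathcal E}(Z)}$ (with the singular-design event handled separately), and then Theorem~\ref{thm:bayes} combined with the direct OLS computation for the matching upper bound. The three obstacles you flag---the monotonicity and limit of the CDF sequence under the quantile functional, the applicability of Anderson's Lemma, and the singular event---are exactly the points the paper spends effort on, and your sketches of how to resolve them are accurate.
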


We make a few remarks about this result before interpreting its content. First, Theorem \ref{thm:lin_reg} improves on the best known comparable result, Proposition \ref{prop:lit_lower_bound}, in two distinct ways: it provides the \emph{exact} minimax risk over the class $\mathcal{P}_{\normalfont \text{Gauss}}(P_{X}, \sigma^{2})$ for the error function $e(t) = t^2/2$, and it generalizes this result to a rich collection of error functions. Second, and as can readily be seen from the proof, the strict convexity hypothesis on $e(t)$ in Theorem \ref{thm:lin_reg} can be weakened to the strict quasiconvexity of $E(w)$, and the strictness can be replaced by the existence of a unique minimizer of $E(w)$. Finally, the proof of Theorem \ref{thm:lin_reg} is based on the Bayesian method we develop in Theorem \ref{thm:bayes}, an adaptation of an argument of \citet{mourtadaExactMinimaxRisk2022}, and Anderson's Lemma \citep[e.g.][]{keener2010theoretical}. 

While exact, the result in Theorem \ref{thm:lin_reg} is both difficult to interpret and hard to manipulate. In particular, the dependence of the minimax risk on the problem parameters $(n, \delta, P_{X}, \sigma^2)$ as well as the error function $e$ remains implicit in Theorem \ref{thm:lin_reg}. This is not too surprising as the error function can interact with the parameters of the problems in quite complicated ways.

In the rest of this section, we develop tools to make these dependencies explicit. Specifically, in Section \ref{subsec:general}, we compute the asymptotic minimax risk for generic error functions as $n \to \infty$ and show that it takes on a simple form. In Section \ref{subsec:square}, we focus on the case of square error function, and identify a setting where the lower bound of Proposition \ref{prop:lit_lower_bound} is tight. In Section \ref{subsec:p_norm}, we extend this result to the case of the $p$-th power error function for $p \in (2,\infty)$.


\subsection{General error functions}
\label{subsec:general}
The following result shows that under a mild assumption on the error function, the asymptotic minimax risk is a pleasingly simple function of the parameters of the problem. In particular, this result shows that the lower bound of Proposition \ref{prop:lit_lower_bound} is asymptotically tight.
\begin{proposition}
\label{prop:asymp}
    Under the setup of Theorem \ref{thm:lin_reg}, further assume that $e$ is twice differentiable and $\widetilde{E}$ is twice differentiable at $0$ with $\nabla^2 \widetilde{E}(0) = \Exp\brack*{\nabla^{2}e(\eta)}$, and let $\alpha \defeq \Exp\brack*{e''(\eta)}/2$. Then
    \begin{equation*}
        \lim_{n \to \infty} n \cdot R_{n, \delta}^{*}(\mathcal{P}_{\normalfont\text{Gauss}}(P_{X}, \sigma^2)) = \sigma^{2} \alpha \cdot  Q_{\norm{Z}_2^2}(1 - \delta) \asymp \sigma^2 \alpha \cdot \brack*{d + \log(1/\delta)},
    \end{equation*}
    where $Z \sim \mathcal{N}(0, I_{d \times d})$, and where the relation $\asymp$ holds when $\delta \in (0, 1/2)$.
\end{proposition}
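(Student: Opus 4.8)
The plan is to read the answer off the exact formula in Theorem~\ref{thm:lin_reg} and pass to the limit $n\to\infty$. Write $\zeta_n$ for the random variable denoted $Z$ in Theorem~\ref{thm:lin_reg} (which depends on $n$ through $\widehat\Sigma_n$). By that theorem $R_{n,\delta}^{*}(\mathcal P_{\normalfont\text{Gauss}}(P_X,\sigma^2)) = Q_{\widetilde{\mathcal E}(\zeta_n)}(1-\delta)$, and since the quantile map is positively homogeneous, i.e.\ $Q_{cV}(p)=c\,Q_{V}(p)$ for $c>0$, we get $n\cdot R_{n,\delta}^{*}(\mathcal P_{\normalfont\text{Gauss}}(P_X,\sigma^2)) = Q_{\,n\widetilde{\mathcal E}(\zeta_n)}(1-\delta)$. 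I would then show that $n\widetilde{\mathcal E}(\zeta_n)$ converges in distribution to $\sigma^{2}\alpha\norm{Z}_2^{2}$ with $Z\sim\mathcal N(0,I_{d\times d})$, and transfer this to convergence of the $(1-\delta)$-quantiles.

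The first ingredient is a second-order expansion of $\widetilde{\mathcal E}$ at the origin. Since $e$ is symmetric and $\eta\stackrel{d}{=}-\eta$ is independent of $X$, one checks that $\widetilde E(-\Delta)=\widetilde E(\Delta)$ for all $\Delta$, so $\nabla\widetilde E(0)=0$; combined with the standing assumption $\nabla^{2}\widetilde E(0)=\Exp\brack*{\nabla^{2}e(\eta)}$, which equals $\Exp\brack*{e''(\eta)}\,\Sigma=2\alpha\Sigma$ by independence of $X$ and $\eta$, this gives $\widetilde{\mathcal E}(\Delta) = \alpha\,\Delta^{T}\Sigma\Delta + r(\Delta)$ with $r(\Delta)=o(\norm{\Delta}_2^{2})$ as $\Delta\to0$; note that $\alpha>0$ by strict convexity of $e$ and positivity of the Gaussian density. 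The second ingredient is that $\widehat\Sigma_n\to\Sigma\succ0$ almost surely by the strong law (note that $\Sigma$ is finite, as $2\alpha\Sigma=\nabla^{2}\widetilde E(0)$ with $\alpha\in(0,\infty)$), so $\widehat\Sigma_n$ is invertible for all large $n$ and $\widehat\Sigma_n^{-1}\to\Sigma^{-1}$ almost surely.

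Conditioning on the inputs, the conditional law of $\sqrt n\,\zeta_n$ is $\mathcal N(0,\sigma^{2}\widehat\Sigma_n^{-1})$ on the event $\{\widehat\Sigma_n\text{ invertible}\}$, which converges weakly to $\mathcal N(0,\sigma^{2}\Sigma^{-1})$; a bounded-convergence argument over the inputs then gives the unconditional statement $\sqrt n\,\zeta_n\Rightarrow W$ with $W\sim\mathcal N(0,\sigma^{2}\Sigma^{-1})$, the vanishing mass of the singular event being harmless. In particular $\sqrt n\,\zeta_n$ is tight, hence $\zeta_n\to0$ in probability, and the expansion yields $n\widetilde{\mathcal E}(\zeta_n) = \alpha\,(\sqrt n\,\zeta_n)^{T}\Sigma(\sqrt n\,\zeta_n) + n\,r(\zeta_n)$, where $n\,r(\zeta_n) = \norm{\sqrt n\,\zeta_n}_2^{2}\cdot (r(\zeta_n)/\norm{\zeta_n}_2^{2})$ is the product of a tight sequence and a sequence tending to $0$ in probability, and so tends to $0$ in probability. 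By the continuous mapping theorem and Slutsky's lemma, $n\widetilde{\mathcal E}(\zeta_n)\Rightarrow\alpha\,W^{T}\Sigma W$, and diagonalizing $W=\sigma\Sigma^{-1/2}Z$ with $Z\sim\mathcal N(0,I_{d\times d})$ identifies this limit with $\sigma^{2}\alpha\norm{Z}_2^{2}$. To conclude the exact limit, I would invoke the standard fact that convergence in distribution implies convergence of the $(1-\delta)$-quantiles whenever the limit has a continuous and strictly increasing distribution function near its $(1-\delta)$-quantile, which holds for $\sigma^{2}\alpha\norm{Z}_2^{2}\sim\sigma^{2}\alpha\,\chi^{2}_{d}$ at every level in $(0,1)$; here one only needs to note that the $+\infty$ atom of $\widetilde{\mathcal E}(\zeta_n)$ on $\{\widehat\Sigma_n\text{ singular}\}$ has mass $<\delta$ for $n$ large and therefore does not enter the $(1-\delta)$-quantile. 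Finally, for $\delta\in(0,1/2)$ the relation $\asymp$ reduces to standard chi-square quantile bounds: the Laurent--Massart inequality gives $Q_{\norm{Z}_2^{2}}(1-\delta)\le d+2\sqrt{d\log(1/\delta)}+2\log(1/\delta)\le 3\brack*{d+\log(1/\delta)}$, while $Q_{\norm{Z}_2^{2}}(1-\delta)\ge\max\brace*{Q_{\norm{Z}_2^{2}}(1/2),\,Q_{Z_1^{2}}(1-\delta)}\ge c\brack*{d+\log(1/\delta)}$ since the median of $\chi^{2}_{d}$ exceeds an absolute constant times $d$ and, by the Gaussian lower tail bound, $Q_{Z_1^{2}}(1-\delta)$ exceeds an absolute constant times $\log(1/\delta)$.

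The step I expect to be the main obstacle is the remainder control in the third paragraph, i.e.\ turning the heuristic ``$n\cdot o(\norm{\zeta_n}_2^{2})=o(1)$'' into a rigorous statement; this is precisely why the argument separates the tightness of $\sqrt n\,\zeta_n$ from the pointwise expansion of $\widetilde{\mathcal E}$ and combines them through Slutsky's lemma rather than a naive substitution. The bookkeeping for the $+\infty$ atom when passing from weak convergence to quantile convergence is a secondary, purely technical point.
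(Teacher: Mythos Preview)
Your proposal is correct and follows essentially the same route as the paper: read off the exact formula from Theorem~\ref{thm:lin_reg}, show $\sqrt{n}\,\zeta_n\Rightarrow\mathcal N(0,\sigma^2\Sigma^{-1})$ via the law of large numbers on $\widehat\Sigma_n$, and apply the second-order delta method (the paper invokes it by name, whereas you write out the remainder argument explicitly) to obtain $n\widetilde{\mathcal E}(\zeta_n)\Rightarrow\sigma^2\alpha\norm{Z}_2^2$, then pass to quantiles. The only cosmetic difference is in the $\asymp$ step, where the paper appeals to its Gaussian CDF bounds while you use Laurent--Massart and a median argument; both are standard and fine.
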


Non-asymptotically, and with no more assumptions on the error function, it is difficult to say much more about the minimax risk than Proposition \ref{prop:asymp}. However, determining when the minimax risk is infinite is tractable, as the next result shows.
\begin{lemma}
\label{lem:infinite}
   Under the setup of Theorem \ref{thm:lin_reg}, let $\eps_{n} \defeq \Prob\paren*{\rank(\widehat{\Sigma}_{n}) < d}$. Then
    \begin{equation*}
        R_{n, \delta}^{*}(\mathcal{P}_{\normalfont\text{Gauss}}(P_{X}, \sigma^2)) = \infty \Leftrightarrow \delta \leq \eps_{n} \quad \text{ and } \quad 
        \rho(P_{X})^{n} \leq \eps_{n} \leq \left(\genfrac{}{}{0pt}{}{n}{d-1}\right) \rho(P_{X})^{n - d - 1}, 
    \end{equation*}
    where $\rho(P_{X}) \defeq \sup_{w \in \R^{d} \setminus \brace*{0}} \Prob(\inp{w}{X} = 0) < 1$.
\end{lemma}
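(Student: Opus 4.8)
The plan is to read the equivalence off the exact formula in Theorem~\ref{thm:lin_reg} and to prove the two-sided bounds on $\eps_n$ by elementary probabilistic arguments. For the equivalence, Theorem~\ref{thm:lin_reg} gives $R^{*}_{n,\delta}(\mathcal{P}_{\normalfont\text{Gauss}}(P_X,\sigma^2)) = Q_{\widetilde{\mathcal{E}}(Z)}(1-\delta)$, so I would determine when this quantile equals $\infty$. Since $\widetilde{E}$ is finite everywhere, on the event $\brace*{\rank(\widehat{\Sigma}_n)=d}$ the conditional law of $Z$ is a nondegenerate Gaussian on $\Rd$ and $\widetilde{\mathcal{E}}(Z)<\infty$ surely, while $\widetilde{\mathcal{E}}(Z)=\infty$ on the complement; hence $\Prob(\widetilde{\mathcal{E}}(Z)=\infty)=\eps_n$ and $\lim_{t\to\infty}\Prob(\widetilde{\mathcal{E}}(Z)\le t)=1-\eps_n$. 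If $\delta>\eps_n$, this limit exceeds $1-\delta$, so the quantile is finite. For the converse, which must also cover the boundary case $\delta=\eps_n$, I would use that $\widetilde{E}$ is coercive: it is convex and finite everywhere, and (as in the proof of Theorem~\ref{thm:lin_reg}) strictly convex, symmetric, with unique minimizer $0$, hence it tends to $\infty$ along every ray from the origin, which for a convex function forces bounded sublevel sets. Conditionally on $\widehat{\Sigma}_n$ being invertible, $Z$ and therefore $\widetilde{\mathcal{E}}(Z)$ are then unbounded above, so $\Prob(\widetilde{\mathcal{E}}(Z)\le t)<1-\eps_n$ for every finite $t$; consequently $\delta\le\eps_n$ forces $\Prob(\widetilde{\mathcal{E}}(Z)\le t)<1-\delta$ for all $t\ge 0$, i.e.\ $Q_{\widetilde{\mathcal{E}}(Z)}(1-\delta)=\infty$.

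For the lower bound on $\eps_n$: for any fixed $w\in\Rd\setminus\brace*{0}$, on the event $\brace*{\inp{w}{X_i}=0\text{ for all }i\le n}$ the points $X_1,\dots,X_n$ all lie in the hyperplane $\brace*{x:\inp{w}{x}=0}$, so $\rank(\widehat{\Sigma}_n)<d$; by independence this event has probability $\Prob(\inp{w}{X}=0)^n$, and taking the supremum over $w$ gives $\eps_n\ge\rho(P_X)^n$. The inequality $\rho(P_X)<1$ is a short consequence of the standing assumption that the support of $P_X$ is not contained in any hyperplane: picking linearly independent $v_1,\dots,v_d$ in the support and using compactness of the unit sphere to get a constant $c_0>0$ with $\max_k\abs{\inp{w}{v_k}}\ge c_0$ for every unit vector $w$, a sufficiently small ball around the corresponding $v_k$ is disjoint from $\brace*{x:\inp{w}{x}=0}$ and has positive $P_X$-mass.

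For the upper bound on $\eps_n$, which I expect to be the main step, I would assume $n\ge d-1$ (if $n<d-1$ then $\eps_n=1$ and the inequality is vacuous) and argue as follows. If $\rank(\widehat{\Sigma}_n)<d$, i.e.\ $X_1,\dots,X_n$ fail to span $\Rd$, take a maximal linearly independent subset of $\brace*{X_i}_{i=1}^n$; it has some size $r\le d-1$, and after enlarging its index set arbitrarily to $S\subseteq[n]$ with $\abs{S}=d-1$ we get $X_i\in V_S$ for all $i\in[n]$, where $V_S$ denotes the linear span of $\brace*{X_j:j\in S}$. Hence $\brace*{\rank(\widehat{\Sigma}_n)<d}\subseteq\bigcup_{\abs{S}=d-1}E_S$, where $E_S\defeq\brace*{X_i\in V_S\text{ for all }i\in[n]}$. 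Fixing $S$ and conditioning on $(X_j)_{j\in S}$, the subspace $V_S$ has dimension at most $d-1$, so one may pick (measurably) a unit vector $w_S\in V_S^{\perp}$; since the $n-d+1$ vectors $X_i$ with $i\notin S$ are \iid and independent of $(X_j)_{j\in S}$ and $\brace*{X_i\in V_S}\subseteq\brace*{\inp{w_S}{X_i}=0}$, each lands in $V_S$ with conditional probability at most $\rho(P_X)$, so $\Prob(E_S)\le\rho(P_X)^{n-d+1}$. A union bound over the $\binom{n}{d-1}$ choices of $S$, together with $\rho(P_X)<1$, then yields $\eps_n\le\binom{n}{d-1}\rho(P_X)^{n-d+1}\le\binom{n}{d-1}\rho(P_X)^{n-d-1}$.

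The genuine work is in this last step: one has to pick a $(d-1)$-element index set that pins down the hyperplane on which all of $X_1,\dots,X_n$ would have to lie, make a measurable selection of its normal, and keep the combinatorics clean; the equivalence and the lower bound are routine once Theorem~\ref{thm:lin_reg} and the support assumption are in hand.
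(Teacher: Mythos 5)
Your argument is correct, and at the level of structure it follows the same route the paper takes: read the equivalence off the exact formula $R^*_{n,\delta}=Q_{\widetilde{\mathcal{E}}(Z)}(1-\delta)$ from Theorem~\ref{thm:lin_reg} by computing $\lim_{t\to\infty}F_{\widetilde{\mathcal{E}}(Z)}(t)=1-\eps_n$, then bound $\eps_n$ from both sides. The difference is that you make the two steps self-contained where the paper instead cites \citet{elhanchiOptimalExcessRisk2023a}: the fact $\rho(P_X)<1$ and the upper bound on $\eps_n$ are both taken from that reference in the paper, whereas you supply short direct arguments (a compactness argument on the unit sphere for the former; a union bound over $(d-1)$-element index sets together with conditioning for the latter). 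You also add a justification that the paper only asserts: to handle the boundary case $\delta=\eps_n$ one needs $F_{\widetilde{\mathcal{E}}(Z)}(t)<1-\eps_n$ \emph{strictly} for every finite $t$, which you obtain by observing that a finite, strictly convex, symmetric function with unique minimizer at $0$ is coercive, so conditionally on $\{\rank(\widehat{\Sigma}_n)=d\}$ the variable $\widetilde{\mathcal{E}}(Z)$ is unbounded above. Finally, note that your union bound actually produces $\eps_n\le\binom{n}{d-1}\rho(P_X)^{n-d+1}$, which is a \emph{stronger} upper bound than the $\binom{n}{d-1}\rho(P_X)^{n-d-1}$ stated in the lemma (and you correctly observe $\rho^{n-d+1}\le\rho^{n-d-1}$ since $\rho\le 1$); so there is no gap, only a slight improvement. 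One small caveat worth phrasing carefully: when $n<d$ the right-hand side of the upper bound is not merely vacuous but incorrect (e.g.\ for $n<d-1$ the binomial coefficient vanishes while $\eps_n=1$), so the bound implicitly requires $n$ large enough; but this applies equally to the statement as written in the paper.
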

The upper bound on $\eps_{n}$ as well as the statement $\rho(P_{X}) < 1$ in Lemma \ref{lem:infinite} are due to \citet{elhanchiOptimalExcessRisk2023a}. At a high level, Lemma \ref{lem:infinite} says that the range of failure probabilities for which the risk is infinite gets exponentially small as a function of $n$. This is in sharp contrast with the result of \citet{mourtadaExactMinimaxRisk2022} under the classical notion of risk and the square error, where it was shown that the minimax risk in that case is infinite for all sample sizes as soon as $\rho(P_{X}) > 0$.


\subsection{Square error}
\label{subsec:square}
We assume throughout this section that $e(t) = t^2/2$. We derive increasingly loose but more interpretable upper and lower bounds on the minimax risk in this setting. Our motivation is to better understand the influence of each of the parameters $(n, \delta, P_{X}, \sigma^2)$ of the problem on the minimax risk. Practically, the main result of this section is the identification of a general sufficient condition under which the lower bound in Proposition \ref{prop:lit_lower_bound} is tight. With that achievement to look forward to, we start with a simple Corollary of Theorem \ref{thm:lin_reg}.
\begin{corollary}
\label{cor:square}
    Under the setup of Theorem \ref{thm:lin_reg},
    \begin{equation*}
        R_{n, \delta}^{*}(\mathcal{P}_{\normalfont\text{Gauss}}(P_{X}, \sigma^2)) = \frac{\sigma^2}{2n} \cdot Q_{\norm{Z}_2^2}(1 - \delta),
    \end{equation*}
    where the random variable $Z$ is jointly distributed with $(X_i)_{i=1}^{n} \sim P_{X}^{n}$ such that $Z \mid (X_i)_{i=1}^{n} \sim \mathcal{N}(0, \widetilde{\Sigma}_{n}^{-1})$ on the event that the sample covariance matrix $\widehat{\Sigma}_{n}$ is invertible, and where $\widetilde{\Sigma}_{n} = \Sigma^{-1/2} \widehat{\Sigma}_{n} \Sigma^{-1/2}$ is the whitened sample covariance matrix; otherwise $\norm{Z}_2^2 \defeq \infty$.
\end{corollary}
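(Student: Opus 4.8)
The plan is to specialize Theorem~\ref{thm:lin_reg} to the error function $e(t) = t^2/2$ and carry out the resulting simplification. First I would record what ``the setup of Theorem~\ref{thm:lin_reg}'' means here: $e(t)=t^2/2$ is strictly convex, differentiable and symmetric, and since $e'(t)=t$ the requirement $\nabla\widetilde{E}(0) = \Exp\brack*{\nabla e(\eta)}$ reads $\nabla\widetilde{E}(0) = \Exp[\eta] = 0$, so the hypothesis on $P_X$ amounts to $\widetilde{E}$ being finite at every point, which for the square error is the same as $\Sigma$ having finite entries; together with the standing assumption that the support of $P_X$ lies in no hyperplane, this makes $\Sigma$ positive definite, so that $\Sigma^{1/2}$ and $\Sigma^{-1/2}$ are well defined. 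Theorem~\ref{thm:lin_reg} then gives $R_{n,\delta}^{*}(\mathcal{P}_{\normalfont\text{Gauss}}(P_X,\sigma^2)) = Q_{\widetilde{\mathcal{E}}(Z)}(1-\delta)$, where conditionally on $(X_i)_{i=1}^{n}$ and on the event that $\widehat{\Sigma}_{n}$ is invertible, $Z \sim \mathcal{N}(0, \tfrac{\sigma^2}{n}\widehat{\Sigma}_{n}^{-1})$, and $\widetilde{\mathcal{E}}(Z) \defeq \infty$ otherwise.

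Next I would compute $\widetilde{\mathcal{E}}$ explicitly. Since $X \indep \eta$ and $\Exp[\eta]=0$, expanding the square gives $\widetilde{E}(\Delta) = \tfrac12\Exp\brack*{(\inp{\Delta}{X}+\eta)^2} = \tfrac12\paren*{\Delta^{T}\Sigma\Delta + \sigma^2}$, hence $\widetilde{\mathcal{E}}(\Delta) = \widetilde{E}(\Delta) - \widetilde{E}(0) = \tfrac12\Delta^{T}\Sigma\Delta = \tfrac12\norm{\Sigma^{1/2}\Delta}_2^2$. The heart of the argument is then a linear change of variables: conditionally on $(X_i)_{i=1}^{n}$ with $\widehat{\Sigma}_{n}$ invertible, $\Sigma^{1/2}Z$ is centred Gaussian with covariance $\tfrac{\sigma^2}{n}\Sigma^{1/2}\widehat{\Sigma}_{n}^{-1}\Sigma^{1/2} = \tfrac{\sigma^2}{n}\paren*{\Sigma^{-1/2}\widehat{\Sigma}_{n}\Sigma^{-1/2}}^{-1} = \tfrac{\sigma^2}{n}\widetilde{\Sigma}_{n}^{-1}$, using that $\Sigma^{-1/2}$ is invertible (so $\widehat{\Sigma}_{n}$ is invertible iff $\widetilde{\Sigma}_{n}$ is). Consequently, setting $Z' \defeq \sqrt{n/\sigma^2}\,\Sigma^{1/2}Z$ and declaring $\norm{Z'}_2^2 \defeq \infty$ off the invertibility event, the pair $\paren*{(X_i)_{i=1}^{n}, Z'}$ has, as far as $\norm{\cdot}_2^2$ is concerned, the same joint law as $\paren*{(X_i)_{i=1}^{n}, Z}$ with $Z$ the random variable in the statement of the corollary, and $\widetilde{\mathcal{E}}(Z) = \tfrac{\sigma^2}{2n}\norm{Z'}_2^2$ pointwise, both sides being $\infty$ on $\brace*{\rank(\widehat{\Sigma}_{n})<d}$ by the respective conventions.

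I would finish using the scaling of quantiles: for any $[0,\infty]$-valued random variable $V$ and any $c>0$, the definition of the pseudo-inverse gives $Q_{cV}(1-\delta) = c\,Q_V(1-\delta)$. Applying this with $V = \norm{Z'}_2^2$ and $c = \sigma^2/(2n)$ yields $R_{n,\delta}^{*}(\mathcal{P}_{\normalfont\text{Gauss}}(P_X,\sigma^2)) = Q_{\widetilde{\mathcal{E}}(Z)}(1-\delta) = \tfrac{\sigma^2}{2n}Q_{\norm{Z'}_2^2}(1-\delta)$, which is the claimed identity after renaming $Z'$ as $Z$. I do not expect a genuine obstacle, as this is a direct specialization; the only points needing care are keeping the two roles of the symbol $Z$ apart (the one produced by Theorem~\ref{thm:lin_reg}, with conditional law $\mathcal{N}(0,\tfrac{\sigma^2}{n}\widehat{\Sigma}_{n}^{-1})$, versus the whitened and rescaled one in the corollary, with conditional law $\mathcal{N}(0,\widetilde{\Sigma}_{n}^{-1})$) and checking that the $+\infty$ conventions on $\brace*{\rank(\widehat{\Sigma}_{n})<d}$ agree between the two statements, so that the pointwise identity — and hence the quantile identity — holds as an equality of extended reals.
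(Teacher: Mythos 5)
Your proof is correct and is the natural specialization of Theorem \ref{thm:lin_reg} to $e(t)=t^2/2$; the paper treats Corollary \ref{cor:square} as an immediate consequence and does not record a separate argument, but the key steps you supply — computing $\widetilde{\mathcal{E}}(\Delta)=\tfrac12\Delta^{T}\Sigma\Delta$, applying the linear change of variables $Z' = \sqrt{n/\sigma^2}\,\Sigma^{1/2}Z$ to pass from conditional covariance $\tfrac{\sigma^2}{n}\widehat{\Sigma}_n^{-1}$ to $\widetilde{\Sigma}_n^{-1}$, and using the positive-homogeneity of quantiles (an instance of Lemma \ref{lem:invar} with $\varphi(x)=cx$) — are exactly what is needed and match the intended route. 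One small imprecision: for $e(t)=t^2/2$ the condition $\nabla\widetilde{E}(0)=\Exp[\nabla e(\eta)]$ unpacks to $\nabla\widetilde{E}(0)=\Exp[e'(\eta)X]=\Exp[\eta X]=\Exp[\eta]\Exp[X]=0$ rather than $\Exp[\eta]=0$ as you wrote, but this does not affect the argument.
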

Corollary \ref{cor:square} already makes explicit the dependence of the minimax risk on $(n, \sigma^2)$, but the dependence on $(P_{X}, \delta)$ remains implicit. The next result is a step towards clarifying this relationship.
\begin{proposition}
\label{prop:bounds}
    Under the setup of Theorem \ref{thm:lin_reg}, and for all $\delta \in (0, (1-\eps_n)/4)$,
    \begin{equation*}
        R_{n, \eps_{n} + \delta}^{*}(\mathcal{P}_{\normalfont\text{Gauss}}(P_{X}, \sigma^2)) \begin{dcases}
            &\leq 2 \cdot \frac{\sigma^2}{n} \brack*{Q_{\Tr\paren*{\widetilde{\Sigma}_{n}^{-1}}}(1 - \eps_n - \delta/2) + Q_{W}(1 - \eps_n - \delta/2)}, \\
            &\geq \frac{1}{6428} \cdot \frac{\sigma^{2}}{n} \brack*{Q_{\Tr\paren*{\widetilde{\Sigma}_{n}^{-1}}}(1 - \eps_n - 4\delta) + Q_{W}(1 - \eps_n - 4\delta)},
        \end{dcases}
    \end{equation*}
    where we defined $\Tr(\widetilde{\Sigma}_{n}^{-1}) \defeq \infty$ when $\widetilde{\Sigma}_{n}$ is not invertible, and $W$ is a random variable jointly distributed with $(X_i)_{i=1}^{n} \sim P_{X}^{n}$ and with conditional distribution $W \mid (X_i)_{i=1}^{n} \sim {\normalfont\text{Exp}}(\lambdamin(\widetilde{\Sigma}_{n}))$, with the convention that the exponential distribution ${\normalfont\text{Exp}}(0)$ refers to the unit mass at $\infty$.
\end{proposition}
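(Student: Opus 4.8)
The plan is to reduce everything, via Corollary~\ref{cor:square}, to a two-sided estimate on the quantiles of a weighted chi-square. That corollary gives $R^{*}_{n,\eps_n+\delta}(\mathcal{P}_{\normalfont\text{Gauss}}(P_{X},\sigma^2)) = \frac{\sigma^2}{2n}\,Q_{\norm{Z}_2^2}(1-\eps_n-\delta)$, where, conditionally on the sample $(X_i)_{i=1}^{n}$ and on the event $\mathcal{I}\defeq\{\widehat{\Sigma}_{n}\text{ invertible}\}$, one has $\norm{Z}_2^2\stackrel{d}{=}\sum_{j=1}^{d}\mu_j G_j^2$ with $G_j$ \iid standard Gaussian and $\mu_1\ge\dots\ge\mu_d>0$ the eigenvalues of $\widetilde{\Sigma}_{n}^{-1}$, so $\sum_j\mu_j=\Tr(\widetilde{\Sigma}_{n}^{-1})$ and $\mu_1=\lambdamin(\widetilde{\Sigma}_{n})^{-1}$. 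The first step is to handle the $+\infty$ atom: each of $\norm{Z}_2^2$, $\Tr(\widetilde{\Sigma}_{n}^{-1})$, $W$ equals $+\infty$ exactly on $\mathcal{I}^c$ (probability $\eps_n$) and is a.s.\ finite on $\mathcal{I}$, so any such $V$ has $\Prob(V\le t)=(1-\eps_n)\Prob(V\le t\mid\mathcal{I})$ for finite $t$, hence $Q_{V}(1-\eps_n-a)=Q_{V\mid\mathcal{I}}\bigl(1-\tfrac{a}{1-\eps_n}\bigr)$ for $a\in(0,1-\eps_n)$. Writing $\tilde\delta\defeq\delta/(1-\eps_n)\in(0,\tfrac14)$ and taking all quantiles conditionally on $\mathcal{I}$ henceforth (where all three variables are a.s.\ finite), it suffices to prove $Q_{\norm{Z}_2^2}(1-\tilde\delta)\le C\bigl[Q_{\Tr(\widetilde{\Sigma}_{n}^{-1})}(1-\tfrac{\tilde\delta}{2})+Q_{W}(1-\tfrac{\tilde\delta}{2})\bigr]$ and $Q_{\norm{Z}_2^2}(1-\tilde\delta)\ge c\bigl[Q_{\Tr(\widetilde{\Sigma}_{n}^{-1})}(1-4\tilde\delta)+Q_{W}(1-4\tilde\delta)\bigr]$ for absolute $C,c>0$; I will not chase the constants, which is why an explicit chain produces $1/6428$ rather than something cleaner.

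For the upper bound I would invoke a Laurent--Massart deviation bound for weighted chi-squares: conditionally on the sample, $\Prob\bigl(\norm{Z}_2^2>2\Tr(\widetilde{\Sigma}_{n}^{-1})+3\mu_1 u\bigm|(X_i)_{i}\bigr)\le e^{-u}$ for all $u\ge0$ (using $\sum_j\mu_j^2\le\mu_1\Tr(\widetilde{\Sigma}_{n}^{-1})$ and $2\sqrt{ab}\le a+b$). Equivalently, conditionally on the sample, $(\norm{Z}_2^2-2\Tr(\widetilde{\Sigma}_{n}^{-1}))_+$ is stochastically dominated by $3\mu_1 E$, $E\sim\mathrm{Exp}(1)$, i.e.\ by $3W$ in conditional law; integrating over the sample, $\Prob\bigl(\norm{Z}_2^2>2\Tr(\widetilde{\Sigma}_{n}^{-1})+s\bigm|\mathcal{I}\bigr)\le\Prob(3W>s\mid\mathcal{I})$ for all $s>0$. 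For $\theta\in(0,\tfrac12)$ one has $\{\norm{Z}_2^2>t\}\subseteq\{\Tr(\widetilde{\Sigma}_{n}^{-1})>\theta t\}\cup\{(\norm{Z}_2^2-2\Tr(\widetilde{\Sigma}_{n}^{-1}))_+>(1-2\theta)t\}$, hence $\Prob(\norm{Z}_2^2>t\mid\mathcal{I})\le\Prob(\Tr(\widetilde{\Sigma}_{n}^{-1})>\theta t\mid\mathcal{I})+\Prob\bigl(W>\tfrac{1-2\theta}{3}t\bigm|\mathcal{I}\bigr)$; fixing $\theta$ and taking $t$ to be an absolute multiple of $Q_{\Tr(\widetilde{\Sigma}_{n}^{-1})}(1-\tfrac{\tilde\delta}{2})$, resp.\ of $Q_{W}(1-\tfrac{\tilde\delta}{2})$, makes both terms $\le\tilde\delta/2$ and gives the bound. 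The point of passing through $W$ by stochastic domination, rather than bounding the conditional quantile $\mu_1\log(1/\beta)$ and then union-bounding over the sample, is that the latter only controls $\mu_1\log(1/\beta)$ by $Q_W$ at a level $1-c\beta\gamma$ quadratic in the failure probabilities, which cannot reach $1-\tilde\delta/2$.

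For the lower bound I would combine two mechanisms and keep the larger. (i) The bulk: since $\Exp\bigl[(\norm{Z}_2^2)^2\bigm|(X_i)_{i}\bigr]=\Tr(\widetilde{\Sigma}_{n}^{-1})^2+2\sum_j\mu_j^2\le3\Tr(\widetilde{\Sigma}_{n}^{-1})^2$, the Paley--Zygmund inequality gives $\Prob\bigl(\norm{Z}_2^2>\theta\Tr(\widetilde{\Sigma}_{n}^{-1})\bigm|(X_i)_{i}\bigr)\ge\tfrac{(1-\theta)^2}{3}$; integrating this over the event $\{\Tr(\widetilde{\Sigma}_{n}^{-1})\ge Q_{\Tr(\widetilde{\Sigma}_{n}^{-1})}(1-4\tilde\delta)\}$, which has conditional probability $\ge4\tilde\delta$, and choosing $\theta$ with $4(1-\theta)^2/3>1$, yields $Q_{\norm{Z}_2^2}(1-\tilde\delta)\ge\theta\,Q_{\Tr(\widetilde{\Sigma}_{n}^{-1})}(1-4\tilde\delta)$. (ii) The single largest coordinate: $\norm{Z}_2^2\ge\mu_1 G_1^2$, and the Gaussian anti-concentration bound $\Prob(G_1^2\ge u)\ge\tfrac12 e^{-u}$ (for every $u\ge0$, e.g.\ because $4e^{x^2}\bar\Phi(x)$ has a unique interior minimum and that minimum is $>1$) gives, pointwise in the sample, $\Prob(\norm{Z}_2^2\ge t\mid(X_i)_{i})\ge\tfrac12 e^{-t/\mu_1}=\tfrac12\Prob(W\ge t\mid(X_i)_{i})$, hence $\Prob(\norm{Z}_2^2\ge t\mid\mathcal{I})\ge\tfrac12\Prob(W\ge t\mid\mathcal{I})$ and so $Q_{\norm{Z}_2^2}(1-\tilde\delta)\ge Q_{W}(1-4\tilde\delta)$. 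Taking the maximum of (i), (ii) and using $\max(x,y)\ge\tfrac12(x+y)$ completes the lower bound.

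The step I expect to be most delicate is the upper bound: one has to control the interplay between the randomness of $\lambdamin(\widetilde{\Sigma}_{n})$ and the chi-square fluctuations of $\norm{Z}_2^2$ without inflating the failure probability past $O(\tilde\delta)$, which is exactly what the identity ``the fluctuation of $\norm{Z}_2^2$ above $2\Tr(\widetilde{\Sigma}_{n}^{-1})$ is conditionally stochastically dominated by $3W$'' buys. Everything else is careful but routine: the $+\infty$-atom bookkeeping of the reduction, one application each of Laurent--Massart and Paley--Zygmund, and the standard translations between tail bounds and quantiles ($Q$ is monotone; $\Prob(V>t)>\alpha\Rightarrow Q_V(1-\alpha)\ge t$, and $\Prob(V\le t)\ge1-\alpha\Rightarrow Q_V(1-\alpha)\le t$).
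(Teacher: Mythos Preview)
Your proposal is correct and follows a genuinely different route from the paper. Both arguments reduce the statement, via Corollary~\ref{cor:square} and the $+\infty$-atom bookkeeping you describe (which the paper leaves implicit), to two-sided estimates on the quantiles of a weighted chi-square. From there the methods diverge. The paper works with the norm $\norm{Z}_2$ rather than $\norm{Z}_2^2$: for the upper bound it applies Gaussian Lipschitz concentration to $\norm{Z}_2$ conditionally on the sample (with $M=\Exp[\norm{Z}_2\mid(X_i)]$ and $R=\lambdamax(\widetilde{\Sigma}_n^{-1})$), then translates a pointwise bound on $F_{\norm{Z}_2}$ into a quantile bound; for the lower bound it uses two Gaussian anti-concentration statements, one of the form $\Prob(M-\norm{Z}_2\ge t)\le\exp(-t^2/\pi M^2)$ for the trace term and one via $\norm{Z}_2\ge\abs{\inp{v_*}{Z}}$ together with the elementary estimate $F_{\abs{N}}(r)\le\sqrt{1-\exp(-2r^2/\pi R)}$ for the $W$ term. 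You instead work directly with $\norm{Z}_2^2$: Laurent--Massart plus the stochastic-domination identity $(\norm{Z}_2^2-2\Tr)_+\preceq 3W$ for the upper bound, and Paley--Zygmund (for the trace) combined with the pointwise lower tail $\Prob(G^2\ge u)\ge\tfrac12 e^{-u}$ (for $W$) for the lower bound. Your upper-bound argument is slightly lossier in constants (a direct optimization over $\theta$ gives roughly $3$ rather than the stated~$2$), while your lower-bound route is cleaner and in fact yields a substantially better absolute constant than the paper's $1/6428$. The stochastic-domination step is the right way to avoid the quadratic blow-up in failure probability you flag; the paper sidesteps the same issue by the different device of pushing the sample randomness into the random threshold $q=Q_M(1-\eps_n-c\delta)$ and bounding the remainder uniformly on $\{M\le q\}$.
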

It is interesting to compare this result with the exact minimax risk under the classical notion of risk computed by \citet{mourtadaExactMinimaxRisk2022}, and given by $(\sigma^{2}/n) \cdot \Exp\brack{\Tr({\widetilde{\Sigma}_{n}^{-1})}}$. Proposition \ref{prop:bounds} says that the minimax quantile risk is upper and lower bounded by a `strong' term given by a quantile of $\Tr({\widetilde{\Sigma}_{n}^{-1})}$, and a `weak' term governed by the distribution of $\lambdamin(\widetilde{\Sigma}_{n})$. Our next result shows that the lower bound from Proposition \ref{prop:bounds} improves on the one from Proposition \ref{prop:lit_lower_bound}.
\begin{lemma}
\label{lem:bounds}
    Let $\delta \in (\eps_{n}, 1)$. Then
    \begin{alignat*}{2}
        d \cdot (1  - \delta) &\leq &Q_{\Tr\paren*{\widetilde{\Sigma}_{n}^{-1}}}(1 - \delta) &\leq Q_{\lambdamax(\widetilde{\Sigma}_{n}^{-1})}(1 - \delta) \cdot d, \\
        \log(1/\delta) &\leq &Q_{W}(1 - \delta) &\leq Q_{\lambdamax(\widetilde{\Sigma}_{n}^{-1})}(1 - \delta/2) \cdot \log(2/\delta).
    \end{alignat*}
\end{lemma}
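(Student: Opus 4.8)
The plan is to establish each of the four inequalities separately, using only elementary facts about quantiles of nonnegative random variables together with the structure of the relevant distributions. Throughout, I will use the fact that for a nonnegative random variable $V$ and any $\delta \in (0,1)$, the quantile $Q_V(1-\delta)$ is monotone in $V$ (if $V \leq V'$ pointwise then $Q_V \leq Q_{V'}$), and that for a product of an a.s.\ bounded factor and another random variable one can control quantiles by a union bound. Note first that for all four bounds the constraint $\delta \in (\eps_n, 1)$ is exactly what guarantees the relevant random variables ($\Tr(\widetilde{\Sigma}_n^{-1})$, $\lambdamax(\widetilde{\Sigma}_n^{-1})$, $W$) are finite with probability greater than $1-\delta$, so the quantiles are finite and the inequalities are not vacuous.

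\emph{The trace bounds.} On the event that $\widetilde{\Sigma}_n$ is invertible, write $\mu_1 \geq \cdots \geq \mu_d > 0$ for its eigenvalues. Then $\Tr(\widetilde{\Sigma}_n^{-1}) = \sum_{i=1}^d \mu_i^{-1}$. For the upper bound, each $\mu_i^{-1} \leq \mu_d^{-1} = \lambdamax(\widetilde{\Sigma}_n^{-1})$, so $\Tr(\widetilde{\Sigma}_n^{-1}) \leq d \cdot \lambdamax(\widetilde{\Sigma}_n^{-1})$ pointwise, and monotonicity of quantiles gives $Q_{\Tr(\widetilde{\Sigma}_n^{-1})}(1-\delta) \leq d \cdot Q_{\lambdamax(\widetilde{\Sigma}_n^{-1})}(1-\delta)$. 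For the lower bound $d(1-\delta) \leq Q_{\Tr(\widetilde{\Sigma}_n^{-1})}(1-\delta)$, the key observation is that $\Exp[\Tr(\widetilde{\Sigma}_n^{-1}) \mid \widetilde{\Sigma}_n \text{ invertible}]$ is large: since $\widetilde{\Sigma}_n = \Sigma^{-1/2}\widehat{\Sigma}_n\Sigma^{-1/2}$ has $\Exp[\widetilde{\Sigma}_n] = I_d$, convexity of $t \mapsto t^{-1}$ (applied coordinatewise after diagonalizing, or via Jensen on $A \mapsto \Tr(A^{-1})$ which is operator-convex in a suitable sense) yields $\Exp[\Tr(\widetilde{\Sigma}_n^{-1})] \geq \Tr((\Exp\widetilde{\Sigma}_n)^{-1}) = d$ on the good event — more carefully one uses $\Exp[\mu_i^{-1}] \geq (\Exp[\mu_i])^{-1}$ after an appropriate basis choice, or simply $\Exp[\langle v, \widetilde{\Sigma}_n^{-1} v\rangle] \geq 1/\Exp[\langle v, \widetilde{\Sigma}_n v\rangle] = 1$ for each unit $v$ and sum over a fixed orthonormal basis. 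Then $\Tr(\widetilde{\Sigma}_n^{-1}) \geq d$ in expectation forces its $(1-\delta)$-quantile to be at least $d(1-\delta)$: indeed, if $Q_{\Tr}(1-\delta) < d(1-\delta)$ then with probability at least $1-\delta$ we have $\Tr < d(1-\delta)$, but then $\Exp[\Tr] \le (1-\delta)\cdot d(1-\delta) + \delta \cdot \infty$ — so this crude argument needs the upper tail handled; instead I would argue directly via Markov on the complementary event or via the inequality $\Exp[\min(\Tr, d)] \le Q_\Tr(1-\delta) + \delta d$ — cleaner: since each $\mu_i^{-1}$ has mean $\geq 1$, Markov's inequality gives $\Prob(\mu_i^{-1} < 1-\delta) \cdot$-type control is not immediate, so the honest route is $\Prob(\Tr(\widetilde{\Sigma}_n^{-1}) \geq d(1-\delta)) \geq $ something; I will instead use that $\Tr(\widetilde{\Sigma}_n^{-1}) \cdot \Tr(\widetilde{\Sigma}_n) \geq d^2$ by Cauchy--Schwarz ($\sum \mu_i^{-1} \sum \mu_i \geq d^2$), combined with a lower bound on $\Prob(\Tr(\widetilde{\Sigma}_n) \leq d/(1-\delta))$ which follows from Markov since $\Exp[\Tr(\widetilde{\Sigma}_n)] = d$. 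This is the step I expect to be the main obstacle — pinning down the cleanest argument for the sharp constant $(1-\delta)$.

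\emph{The $W$ bounds.} Recall $W \mid (X_i)_{i=1}^n \sim \mathrm{Exp}(\lambdamin(\widetilde{\Sigma}_n))$, i.e.\ conditionally $\Prob(W > t \mid (X_i)) = e^{-\lambdamin(\widetilde{\Sigma}_n) t}$. For the lower bound $Q_W(1-\delta) \geq \log(1/\delta)$: using $\lambdamin(\widetilde{\Sigma}_n) \leq \frac{1}{d}\Tr(\widetilde{\Sigma}_n) $ whose mean is $1$, so $\lambdamin(\widetilde{\Sigma}_n) \le 1$ in expectation; but we want a pointwise-flavored statement. Better: $\Prob(W \le \log(1/\delta)) = \Exp[1 - e^{-\lambdamin(\widetilde{\Sigma}_n)\log(1/\delta)}] = \Exp[1 - \delta^{\lambdamin(\widetilde{\Sigma}_n)}]$, and by Jensen (convexity of $\lambda \mapsto \delta^\lambda$) this is $\le 1 - \delta^{\Exp[\lambdamin(\widetilde{\Sigma}_n)]} \le 1 - \delta^{1} = 1-\delta$, using $\Exp[\lambdamin(\widetilde{\Sigma}_n)] \le \Exp[\frac1d \Tr \widetilde\Sigma_n] = 1$ and monotonicity of $\delta^{(\cdot)}$. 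Hence $Q_W(1-\delta) \geq \log(1/\delta)$. For the upper bound, condition on $(X_i)$ and split: with probability $\geq 1-\delta/2$ over $(X_i)$ we have $\lambdamax(\widetilde{\Sigma}_n^{-1}) = 1/\lambdamin(\widetilde{\Sigma}_n) \le Q_{\lambdamax(\widetilde\Sigma_n^{-1})}(1-\delta/2) =: M$; on that event, $\Prob(W > M\log(2/\delta) \mid (X_i)) = e^{-\lambdamin(\widetilde\Sigma_n) M \log(2/\delta)} \le e^{-\log(2/\delta)} = \delta/2$ since $\lambdamin(\widetilde\Sigma_n) M \ge 1$. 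A union bound over the two bad events of probability $\le \delta/2$ each gives $\Prob(W \le M\log(2/\delta)) \ge 1-\delta$, hence $Q_W(1-\delta) \le Q_{\lambdamax(\widetilde\Sigma_n^{-1})}(1-\delta/2)\log(2/\delta)$. Assembling these four pieces completes the proof; I would present the two trace inequalities first (they are the ones requiring care) and then the two $W$ inequalities, which are clean conditional computations.
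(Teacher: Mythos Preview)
Your approach is essentially the same as the paper's for all four inequalities: the trace upper bound via $\Tr(\widetilde{\Sigma}_n^{-1}) \leq d\,\lambdamax(\widetilde{\Sigma}_n^{-1})$, the $Q_W$ lower bound via Jensen applied to $\lambda \mapsto e^{-t\lambda}$ together with $\Exp[\lambdamin(\widetilde{\Sigma}_n)]\le 1$, and the $Q_W$ upper bound via the split on $\{\lambdamax(\widetilde{\Sigma}_n^{-1}) \le Q_{\lambdamax}(1-\delta/2)\}$ followed by a union bound --- these are exactly the paper's arguments.

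For the trace lower bound you land on the right two ingredients (the AM--HM/Cauchy--Schwarz inequality $\Tr(\widetilde{\Sigma}_n^{-1}) \ge d^2/\Tr(\widetilde{\Sigma}_n)$ and Markov on $\Tr(\widetilde{\Sigma}_n)$), which is precisely what the paper uses, but your stated combination has a directional slip. You write that you need ``a lower bound on $\Prob(\Tr(\widetilde{\Sigma}_n) \le d/(1-\delta))$'': Markov only gives $\Prob(\Tr(\widetilde{\Sigma}_n) > d/(1-\delta)) \le 1-\delta$, i.e.\ that probability is at least $\delta$, not at least $1-\delta$, so the ``on that event'' argument for the quantile does not go through. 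The clean fix (and the paper's execution) is to bound the CDF at \emph{every} level: for any $t>0$,
\[
\Prob\!\bigl(\Tr(\widetilde{\Sigma}_n^{-1}) \le t\bigr) \;\le\; \Prob\!\bigl(d^2/\Tr(\widetilde{\Sigma}_n) \le t\bigr) \;=\; \Prob\!\bigl(\Tr(\widetilde{\Sigma}_n) \ge d^2/t\bigr) \;\le\; \frac{\Exp[\Tr(\widetilde{\Sigma}_n)]}{d^2/t} \;=\; \frac{t}{d},
\]
so for every $t < d(1-\delta)$ the CDF is strictly below $1-\delta$, giving $Q_{\Tr(\widetilde{\Sigma}_n^{-1})}(1-\delta) \ge d(1-\delta)$. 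With this one adjustment your plan is complete and matches the paper's proof.
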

This lemma further shows that a sufficient condition for the lower bound of Proposition \ref{prop:lit_lower_bound} to be tight is the boundedness of $Q_{\lambdamax(\widetilde{\Sigma}_{n}^{-1})}(1 - \delta/2)$ by an absolute constant. Under what conditions on $(n, \delta, P_{X})$ does this hold ? Our results from Section \ref{sec:smallest} provide a satisfying answer.

\begin{corollary}
\label{cor:suff}
    Assume that $P_{X}$ has fourth moments. If $\delta \in (0, 1/2)$ and
    \begin{equation*}
        n \geq \max\brace*{128 \brack*{4 \log(3d) \lambdamax(S(P_{X})) + R(P_{X}) \log(2/\delta)}, \frac{\log(3d)}{18 \lambdamax(S(P_{X}))}, \frac{\log(2/\delta)}{R(P_{X})}},
    \end{equation*}
    then
    \begin{equation*}
        R_{n, \delta}^{*}(\mathcal{P}_{\normalfont\text{Gauss}}(P_{X}, \sigma^2)) \asymp \frac{\sigma^2 (d + \log(1/\delta))}{n},
    \end{equation*}
    where the parameters $S(P_{X})$, $R(P_{X})$ are as defined in (\ref{eq:matrix_param}).
\end{corollary}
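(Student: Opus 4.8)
The plan is to apply Lemma \ref{lem:bounds} together with the results of Section \ref{sec:smallest}, which provide high-probability (quantile) control on $\lambdamax(\widetilde{\Sigma}_n^{-1})$, i.e.\ equivalently a lower bound on $\lambdamin(\widetilde{\Sigma}_n)$, under a fourth moment assumption on $P_X$. Concretely, I would first unwind Proposition \ref{prop:bounds} to see that it suffices to sandwich $\sigma^2 n^{-1}[Q_{\Tr(\widetilde{\Sigma}_n^{-1})}(1-\eps_n-c\delta) + Q_W(1-\eps_n-c\delta)]$ between constant multiples of $\sigma^2(d+\log(1/\delta))/n$ for the relevant constants $c$. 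The left (lower bound) half of Lemma \ref{lem:bounds} already gives $Q_{\Tr(\widetilde{\Sigma}_n^{-1})}(1-\delta') \geq d(1-\delta')$ and $Q_W(1-\delta') \geq \log(1/\delta')$, which, once $\delta' = \eps_n + c\delta$ is seen to be bounded away from $1$ and comparable to $\delta$ under our sample-size assumption, yields the $\gtrsim \sigma^2(d+\log(1/\delta))/n$ direction; here I need $\eps_n$ to be negligible, which follows from Lemma \ref{lem:infinite} since $\rho(P_X)<1$ and $n$ is large (in fact one should check that $\eps_n \le \delta$ or smaller in the prescribed regime). For the right (upper bound) half, Lemma \ref{lem:bounds} reduces everything to showing $Q_{\lambdamax(\widetilde{\Sigma}_n^{-1})}(1-\delta/2) \lesssim 1$ — i.e.\ that with probability at least $1-\delta/2$ the whitened sample covariance has smallest eigenvalue bounded below by an absolute constant (say $1/2$).

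The crux is therefore this last eigenvalue bound, and this is exactly where Proposition \ref{prop:asymp_lower} (the tight non-asymptotic control on the quantiles of $\lambdamin$ of the sample covariance matrix under a fourth moment assumption) enters. I would invoke it with the whitened matrix $\widetilde{\Sigma}_n = \Sigma^{-1/2}\widehat{\Sigma}_n\Sigma^{-1/2}$, whose population covariance is the identity, so that the relevant parameters become $\lambdamax(S(P_X))$ and $R(P_X)$ as defined in \eqref{eq:matrix_param}. The content of that proposition should be, roughly, that $\lambdamin(\widetilde{\Sigma}_n) \ge 1 - C'(\sqrt{\lambdamax(S(P_X))\log d / n} + \sqrt{R(P_X)\log(1/\delta)/n}) - (\text{lower-order terms})$ with probability $\ge 1-\delta/2$, plus a term that is controlled once $n \gtrsim \log(3d)/\lambdamax(S(P_X))$ and $n\gtrsim \log(2/\delta)/R(P_X)$ (these are precisely the second and third terms in the max defining the sample-size threshold, ensuring the non-asymptotic correction terms are also small). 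The first term in the max, $n \ge 128[4\log(3d)\lambdamax(S(P_X)) + R(P_X)\log(2/\delta)]$, is exactly what forces $C'(\sqrt{\cdots}+\sqrt{\cdots}) \le 1/2$, giving $\lambdamin(\widetilde{\Sigma}_n)\ge 1/2$, hence $\lambdamax(\widetilde{\Sigma}_n^{-1}) \le 2$ with the required probability. I expect the proof to amount to little more than bookkeeping: substituting these bounds into Lemma \ref{lem:bounds}, then into Proposition \ref{prop:bounds}, and tracking that all the shifted failure probabilities $\eps_n + c\delta$, $\delta/2$, $\delta/4$ stay in the admissible ranges $(0,(1-\eps_n)/4)$ etc.

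The main obstacle, and the only place requiring genuine care rather than algebra, is matching the constants and failure-probability shifts so that the two-sided bound closes up cleanly into a single $\asymp$. In particular: (i) ensuring $\eps_n$ is small enough — via Lemma \ref{lem:infinite} and the hypothesis $n$ large — that $1-\eps_n-4\delta$ and $1-\eps_n-\delta/2$ are both $\ge 1/2$ (here $\delta<1/2$ is used, and one may need the sample-size lower bound to push $\eps_n$ below, say, $1/8$), so that the quantiles $Q_{\Tr(\widetilde{\Sigma}_n^{-1})}$ and $Q_W$ evaluated at these shifted levels are still comparable to those at level $1-\delta$; (ii) checking that the lower bound of Proposition \ref{prop:bounds} applies at $\delta$ while its conclusion concerns level $\eps_n+\delta$, versus Corollary \ref{cor:suff} which states a bound directly at level $\delta$ — so one invokes Proposition \ref{prop:bounds} with a slightly smaller failure probability, absorbing the gap into the absolute constants. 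None of these steps is deep, but getting a genuinely clean $\asymp$ (rather than something with an extra $\log\log$ or a $\delta$-dependent slack) is the part that needs the sharp form of Proposition \ref{prop:asymp_lower} rather than a crude $\lambdamin$ bound.
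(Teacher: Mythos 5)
Your proposal is correct and follows essentially the same route as the paper: use the upper bound in Proposition \ref{prop:asymp_lower} and the sample-size condition to guarantee $Q_{1-\lambdamin(\widetilde{\Sigma}_n)}(1-\delta/2)\leq 1/2$, hence $Q_{\lambdamax(\widetilde{\Sigma}_n^{-1})}(1-\delta/2)\leq 2$, and then feed this into Lemma \ref{lem:bounds} and Proposition \ref{prop:bounds}. Two small remarks. First, you do not need Lemma \ref{lem:infinite} to control $\eps_n$: the bound $Q_{\lambdamax(\widetilde{\Sigma}_n^{-1})}(1-\delta/2)\leq 2<\infty$ already forces $\Prob(\widetilde{\Sigma}_n \text{ invertible})\geq 1-\delta/2$, i.e.\ $\eps_n\leq \delta/2$, so that detour (which would also drag in $\rho(P_X)$, a quantity absent from the statement) is unnecessary. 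Second, you are right that the bookkeeping of failure-probability shifts is the only delicate point, and the paper's one-line ``appealing to Lemma \ref{lem:bounds} proves the result'' is indeed terse here: once you plug $\delta'=\delta-\eps_n$ (or any $\delta'\in[\delta/2,\delta-\eps_n]$) into Proposition \ref{prop:bounds}, the quantile levels that appear are of the form $1-\eps_n-\delta'/8$ or similar, and through Lemma \ref{lem:bounds} this requires a bound on $Q_{\lambdamax(\widetilde{\Sigma}_n^{-1})}$ at a level slightly closer to $1$ than $1-\delta/2$; closing this cleanly requires establishing the eigenvalue bound at, say, level $1-\delta/16$, which is what the absolute constants in the sample-size condition are really sized for. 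This affects constants only, not the validity of the argument.
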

Corollary \ref{cor:suff} can be interpreted as a non-asymptotic version of Proposition \ref{prop:asymp} for the square error function. As we argue in Section \ref{sec:smallest}, the fourth moment assumption is very natural in this setting, and the sample size restriction is, in a sense, optimal. The main restriction on the sample size comes from the first term, as both $\lambdamax(S(P_{X}))$ and $R(P_{X})$ are expected to be large. 

\subsection{\texorpdfstring{$p$}{TEXT}-th power error}
\label{subsec:p_norm}
The results of the last section are quite specific to the case of the square error, and it is a priori unclear how the minimax risk of other error functions can be studied non-asymptotically. Let us build on the observation that Corollary \ref{cor:suff} is a non-asymptotic version of Proposition \ref{prop:asymp} for the square error. Can we do this for more general error functions ? 
The underlying proof idea of Proposition \ref{prop:asymp} is a simple second order Taylor expansion, which becomes exact as $n \to \infty$. If we have non-asymptotic control over the error in this expansion, we can carry out the argument behind Proposition \ref{prop:asymp} non-asymptotically. We implement this idea here, and conclude this section with the following non-asymptotic lower bound on the minimax risk under a $p$-th power error function.

\begin{proposition}
\label{prop:lower_bound_p_norm}
    Assume that $e(t) = \abs{t}^{p}/[p(p-1)]$ for some $p \in (2,\infty)$. Under the setup of Theorem \ref{thm:lin_reg}, and for $\delta \in (0, 1/2)$, we have
    \begin{equation*}
        R_{n, \delta}^{*}(\mathcal{P}_{\normalfont\text{Gauss}}(P_{X}, \sigma^2)) \geq \frac{m(p-2)}{16 (p-1)} \cdot \frac{\sigma^{p}\brack*{d + \log(1/\delta)}}{n}
    \end{equation*}
    where $m(p) \defeq 2^{p/2-1}\Gamma(p/2-1)/\sqrt{\pi}$ is the $p$-th absolute moment of a standard normal variable.
\end{proposition}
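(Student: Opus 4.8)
The plan is to implement the non-asymptotic version of the argument behind Proposition \ref{prop:asymp}, as the text already hints: start from the exact expression in Theorem \ref{thm:lin_reg} and replace the asymptotic Taylor expansion of $\widetilde{\mathcal{E}}$ by a one-sided quadratic lower bound valid for all $\Delta$. Concretely, for $e(t) = |t|^p/[p(p-1)]$ with $p>2$, I would first establish a pointwise inequality of the form $e(s+t) \geq e(t) + e'(t) s + c_p t^{p-2} s^2$ for suitable $s,t$ (or, more robustly, a bound after taking the expectation over the Gaussian $\eta$): namely I want a constant $c_p>0$ such that $\widetilde{E}(\Delta) - \widetilde{E}(0) \geq \frac{c_p}{2}\,\Exp[\langle \Delta, X\rangle^2 \cdot g(\eta)]$ for an appropriate weight, or more simply $\widetilde{\mathcal{E}}(\Delta) \geq \beta_p \sigma^{p-2} \langle \Delta, X\rangle_{L^2}^2 \wedge (\text{something})$. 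The cleanest route: since $e$ is symmetric and convex, $\Exp_\eta[e(u+\eta)]$ is a smooth symmetric convex function of $u$ whose second derivative at $0$ equals $\Exp[e''(\eta)] = \sigma^{p-2} m(p-2)/(p-1)$ (using that the $(p-2)$-th absolute moment of $\mathcal{N}(0,\sigma^2)$ is $\sigma^{p-2} m(p-2)$), and I would show this function is bounded below by its second-order Taylor polynomial on a neighborhood, and globally by a quadratic with a worse but still dimension-free constant — the factor $1/8$ or $1/16$ in the statement is exactly the room left for this.

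Next, conditionally on $(X_i)_{i=1}^n$ with $\widehat{\Sigma}_n$ invertible, $Z \mid (X_i) \sim \mathcal{N}(0, \frac{\sigma^2}{n}\widehat{\Sigma}_n^{-1})$, so $\langle Z, X\rangle_{L^2(P_X)}^2 = Z^T \Sigma Z$ and $\Exp[Z^T\Sigma Z \mid (X_i)] = \frac{\sigma^2}{n}\Tr(\widehat{\Sigma}_n^{-1}\Sigma) = \frac{\sigma^2}{n}\Tr(\widetilde{\Sigma}_n^{-1})$. Plugging the quadratic lower bound into $\widetilde{\mathcal{E}}(Z)$ and using the monotonicity of quantiles (Proposition \ref{prop:invar} / basic properties of $Q$), I get
\begin{equation*}
    R_{n,\delta}^*(\mathcal{P}_{\mathrm{Gauss}}(P_X,\sigma^2)) = Q_{\widetilde{\mathcal{E}}(Z)}(1-\delta) \gtrsim_p \sigma^{p-2}\, Q_{Z^T\Sigma Z}(1-\delta) \wedge (\cdots),
\end{equation*}
and then I would lower bound $Q_{Z^T\Sigma Z}(1-\delta)$. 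Here $Z^T\Sigma Z = \frac{\sigma^2}{n}\, V^T \widetilde{\Sigma}_n^{-1} V$ where $V \mid (X_i) \sim \mathcal{N}(0, I_d)$, so I can reuse the machinery already set up for the square error: by Lemma \ref{lem:bounds} and the reasoning in Proposition \ref{prop:bounds}, a quantile of $V^T\widetilde{\Sigma}_n^{-1}V$ is at least of order $d + \log(1/\delta)$ up to the behavior of $\lambdamin(\widetilde{\Sigma}_n)$ — but for a pure lower bound I only need the crude bound $V^T\widetilde{\Sigma}_n^{-1}V \geq \lambdamin(\widetilde{\Sigma}_n^{-1})\|V\|_2^2 \geq \|V\|_2^2$ is false; instead I should use that $\widetilde{\Sigma}_n^{-1} \succeq (\text{its restriction})$ — actually the simplest valid move is $V^T\widetilde{\Sigma}_n^{-1}V \geq \|V\|_2^2 / \lambdamax(\widetilde{\Sigma}_n)$ combined with a high-probability upper bound on $\lambdamax(\widetilde{\Sigma}_n)$, OR, to stay fully assumption-free, condition on the event $\widehat{\Sigma}_n$ invertible and use that $\Tr(\widetilde{\Sigma}_n^{-1}) \geq d \cdot (1-\delta)$-type bounds hold without moment assumptions (the left inequalities in Lemma \ref{lem:bounds}). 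I would also handle the $\log(1/\delta)$ term via the exponential-type lower bound $Q_W(1-\delta) \geq \log(1/\delta)$ from Lemma \ref{lem:bounds}, transferred to $Q_{Z^T\Sigma Z}$ by the same comparison. Since $Q_{A+B}(1-\delta) \geq \max\{Q_A(1-2\delta), Q_B(1-2\delta)\}$ up to splitting the failure probability, and $d + \log(1/\delta) \asymp \max\{d,\log(1/\delta)\}$, combining the two pieces (possibly after reindexing $\delta \to \delta/2$, absorbed into constants since $\delta<1/2$) yields the claimed $\gtrsim \sigma^p (d+\log(1/\delta))/n$, with the explicit constant $m(p-2)(p-2)/[16(p-1)]$ coming from tracking $\alpha = \Exp[e''(\eta)]/2 = \sigma^{p-2}m(p-2)/[2(p-1)]$ through the factor-of-$1/8$ loss in the global quadratic minorization.

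The main obstacle I anticipate is the global (non-local) quadratic lower bound on $u \mapsto \Exp_\eta[e(u+\eta)] - \Exp_\eta[e(\eta)]$ with a clean dimension-free constant: near $0$ it behaves like $\alpha u^2$, but for large $|u|$ it grows like $|u|^p/[p(p-1)]$, which dominates any fixed quadratic, so a single inequality $\widetilde{\mathcal{E}}_{\text{1-d}}(u) \geq c_p \alpha (u^2 \wedge |u|^p)$ or $\geq c_p\alpha\, u^2$ on a controlled range must be argued carefully — this is where scaling in $\sigma$ enters and why the noise variance $\sigma^2$ must be separated out correctly to land $\sigma^p$ rather than $\sigma^2$. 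I would resolve it by rescaling $u = \sigma v$, reducing to the unit-variance case, proving the one-dimensional convexity/minorization lemma there (e.g. by showing the ratio $\widetilde{\mathcal{E}}_{\text{1-d}}(v)/v^2$ is bounded below on all of $\R$, using that it is continuous, positive, tends to $\alpha_0 := m(p-2)/(p-1)$ at $0$ and to $+\infty$ at $\infty$, so its infimum is a positive constant $\geq \alpha_0/8$ by an explicit estimate), then undoing the scaling. The rest is bookkeeping with quantiles and the already-established eigenvalue bounds.
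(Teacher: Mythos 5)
Your plan matches the paper's argument: the key step is a pointwise quadratic minorization of exactly the form you anticipate, namely Lemma 2.5 of \citet{adilFastAlgorithmsEll_p2023}, $e(t) - e(s) - e'(s)(t-s) \geq \tfrac{1}{8(p-1)}e''(s)(t-s)^2$, which after taking $s=\eta$, $t = \eta + \langle\Delta,X\rangle$ and expectations (the linear term vanishes by symmetry of $\eta$) yields $\widetilde{\mathcal{E}}(\Delta) \geq \tfrac{1}{8(p-1)}m(p-2)\sigma^{p-2}\,\Delta^T\Sigma\Delta$, after which the reduction to Corollary \ref{cor:square}, Proposition \ref{prop:bounds}, and Lemma \ref{lem:bounds} proceeds as you describe. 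Two small corrections: since $e''(t)=|t|^{p-2}$ you have $\Exp[e''(\eta)] = m(p-2)\sigma^{p-2}$ without the extra factor of $1/(p-1)$; and the continuity-plus-limits argument you sketch for bounding $\widetilde{\mathcal{E}}_{\text{1-d}}(v)/v^2$ from below only gives positivity of the infimum, not the explicit value $\alpha_0/8$ --- the quantitative pointwise inequality is genuinely needed to track the constant.
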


\section{Minimaxity of the min-max linear regression procedure}
\label{sec:general}
In this section, we establish the minimaxity of a variant of the popular min-max regression procedure, e.g.\ \citet{audibertRobustLinearLeast2011,lecueRobustMachineLearning2020,oliveiraTrimmedSampleMeans2023} over suitably large classes of problems under the $p$-th power error functions $e(t) = \abs{t}^{p}/[p(p-1)]$, for $p \in [2, \infty)$. Before stating our results, we briefly describe the construction of the procedure. 

Let $\alpha, \beta \in \R$ such that $\alpha \leq \beta$ and define $\phi_{\alpha,\beta}(x) \defeq \alpha \mathbbm{1}_{(-\infty, \alpha)}(x) + x \mathbbm{1}_{[\alpha, \beta]}(x) + \beta \mathbbm{1}_{(\beta,\infty)}(x)$. For a real valued sequence $a \defeq (a_i)_{i=1}^{n}$, define the sequence $a^{*} = (a^{*}_{i})_{i=1}^{n}$ by $a^{*}_i \defeq a_{\pi(i)}$ where $\pi$ is a permutation that orders $a$ increasingly.
Fix $k \in \brace*{1, \dotsc, \floor{n/2}}$, and define $\varphi_{k}[a] \defeq \sum_{i=1}^{n} \phi_{a^{*}_{1+ k}, a^{*}_{n-k}}(a_i)$. Given samples $(X_i, Y_i)_{i=1}^{n}$, and for $w, v \in \R^{d}$, define
\begin{gather*}
    \psi_{k}(w, v) \defeq n^{-1} \varphi_{k}\brack*{\paren*{e(\inp{w}{X_i} - Y_i) - e(\inp{v}{X_i} - Y_i)}_{i=1}^{n}},
\end{gather*}
and consider the procedure
\begin{equation}
\label{eq:procedure}
    \hat{w}_{n, k}((X_i, Y_i)_{i=1}^{n}) \in \argmin_{w \in \R^{d}} \max_{v \in \R^{d}} \psi_{k}(w, v).
\end{equation}

\subsection{Square error}
Our first result shows that for the square error, and under appropriate conditions, the procedure (\ref{eq:procedure}) is minimax up to absolute constants over $\mathcal{P}_{2}(P_{X}, \sigma^2)$ when $P_{X}$ has finite fourth moments.
\begin{theorem}
\label{thm:guarantee_1}
    Under the square error $e(t) = t^2/2$, let $\delta \in (0,1/4)$ be such that $k \defeq 8 \log(4/\delta)$ is an integer satisfying $1 \leq k \leq \floor{n/8}$. Assume that $P_{X}$ has finite fourth moments. If
    \begin{equation*}
        n \geq 800^{2} \cdot \paren*{8 \log(6d) \cdot \brack*{\lambdamax(S(P_{X})) + 1} + \brack*{R(P_{X}) + 1} \log(1/\delta)},
    \end{equation*}
    where $S(P_{X})$ and $R(P_{X})$ are as defined in (\ref{eq:matrix_param}), then
    \begin{equation*}
        R_{n,\delta}(\mathcal{P}_{2}(P_{X}, \sigma^2), \hat{w}_{n,k}) \leq (100)^{2} \cdot \frac{\sigma^2 (d + \log(1/\delta))}{n}.
    \end{equation*}
\end{theorem}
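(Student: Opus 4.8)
The plan is to convert the stated bound on the worst-case quantile risk into the high-probability statement that, for every $P \in \mathcal{P}_{2}(P_{X}, \sigma^2)$, the event $\mathcal{E}(\hat{w}_{n,k}) \le (100)^{2}\sigma^2(d+\log(1/\delta))/n$ has probability at least $1-\delta$; taking the $(1-\delta)$-quantile of $\mathcal{E}(\hat{w}_{n,k})$ then yields the theorem. Write $w^* = \argmin_{w} E(w)$, well-defined because the fourth-moment hypothesis makes $\Sigma$ finite and invertible, and set $\Delta \defeq w - w^*$, $\tilde{X}_i \defeq \Sigma^{-1/2}X_i$, $\xi_i \defeq Y_i - \inp{w^*}{X_i}$. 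Since $e$ is the square error and $\Exp\brack*{\xi X} = 0$, the relative loss expands exactly as $e(\inp{w}{X_i}-Y_i) - e(\inp{w^*}{X_i}-Y_i) = \tfrac12\inp{\Delta}{X_i}^2 - \inp{\Delta}{X_i}\xi_i$, a term with population mean $\mathcal{E}(w) = \tfrac12\norm{\Sigma^{1/2}\Delta}^2$ and variance bounded by a constant times $(R(P_{X})+1)\norm{\Sigma^{1/2}\Delta}^4 + \sigma^2\norm{\Sigma^{1/2}\Delta}^2$.

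The backbone of the argument is the min-max sandwich: because $\hat{w}_{n,k}$ minimizes $w \mapsto \max_{v}\psi_{k}(w,v)$ and $w^*$ is feasible,
\[
  \psi_{k}(\hat{w}_{n,k}, w^*) \;\le\; \max_{v}\psi_{k}(\hat{w}_{n,k}, v) \;\le\; \max_{v}\psi_{k}(w^*, v).
\]
It then suffices to establish two uniform-in-direction estimates, each on an event of probability at least $1-\delta/2$: a lower curvature bound $\psi_{k}(w, w^*) \ge c_1\norm{\Sigma^{1/2}\Delta}^2 - c_2\,\sigma\norm{\Sigma^{1/2}\Delta}\sqrt{(d+\log(1/\delta))/n}$ valid for all $w$, and a small-bias bound $\max_{v}\psi_{k}(w^*,v) \le c_3\,\sigma^2(d+\log(1/\delta))/n$. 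Given these, plugging $\Delta = \hat{w}_{n,k}-w^*$ into the sandwich produces a scalar quadratic inequality in $\norm{\Sigma^{1/2}\Delta}$, and absorbing the cross term into $c_1\norm{\Sigma^{1/2}\Delta}^2$ via $ab \le \tfrac12 a^2 + \tfrac12 b^2$ gives $\mathcal{E}(\hat{w}_{n,k}) = \tfrac12\norm{\Sigma^{1/2}\Delta}^2 \le c\,\sigma^2(d+\log(1/\delta))/n$; a union bound over the two failure events and bookkeeping of constants, using $k = 8\log(4/\delta)$ and $\delta < 1/4$, yields the explicit $(100)^2$.

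Both uniform estimates rest on two ingredients. First, I would isolate the structural properties of the truncation functional $\varphi_{k}$ that make it usable: it replaces the $k$ smallest and $k$ largest entries by order statistics, so it is sandwiched between one-sided trimmed sums and is $1$-Lipschitz against perturbations of the input sequence up to a controlled shift of the trimming levels; and, crucially, applied to an i.i.d.\ sequence of finite variance $v$ it concentrates around its mean at the sub-Gaussian scale $\sqrt{vk/n}$ on an event of probability $1 - e^{-\Omega(k)}$ (the trimmed-mean estimate underlying the procedure of \citet{oliveiraTrimmedSampleMeans2023}). Using the $1$-Lipschitz stability and the decomposition of the relative loss into its quadratic part $\tfrac12\inp{\Delta}{X_i}^2$ and its multiplier part $\inp{\Delta}{X_i}\xi_i$, I would bound $\max_{v}\psi_{k}(w^*,v)$ from above and $\psi_{k}(w,w^*)$ from below by linear combinations of a trimmed quadratic form and a trimmed multiplier process in the direction $\Delta$.

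The hard part is to upgrade the pointwise concentration to a statement uniform over all directions $\Delta$ while keeping the trimming level at only $k \asymp \log(1/\delta)$ — far too small to afford a union bound over a net of the sphere, which would force $k \gtrsim d$. This is where the second ingredient enters: a generic upper bound on the supremum of a truncated empirical process, combined with the matrix Khintchine inequality \citep{troppExpectedNormSum2015, troppIntroductionMatrixConcentration2015, vanhandelStructuredRandomMatrices2017}. Concretely, I would (i) lower bound the trimmed quadratic form, uniformly over $\Delta$, by $\lambdamin$ of a direction-dependently truncated whitened sample covariance matrix — precisely the object controlled by the smallest-eigenvalue results of Section \ref{sec:smallest} (Proposition \ref{prop:asymp_lower}), whose fourth-moment parameters are exactly why $\lambdamax(S(P_{X}))\log(6d)$ appears in the sample-size condition — and (ii) bound the trimmed multiplier process from above by the same empirical-process machinery, with the heavy tails of $\xi$ tamed by the trimming, which is what contributes the $R(P_{X})\log(1/\delta)$ term and the $\sigma\sqrt{(d+\log(1/\delta))/n}$ fluctuation. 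The delicate point throughout is that the truncation levels are themselves data- and direction-dependent, so one must pass to direction-free envelopes and control the two processes simultaneously; once that is done, the sandwich and the scalar quadratic inequality finish the proof, and everything else is constant tracking.
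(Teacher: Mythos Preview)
Your approach is correct and mirrors the paper's: the min-max sandwich $\psi_k(\hat w_{n,k},w^*)\le\phi(w^*)$, the exact quadratic decomposition of the square loss, uniform control of the trimmed multiplier process via the paper's generic truncated-empirical-process bound (Lemma~\ref{lem:trunc_7}), and the curvature lower bound via the adversarially truncated smallest eigenvalue. Two small corrections are worth flagging. First, the eigenvalue input is Proposition~\ref{prop:lower_2} rather than Proposition~\ref{prop:asymp_lower}: after separating out the linear part you must lower-bound $\inf_{v\in S^{d-1}} n^{-1}\sum_{i=1}^{n-2k} Z_{i,v}^{*}$ with $Z_{i,v}=\inp{v}{\widetilde X_i}^{2}$, i.e.\ the smallest eigenvalue \emph{after} direction-dependent trimming of $2k$ terms, which is exactly the object Proposition~\ref{prop:lower_2} controls. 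Second, the decoupling of the nonnegative quadratic part from the linear part inside $\varphi_k$ is not a $1$-Lipschitz estimate but the specific inequality $\varphi_k(a+b)\ge\varphi_k(a)+\sum_{i=1}^{n-2k} b_i^{*}$ for $b\ge 0$ (Lemma~\ref{lem:trunc_20}); a Lipschitz bound would only give $\varphi_k(a+b)\ge\varphi_k(a)-\norm{b}$, with the wrong sign. Finally, your ``small-bias bound'' on $\max_v\psi_k(w^*,v)$ is not an independent estimate: since $\psi_k(w^*,v)=-\psi_k(v,w^*)$, it follows directly from the curvature bound by maximizing $-c_1\rho^{2}+c_2\rho\,\Delta(\delta)$ over $\rho=\norm{v-w^*}_\Sigma$. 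The paper equivalently localizes at the fixed radius $r=20\,\Delta(\delta)$, using Corollary~\ref{cor:g_1} for the near part and Lemma~\ref{lem:g_2} for the far part, but the content is the same.
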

Compared to Proposition \ref{prop:lit_upper_bound}, the upper bound in Theorem \ref{thm:guarantee_1} contains no distribution-dependence, showing the minimaxity of the procedure $(\ref{eq:procedure})$ up to an absolute constant by Proposition \ref{prop:lit_lower_bound}. On the other hand, we require the existence of fourth moments, which is more than what is required in Proposition \ref{prop:lit_upper_bound}. As we argue in Section \ref{sec:smallest} however, the fourth moment assumption is quite natural. We also note that the sample size restriction in Theorem \ref{thm:guarantee_1} nearly matches that in Corollary \ref{cor:suff}, which as we discuss in Section \ref{sec:smallest} is optimal in a certain sense.

\subsection{\texorpdfstring{$p$}{p}-th power error}
We now move to the case $p \in (2, \infty)$. The first difficulty we are faced with here is that it is a priori unclear what set of problems we should select that is both tractable and large enough to model realistic scenarios. Using our insights from Section \ref{sec:gaussian}, we propose the following analogue to the class $\mathcal{P}_{2}(P_{X}, \sigma^2)$, under the necessary conditions that $P_{X}$ and the noise $\xi$ have finite $p$-th moments
\begin{equation*}
    \mathcal{P}_{p}(P_{X}, \sigma^2, \mu) \defeq \brace*{P \st (X, Y) \sim P : X \sim P_{X} \text{ and  (\ref{eq:condition}) holds.} },
\end{equation*}
where $\mu \in (0, m(p) \cdot \sigma^{p-2}]$, $m(p)$ is as in Proposition \ref{prop:lower_bound_p_norm}, and (\ref{eq:condition}) is the condition
\begin{equation}
\label{eq:condition}
    \frac{\esssup(\Exp\brack{\abs{\xi}^{2(p-1)} \mid X})}{\essinf(\Exp\brack*{\abs{\xi}^{p-2} \mid X})} \leq \frac{m(2p-2)}{m(p-2)} \cdot \sigma^{p} \eqdef r(p) \quad \text{ and } \quad  \essinf(\Exp\brack{\abs{\xi}^{p-2} \mid X}) \geq \mu \tag{$\star$},
\end{equation}
where $\xi \defeq Y-\inp{w^{*}}{X}$ and $w^{*} \in \R^{d}$ is the unique minimizer of the expected error $E(w)$.
It is straightforward to check that $\mathcal{P}_{\normalfont \text{Gauss}}(P_{X}, \sigma^{2}) \subset \mathcal{P}_{p}(P_{X}, \sigma^2, \mu)$, for all legal choices of $\mu$. 


While at first this seems like an overly special class of distributions, let us now argue that this far from the case. In fact, we claim that this class is a natural extension of $\mathcal{P}_{2}(P_{X}, \sigma^2)$. Firstly, by setting $p=2$, we recover $\mathcal{P}_{2}(P_{X}, \sigma^2, \mu) = \mathcal{P}_{2}(P_{X}, \sigma^2)$ for all legal $\mu$. Secondly, we note that $\mathcal{P}_{p}(P_{X}, \sigma^2, \mu) \subset \mathcal{P}_{p}(P_{X}, \sigma^2, \mu')$ whenever $\mu \geq \mu'$. Ideally, we would like to take as large a class as possible, which would correspond to the choice $\mu=0$. Unfortunately, our bounds diverge in this setting. On the positive side however, this means that the upper constraint on $\mu$ is benign as the interesting regime is when $\mu$ is near zero. Finally, note that much like with the set $\mathcal{P}_{2}(P_{X}, \sigma^2)$, one can capture a large set of problems by varying $\sigma^{2}$. As an example, for any linear regression problem where the noise $\xi$ is non-zero, symmetric, and independent of $X$, there exists $(\sigma^{2}, \mu)$ such that $\mathcal{P}_{p}(P_{X}, \sigma^2, \mu)$ contains this problem.

Remarkably, we show that the procedure (\ref{eq:procedure}) is minimax over this class under mild assumptions.
\begin{theorem}
\label{thm:guarantee_2}
    Under the $p$-th power error $e(t) = \abs{t}^{p}/[p(p-1)]$ for $p \in (2, \infty)$, let $\delta \in (0,1)$ be such that $k \defeq 8 \log(4/\delta)$ is an integer satisfying $1 \leq k \leq \floor{n/8}$. Assume that $P_{X}$ has finite fourth moments. If
    \begin{multline*}
        n \geq r^{\frac{p-2}{p-1}}(p) \mu^{-\frac{p}{p-1}} \cdot \brack*{8\log(6d)(\lambdamax(S(P_{X}))+1)+(R(P_{X})+1)\log(1/\delta)} \\ + (2400)^{2} \cdot r(p) \mu^{-\frac{p}{p-2}} p^{4} N^{\frac{2p}{p-2}}(P_{X}, p) \cdot [d + \log(4/\delta)]
    \end{multline*}
    where $r(p)$ and $\mu$ are as in (\ref{eq:condition}), $S(P_{X})$ and $R(P_{X})$ are as in (\ref{eq:matrix_param}), and $N(P_{X}, p)$ is the norm equivalence constant between the $L^{p}$ and $L^{2}$ norms induced by $P_{X}$ on the set of linear functions on $\R^{d}$, given by $N(P_{X}, p) = \sup_{w \in \R^{d}\setminus \brace{0}} \Exp\brack*{\abs{\inp{w}{X}}^{p}}^{1/p}/\Exp\brack*{\inp{w}{X}^{2}}^{1/2}$, then
    \begin{equation*}
        R_{n,\delta}(\mathcal{P}_{p}(P_{X}, \sigma^2, \mu), \hat{w}_{n,k}) \leq 120^{2} \cdot K(p) \cdot \frac{\sigma^{p}[d + \log(1/\delta)]}{n},
    \end{equation*}
    where $K(p) \defeq (p-1)^{2} \cdot m(2p-2)/m(p-2)$.
\end{theorem}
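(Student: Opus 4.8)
The plan is to adapt the median-of-means tournament argument of \citet{lugosiRiskMinimizationMedianofmeans2019,lecueRobustMachineLearning2020}, with the median-of-means functional replaced by the trimmed functional $n^{-1}\varphi_k$, doing all stochastic control under the fourth-moment assumption on $P_X$ and condition \eqref{eq:condition}. Since $\hat w_{n,k}$ and the quantile risk are equivariant under $X \mapsto \Sigma^{-1/2}X$, $w \mapsto \Sigma^{1/2}w$ (which also fixes $S(P_X)$, $R(P_X)$, $N(P_X,p)$), I would assume $\Sigma = I$, so $\norm{w-\wstar}_{L^2(P_X)} = \norm{w-\wstar}_2$. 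Write $\Delta \defeq w-\wstar$, $\xi_i \defeq Y_i-\inp{\wstar}{X_i}$, $\bar\mu \defeq \essinf(\Exp[\abs{\xi}^{p-2}\mid X]) \ge \mu$, and split $e(\inp{w}{X_i}-Y_i)-e(\inp{\wstar}{X_i}-Y_i) = L_i(\Delta) + Q_i(\Delta)$ into the multiplier part $L_i(\Delta) = e'(-\xi_i)\inp{\Delta}{X_i}$ and the nonnegative quadratic remainder $Q_i(\Delta) = \int_0^1(1-t)\,e''(t\inp{\Delta}{X_i}-\xi_i)\,\inp{\Delta}{X_i}^2\,dt \ge 0$, using $e''(t) = \abs{t}^{p-2}$; note $\Exp[L_i(\Delta)] = \inp{\Delta}{\Exp[e'(-\xi)X]} = 0$ by first-order optimality of $\wstar$, hence $\Exp[Q_i(\Delta)] = \mathcal{E}(w)$. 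Fix the target radius $\rho$ by $\bar\mu\rho^2 \asymp r(p)[d+\log(1/\delta)]/n \asymp K(p)\sigma^p[d+\log(1/\delta)]/n$ --- the scale at which curvature overtakes fluctuation, which by \eqref{eq:condition} is free of $\bar\mu$, hence of $\mu$ --- so the theorem becomes $\mathcal{E}(\hat w_{n,k}) \le C\bar\mu\rho^2$. The deterministic core of the tournament argument --- min-max optimality of $\hat w_{n,k}$, antisymmetry $\psi_k(w,v) = -\psi_k(v,w)$ (from $\varphi_k[-a] = -\varphi_k[a]$), robustness of $\varphi_k$ to deleting $\asymp k$ entries, positive homogeneity of $e$ (which tames competitors far from $\wstar$), and convexity of $\mathcal{E}$ --- reduces the theorem to two high-probability uniform statements: a curvature lower bound $n^{-1}\varphi_k[(Q_i(\Delta))_i] \ge \frac{1}{2}\mathcal{E}(w) - c\bar\mu\rho^2$ and a one-sided multiplier bound $n^{-1}\varphi_k[(L_i(\Delta))_i] \ge -c\bar\mu\rho^2$, both uniformly over $\norm{\Delta}_2 \le \rho$ (and their natural extension to the outer sphere and beyond). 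I would isolate this reduction as a standalone deterministic lemma (for $p=2$ it specializes to the proof of Theorem \ref{thm:guarantee_1}).

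The crux is the curvature bound, which produces the first term of the sample-size condition. Using robustness of $\varphi_k$, $n^{-1}\varphi_k[(Q_i(\Delta))_i]$ is, up to an adversarial per-direction deletion of $\asymp k$ summands, at least $n^{-1}\sum_i\abs{\xi_i}^{p-2}\inp{\Delta}{X_i}^2$ restricted to indices with $\abs{\inp{\Delta}{X_i}}$ not too large. Lower bounding this uniformly over directions is exactly the problem of lower bounding, with high probability, the smallest eigenvalue of a direction-dependently truncated version of $n^{-1}\sum_i\abs{\xi_i}^{p-2}X_iX_i^{T}$, which I would attack as announced in the introduction: bound the supremum over the sphere of the associated truncated empirical process by a generic clipping lemma, and bound the residual operator-norm matrix deviation $\norm{n^{-1}\sum_i\abs{\xi_i}^{p-2}X_iX_i^{T} - \Exp[\,\cdot\,]}$ by matrix Khintchine / matrix Bernstein --- here $\norm{\Exp[\norm{X}_2^2\,XX^{T}]} = \lambdamax(S(P_X))+1$, the per-direction variance involves $R(P_X)$, the factor $\log(6d)$ is the matrix-dimension penalty, and $\Exp[\abs{\xi}^{p-2}\mid X] \ge \bar\mu$ supplies the population floor. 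Requiring this deviation to be at most a small multiple of $\bar\mu$ and using \eqref{eq:condition} to dominate $\esssup(\Exp[\abs{\xi}^{2p-4}\mid X]) \le (r(p)\bar\mu)^{(p-2)/(p-1)}$, then replacing $\bar\mu$ by its floor $\mu$ (which only enlarges the requirement), produces exactly $r^{(p-2)/(p-1)}(p)\,\mu^{-p/(p-1)}$ times $8\log(6d)(\lambdamax(S(P_X))+1)+(R(P_X)+1)\log(1/\delta)$.

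For the multiplier bound, since $\Exp[L_i(\Delta)] = 0$ and $L_i(\cdot)$ is linear (so homogeneity handles large $\norm{\Delta}_2$), $n^{-1}\varphi_k[(L_i(\Delta))_i] \ge -c\bar\mu\rho^2$ uniformly is a one-sided uniform deviation bound for a centered trimmed empirical process indexed by the sphere; its conditional variance is at most $(p-1)^{-2}\esssup(\Exp[\abs{\xi}^{2(p-1)}\mid X])\inp{\Delta}{X}^2 \le (p-1)^{-2}r(p)\bar\mu\inp{\Delta}{X}^2$ by \eqref{eq:condition}, so the ``Gaussian part'' of this process already sits below the target scale $\bar\mu\rho^2$, and the sample-size cost lies entirely in upgrading this second-moment control to a high-probability trimmed-mean bound --- via robustness of $\varphi_k$ and a $p$-th moment union bound over an $\eps$-net of the sphere. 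This is where the $L^p/L^2$ norm-equivalence constant $N(P_X,p)$, the factor $p^4$, and the term $r(p)\,\mu^{-p/(p-2)}\,p^4\,N(P_X,p)^{2p/(p-2)}[d+\log(4/\delta)]$ enter, and the part of $Q_i$ beyond the leading $\abs{\xi_i}^{p-2}\inp{\Delta}{X_i}^2$ (delicate for $p\in(2,3)$, where $e''(t)=\abs{t}^{p-2}$ is only H\"older) is absorbed by the same machinery together with \eqref{eq:condition}. Assembling: on the good event, $n^{-1}\varphi_k[(Q_i(\Delta))_i] + n^{-1}\varphi_k[(L_i(\Delta))_i] \ge \frac{1}{2}\mathcal{E}(w) - 2c\bar\mu\rho^2$ uniformly over $\norm{\Delta}_2 \le \rho$ and strictly positive on the sphere $\norm{\Delta}_2 = \rho$ (where $\mathcal{E}(w) \asymp \bar\mu\rho^2$), while $\sup_v\psi_k(\wstar,v) \le 2c\bar\mu\rho^2$ by the analogous two-sided statement; min-max optimality then forces $\hat w_{n,k}$ into the ball $\norm{\Delta}_2 \le \rho$, and there $2c\bar\mu\rho^2 \ge \sup_v\psi_k(\wstar,v) \ge \psi_k(\hat w_{n,k},\wstar) \ge \frac{1}{2}\mathcal{E}(\hat w_{n,k}) - 2c\bar\mu\rho^2$, so $\mathcal{E}(\hat w_{n,k}) \le 8c\bar\mu\rho^2 \asymp r(p)[d+\log(1/\delta)]/n$; since $r(p) = m(2p-2)\sigma^p/m(p-2)$ the parameter $\bar\mu$ cancels, and tracking the $(p-1)$ factors from $e'$, $e''$ and \eqref{eq:condition} gives the advertised $120^2 K(p)\sigma^p[d+\log(1/\delta)]/n$ with $K(p) = (p-1)^2 m(2p-2)/m(p-2)$. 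The choice $k = 8\log(4/\delta)$ makes the trimming simultaneously robust enough for the adversarial per-direction deletions and the heavy tails and small enough not to bias the trimmed mean, which is why $\delta$ is constrained by $1 \le k \le \floor{n/8}$.

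The main obstacle is the curvature step: lower bounding, uniformly over all directions $\Delta$ and with high probability, the smallest eigenvalue of $n^{-1}\sum_i\abs{\xi_i}^{p-2}X_iX_i^{T}$ after an \emph{adversarial, direction-dependent} truncation of its summands. The truncation destroys linearity in the summands, so neither a naive $\eps$-net argument nor a direct matrix-concentration inequality suffices on its own; the resolution --- a generic bound on the suprema of truncated empirical processes combined with matrix Khintchine --- is precisely the technical engine behind Proposition \ref{prop:asymp_lower}, and it is what forces the fourth-moment assumption on $P_X$ and pins down the first term of the sample-size requirement. A secondary difficulty, specific to $p \in (2,3)$, is that $e''(t) = \abs{t}^{p-2}$ is only H\"older, so the non-asymptotic Taylor-remainder control must be done with H\"older-type rather than Lipschitz-type estimates.
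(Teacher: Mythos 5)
Your overall skeleton is right and matches the paper: min--max tournament with $\psi_k(\hat w_{n,k},w^*)\le\sup_v\psi_k(w^*,v)$, a Taylor split of the loss difference into a centered multiplier part and a nonnegative curvature part, the multiplier part controlled by the generic trimmed-process bound (Lemma~\ref{lem:trunc_7}), the curvature part reduced via Lemma~\ref{lem:trunc_20} to a lower bound on the adversarially-truncated smallest eigenvalue of the reweighted matrix $n^{-1}\sum_i\abs{\xi_i}^{p-2}X_iX_i^{T}$ (with $\Exp[\,\cdot\,]=H=\nabla^2 E(w^*)$), and the first term of the sample-size condition coming from Proposition~\ref{prop:lower_2} after translating $S(P_W),R(P_W)$ for $W=\abs{\xi}^{(p-2)/2}X$ back to $S(P_X),R(P_X)$ through $(\star)$. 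You also correctly identified that the right ambient metric is the Hessian metric and that $(\star)$ makes the final bound $\mu$-free.

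There is a genuine gap in your attribution of the \emph{second} sample-size term and of the role of $N(P_X,p)$. You propose to control the multiplier process by ``a $p$-th moment union bound over an $\eps$-net of the sphere,'' and to absorb the lower-order Taylor remainder ``by the same machinery.'' Neither is what the paper does, and you do not supply enough detail to be confident it would yield the stated exponents. In the paper, the multiplier bound (Lemma~\ref{lem:f_0}) is done at the unit $H$-sphere scale exactly as in the $p=2$ case --- symmetrization, Rademacher contraction, second-moment control with variance proxy $\esssup(\Exp[\abs{\xi}^{2(p-1)}\mid X])$ --- and produces $\Delta_p(\delta)$ with no appearance of $N(P_X,p)$ or of $p^4$ at all. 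The constant $N(P_X,p)$, the factor $p^4$, and the term $r(p)\,\mu^{-p/(p-2)}\,p^4\,N^{2p/(p-2)}(P_X,p)[d+\log(4/\delta)]$ enter only in the \emph{final deterministic conversion}: after the localization $\norm{\hat w_{n,k}-w^*}_H\lesssim (p-1)\Delta_p(\delta)$ is established, the excess risk is bounded using the two-sided Taylor-type sandwich of \citet{adilFastAlgorithmsEll_p2023} (Lemma 2.5 there),
\begin{equation*}
    \frac{1}{8(p-1)}\,e''(s)(t-s)^2 \;\le\; e(t)-e(s)-e'(s)(t-s) \;\le\; 4\,e''(s)(t-s)^2 + 2p^{p-2}\abs{t-s}^{p},
\end{equation*}
and the $\abs{t-s}^p$ term, once integrated, is bounded by $2p^{p-2}\,\mu^{-p/2}\,N^{p}(P_X,p)\norm{\hat w_{n,k}-w^*}_H^p$; the second sample-size restriction is precisely what makes this term at most $4\norm{\hat w_{n,k}-w^*}_H^2$. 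So the $N(P_X,p)$ factor and the second term are purely a nonlinearity--absorption price paid \emph{after} localization, not a price of the multiplier concentration step.

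This point also dissolves your worry about $p\in(2,3)$: the paper never touches the H\"older modulus of $e''$, because the Adil et al.\ sandwich inequalities are valid for all $p\ge 2$ and replace the Taylor integral remainder entirely. The decomposition you wrote, $Q_i(\Delta)=\int_0^1(1-t)e''(t\inp{\Delta}{X_i}-\xi_i)\inp{\Delta}{X_i}^2\,dt$, is not used; instead one bounds $e(t)-e(s)-e'(s)(t-s)$ below by $\tfrac{1}{8(p-1)}\abs{s}^{p-2}(t-s)^2$ (for the curvature lower bound of Lemma~\ref{lem:f_2}) and above by $4\abs{s}^{p-2}(t-s)^2+2p^{p-2}\abs{t-s}^p$ (for the final conversion), with $s=\inp{w^*}{X_i}-Y_i=-\xi_i$, so that the weights $\abs{\xi_i}^{p-2}$ appear directly and the curvature step reduces exactly to a truncated smallest-eigenvalue problem with no H\"older machinery.
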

Combining this result with Proposition \ref{prop:lower_bound_p_norm} shows the minimaxity of the procedure (\ref{eq:procedure}) on this class of problems, up to a constant that depends only on $p$. The closest related result is due to \citet{elhanchiOptimalExcessRisk2023a} who studied the performance of ERM on linear regression under $p$-th power error. Their result however is specific to ERM, and, as expected, only yields a polynomial dependence on $1/\delta$ instead of the needed $\log(1/\delta)$ to establish minimaxity. Our result combines the insights of that work with the proof techniques used to study the procedure (\ref{eq:procedure}) developed by \citet{lugosiRiskMinimizationMedianofmeans2019}, as well as our new insights on how to leverage the fourth moment assumption to obtain absolute constants instead of distribution-dependent constants in the upper bound.

\section{The quantile risk}
\label{sec:quantile}
In this section, we study the quantile risk in full generality. Our motivation for doing so is to provide the tools necessary for proving Theorem \ref{thm:lin_reg}. The results we obtain are however more general and can be used to tackle other problems. We illustrate this with two examples at the end of the section.  

Before we formulate our results, let us briefly introduce some basic decision-theoretic concepts. To avoid overloading the notation, the symbols we introduce here will be specific to this section. A decision problem has the following components: a set of possible observations $\mathcal{O}$, a subset $\mathcal{P}$ of probability measures on $\mathcal{O}$, a set of available actions $\mathcal{A}$, a loss function $\ell: \mathcal{P} \times \mathcal{A} \to \R$, and a decision rule $d: \mathcal{O} \to \mathcal{A}$. For a fixed distribution $P$, the performance of a decision rule is classically evaluated through its expected loss $\Exp\brack*{\ell(P, d(O))}$ where $O \sim P$. Here instead, for a user-chosen failure probability $\delta \in (0, 1)$, we evaluate the performance of a decision rule through its quantile risk $R_{\delta}(\ell, P, d) \defeq Q_{\ell(P, d(O))}(1 - \delta)$. We define the associated worst-case and minimax risks by $R_{\delta}(\ell, d) \defeq \sup_{P \in \mathcal{P}} R_{\delta}(\ell, P, d)$ and $R^{*}_{\delta}(\ell) \defeq \inf_{d} R_{\delta}(\ell, d)$ respectively. Our aim is to develop methods to understand the minimax risk and establish the minimaxity of candidate decision rules.

\subsection{A Bayesian criterion for minimaxity and an invariance principle}
A classical way to establish the minimaxity of a decision rule is by computing its worst-case risk and showing that it matches the limit of a sequence of Bayes risks \citep{lehmann2006theory}. The following result provides an analogue to this method when working under the quantile risk.
\begin{theorem}
\label{thm:bayes}
    For a distribution $\pi$ on $\mathcal{P}$, define $F^{\pi}_{\ell(P, d(O))}$ to be the cumulative distribution function of the random variable $\ell(P, d(O))$, where $P \sim \pi$ and $O \mid P \sim P$.
    Let $(\pi_{k})_{k \in \N}$ be a sequence of distributions on $\mathcal{P}$. For any $t \in \R$, define
    \begin{equation*}
        p_{\ell, k}(t) \defeq \sup_{d} F^{\pi_{k}}_{\ell(P, d(O))}(t).
    \end{equation*}
    Assume that the functions $(p_{\ell, k})_{k \in \R}$ are right-continuous and that the sequence is decreasing, i.e. $p_{\ell, k} \geq p_{\ell, k+1}$ and let $p_{\ell} \defeq \inf_{k \in N} p_{\ell, k} = \lim_{k \to \infty} p_{\ell, k}$. If $\hat{d}$ is a decision satisfying
    \begin{equation*}
        \sup_{P \in \mathcal{P}} R_{\delta}(\ell, P, \hat{d}) = p_{\ell}^{-}(1-\delta),
    \end{equation*}
    where $p^{-}_{\ell}$ is the pseudo-inverse of $p_{\ell}$, then $\hat{d} \in \argmin_{d} R_{\delta}(\ell, d)$, i.e.\ it is minimax.
\end{theorem}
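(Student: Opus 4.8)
The plan is to follow the structure of the classical Bayesian lower bound argument (e.g.\ \citet{lehmann2006theory}), replacing expectations by quantiles throughout. The basic idea is that for any decision rule $d$, the worst-case quantile risk is at least the "Bayes quantile risk" with respect to any prior $\pi_k$, and the latter is in turn governed by the function $p_{\ell,k}$. Taking the limit $k\to\infty$ and using the hypothesis that $\hat d$ achieves $p_\ell^-(1-\delta)$ will give the result.

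First I would establish the key inequality: for any prior $\pi$ on $\mathcal{P}$ and any decision rule $d$,
\[
    \sup_{P\in\mathcal{P}} R_\delta(\ell,P,d) \;\geq\; \inf\brace*{t\in\R \st F^{\pi}_{\ell(P,d(O))}(t) \geq 1-\delta}.
\]
This is the analogue of "sup risk $\geq$ average risk". To see it, note that if $t < R_\delta(\ell,P,d) = Q_{\ell(P,d(O))}(1-\delta)$ for \emph{every} $P$ in the support of $\pi$ — actually more carefully, if $t$ is strictly below $\sup_P R_\delta(\ell,P,d)$ we cannot immediately conclude; instead the clean way is: for any $P$, $\Prob(\ell(P,d(O))\leq t) \geq 1-\delta$ can only hold if $t \geq Q_{\ell(P,d(O))}(1-\delta)$ fails to be ruled out... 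The correct argument is to fix $s < \sup_P R_\delta(\ell,P,d)$ so that some $P_0$ has $R_\delta(\ell,P_0,d) > s$, hence $F_{\ell(P_0,d(O))}(s) < 1-\delta$; but this alone doesn't control the mixture. So instead I would argue directly with the mixture CDF: for any $t$,
\[
    F^{\pi}_{\ell(P,d(O))}(t) = \int_{\mathcal P} \Prob\paren*{\ell(P,d(O))\leq t}\,d\pi(P) \leq \sup_{P\in\mathcal P}\Prob\paren*{\ell(P,d(O))\leq t}.
\]
If $t < \sup_{P}R_\delta(\ell,P,d)$, this does not force $F^\pi(t)<1-\delta$. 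The resolution is that we only need the inequality in one direction for the \emph{minimax} comparison: taking $\sup_d$ and then pseudo-inverting. Concretely, for any $d$ and any $t$ with $F^{\pi_k}_{\ell(P,d(O))}(t) < 1-\delta$ we have $t < Q^{\pi_k}_{\ell(P,d(O))}(1-\delta)$; and since $R_\delta(\ell,d) \geq R_\delta(\ell,P,d)$ for all $P$, I would use Fatou/Markov-type reasoning on the mixture: $\Prob_{P\sim\pi_k,O\sim P}(\ell(P,d(O)) > t) \leq \sup_P \Prob_P(\ell(P,d(O))>t)$, and if $R_\delta(\ell,d)\leq t$ then $\Prob_P(\ell(P,d(O))>t)\leq \delta$ for all $P$ (this is where left/right-continuity and the definition of $Q$ as an infimum must be handled carefully), hence $F^{\pi_k}_{\ell(P,d(O))}(t)\geq 1-\delta$, i.e.\ $p_{\ell,k}(t)\geq 1-\delta$. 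Contrapositive: $p_{\ell,k}(t) < 1-\delta \Rightarrow R_\delta(\ell,d) > t$, so $R_\delta(\ell,d) \geq p_{\ell,k}^-(1-\delta)$ for every $d$, hence $R^*_\delta(\ell)\geq p_{\ell,k}^-(1-\delta)$.

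Next I would pass to the limit. Since $p_{\ell,k}\downarrow p_\ell$ pointwise and each is right-continuous, and $p_\ell$ is a pointwise infimum of right-continuous nondecreasing functions hence nondecreasing, I would show $p_{\ell,k}^-(1-\delta) \uparrow p_\ell^-(1-\delta)$ as $k\to\infty$ — or at the very least $\sup_k p_{\ell,k}^-(1-\delta) = p_\ell^-(1-\delta)$; this is a routine monotonicity-of-pseudo-inverses computation using that $p_{\ell,k}\geq p_{\ell,k+1}\geq p_\ell$ gives $p_{\ell,k}^-\leq p_{\ell,k+1}^-\leq p_\ell^-$, together with right-continuity to get the reverse inequality in the limit (for any $t < p_\ell^-(1-\delta)$ one has $p_\ell(t)<1-\delta$, and by $p_\ell = \inf_k p_{\ell,k}$ some $k$ has $p_{\ell,k}(t)<1-\delta$, so $t < p_{\ell,k}^-(1-\delta)$; right-continuity is used to promote this to $t \le \lim_k p^-_{\ell,k}(1-\delta)$ after a limiting argument). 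Combining, $R^*_\delta(\ell) \geq \sup_k p_{\ell,k}^-(1-\delta) = p_\ell^-(1-\delta)$. Finally, by hypothesis $\hat d$ satisfies $\sup_{P} R_\delta(\ell,P,\hat d) = p_\ell^-(1-\delta) \le R^*_\delta(\ell)$, and since trivially $R^*_\delta(\ell) \le \sup_P R_\delta(\ell,P,\hat d)$, all inequalities are equalities, so $\hat d$ is minimax.

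I expect the main obstacle to be the first step — the clean statement that $R_\delta(\ell,d) \le t$ implies $p_{\ell,k}(t) \ge 1-\delta$ — because quantiles behave worse than expectations under mixtures: the naive "sup $\ge$ average" fails, and one must instead argue via tail probabilities and be careful about strict versus non-strict inequalities, using that $Q_X(1-\delta) = \inf\brace*{t \st F_X(t)\geq 1-\delta}$ and exploiting right-continuity of $F$ and of the $p_{\ell,k}$. The passage to the limit is then a technical but standard exercise in manipulating pseudo-inverses of monotone right-continuous functions.
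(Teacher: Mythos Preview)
Your proposal is correct and follows essentially the same route as the paper: bound the worst-case quantile risk from below by $p_{\ell,k}^{-}(1-\delta)$ via the mixture CDF, then pass to the limit in $k$. The paper executes this more cleanly by first developing a small calculus of pseudo-inverses --- in particular the identity $\sup_{i} f_i^{-} = (\inf_i f_i)^{-}$ for right-continuous increasing $f_i$ --- so that your contrapositive tail argument becomes the one-line chain $\sup_P F_{\ell(P,d(O))}^{-}(1-\delta) = (\inf_P F_{\ell(P,d(O))})^{-}(1-\delta) \geq (F^{\pi_k}_{\ell(P,d(O))})^{-}(1-\delta) \geq p_{\ell,k}^{-}(1-\delta)$, and your limiting step becomes the lemma $\sup_k f_k^{-} = f^{-}$ for a decreasing sequence of right-continuous increasing functions.
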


We mention that instantiations of the arguments leading to Theorem \ref{thm:bayes} have been used by \citet{depersinOptimalRobustMean2022a} and \citet{guptaMinimaxOptimalLocationEstimation2023} to tackle specific problems. The general form we present here is new, and relies on new analytical results concerning collections of quantile functions.

In applications, it is desirable that the optimality of a decision rule depends as little as possible on the loss function, or conversely, that a single decision rule be minimax for as large a number of loss functions as possible. The following result shows that the minimaxity of a decision rule in the quantile risk is invariant to at least one form of transformation of the loss function.
\begin{proposition}
\label{prop:invar}
    Let $\varphi: \R \to \R$ be a strictly increasing left-continuous function, and define $\varphi(\infty) \defeq \infty$ and $\varphi(-\infty) \defeq -\infty$. Then $R_{\delta}(\varphi \circ \ell, P, d) = \varphi\paren*{R_{\delta}(\ell, P, d)}$. Furthermore, if $R_{\delta}(\ell, d) < \infty$, then $R_{\delta}(\varphi \circ \ell, d) = \varphi(R_{\delta}(\ell, d))$. Finally, if $R^{*}_{\delta}(\ell) < \infty$, then
    \begin{equation*}
        d^{*} \in \argmin_{d} R_{\delta}(\ell, d) \implies d^{*} \in \argmin_{d} R_{\delta}(\varphi \circ \ell, d).
    \end{equation*}
\end{proposition}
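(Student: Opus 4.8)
The plan is to work directly from the definitions and reduce everything to one elementary fact about quantile functions: if $\varphi: \R \to \R$ is strictly increasing and left-continuous, then for any real random variable $Z$ (allowing the values $\pm\infty$, with $\varphi(\pm\infty) \defeq \pm\infty$) and any $u \in (0,1)$ one has $Q_{\varphi(Z)}(u) = \varphi(Q_Z(u))$. First I would establish this identity. Recall $Q_Z(u) = \inf\{t \in \R : \Prob(Z \leq t) \geq u\}$ and similarly for $\varphi(Z)$. The key observations are: (i) strict monotonicity of $\varphi$ gives $\{ \varphi(Z) \leq \varphi(s) \} = \{ Z \leq s \}$ for every $s$, so $\varphi(Q_Z(u))$ is a candidate threshold for $\varphi(Z)$, whence $Q_{\varphi(Z)}(u) \leq \varphi(Q_Z(u))$; and (ii) conversely, if $t < \varphi(Q_Z(u))$, then by left-continuity and strict monotonicity of $\varphi$ the sublevel set $\{ \varphi \leq t \}$ is of the form $(-\infty, s]$ (or empty) with $s < Q_Z(u)$, so $\Prob(\varphi(Z) \leq t) = \Prob(Z \leq s) < u$ by definition of $Q_Z(u)$; taking the infimum over such $t$ gives $Q_{\varphi(Z)}(u) \geq \varphi(Q_Z(u))$. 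Left-continuity of $\varphi$ is exactly what makes the sublevel sets closed on the right so that this argument goes through; this is the one place where the hypothesis is used, and it is the main (though still routine) technical point.

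Given this identity, the first claim is immediate: apply it with $Z = \ell(P, d(O))$ and $u = 1-\delta$ to get
\begin{equation*}
    R_{\delta}(\varphi \circ \ell, P, d) = Q_{\varphi(\ell(P, d(O)))}(1-\delta) = \varphi\paren*{Q_{\ell(P, d(O))}(1-\delta)} = \varphi\paren*{R_{\delta}(\ell, P, d)}.
\end{equation*}
For the second claim, take the supremum over $P \in \mathcal{P}$ on both sides. Since $\varphi$ is increasing, $\sup_{P} \varphi(R_{\delta}(\ell, P, d)) \geq \varphi(\sup_{P} R_{\delta}(\ell, P, d))$ always, and the reverse inequality holds as well: if $R_{\delta}(\ell, d) < \infty$, then for every $\eps > 0$ there is $P$ with $R_{\delta}(\ell, P, d) > R_{\delta}(\ell, d) - \eps$, and one uses monotonicity together with the fact that an increasing real function on $\R$ has a right limit; more cleanly, one shows $\sup_P \varphi(a_P) = \varphi(\sup_P a_P)$ for a bounded-above family $(a_P)$ using that $\varphi$ is increasing and that $\varphi$ restricted to a neighborhood of the supremum is controlled by monotonicity — here one does need to be slightly careful because $\varphi$ need not be right-continuous, but since we only claim an equality of extended reals and $\sup_P a_P$ is approached from below, monotonicity of $\varphi$ gives $\varphi(\sup_P a_P) \geq \sup_P \varphi(a_P) \geq \varphi(a_P)$ for all $P$ and $\sup_P \varphi(a_P) \geq \lim_{b \uparrow \sup a_P}\varphi(b)$; left-continuity of $\varphi$ then upgrades this last limit to $\varphi(\sup_P a_P)$, closing the gap. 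Hence $R_{\delta}(\varphi \circ \ell, d) = \varphi(R_{\delta}(\ell, d))$.

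For the final claim, suppose $R^{*}_{\delta}(\ell) < \infty$ and let $d^{*} \in \argmin_d R_{\delta}(\ell, d)$. In particular $R_{\delta}(\ell, d^{*}) = R^{*}_{\delta}(\ell) < \infty$, so by the second claim $R_{\delta}(\varphi \circ \ell, d^{*}) = \varphi(R^{*}_{\delta}(\ell))$. For any other decision rule $d$, either $R_{\delta}(\ell, d) = \infty$, in which case $R_{\delta}(\varphi \circ \ell, d) = \sup_P \varphi(R_{\delta}(\ell, P, d))$; since $\sup_P R_{\delta}(\ell, P, d) = \infty$ means the values $R_{\delta}(\ell, P, d)$ are unbounded above, and $\varphi$ is strictly increasing hence $\varphi(b) \to \infty$ as $b \to \infty$ (a strictly increasing function on $\R$ cannot be bounded above on $(c,\infty)$ unless it fails to be defined there, which it is not), we get $R_{\delta}(\varphi \circ \ell, d) = \infty \geq \varphi(R^{*}_{\delta}(\ell))$; or $R_{\delta}(\ell, d) < \infty$, in which case $R_{\delta}(\varphi \circ \ell, d) = \varphi(R_{\delta}(\ell, d)) \geq \varphi(R^{*}_{\delta}(\ell)) = R_{\delta}(\varphi \circ \ell, d^{*})$ by monotonicity of $\varphi$ and the definition of $R^{*}_{\delta}(\ell)$. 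In both cases $R_{\delta}(\varphi \circ \ell, d) \geq R_{\delta}(\varphi \circ \ell, d^{*})$, so $d^{*} \in \argmin_d R_{\delta}(\varphi \circ \ell, d)$, as claimed. I expect the only genuine subtlety to be the careful handling of left-continuity versus right-continuity in the quantile identity and in commuting $\varphi$ with the supremum over $\mathcal{P}$; everything else is bookkeeping with extended-real arithmetic.
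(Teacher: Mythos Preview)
Your approach matches the paper's: the quantile identity $Q_{\varphi(Z)}(u) = \varphi(Q_Z(u))$ for the first claim, commuting $\varphi$ with $\sup_P$ via left-continuity for the second, and a case split for the third. The first two parts are fine.

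There is a genuine gap in the case $R_\delta(\ell,d)=\infty$ of the third claim. You assert that a strictly increasing function $\varphi:\R\to\R$ satisfies $\varphi(b)\to\infty$ as $b\to\infty$, and hence that $R_\delta(\varphi\circ\ell,d)=\infty$. This is false: $\varphi(x)=\arctan(x)$ is strictly increasing and left-continuous but bounded above. So from $\sup_P R_\delta(\ell,P,d)=\infty$ you cannot conclude $\sup_P \varphi(R_\delta(\ell,P,d))=\infty$.

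The fix is that you do not need the stronger conclusion. Since $R_\delta(\ell,d^*)=R^{*}_\delta(\ell)<\infty$ and $\sup_P R_\delta(\ell,P,d)=\infty$, there exists a single $P_0\in\mathcal{P}$ with $R_\delta(\ell,P_0,d)\geq R_\delta(\ell,d^*)$. Then by the first claim and monotonicity,
\[
R_\delta(\varphi\circ\ell,d)\;\geq\; R_\delta(\varphi\circ\ell,P_0,d)\;=\;\varphi\bigl(R_\delta(\ell,P_0,d)\bigr)\;\geq\;\varphi\bigl(R_\delta(\ell,d^*)\bigr)\;=\;R_\delta(\varphi\circ\ell,d^*),
\]
which is all you need. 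This is how the paper handles the case.
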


\subsection{Mean estimation revisited}
To exhibit the usefulness of the above results, we briefly revisit the problem of mean estimation under Gaussian data. This problem can be embedded in the above decision-theoretic setting as follows. The observations are random vectors $(X_i)_{i=1}^{n} \in (\R^{d})^{n}$ for some $d, n \in \N$, the subset of distributions is the $n$-product of distributions in the class $\mathcal{P}_{\normalfont\text{Gauss}}(\Sigma) \defeq \brace*{\mathcal{N}(\mu, \Sigma) \st \mu \in \R^{d}}$, for a fixed $\Sigma \in S_{++}^{d}$. The set of available actions is $\R^{d}$, and the loss function is given by $\ell(P^{n}, \mu) \defeq e(\mu - \mu(P))$ for some error function $e$ and where $\mu(P)$ is the mean of the distribution $P$. Finally, a decision rule is given by an estimator $\hat{\mu}: (\R^{d})^{n} \to \R^{d}$. The following result gives the minimax quantile risk for this problem under a mild assumption on the error function $e$, generalizing the recent result of \cite{depersinOptimalRobustMean2022a} which holds under stronger assumptions on $e$.
\begin{proposition}
\label{prop:mean}
    Assume that the error function $e$ satisfies $e = \varphi \circ s$, where $\varphi$ is a left-continuous strictly increasing function, and $s$ is both quasiconvex, i.e.\ $s(tv + (1-t)u) \leq \max\brace*{s(v), s(u)}$ for all $t \in [0,1]$ and $u,v \in \R^{d}$, and symmetric, i.e.\ $s(v) = s(-v)$. Then for all $\Sigma \in S_{++}^{d}$
    \begin{equation*}
        \inf_{\hat{\mu}}\sup_{P \in \mathcal{P}_{\normalfont\text{Gauss}}(\Sigma)} R_{\delta}(\ell, P^{n}, \hat{\mu}) = Q_{e(Z)}(1- \delta),
    \end{equation*}
    where $Z \sim \mathcal{N}(0, \Sigma/n)$. Furthermore, the sample mean $\hat{\mu}((X_i)_{i=1}^{n})\defeq n^{-1}\sum_{i=1}^{n}X_i$ is minimax.
\end{proposition}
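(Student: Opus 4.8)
The plan is to apply the Bayesian criterion of Theorem \ref{thm:bayes} together with the invariance principle of Proposition \ref{prop:invar}, reducing the problem to a symmetric quasiconvex loss $s$ where Anderson's Lemma can be brought to bear. First, since $e = \varphi \circ s$ with $\varphi$ left-continuous strictly increasing, Proposition \ref{prop:invar} tells us it suffices to establish minimaxity of the sample mean (and compute the minimax value) for the loss $\ell_s(P^n, \mu) \defeq s(\mu - \mu(P))$; the identity $Q_{e(Z)}(1-\delta) = \varphi(Q_{s(Z)}(1-\delta))$ follows from the same proposition applied to the scalar random variable $s(Z)$. So from now on I work with $s$, which is quasiconvex and symmetric.

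Next I set up the Bayesian lower bound. Fix $\Sigma \in S_{++}^d$. For the prior I take $\pi_k$ to be the law of $\mathcal{N}(\mu, \Sigma)^n$ where $\mu \sim \mathcal{N}(0, k \Sigma)$ (a growing Gaussian prior on the mean), so that under $\pi_k$ the pair $(\mu, (X_i))$ is jointly Gaussian. The posterior of $\mu$ given $(X_i)_{i=1}^n$ is Gaussian with mean a shrinkage of $\bar X_n$ and covariance $(\Sigma^{-1}/k + n\Sigma^{-1})^{-1} = \Sigma (1/k + n)^{-1}$, which increases to $\Sigma/n$ as $k \to \infty$. For any fixed $t$, the Bayes-optimal decision rule $d$ maximizing $\Prob(s(d(O) - \mu) \le t)$ is, conditionally on $O$, the point $a$ maximizing the posterior mass of the set $\{v : s(a - v) \le t\}$; since $s$ is quasiconvex and symmetric this sublevel set is convex and symmetric about $0$, and the posterior is Gaussian, so Anderson's Lemma gives that the optimal $a$ is the posterior mean $\mu_k(O)$. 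Hence $p_{s,k}(t) = \Prob(s(\mu_k(O) - \mu) \le t)$, and $\mu_k(O) - \mu$ is (marginally) $\mathcal{N}(0, \Sigma(1/k+n)^{-1})$. This gives $p_{s,k}(t) = \Prob(s(W_k) \le t)$ with $W_k \sim \mathcal{N}(0, \Sigma(1/k+n)^{-1})$; as $k$ increases these covariances increase in the Loewner order, so by Anderson's Lemma again $p_{s,k}$ is a decreasing sequence, converging pointwise to $p_s(t) = \Prob(s(Z) \le t)$ with $Z \sim \mathcal{N}(0,\Sigma/n)$. One checks right-continuity of each $p_{s,k}$ (the sublevel sets $\{s \le t\}$ are nested and their union over $t' > t$ is $\{s \le t\}$ by... actually $\{s < \cdot\}$ issues — use that $\Prob(s(W_k) \le t)$ is right-continuous in $t$ since it is a CDF of the real random variable $s(W_k)$, which is immediate).

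Finally I verify the matching upper bound for the sample mean $\hat\mu$. Under any $P = \mathcal{N}(\mu,\Sigma)$, $\hat\mu - \mu \sim \mathcal{N}(0,\Sigma/n) \overset{d}{=} Z$, so $R_\delta(\ell_s, P^n, \hat\mu) = Q_{s(Z)}(1-\delta)$ for every $\mu$, whence $\sup_{P} R_\delta(\ell_s, P^n, \hat\mu) = Q_{s(Z)}(1-\delta) = p_s^-(1-\delta)$. Theorem \ref{thm:bayes} then yields that $\hat\mu$ is minimax and that the minimax value equals $Q_{s(Z)}(1-\delta)$; applying $\varphi$ via Proposition \ref{prop:invar} recovers the stated formula with $e$. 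The main obstacle is the Anderson's Lemma step identifying the Bayes-optimal rule as the posterior mean under only quasiconvexity (not convexity) of $s$: the classical statement of Anderson's inequality needs the sublevel set to be convex, symmetric, and measurable, and one must be slightly careful that $s$ quasiconvex and symmetric indeed gives convex symmetric sublevel sets (true: $\{v: s(v) \le t\}$ is convex by quasiconvexity and symmetric by $s(v)=s(-v)$, possibly empty or all of $\R^d$, both fine) and that no further regularity of $s$ is needed beyond measurability, which can be assumed WLOG or derived since quasiconvex functions are measurable. A secondary technical point is justifying the interchange "sup over decision rules of a Bayes risk is attained at the posteriorwise maximizer" rigorously, i.e. that maximizing $\Prob(s(d(O)-\mu)\le t) = \Exp[\,\Prob(s(d(O)-\mu)\le t \mid O)\,]$ pointwise in $O$ is legitimate — this is standard but should be stated.
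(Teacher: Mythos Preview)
Your proposal is correct and follows essentially the same approach as the paper: reduce to the quasiconvex symmetric loss $s$ via Proposition \ref{prop:invar}, apply Theorem \ref{thm:bayes} with a growing Gaussian prior on $\mu$, identify the Bayes-optimal rule as the posterior mean via Anderson's Lemma, and verify the limit and the matching worst-case risk of the sample mean. The only cosmetic difference is that for the monotonicity of $p_{s,k}$ the paper exploits that the posterior covariances are scalar multiples of $\Sigma$ and uses the inclusion $A \subseteq \lambda A$ for convex symmetric $A$ and $\lambda \geq 1$, whereas you invoke Anderson's Lemma again via the decomposition $W_{k+1} \overset{d}{=} W_k + Y$; both arguments are valid.
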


\subsection{Minimax estimation of the variance of a Gaussian with known mean}
As a second application of our results, we consider the problem of variance estimation with one-dimensional Gaussian data. For this problem, the observations are random variables $(X_i)_{i=1}^{n} \in \R^{n}$ for some $n \in \N$, the subset of distributions is the $n$-product of distributions in the class $\mathcal{P}_{\normalfont\text{Gauss}}(\mu) \defeq \brace*{\mathcal{N}(\mu, \sigma^{2}) \st \sigma \in (0, \infty)}$,
for a fixed $\mu$. The set of available actions is $(0, \infty)$, and we consider the following loss function which captures the problem of estimating $\sigma^{2}$ in relative error: $\ell(P^{n}, \sigma^{2}) \defeq \abs*{\log(\sigma^2(P)/\sigma^2)}$ where $\sigma^{2}(P)$ is the variance of the distribution $P$. Finally, a decision rule is given by an estimator $\hat{\sigma}^{2}: \R^{n} \to (0, \infty)$. Using Theorem \ref{thm:bayes}, we obtain the following result.
\begin{proposition}
\label{prop:var}
    For $\alpha \in (0, \infty)$ and $Z \sim {\normalfont\text{Inv-Gamma}}(\alpha, \alpha)$, define $p_{\alpha}:(0,\infty) \to [0,1)$ by
    \begin{equation*}
        p_{\alpha}(t) \defeq \Prob\paren*{\frac{1-\exp(-2t)}{2t} \leq Z \leq \frac{\exp(2t) - 1}{2t}}.
    \end{equation*}
    Then for all $\mu \in \R$
    \begin{equation*}
        \inf_{\hat{\sigma}^{2}} \sup_{P \in \mathcal{P}_{\normalfont\text{Gauss}}(\mu)} R_{\delta}(\ell, P^{n}, \hat{\sigma}^{2}) = p_{n/2}^{-}(1-\delta).
    \end{equation*}
    Furthermore, the sample variance is not minimax and the estimator
    \begin{equation*}
        \hat{\sigma}^{2}((X_i)_{i=1}^{n}) \defeq \frac{\sum_{i=1}^{n}(X_i - \mu)^{2}}{n} \phi\paren*{p_{n/2}^{-}(1-\delta)}
    \end{equation*}
    is minimax, where $\phi(x) \defeq \sinh(x)/x$, and $p_{n/2}^{-}$ is the pseudo-inverse of $p_{n/2}$.
\end{proposition}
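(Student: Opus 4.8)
The plan is to combine the Bayesian lower-bound method of Theorem~\ref{thm:bayes} with a direct computation of the worst-case risk of the candidate estimator, both of which reduce to a single observation about multiplicative confidence windows for the Inverse-Gamma family. Throughout I would take $\mu=0$ without loss of generality and pass to the sufficient statistic $S \defeq \sum_{i=1}^{n} X_i^2$, so that $S/\sigma^2 \sim \chi^2_n$, the loss equals $\abs{\log(\sigma^2/\hat\sigma^2)}$, and $Z \defeq n\sigma^2/S \sim \text{Inv-Gamma}(n/2,n/2)$.

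The key lemma I would prove first is that for $V \sim \text{Inv-Gamma}(\beta,\gamma)$ and $t>0$, the map $a \mapsto \Prob(ae^{-t}\le V\le ae^{t})$ — the mass of a window of fixed multiplicative width $e^{2t}$ — has a \emph{unique} maximizer. Writing the stationarity condition $e^{t}f_V(ae^{t})=e^{-t}f_V(ae^{-t})$ with $f_V(v)\propto v^{-\beta-1}e^{-\gamma/v}$, the factor $v^{-\beta-1}$ cancels and one is left with $(\gamma/a)(e^{t}-e^{-t})=2\beta t$, i.e.\ $a^{*}=(\gamma/\beta)\phi(t)$ with $\phi(t)=\sinh(t)/t$; since the objective vanishes as $a\downarrow 0$ and $a\to\infty$, this critical point is the global max, and by scaling its value is exactly $p_\beta(t)$ with $p_\beta$ as in the statement. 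I would also record that $p_\beta$ is continuous and, because the window endpoints $(1-e^{-2t})/(2t)$ and $(e^{2t}-1)/(2t)$ both move strictly outward in $t$, strictly increasing from $0$ to $1$ on $(0,\infty)$, so $p_\beta^{-}(1-\delta)$ is a genuine inverse.

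For the lower bound I would feed Theorem~\ref{thm:bayes} the conjugate priors $\pi_k \defeq \text{Inv-Gamma}(\alpha_k,\alpha_k)$ on $\sigma^2$ with $\alpha_k\downarrow 0$. By conjugacy the posterior given $S$ is $\text{Inv-Gamma}(\alpha_k+n/2,\ \alpha_k+S/2)$, and applying the key lemma \emph{conditionally on $S$} shows the best any decision rule can do — uniformly over the value of $S$, by reporting $\tfrac{\alpha_k+S/2}{\alpha_k+n/2}\phi(t)$ — yields $p_{\ell,k}(t)\defeq\sup_d F^{\pi_k}_{\ell(P,d(O))}(t)=p_{\alpha_k+n/2}(t)$, a function independent of the data. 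These are right-continuous and $p_{\alpha_k+n/2}\to p_{n/2}$ pointwise by continuity of $\beta\mapsto p_\beta$; granting that the sequence is decreasing, Theorem~\ref{thm:bayes} says \emph{any} rule with worst-case quantile risk $p_{n/2}^{-}(1-\delta)$ is minimax. For the matching upper bound I would verify this for $\hat\sigma^2\defeq\tfrac{S}{n}\phi(c)$ with $c\defeq p_{n/2}^{-}(1-\delta)$: the loss equals $\abs{\log(Z/\phi(c))}$, so $\Prob(\ell\le t)=\Prob(\phi(c)e^{-t}\le Z\le\phi(c)e^{t})\le p_{n/2}(t)$ by the key lemma, with equality at $t=c$ since $\phi(c)$ is \emph{the} optimal center for width $e^{2c}$; as $p_{n/2}$ is continuous and strictly increasing with $p_{n/2}(c)=1-\delta$, this forces $Q_\ell(1-\delta)=c$ for \emph{every} $\sigma^2$. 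The same computation shows the sample variance $S/n$ is suboptimal: it uses center $a=1\ne\phi(t)$, so by uniqueness $\Prob(e^{-t}\le Z\le e^{t})<p_{n/2}(t)$ strictly for $t>0$, and pushing this through the quantile gives worst-case risk strictly above $p_{n/2}^{-}(1-\delta)$.

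The hard part is the one ingredient glossed over above: that the sequence $(p_{\alpha_k+n/2})_k$ is decreasing, i.e.\ that $\beta\mapsto p_\beta(t)$ is nondecreasing for every $t$. Equivalently, this is a peakedness statement for the scale family $\{\text{Inv-Gamma}(\beta,\beta)\}$ in logarithmic coordinates — as $\beta$ grows, $\log V$ should place more mass on the fixed interval $[\log\phi(t)-t,\ \log\phi(t)+t]$, which always contains $0$ in its interior (since $0<\log\phi(t)<t$). The naive stationarity/sign argument and the convex-order argument both break down here, so this monotonicity must be extracted from finer structure of the Gamma law; once it is in hand, Theorem~\ref{thm:bayes} applies and everything else is conjugacy, an exactly solvable first-order condition, and routine bookkeeping with quantiles and pseudo-inverses.
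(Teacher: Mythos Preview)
Your proposal is correct and follows essentially the same route as the paper: conjugate Inverse-Gamma priors with shape $\alpha_k\downarrow 0$ fed into Theorem~\ref{thm:bayes}, the optimal-center computation for multiplicative windows (the paper's Lemma~\ref{lem:7}), and a direct verification that the rescaled sample variance attains the resulting bound while the plain sample variance does not by uniqueness of the optimizer. The monotonicity $\beta\mapsto p_\beta(t)$ that you flag as the hard ingredient is exactly the content of the paper's Lemma~\ref{lem:8}, which is likewise stated there without proof; so on this point you and the paper are at parity.
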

Surprisingly, Proposition \ref{prop:var} shows that the sample variance is suboptimal under the quantile risk, but that a careful reweighting of it is. We note that as $n \to \infty$, the weight converges to $1$, so that the sample variance is asymptotically minimax. We are not aware of a similar result in the literature.


\section{Smallest eigenvalue of the sample covariance matrix}
\label{sec:smallest}


The results of Sections \ref{sec:gaussian} and \ref{sec:general}, and in particular the sample size conditions in Corollary \ref{cor:suff} and Theorems \ref{thm:guarantee_1} and \ref{thm:guarantee_2}, rely on new high probability lower bounds on the smallest eigenvalue of the sample covariance matrix we describe in this section. We briefly formalize our problem, we then state our main results, and conclude this section by discussing and relating them to the literature.

Let $P_{X}$ be a distribution on $\R^{d}$ with finite second moments, $X \sim P_{X}$, and denote by $\Sigma \defeq \Exp\brack*{XX^{T}}$ its covariance matrix. For \iid samples $(X_i)_{i=1}^{n} \sim P_{X}^{n}$, define the sample covariance matrix $\widehat{\Sigma}_{n} \defeq  n^{-1} \sum_{i=1}^{n} X_{i}X_{i}^{T}$. In this section, we want to identify how close $\widehat{\Sigma}_{n}$ is to $\Sigma$ in a relative error sense and in a one-sided fashion. Specifically, we want to characterize the quantiles of the random variable $\lambdamax(I -\Sigma^{-1/2}\widehat{\Sigma}_{n}\Sigma^{-1/2}) = 1 - \lambdamin(\Sigma^{-1/2}\widehat{\Sigma}_{n}\Sigma^{-1/2})$. To ease notation, we introduce the whitened random vector $\widetilde{X} \defeq \Sigma^{-1/2} X$. Notice that $\Exp\brack{\widetilde{X}\widetilde{X}^{T}} = I_{d \times d}$, and that $\widetilde{\Sigma}_{n} \defeq n^{-1}\sum_{i=1}^{n} \widetilde{X}_{i}\widetilde{X}_{i}^{T} = \Sigma^{-1/2}\widehat{\Sigma}_{n}\Sigma^{-1/2}$. We want to understand the quantiles of $1 - \lambdamin(\widetilde{\Sigma}_{n})$.

As already mentioned, our motivation for studying this problem stems from the fact that upper bounds on these quantiles form a crucial step in the analysis of linear regression in general, e.g.\ \citet{oliveiraLowerTailRandom2016, mourtadaExactMinimaxRisk2022}, and in particular our results from Section \ref{sec:gaussian} and \ref{sec:general}. 

We are now ready to state our results. Define the following variance-like parameters
\begin{equation}
    \label{eq:matrix_param}
    S(P_{X}) \defeq \Exp\brack*{\paren*{\widetilde{X}\widetilde{X}^{T} - I}^{2}}, \quad\quad R(P_{X}) \defeq \sup_{v \in S^{d-1}} \Exp\brack*{\paren*{\inp{v}{\widetilde{X}}^{2} - 1}^2}.
\end{equation}
Our first result is the following proposition, which provides an asymptotic lower bound on the quantiles of $1 - \lambdamin(\widetilde{\Sigma}_{n})$, and a nearly matching non-asymptotic upper bound.
\begin{proposition}
\label{prop:asymp_lower}
    Assume that $P_{X}$ has finite fourth moments. Then for all $\delta \in (0, 0.1)$,
    \begin{equation*}
        \lim_{n \to \infty} \sqrt{n} \cdot Q_{1 - \lambdamin(\widetilde{\Sigma}_{n})}(1 - \delta) \geq \frac{1}{40} \cdot \paren*{\sqrt{\lambdamax(S(P_{X}))} + \sqrt{R(P_{X}) \log(1/\delta)}}.
    \end{equation*}
    Furthermore, for all $n \in \N$ and $\delta \in (0, 1)$,
    \begin{equation*}
        Q_{1 - \lambdamin(\widetilde{\Sigma}_{n})}(1-\delta) \leq \sqrt{\frac{8\lambdamax(S(P_{X}))\log(3d)}{n}} + \sqrt{\frac{2R(P_{X}) \log(1/\delta)}{n}} + \frac{(2\log(3d) + 4 \log(1/\delta))}{3n}.
    \end{equation*}
\end{proposition}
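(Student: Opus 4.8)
The plan is to treat the two bounds separately. For the non-asymptotic upper bound, I would write $1 - \lambdamin(\widetilde{\Sigma}_n) = \lambdamax(I - \widetilde{\Sigma}_n) = \lambdamax\big(\tfrac1n\sum_{i=1}^n (I - \widetilde{X}_i\widetilde{X}_i^T)\big)$, a sum of i.i.d.\ centered symmetric random matrices. The natural tool is a matrix Bernstein inequality in the form that controls $\lambdamax$ of such a sum with both a "variance" term and a "boundedness" term; since the summands need not be bounded, I would use a moment/Bernstein variant (e.g.\ via truncation of $\|\widetilde{X}_i\|^2$, or a matrix Bennett-type bound that only needs growth of moments). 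The variance proxy naturally splits: the operator-norm contribution $\lambdamax(S(P_X))$ from the full matrix variance $\Exp[(\widetilde{X}\widetilde{X}^T - I)^2]$, and a one-dimensional-direction contribution $R(P_X)$ coming from the scalar fluctuations $\inp{v}{\widetilde{X}}^2 - 1$ along the worst unit vector, which enters the $\log(1/\delta)$ term. The $\log(3d)$ is the usual dimensional factor (intrinsic dimension / union bound over an eigenvector net) attached to the matrix-variance term, while $\log(1/\delta)$ attaches to the scalar term; the final additive $(2\log(3d)+4\log(1/\delta))/(3n)$ is the Bernstein cubic/boundedness term. Converting the resulting tail bound $\Prob(1-\lambdamin(\widetilde{\Sigma}_n) \ge t) \le \delta$ into a quantile statement $Q_{1-\lambdamin(\widetilde{\Sigma}_n)}(1-\delta) \le t$ is immediate from the definition of the quantile function. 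The main care here is isolating the two distinct variance scales ($\lambdamax(S)$ vs.\ $R$) rather than crudely bounding everything by $\lambdamax(S)$, which is what lets the $R\log(1/\delta)$ term appear with the correct constant.

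For the asymptotic lower bound, I would invoke the multivariate CLT: since $P_X$ has finite fourth moments, the random matrix $\widetilde{X}\widetilde{X}^T - I$ has finite second moment, so $\sqrt{n}\,(\widetilde{\Sigma}_n - I) \overset{d}{\to} G$ where $G$ is a symmetric Gaussian random matrix (a "Gaussian orthogonal ensemble"-type object) with covariance structure determined by $\Exp[(\widetilde{X}\widetilde{X}^T - I)\otimes(\widetilde{X}\widetilde{X}^T - I)]$. Then $\sqrt{n}\,(1 - \lambdamin(\widetilde{\Sigma}_n)) = \lambdamax(\sqrt{n}(I - \widetilde{\Sigma}_n)) \overset{d}{\to} \lambdamax(-G) = \lambdamax(G)$ by continuity of $\lambdamax$ and the continuous mapping theorem. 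Hence $\sqrt{n}\, Q_{1-\lambdamin(\widetilde{\Sigma}_n)}(1-\delta) \to Q_{\lambdamax(G)}(1-\delta)$, provided the limiting quantile is a continuity point of the limiting CDF, which holds for $\delta \in (0,0.1)$ since $\lambdamax(G)$ has a density on the relevant range (I would justify this via anticoncentration of the Gaussian, or sidestep it by passing to $\liminf$ and using that the quantile function is lower semicontinuous under weak convergence, which already gives $\ge$). So it remains to lower bound $Q_{\lambdamax(G)}(1-\delta)$ by $\tfrac{1}{40}(\sqrt{\lambdamax(S(P_X))} + \sqrt{R(P_X)\log(1/\delta)})$. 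For the first term: $\lambdamax(G) \ge \inp{v}{Gv}$ for a fixed deterministic unit eigenvector $v$ of $S(P_X)$ attaining $\lambdamax(S(P_X))$, and $\inp{v}{Gv}$ is a centered Gaussian whose variance equals $\Exp[\inp{v}{(\widetilde{X}\widetilde{X}^T - I)v}^2]$; choosing $v$ cleverly (or using $\Exp[\lambdamax(G)] \gtrsim \Exp\|G\|_{\mathrm{op}} \gtrsim \sqrt{\lambdamax(S)}$ via a matrix Khintchine / Gaussian lower bound) yields the $\sqrt{\lambdamax(S)}$ scale. For the $\log(1/\delta)$ term: again test against the worst direction $v^*$ achieving $R(P_X)$, so $\inp{v^*}{Gv^*} \sim \mathcal{N}(0, R(P_X))$ (up to constants), and a standard Gaussian tail lower bound gives its $(1-\delta)$-quantile $\gtrsim \sqrt{R(P_X)\log(1/\delta)}$; combining the two testing directions and paying a constant to merge them produces the sum inside the parentheses with the constant $1/40$.

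The main obstacle I expect is the lower-bound direction, specifically getting both terms $\sqrt{\lambdamax(S)}$ and $\sqrt{R\log(1/\delta)}$ simultaneously out of a single Gaussian matrix with a clean absolute constant. Individually each is a short argument (test against one deterministic direction), but the direction maximizing $S$ and the direction maximizing $R$ need not coincide, and $\lambdamax(G)$ dominates $\inp{v}{Gv}$ for every fixed $v$ but not obviously the sum of the two tests; the fix is to use that $Q_{\lambdamax(G)}(1-\delta) \ge \max$ of the two individual quantile lower bounds and then note $\max\{a,b\} \ge (a+b)/2$, absorbing the factor $2$ into the overall $1/40$. A secondary technical point is rigorously passing from the CLT weak convergence to convergence of quantiles; I would handle this by taking $\liminf$ and using the lower-semicontinuity of quantiles under weak convergence (which needs no continuity-point assumption for the $\ge$ direction), thereby making the restriction $\delta \in (0,0.1)$ a convenience for stating a clean constant rather than a logical necessity. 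On the upper-bound side the only delicate point is choosing a matrix concentration inequality that does not require a.s.\ boundedness; a moment-based matrix Bernstein (controlling $\Exp[(\widetilde{X}\widetilde{X}^T-I)^{2m}]$ by $m!\,(\text{stuff})^m$ using the fourth-moment hypothesis together with the intrinsic-dimension refinement of van~Handel) handles this cleanly and produces exactly the $\log(3d)$ versus $\log(1/\delta)$ split.
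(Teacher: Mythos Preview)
Your overall architecture (CLT + continuous mapping for the lower bound; matrix concentration for the upper bound) is right, but both halves contain a genuine gap.

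\textbf{Upper bound.} A single matrix Bernstein inequality does \emph{not} produce the decoupling $\sqrt{\lambdamax(S)\log(3d)} + \sqrt{R\log(1/\delta)}$ you are after: matrix Bernstein has one variance proxy $\lambdamax(\sum_i \Exp[Y_i^2]) = \lambdamax(S)/n$, and both the dimensional factor and the confidence factor multiply it, yielding $\sqrt{\lambdamax(S)(\log d + \log(1/\delta))/n}$. The intrinsic-dimension refinement you cite does not change this. The paper gets the split by a \emph{two-stage} argument: first apply Bousquet's inequality to the scalar empirical process $\sup_{v}\sum_i Z_{i,v}$ with $Z_{i,v}=\tfrac1n(1-\inp{v}{\widetilde X_i}^2)$, which needs only the one-sided bound $Z_{i,v}\le 1/n$ and whose scalar variance is $R(P_X)/n$; this produces the $\sqrt{R\log(1/\delta)/n}$ and $\log(1/\delta)/n$ terms together with $\Exp[\sup_v \sum_i Z_{i,v}]$. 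Then bound that expectation by the one-sided matrix Bernstein (again using only $\lambdamax(I-\widetilde X\widetilde X^T)\le 1$), which gives $\sqrt{\lambdamax(S)\log(3d)/n}+\log(3d)/(3n)$. The one-sided boundedness of $I-\widetilde X\widetilde X^T$ is the key structural fact you are missing; it removes any need for truncation or moment-growth hypotheses, and it is precisely what lets the scalar variance $R$ appear on the $\log(1/\delta)$ term while $\lambdamax(S)$ appears only on the expectation.

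\textbf{Lower bound.} Your proposal to extract $\sqrt{\lambdamax(S)}$ by testing a fixed eigenvector $v$ of $S$ does not work: for any fixed unit $v$, $\inp{v}{Gv}$ is centered Gaussian with variance $\Exp[(\inp{v}{\widetilde X}^2-1)^2]$, which is at most $R(P_X)$, \emph{not} $v^T S(P_X) v$. (The quantity $v^T S v=\Exp\|(\widetilde X\widetilde X^T-I)v\|^2$ involves the full matrix action and is not the variance of any one-dimensional projection of $G$.) So fixed-direction testing only ever yields the $\sqrt{R}$ scale. The paper's route is the one you put in parentheses: first $\Exp\|G\|_{\mathrm{op}}^2 \ge \|\Exp G^2\|_{\mathrm{op}} = \lambdamax(S)$, then Gaussian Poincar\'e gives $\Exp\|G\|_{\mathrm{op}}\ge c\sqrt{\lambdamax(S)}$; but you still need a further step you did not mention, namely Gaussian concentration on the \emph{lower} tail of $\|G\|_{\mathrm{op}}$ (which is $\sqrt{R}$-Lipschitz) to convert this expectation lower bound into a quantile lower bound $Q_{\|G\|_{\mathrm{op}}}(1-2\delta)\gtrsim \Exp\|G\|_{\mathrm{op}}$ for $\delta$ small. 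That, combined with the fixed-direction bound for $\sqrt{R\log(1/\delta)}$ and the symmetry $G\overset{d}{=}-G$ (which gives $Q_{\lambdamax(G)}(1-\delta)\ge Q_{\|G\|_{\mathrm{op}}}(1-2\delta)$), yields the stated sum. Your $\max\{a,b\}\ge(a+b)/2$ trick is fine for the final combination.
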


Our second result extends the upper bound in Proposition \ref{prop:asymp_lower} to the case where $\lambdamin(\widetilde{\Sigma}_{n}) = \inf_{v \in S^{d-1}} n^{-1} \sum_{i=1}^{n} \inp{v}{\widetilde{X}_i}^{2}$ is subject to a direction dependent adversarial truncation. This result is needed in our analyses of Section \ref{sec:general}, from which we recall the definition of $a^{*}$ for a sequence $a$.

\begin{proposition}
\label{prop:lower_2}
    Let $\delta \in (0, 1/2)$ such that $k = 8\log(2/\delta)$ is an integer satisfying $1 \leq k \leq \floor{n/2}$. For $(v, i) \in S^{d-1} \times [n]$, define $Y_{i, v} \defeq \inp{v}{\widetilde{X}_i}^{2}$ and $\overline{\lambda}_{\normalfont\text{min}}(\widetilde{\Sigma}_{n}) \defeq \inf_{v \in S^{d-1}} n^{-1}\sum_{i=k+1}^{n-k} Y_{i, v}^{*}$. Then, if $n \geq 8\log(6d)$,
    \begin{equation*}
        Q_{(1-2k/n) - \overline{\lambda}_{\normalfont\text{min}}(\widetilde{\Sigma}_{n})}(1 - \delta) \leq 100 \paren*{\sqrt{\frac{8\log(6d)(\lambdamax(S(P_{X})) + 1)}{n}} + \sqrt{\frac{(R(P_{X}) + 1) \log(1/\delta)}{n}}}.
    \end{equation*}
\end{proposition}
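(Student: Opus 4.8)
The plan is to reduce the truncated quantity $\overline{\lambda}_{\normalfont\text{min}}(\widetilde{\Sigma}_{n})$ to the untruncated one handled by Proposition \ref{prop:asymp_lower}, paying a controlled price for the adversarial truncation. Writing $\overline{\lambda}_{\normalfont\text{min}}(\widetilde{\Sigma}_{n}) = \inf_{v \in S^{d-1}} n^{-1} \sum_{i=k+1}^{n-k} Y_{i,v}^{*}$, the first move is to note that for each fixed $v$, dropping the $k$ smallest and $k$ largest of $(Y_{i,v})_{i=1}^{n}$ and summing the rest is what produces $\sum_{i=k+1}^{n-k} Y_{i,v}^{*}$; since the $Y_{i,v}$ are nonnegative, dropping the $k$ smallest only decreases the sum, while dropping the $k$ largest is the real source of difficulty because it can be done adversarially in $v$. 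I would therefore write, for any $v$,
\begin{equation*}
    \frac{1}{n}\sum_{i=k+1}^{n-k} Y_{i,v}^{*} \;\geq\; \frac{1}{n}\sum_{i=1}^{n} Y_{i,v} \;-\; \frac{1}{n}\sum_{i=n-k+1}^{n} Y_{i,v}^{*} \;=\; \frac{1}{n}\sum_{i=1}^{n}\inp{v}{\widetilde{X}_i}^{2} \;-\; \frac{1}{n}\sum_{i=n-k+1}^{n} Y_{i,v}^{*},
\end{equation*}
so that $(1-2k/n) - \overline{\lambda}_{\normalfont\text{min}}(\widetilde{\Sigma}_{n}) \le \bigl(1 - \lambdamin(\widetilde{\Sigma}_{n})\bigr) + \sup_{v}\tfrac{1}{n}\sum_{i=n-k+1}^{n} Y_{i,v}^{*} - \tfrac{2k}{n}$, reducing the task to (i) the already-controlled term $1 - \lambdamin(\widetilde{\Sigma}_{n})$ and (ii) a uniform bound on the sum of the $k$ largest values of $\inp{v}{\widetilde{X}_i}^{2}$ over all directions $v$.

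For term (i) I would invoke the non-asymptotic upper bound of Proposition \ref{prop:asymp_lower} at failure probability $\delta/2$, which under $n \ge 8\log(6d)$ and $\delta < 1/2$ gives a bound of order $\sqrt{\lambdamax(S(P_{X}))\log(3d)/n} + \sqrt{R(P_{X})\log(1/\delta)/n}$ up to the lower-order $\log$ term, all of which is dominated by the claimed right-hand side after absorbing constants (the $+1$'s inside the square roots in the statement are exactly what lets us absorb the residual $(\log(3d) + \log(1/\delta))/n$ term into $\sqrt{(\lambdamax(S(P_{X}))+1)\log(3d)/n}$ and $\sqrt{(R(P_{X})+1)\log(1/\delta)/n}$ by AM–GM, since $t/n \le \max\{t^2/n, 1/n\} \le t^2/n + 1/n$ style arguments control $\log/n$ by $\sqrt{\log/n}$ once $\log \ge 1$). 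The harder term is (ii): I must show that with probability at least $1-\delta/2$,
\begin{equation*}
    \sup_{v \in S^{d-1}} \frac{1}{n}\sum_{i=n-k+1}^{n} \bigl(\inp{v}{\widetilde{X}_i}^{2}\bigr)^{*} \;\lesssim\; \frac{k}{n} \;+\; \sqrt{\frac{(\lambdamax(S(P_{X}))+1)\log(3d)}{n}} \;+\; \sqrt{\frac{(R(P_{X})+1)\log(1/\delta)}{n}},
\end{equation*}
i.e. the mass of the $k$ largest coordinates, uniformly over the sphere, is not much more than $k/n$ (which would be the ``typical'' contribution since $\Exp[\inp{v}{\widetilde{X}}^2]=1$) plus the same fluctuation scale. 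This is precisely a truncated/trimmed empirical process bound of the type the paper advertises developing: I would control $\sup_v \tfrac1n\sum_{i=n-k+1}^{n}(Y_{i,v})^{*}$ by the supremum over $v$ and over all index subsets $J \subset [n]$ with $|J| = k$ of $\tfrac1n\sum_{i\in J} \inp{v}{\widetilde{X}_i}^2$, then split each $\inp{v}{\widetilde{X}_i}^2$ as its mean $1$ (contributing $k/n$) plus the centered part $\inp{v}{\widetilde{X}_i}^2 - 1$, and bound $\sup_{v,J} \tfrac1n\sum_{i\in J}(\inp{v}{\widetilde{X}_i}^2-1)$ by the full centered supremum $\sup_v \tfrac1n\sum_{i=1}^n |\inp{v}{\widetilde{X}_i}^2 - 1|$ — no, that is too lossy; instead I would pass to a chaining/matrix-Khintchine argument as in the proof of Proposition \ref{prop:asymp_lower}, now applied to the process indexed by $(v, J)$, where the union over $\binom{n}{k}$ sets $J$ costs an extra $\log\binom{n}{k} \le k\log(en/k) \asymp k\log(2/\delta)/\ldots$ in the exponent, which is of the same order as $\log(1/\delta)$ up to constants because $k = 8\log(2/\delta)$. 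Concretely: the centered variables $\inp{v}{\widetilde{X}_i}^2 - 1$ have second moment at most $R(P_X)$ in each direction and matrix second moment $S(P_X)$, so a Bernstein/Khintchine-type bound for the supremum over $v$ of a sum over any fixed subset of size $k$ gives deviation $\sqrt{\lambdamax(S(P_X))\log(3d)/n} + \sqrt{R(P_X)(\log(1/\delta) + k\log(en/k))/n} + (\ldots)/n$, and since $k \asymp \log(1/\delta)$ the term $k\log(en/k)$ is $\lesssim \log(1/\delta)\log(n/\log(1/\delta))$ which I would need to further absorb — this is the delicate point.

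The main obstacle, and the step deserving the most care, is exactly this control of $\sup_{v,J}\tfrac1n\sum_{i\in J}(\inp{v}{\widetilde{X}_i}^2-1)$ with the correct $\log(1/\delta)$ dependence rather than $\log(1/\delta)\log n$: a naive union bound over the $\binom{n}{k}$ subsets is too crude. The right approach is probably to not union-bound over subsets at all, but to observe that the sum of the $k$ largest centered values is bounded by a Fenchel-type / variational identity, namely $\sum_{i \in J_{\max}}(Y_{i,v}-1) = \sup_{\theta \in [0,1]^n, \sum \theta_i = k}\sum_i \theta_i (Y_{i,v} - 1) \le k \cdot t + \sum_i (Y_{i,v} - 1 - t)_+$ for any threshold $t \ge 0$, so that $\sup_v \tfrac1n\sum_{i\in J_{\max}(v)}(Y_{i,v}-1) \le \tfrac{kt}{n} + \sup_v \tfrac1n\sum_i (\inp{v}{\widetilde{X}_i}^2 - 1 - t)_+$; now the second term is a supremum of an empirical process with bounded-increment (since $(\cdot)_+$ truncated from below is $1$-Lipschitz), to which a contraction/symmetrization plus matrix-Khintchine argument applies giving expectation $\lesssim \sqrt{\lambdamax(S(P_X))\log(3d)/n} + \sqrt{R(P_X)/n}$, with a Talagrand/bounded-differences concentration tail contributing $\sqrt{R(P_X)\log(1/\delta)/n} + \log(1/\delta)/n$, and then choosing $t \asymp \sqrt{R(P_X)\log(1/\delta)/n}$ (or $t$ a small absolute constant) balances $kt/n \asymp \log(1/\delta)\sqrt{R(P_X)\log(1/\delta)/n}/n$ — still carrying an awkward extra factor, so in fact the cleaner choice is $t$ an absolute constant of order $1$, giving $kt/n \asymp \log(1/\delta)/n \lesssim \sqrt{(1+\log(1/\delta))/n}\cdot\sqrt{\log(1/\delta)/n}\cdot\ldots$, and one finishes by noting $\log(1/\delta)/n \le \sqrt{(R(P_X)+1)\log(1/\delta)/n}$ whenever $n \ge \log(1/\delta)/(R(P_X)+1)$, which follows from $k \le \floor{n/2}$. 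Assembling (i) and (ii), combining the two failure events of probability $\delta/2$ each by a union bound, and collecting all absolute constants into the stated $100$, yields the claim; the $8\log(6d)$ lower bound on $n$ is used to keep the $\log(3d)/n$ remainder terms subordinate.
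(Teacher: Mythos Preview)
Your reduction has a sign error: the displayed inequality
\[
\frac{1}{n}\sum_{i=k+1}^{n-k} Y_{i,v}^{*} \;\geq\; \frac{1}{n}\sum_{i=1}^{n} Y_{i,v} - \frac{1}{n}\sum_{i=n-k+1}^{n} Y_{i,v}^{*}
\]
is false (the right-hand side equals $n^{-1}\sum_{i=1}^{n-k} Y_{i,v}^{*}$, which dominates the left-hand side since you have dropped $k$ further nonnegative terms). This is repairable by keeping the identity and also controlling the bottom-$k$ sum, so it is not the real obstruction.

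The real gap is in step (ii). Your Fenchel bound $\sum_{i=n-k+1}^{n}Y_{i,v}^{*}\le kt+\sum_i (Y_{i,v}-t)_+$ is fine, but the subsequent plan---symmetrize, contract $(\cdot-t)_+$ away, bound $\Exp\bigl[\sup_v \sum_i \eps_i \inp{v}{\widetilde X_i}^2\bigr]$ by matrix Khintchine, then apply Talagrand---breaks down under a bare fourth-moment assumption. The matrix Khintchine/Bernstein bound for $\Exp\bigl[\lambdamax(\sum_i \eps_i \widetilde X_i\widetilde X_i^{T})\bigr]$ carries an additive term $C\log d\cdot \Exp\bigl[\max_i\norm{\widetilde X_i}_2^{4}\bigr]^{1/2}$ (the ``weak variance'' in Tropp's expected-norm bound), and with only $\Exp[\norm{\widetilde X}_2^4]<\infty$ this max term can be of order $\sqrt{n\,\Tr(\tilde S)}$, not $O(1)$; after dividing by $n$ you are left with $\log d\,\sqrt{\Tr(\tilde S)/n}$, a factor $\sqrt{d}$ away from the target $\sqrt{\lambdamax(\tilde S)\log d/n}$. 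Likewise, Talagrand/Bousquet concentration for $\sup_v \sum_i (Y_{i,v}-t)_+$ needs a uniform upper bound on each summand, which you do not have. Your own hedging (``still carrying an awkward extra factor'') is detecting exactly this.

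The paper does not separate into $\bigl(1-\lambdamin(\widetilde\Sigma_n)\bigr)$ plus a top-$k$ correction at all. Instead it introduces a hard truncation $X_B\defeq \widetilde X\,\mathbbm{1}\{\norm{\widetilde X}_2^2<B\}$ at a level $B\asymp\sqrt{n\lambdamax(\tilde S)/\log d}$, so that $Z_{i,v}\defeq\inp{v}{X_{B,i}}^2\le B$ are uniformly bounded. One then pays a bias $\sup_v\bigl(\Exp\inp{v}{\widetilde X}^2-\Exp\inp{v}{X_B}^2\bigr)\le \lambdamax(\tilde S)/B$, and applies the paper's generic trimmed-process bound (Lemma~\ref{lem:trunc_7}) directly to $\sup_v \varphi_k\bigl((m(v)-Z_{i,v})_{i=1}^n\bigr)$. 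The Rademacher complexity this lemma requires is now a matrix-Khintchine bound for $\sum_i\eps_i X_{B,i}X_{B,i}^T$, whose weak-variance term is at most $B$ by construction; the choice of $B$ balances this against the bias, yielding the $\sqrt{\log d\,\lambdamax(\tilde S)/n}$ scale. In short, the missing idea is the preliminary truncation of $\widetilde X$ to make the matrix-Khintchine max term and the concentration step go through; without it, neither of your two tools in (ii) is valid at fourth moments.
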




\paragraph{Comparison with existing results.} To the best of our knowledge, the only known lower bound, asymptotic or not, on the quantiles of $1-\lambdamin(\widetilde{\Sigma}_{n})$ is due to \citet[Proposition 4]{mourtadaExactMinimaxRisk2022}. This bound however is distribution-free and decays fast, as $\log(1/\delta)/n$. In terms of upper bounds comparable to that of Proposition \ref{prop:asymp_lower}, the closest result we are aware of is due to \citet{oliveiraLowerTailRandom2016} (see also \citet{zhivotovskiyDimensionfreeBoundsSums2024}), who proved $\sqrt{n} Q_{1 - {\lambda}_{\text{min}}(\widetilde{\Sigma}_{n})}(1 - \delta) \lesssim \sqrt{(R(P_{X})+1)\brack*{d + \log(1/\delta)}}$. In general, our upper bound and theirs are not comparable, and when combined they yield the best of both. Nevertheless, we suspect that our bound is better on heavy-tailed problems. Indeed, by Jensen's inequality, it is not hard to see that $\lambdamax(S(P_{X})) \leq R(P_{X}) \cdot d$, so our upper bound from Proposition \ref{prop:asymp_lower} can be at most worse by $\sqrt{\log(d)}$. This occurs when $X$ is a centred Gaussian, for which it is known that Oliveira's bound is tight \citep{koltchinskiiConcentrationInequalitiesMoment2017}. On the other hand, consider $X$ with centred independent coordinates, and where the first coordinate has kurtosis $\kappa \gg 1$, while the other coordinates have constant kurtosis. Then Oliveira's bound scales as $\sqrt{\kappa \cdot d}$, while ours scales as $\sqrt{\kappa \cdot \log(d)}$. Finally, versions of Proposition \ref{prop:lower_2} that mimic Oliveira's bound can be deduced from the recent literature \citep{abdallaCovarianceEstimationOptimal2023,oliveiraImprovedCovarianceEstimation2022}. The same considerations apply when comparing these results.

\paragraph{On the fourth moment assumption.}
We carried out our analysis under a fourth moment assumption on $P_{X}$. We argue here that this is in some sense the most natural assumption to study this problem. Indeed, recall the fact that $\widetilde{\Sigma}_{n}$ is an empirical average of the random matrix $\widetilde{X}\widetilde{X}^{T}$. Therefore, by the law of large numbers, $\widetilde{\Sigma}_{n} \overset{d}{\to} I_{d \times d}$, and by the continuous mapping theorem, $\lambdamin(\widetilde{\Sigma}_{n}) \overset{d}{\to} 1$ as $n \to \infty$. To say something about the rate of this convergence, the most natural assumption to make is that the entries of the random matrix $XX^{T}$ have finite variance so that the CLT holds. This is equivalent to assuming that $P_{X}$ has fourth moments.

\paragraph{On the critical sample size.} Our main application of Propositions \ref{prop:asymp_lower} and \ref{prop:lower_2} is in providing upper bounds on the critical sample size $n^{*}(P_{X}, \delta) \defeq \min\brace*{n \in \N \st Q_{1 - \lambdamin(\widetilde{\Sigma}_{n})}(1-\delta/2) \leq 1/4}$. In particular, these upper bounds correspond to the sample size restrictions in Corollary \ref{cor:suff} and Theorems \ref{thm:guarantee_1} and \ref{thm:guarantee_2}. We claimed after the statement of these results that these restrictions were in some sense optimal; we expand on this here. Let $L \defeq \lim_{n \to \infty} \sqrt{n} \cdot Q_{1 - \lambdamin(\widetilde{\Sigma}_{n})}(1 - \delta)$, and define $n_{0}(P_{X}, \delta) \defeq \min\brace*{n \in \N \st m \geq n \Rightarrow Q_{1 - \lambdamin(\widetilde{\Sigma}_{m})}(1 - \delta/2) \geq \frac{L}{4\sqrt{m}}}$. If $n_{0}(P_{X}, \delta) \leq n^{*}(P_{X}, \delta)$, then we can reverse the above-mentioned upper bounds using the first item of Proposition \ref{prop:asymp_lower}, up to a $\sqrt{\log(d)}$ factor. 
In words, if the critical sample size is determined by the asymptotic behaviour of the $1-\delta/2$ quantile of $1-\lambdamin(\widetilde{\Sigma}_{n})$, then our bounds on the critical sample size are tight. When the hypothesis in this last statement is true remains unclear however. The choice of the constant $1/4$ in the above argument is arbitrary, and can be replaced with any absolute constant.

\section{Conclusion}
In this paper, we studied minimax linear regression under the quantile risk. We gave an in-depth characterization of the minimax risk over the Gaussian class $\mathcal{P}_{\normalfont\text{Gauss}}(P_{X}, \sigma^2)$, and leveraged these results to establish the minimaxity, up to absolute constants, of the min-max regression procedure for $p$-norm regression problems. While the problem of estimation with high confidence has been studied intensely recently, we are not aware of its formalization through quantiles as was done in this paper. We hope this new perspective proves fruitful in advancing both our understanding of learning problems and our ability to design efficient solutions for them.

\acks{Resources used in preparing this research were provided in part by the Province of Ontario, the Government of Canada through CIFAR, and companies sponsoring the Vector Institute. CM acknowledges the support of the Natural Sciences and Engineering Research Council of Canada (NSERC), RGPIN-2021-03445. MAE was partially supported by NSERC Grant [2019-06167], CIFAR AI Chairs program, and CIFAR AI Catalyst grant.}

\nocite{catoni2016pac}
\bibliography{biblio}

\newpage
\appendix


\tableofcontents

\newpage
\section{Preliminaries}
\subsection{Pseudo-inverses and quantile function}

We say a function $f: \R \to \R$ is increasing if $x < y$ implies $f(x) \leq f(y)$, and strictly increasing if $x < y$ implies $f(x) < f(y)$. For a function $f: \R \to \R$, we define $\im(f) = \brace*{f(x) \st x \in \R}$.
\begin{definition}
    Let $f: \R \to \R$ be an increasing function. We define $f^{-}: [-\infty, \infty] \to [-\infty, \infty]$, the pseudo-inverse of $f$, by
    \begin{equation*}
        f^{-}(y) \defeq \inf\brace*{x \in \R \st f(x) \geq y},
    \end{equation*}
    with the conventions $\inf \varnothing \defeq \infty$ and $\inf \R \defeq -\infty$.
\end{definition}
\begin{lemma}
    \label{lem:prelims-1}
    The following holds.
    \begin{itemize}
        \item Let $f: \R \to \R$ be an increasing function. Then for all $x \in \R$, $f^{-}(f(x)) \leq x$.
        \item Let $f, g$ be increasing functions from $\R$ to $\R$. If $f \geq g$ then $f^{-} \leq g^{-}$.
        \item Let $I$ be an index set and let $\brace*{f_i}_{i \in I}$ be a collection of increasing functions from $\R$ to $\R$. Then
            \begin{equation*}
                \paren*{\sup_{i \in I} f_{i}}^{-} \leq \inf_{i \in I} f_{i}^{-} \leq \sup_{i \in I} f_{i}^{-} \leq \paren*{\inf_{i \in I} f_{i}}^{-}.
            \end{equation*}
        \item Let $f: \R \to \R$ be a strictly increasing function, so that it is a bijection from $\R$ to $\im(f)$. Denote by $f^{-1}:\im(f) \to \R$ the inverse of $f$. Then for all $y \in \im(f)$, $f^{-}(y) = f^{-1}(y)$.
        \item Let $f: \R \to \R$ be an increasing right-continuous function. Then for all $y \in [-\infty, \infty]$,
        \begin{equation*}
            f^{-}(y) = \min\brace*{x \in \R \st f(x) \geq y},
        \end{equation*}
        with the conventions $\min \varnothing \defeq \infty$ and $\min \R \defeq -\infty$.
        \item Let $I$ be an index set and let $\brace*{f_i}_{i \in I}$ be a collection of increasing right-continuous functions from $\R$ to $\R$. Then
            \begin{equation*}
                \sup_{i \in I} f_{i}^{-} = \paren*{\inf_{i \in I} f_{i}}^{-}.
            \end{equation*}
        \item Let $f: \R \to \R$ be increasing and right-continuous, and let $g: \R \to \R$ be increasing. Then
            \begin{equation*}
                (f \circ g)^{-} = g^{-} \circ f^{-}.
            \end{equation*}
        \item Let $(f_k)_{k \in \N}$ be a decreasing sequence of increasing right-continuous functions, and assume that $f_{n} \to f$ pointwise as $n \to \infty$. Then,
            \begin{equation*}
                \sup_{n \in \N} f_{n}^{-1} = f^{-1}.
            \end{equation*}
    \end{itemize}
\end{lemma}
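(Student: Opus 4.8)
The plan is to reduce the statement to the sixth item of this lemma (throughout, $f^{-1}$ and $f_k^{-1}$ are read as the pseudo-inverses $f^{-}$ and $f_k^{-}$). Since the sequence $(f_k)_{k\in\N}$ is decreasing, for every $x\in\R$ its pointwise limit equals its pointwise infimum, $f(x)=\lim_{k\to\infty}f_k(x)=\inf_{k\in\N}f_k(x)$; thus $f=\inf_{k\in\N}f_k$, and $f$ is $\R$-valued by hypothesis. Each $f_k$ is increasing and right-continuous, so applying the sixth item with index set $I=\N$ gives $\sup_{k\in\N}f_k^{-}=\paren*{\inf_{k\in\N}f_k}^{-}=f^{-}$, which is exactly the claim.

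For completeness I would also give a direct two-sided proof. For $\sup_{k\in\N}f_k^{-}\le f^{-}$: since $f\le f_m$ for each $m$, the monotonicity of the pseudo-inverse (second item) yields $f_m^{-}\le f^{-}$, and taking the supremum over $m$ gives the inequality. For the reverse direction, fix $y\in[-\infty,\infty]$ and set $s\defeq\sup_{k\in\N}f_k^{-}(y)$, where we may assume $s<\infty$. For any $x>s$ and any $m\in\N$ we have $x>f_m^{-}(y)=\inf\brace*{t\in\R\st f_m(t)\ge y}$, so this set is nonempty and contains some $t<x$; since $f_m$ is increasing, $f_m(x)\ge f_m(t)\ge y$. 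Letting $m\to\infty$ gives $f(x)=\lim_m f_m(x)\ge y$. Hence $(s,\infty)\subseteq\brace*{t\in\R\st f(t)\ge y}$, so $f^{-}(y)=\inf\brace*{t\in\R\st f(t)\ge y}\le s$. As $y$ was arbitrary, $f^{-}\le\sup_{k\in\N}f_k^{-}$.

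This is all routine manipulation of pseudo-inverses; the only step requiring a little care is the reverse inequality, where one passes $f_m(x)\ge y$ through the limit and then reads ``$f(x)\ge y$ for all $x>s$'' back as ``$f^{-}(y)\le s$'' via the defining infimum. No real obstacle arises.
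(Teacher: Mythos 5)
Your reduction is correct and is a genuinely different, more economical route than the paper's. You observe that because the sequence is decreasing, the pointwise limit equals the pointwise infimum, so $f = \inf_{k} f_k$; since each $f_k$ is increasing, right-continuous, and $\R$-valued, the sixth item (with $I=\N$) immediately yields $\sup_k f_k^{-} = \paren*{\inf_k f_k}^{-} = f^{-}$, and you are done. The paper instead gives a fresh two-sided argument: the $(\leq)$ direction is the same monotonicity argument you use, but for $(\geq)$ it proceeds by contradiction, assuming $\sup_n f_n^{-}(y) < f^{-}(y)$ and splitting into three cases depending on whether $\sup_n f_n^{-}(y)$ and $f^{-}(y)$ are $\pm\infty$ or finite, invoking the fifth item (which requires right-continuity of each $f_n$) in each finite case. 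Your alternate direct proof is also correct and arguably cleaner than the paper's: rather than arguing by contradiction and cases, you show directly that $(s,\infty) \subseteq \brace*{t \in \R \st f(t) \geq y}$ where $s \defeq \sup_k f_k^{-}(y)$, hence $f^{-}(y) \leq s$, and notably this direct argument never needs the right-continuity of the $f_k$ at all. Both of your approaches buy simplicity: the reduction avoids duplicating work already done for the sixth item, and the direct proof avoids case-splitting and the fifth item; the paper's choice to prove the eighth item from scratch gives nothing extra here.
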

\begin{proof}
\begin{itemize}
    \item
    We have, since $f(x) \geq f(x)$,
    \begin{equation*}
        f^{-}(f(x)) = \inf\brace*{z \in \R \st f(z) \geq f(x)} \leq x.
    \end{equation*}
    \item
    Fix $y \in [-\infty, \infty]$, and define $S_{f} \defeq \brace*{x \in \R \st f(x) \geq y}$, and $S_{g} \defeq \brace*{x \in \R \st g(x) \geq y}$. We claim that $S_{g} \subset S_{f}$ from which the statement follows. If $S_{g} = \varnothing$, the statement follows trivially. Otherwise let $x \in S_{g}$. Then $f(x) \geq g(x) \geq y$, so $x \in S_{f}$.
    \item 
    We prove the last inequality. The first follows from a similar argument. By definition, for all $j \in I$, $\sup_{i \in I} f_i \geq f_j$. Applying the second item yields that for all $j \in I$, $\paren*{\sup_{i \in I} f_i}^{-} \leq f_{j}^{-}$. Taking the infimum over $j \in I$ yields the result.
    \item
    Let $y \in \im(f)$ and let $S \defeq \brace*{x \in \R \st f(x) \geq y}$. We claim that $f^{-1}(y) = \min S$ from which the claim follows since then $f^{-}(y) = \min S$. Indeed, since $f(f^{-1}(y)) = y$, we have $f^{-1}(y) \in S$. Now suppose that there exists an $x \in S$ such that $f^{-1}(y) > x$. Since $f$ is strictly increasing, we would have $y = f(f^{-}(y)) > f(x)$, which contradicts the fact that $x \in S$. Therefore $f^{-1}(y) \leq x$ for all $x \in S$. This proves that $f^{-1}(y) = \min S$.
    \item
    The statement holds trivially if $f^{-}(y) \in \brace*{-\infty, \infty}$. Otherwise, $f^{-}(y) \in \R$, and by definition of the infimum, for all $k \in \N$, we have $x_k \defeq f^{-}(y) + 1/k \in S$ and therefore $f(x_k) \geq y$. Furthermore, $\lim_{k \to \infty} x_k = f^{-1}(y)$ and $x_k > f^{-1}(y)$, so by the right-continuity of $f$ we obtain
    \begin{equation*}
        f(f^{-1}(y)) = \lim_{k \to \infty} f(x_k) \geq y.
    \end{equation*}
    Therefore $f^{-1}(y) \in S$ which implies $f^{-1}(y) = \min S$.
    \item The inequality $(\leq)$ is covered by the third item, therefore it is enough to prove the inequality $(\geq)$. Let $y \in [-\infty, \infty]$. We claim that
    \begin{equation*}
        \sup_{i \in I}f^{-}_{i}(y) \geq \paren*{\inf_{i \in I}f_i}^{-}(y).
    \end{equation*}
    The statement follows trivially if $\sup_{i \in I}f^{-}_{i}(y) = \infty$. Otherwise, we have $f^{-}_{i}(y) < \infty$ for all $i \in I$. If $\sup_{i \in I}f^{-}_{i}(y) = -\infty$, then $f_{i}^{-}(y) = -\infty$ for all $i \in I$, which implies that $f_{i}(x) \geq y$ for all $x \in \R$. This in turn implies that for all $x \in \R$, $\inf_{i \in I}f_{i}(x) \geq y$ and therefore $\paren*{\inf_{i \in I}f_i}^{-}(y) = -\infty$. It remains to consider the case where $\sup_{i \in I}f^{-}_{i}(y) \in \R$. We claim that
    \begin{equation*}
        \inf_{i \in I} f_{i}\paren*{\sup_{j \in I}f^{-}_{j}(y)} \geq y
    \end{equation*}
    from which the main claim follows by definition of the pseudo-inverse. Indeed, let $i \in I$. If $f^{-}_{i}(y) \in \R$, then we have
    \begin{equation*}
        f_{i}\paren*{\sup_{j \in I}f^{-}_{j}(y)} \geq f_{i}(f_{i}^{-}(y)) \geq y
    \end{equation*}
    where the first inequality holds since $f_i$ is increasing, and the second by the fifth item and the fact that $f^{-}_{i}(y) \in \R$. Otherwise $f_{i}^{-}(y) = -\infty$, and therefore $f_{i}(x) \geq y$ for all $x \in \R$, which in particular implies the desired statement since $\sup_{i \in I}f^{-}_{i}(y) \in \R$.
    \item
    Let $y \in [-\infty, \infty]$. By the assumed properties of $f$ and the fifth item, we have $f(g(x)) \geq y$ if and only if $g(x) \geq f^{-}(y)$. Therefore
    \begin{align*}
        (f \circ g)^{-}(y) &= \inf\brace*{x \in \R \st f(g(x)) \geq y} \\
        &= \inf\brace*{x \in \R \st g(x) \geq f^{-}(y)} \\
        &= g^{-}(f^{-}(y)) = (g^{-} \circ f^{-})(y).
    \end{align*}
    \item 
    We start with the inequality ($\leq$). Let $x \in \R$. Since $(f_{n}(x))_{n \in \N}$ is decreasing, we have $f(x) = \lim_{n \to \infty} f_{n}(x) = \inf_{n \in \N} f_{n}(x)$. Therefore, for all $n \in \N$, we have $f_{n} \geq f$. By the second item, we therefore have $f^{-}_{n} \leq f^{-}$. Taking supremum over $n$ yields the result. 
    
    For the inequality $(\geq)$, let $y \in \R$, and suppose that $\sup_{n \in \N} f_{n}^{-}(y) < f^{-}(y)$. If $\sup_{n \in \N} f_{n}^{-}(y) = -\infty$, then for all $x \in \R$ and for all $n \in \N$, $f_{n}(x) \geq y$, which implies that for all $x \in \R$, $f(x) = \lim_{n \to \infty} f_{n}(x) \geq y$, and therefore $f^{-1}(y) = -\infty$, contradicting the strict inequality. Otherwise $x^{*} \defeq \sup_{n \in \N} f_{n}^{-}(y) \in \R$, and either $f^{-}(y) = \infty$ or $f^{-}(y) \in \R$.

    If $f^{-}(y) = \infty$, then on the one hand, for all $x \in \R$, $\lim_{n \to \infty} f_{n}(x) = f(x) < y$. On the other hand, for all $n \in \N$, $f_{n}\paren*{x^{*}} \geq y$. Indeed, if $f^{-}_{n}(y) \in \R$, then by the fifth item $f_{n}(x^{*}) \geq f_{n}(f^{-}_{n}(y)) \geq y$. Otherwise, $f^{-}_{n}(y) = -\infty$ so that $f(x) \geq y$ for all $x \in \R$, and in particular $f_{n}(x^{*}) \geq y$. But then we get the contradiction $y > \lim_{n \to \infty} f_{n}\paren*{x^{*}} \geq y$.
    
    Finally, if $f^{-}(y) \in \R$, define $\eps \defeq f^{-}(y) - x^{*} > 0$. By definition of $x^{*}$, $f^{-}(y) - \eps \geq f^{-}_{n}(y)$ for all $n \in \N$. We claim that for all $n \in \N$
    \begin{equation*}
        f_{n}(f^{-}(y) - \eps) \geq y.
    \end{equation*}
    Indeed, if $f^{-}_{n}(y) \in \R$, then by the fifth item $f_{n}(f^{-}(y) - \eps) \geq f_{n}(f^{-}_{n}(y)) \geq y$. Otherwise, $f^{-}_{n}(y) = -\infty$ so that $f(x) \geq y$ for all $x \in \R$, and in particular $f_{n}(f^{-}(y) - \eps) \geq y$ since $f^{-}(y) - \eps \in \R$. Taking the limit as $n \to \infty$ yields
    \begin{equation*}
        \lim_{n \to \infty} f_{n}(f^{-}(y) - \eps) = f(f^{-}(y)-\eps) \geq y 
    \end{equation*}
    contradicting the minimality of $f^{-}(y)$.
\end{itemize}
\end{proof}

For a random variable $X$, we denote by $F_{X}: \R \to \R$ its cumulative distribution function $F_{X}(x) \defeq \Prob\paren*{X \leq x}$, and by $Q_{X}: [-\infty, \infty] \to [-\infty, \infty]$ its quantile function $Q_{X}(p) \defeq F_{X}^{-}(p)$. Since $F_{X}$ is right-continuous, then by the fifth item of Lemma \ref{lem:prelims-1} we have
\begin{equation*}
    Q_{X}(p) = \min\brace*{x \in \R \st F_{X}(x) \geq p}.
\end{equation*}
Furthermore, since $\lim_{x \to -\infty} F_{X}(x) = 0$ and $\lim_{x \to \infty} F_{X}(x) = 1$, it is easy to verify that $Q_{X}(p) \in \R$ for all $p \in (0, 1)$ and $Q_{X}(0) = -\infty$. If $X, Y$ are two random variables, we define the random variable $F_{X \mid Y}(x) \defeq \Prob\paren*{X \leq x \st Y}$ and we note that $F_{X}(x) = \Exp\brack{F_{X \mid Y}(x)}$ for all $x \in \R$.
\begin{lemma}
\label{lem:invar}
    Let $\varphi: \R \to \R$ be strictly increasing and left continuous. Then for all $p \in (0, 1)$
    \begin{equation*}
        Q_{\varphi(X)}(p) = \varphi(Q_{X}(p)).
    \end{equation*}
    where we define $\varphi(\infty) \defeq \infty$ and $\varphi(-\infty) \defeq -\infty$.
\end{lemma}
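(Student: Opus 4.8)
The plan is to prove the two inequalities $Q_{\varphi(X)}(p) \le \varphi(Q_X(p))$ and $Q_{\varphi(X)}(p) \ge \varphi(Q_X(p))$ separately, working directly from the definition $Q_Y(p) = F_Y^{-}(p) = \inf\brace*{t \in \R \st F_Y(t) \ge p}$ together with the characterisation $Q_X(p) = \min\brace*{x \in \R \st F_X(x) \ge p}$, which is available because $F_X$ is right-continuous (fifth item of Lemma~\ref{lem:prelims-1}). Write $q \defeq Q_X(p)$; since $p \in (0,1)$ we have $q \in \R$ and hence $\varphi(q) \in \R$, so none of the $\pm\infty$ conventions ever intervene and the claim is an equality of real numbers.

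For the upper bound, the minimality characterisation of $q$ gives $F_X(q) \ge p$, and since $\varphi$ is increasing we have $\brace*{X \le q} \subseteq \brace*{\varphi(X) \le \varphi(q)}$, whence $F_{\varphi(X)}(\varphi(q)) \ge F_X(q) \ge p$. Thus $\varphi(q)$ belongs to the set whose infimum defines $Q_{\varphi(X)}(p)$, so $Q_{\varphi(X)}(p) \le \varphi(q)$. This half uses neither strictness nor left-continuity.

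For the lower bound — the step where left-continuity is essential — fix any $t \in \R$ with $t < \varphi(q)$. Left-continuity and monotonicity of $\varphi$ give $\varphi(q) = \lim_{x \uparrow q} \varphi(x) = \sup_{x < q} \varphi(x)$, so there exists $q' < q$ with $\varphi(q') > t$. I claim $\brace*{\varphi(X) \le t} \subseteq \brace*{X \le q'}$: if $\varphi(X(\omega)) \le t$ but $X(\omega) > q'$, then monotonicity forces $\varphi(X(\omega)) \ge \varphi(q') > t$, a contradiction. Hence $F_{\varphi(X)}(t) \le F_X(q')$, and because $q' < q = \min\brace*{x \st F_X(x) \ge p}$ we get $F_X(q') < p$, so $F_{\varphi(X)}(t) < p$. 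As $t < \varphi(q)$ was arbitrary, no point strictly below $\varphi(q)$ lies in $\brace*{t \st F_{\varphi(X)}(t) \ge p}$, so $Q_{\varphi(X)}(p) \ge \varphi(q)$. Combining the two bounds yields $Q_{\varphi(X)}(p) = \varphi(q) = \varphi(Q_X(p))$.

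The only genuine subtlety, and the main place to be careful, is the production of the point $q'$ with $q' < q$ and $\varphi(q') > t$: this is exactly where left-continuity of $\varphi$ cannot be dropped (a strictly-increasing $\varphi$ with a right-continuous upward jump at $q$ would make the identity fail, with $Q_{\varphi(X)}(p)$ landing at the left limit of $\varphi$ at $q$ rather than at $\varphi(q)$); everything else is bookkeeping with definitions. An essentially equivalent alternative route is to first establish the pointwise equivalence $\varphi(x) \le t \iff x \le \varphi^{-}(t)$ for all $x,t \in \R$ (valid precisely because $\varphi$ is increasing and left-continuous), deduce $F_{\varphi(X)} = F_X \circ \varphi^{-}$, and then invoke the composition rule for pseudo-inverses (seventh item of Lemma~\ref{lem:prelims-1}); but the direct argument above is shorter and avoids dealing with $\varphi^{-}$ taking values in $[-\infty,\infty]$.
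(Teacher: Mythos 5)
Your approach is genuinely different from the paper's, and arguably cleaner. The paper proves the identity $F_X = F_{\varphi(X)} \circ \varphi$, applies the composition rule for pseudo-inverses (penultimate item of Lemma~\ref{lem:prelims-1}) to obtain $Q_X = \varphi^{-} \circ Q_{\varphi(X)}$, and then devotes the bulk of its argument to showing that $Q_{\varphi(X)}(p) \in \im(\varphi)$ so that applying $\varphi$ on the left cancels $\varphi^{-}$ and recovers $Q_{\varphi(X)}(p)$. Your direct two-inequality argument avoids that image analysis entirely; the upper bound is the easy half, and the use of left-continuity to manufacture $q' < q$ with $\varphi(q') > t$ is exactly the right step for the lower bound. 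On the set of $p$ where $Q_X(p) \in \R$, your proof is correct and shorter than the paper's.

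There is, however, a genuine gap around extended-valued $X$. You assert that ``since $p \in (0,1)$ we have $q \in \R$ \dots\ so none of the $\pm\infty$ conventions ever intervene,'' which is only true when $X$ is almost surely real-valued. The conventions $\varphi(\pm\infty) \defeq \pm\infty$ are in the statement precisely because the paper applies the lemma to random variables that equal $\infty$ with positive probability (for instance $\widetilde{\mathcal{E}}(Z)$ in Theorem~\ref{thm:lin_reg}, which is set to $\infty$ whenever $\widehat{\Sigma}_{n}$ is singular). Writing $\eps_{-} \defeq \Prob(X=-\infty)$ and $\eps_{+} \defeq \Prob(X=+\infty)$, one has $Q_X(p) = -\infty$ for $p \le \eps_-$ and $Q_X(p) = +\infty$ for $p > 1-\eps_+$, so the reduction to ``an equality of real numbers'' fails on those $p$-ranges. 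The paper opens its proof by dispatching exactly these cases --- both sides equal $-\infty$ on the first range and $+\infty$ on the second, by the conventions --- and only then restricts to the interval where $Q_X(p) \in \R$. Your argument works verbatim on that interval, so the fix is simply to add this preliminary case split rather than to dismiss the conventions as never triggered.
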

\begin{proof} 
    Let $\eps_{-} \defeq \Prob\paren*{X = \infty}$ and $\eps_{+} = \Prob\paren*{X = \infty}$. By definition, $\varphi(X) = \infty \Leftrightarrow X = -\infty$, so the identity holds trivially for all $p \in (0, \eps_{-}] \cup [1-\eps_{+}, 1)$. Now consider the case $p \in I \defeq (\eps_{-}, 1-\eps_{+})$. First, since $\lim_{x \to -\infty} F_{X}(x) = \eps_{-}$ and $\lim_{x \to \infty} F_{X}(x) = 1- \eps_{+}$ by continuity of probability measures, we have $Q_{X}(p) \in \R$. The same argument shows that $Q_{\varphi(X)}(p) \in \R$. Now, since $\varphi$ is strictly increasing,
    \begin{equation*}
        F_{X}(x) = \Prob\paren*{X \leq x} = \Prob\paren*{\varphi(X) \leq \varphi(x)} = F_{\varphi(X)}(\varphi(x)) = (F_{\varphi(X)} \circ \varphi)(x).
    \end{equation*}
    Therefore, by the penultimate item of Lemma \ref{lem:prelims-1}, we have
    \begin{equation}
    \label{eq:pf_lem_2_1}
        Q_{X} = F_{X}^{-} = \varphi^{-} \circ F_{\varphi(X)}^{-} = \varphi^{-} \circ Q_{\varphi(X)}.
    \end{equation}
    We claim that for all $p \in I$,
    \begin{equation}
    \label{eq:pf_lem_2_2}
        (\varphi \circ \varphi^{-} \circ Q_{\varphi(X)})(p) = Q_{\varphi(X)}(p).
    \end{equation}
    By the fourth item of Lemma \ref{lem:prelims-1}, it is enough to show that $Q_{\varphi(X)}(p) \in \im(\varphi)$ for all $p \in I$. This will be the goal of the proof. Let $p \in I$, and define
    $S \defeq \brace*{x \in \R \st \varphi(x) \leq Q_{\varphi(X)}(p)}$. We claim that $S$ is non-empty and upper bounded. Indeed, suppose not. Then either $\varphi(x) > Q_{\varphi(X)}(p)$ for all $x \in \R$ or $\varphi(x) \leq Q_{\varphi(X)}(p)$ for all $x \in \R$. In the former case, this implies that for all $x \in \R$
    \begin{equation*}
        F_{X}(x) = \Prob\paren*{X \leq x} = \Prob\paren*{\varphi(X) \leq \varphi(x)} \geq \Prob\paren*{\varphi(X) \leq Q_{\varphi(X)}(p)} \geq p > \eps_{-},
    \end{equation*}
    where the second inequality follows from the fifth item of Lemma \ref{lem:prelims-1} and the fact that $Q_{\varphi(X)}(p) \in \R$. This leads to the contradiction
    \begin{equation*}
        \eps_{-} = \lim_{x \to -\infty} F_{X}(x) \geq p > \eps_{-}.
    \end{equation*}
    In the latter case, we get that for all $x \in \R$
    \begin{equation*}
        F_{X}(x) = \Prob\paren*{X \leq x} = \Prob\paren*{\varphi(X) \leq \varphi(x)} \leq \Prob\paren*{\varphi(X) \leq Q_{\varphi(X)}(p)}.
    \end{equation*}
    This leads to
    \begin{equation*}
        1 - \eps_{+} = \lim_{x \to \infty} F_{X}(x) \leq \Prob\paren*{\varphi(X) \leq Q_{\varphi(X)}(p)} \leq 1 - \eps_{+}.
    \end{equation*}
    where the last inequality follows from the fact that $Q_{\varphi(X)}(p) \in \R$. Now, since
    \begin{equation*}
        \Prob\paren*{\varphi(X) \leq \lim_{x \to \infty} \varphi(x)} = 1-\eps_{+}
    \end{equation*}
    and $\varphi(x) \leq Q_{\varphi(X)}(p)$ for all $x \in \R$, we get by the minimality property of $Q_{\varphi(X)}(p)$
    \begin{equation*}
        Q_{\varphi(X)}(p) = \lim_{x \to \infty}\varphi(x).
    \end{equation*}
    But, on the one hand, we have by continuity of probability,
    \begin{equation*}
        \lim_{n \to \infty} \Prob\paren*{\varphi(X) \leq \varphi(n)} = \Prob\paren*{\varphi(X) \leq \lim_{n \to \infty} \varphi(n)} = 1 - \eps_{+}
    \end{equation*}
    yet on the other, since $\varphi$ is strictly increasing, we have $\varphi(n) < \lim_{x \to \infty}\varphi(x) = Q_{\varphi(X)}(p)$ for all $n \in \N$, so by the minimality of $Q_{\varphi(X)}(p)$, $\Prob\paren*{\varphi(X) \leq \varphi(n)} < p$ for all $n \in \N$, from which we obtain the contradiction
    \begin{equation*}
        1 - \eps_{+} = \lim_{n \to \infty} \Prob\paren*{\varphi(X) \leq \varphi(n)} \leq p
    \end{equation*}
    This proves that $S$ is non-empty and upper bounded. Now define $x_0 \defeq \sup S$, which is guaranteed to satisfy $x_{0} \in \R$ by the upper boundedness of $S$ and its non-emptiness. We claim that $\varphi(x_0) =Q_{\varphi(X)}(p)$. Indeed, by the left-continuity of $\varphi$, we have, for any sequence $(x_n)_{n \in \N}$ in $S$ such that $x_n \to x_0$
    \begin{equation}
    \label{eq:pf_lem_2_3}
        \varphi(x_0) = \lim_{n \to \infty} \varphi(x_n) \leq Q_{\varphi(X)}(p)
    \end{equation}
    where the last inequality follows from the definition of $S$ and the fact that $x_n \in S$ for all $n \in \N$. On the other hand, by the maximality of $x_0$, we have for all $x > x_0$, $\varphi(x) > Q_{\varphi(X)}(p)$, which implies that
    \begin{equation}
    \label{eq:pf_lem_2_4}
        Q_{\varphi(X)}(p) \leq \lim_{x \to x_{0}^{+}} \varphi(x)
    \end{equation}
    Combining (\ref{eq:pf_lem_2_3}) and (\ref{eq:pf_lem_2_4}), we obtain
    \begin{equation*}
        Q_{\varphi(X)}(p) \in \brack*{\varphi(x_0), \lim_{x \to x_{0}^{+}} \varphi(x)}
    \end{equation*}
    But for any $y \in \brack*{{\varphi(x_0), \lim_{x \to x_{0}^{+}} \varphi(x)}}$, we have
    \begin{equation*}
        \Prob\paren*{\varphi(X) \leq y} = \Prob\paren*{\varphi(X) \leq \varphi(x_0)}
    \end{equation*}
    Indeed on the one hand
    \begin{equation*}
        \Prob\paren*{\varphi(X) \leq y} \geq \Prob\paren*{\varphi(X) \leq \varphi(x_0)}
    \end{equation*}
    On the other, if $X > x_0$, then since $\varphi$ is strictly increasing, $\varphi(X) > \lim_{x \to x_{0}^{+}} \varphi(x) \geq y$. Therefore
    \begin{equation*}
        \Prob\paren*{\varphi(X) \leq y} \leq \Prob\paren*{\varphi(X) \leq \lim_{x \to x_{0}^{+}} \varphi(x)} \leq \Prob\paren*{X \leq x_0} = \Prob\paren*{\varphi(X) \leq \varphi(x_0)}
    \end{equation*}
    but then, by the minimality of $Q_{\varphi(X)}(p)$, we obtain $Q_{\varphi(X)}(p) = \varphi(x_0)$. This proves (\ref{eq:pf_lem_2_2}). Now applying $\varphi$ to both sides of (\ref{eq:pf_lem_2_1}) and using (\ref{eq:pf_lem_2_2}) yields the result.
\end{proof}

\subsection{Convexity}
\begin{definition}
    A subset $A \subset \R^{d}$ is
    \begin{itemize}
        \item convex if for all $x, y \in A$ and $t \in [0, 1]$, $(1-t) x + t y \in A$.
        \item symmetric if for all $x \in A$, $-x \in A$.
    \end{itemize}
\end{definition}
\begin{lemma}
\label{lem:3}
    Let $A$ be a non-empty convex symmetric set. Then for all $\lambda \geq 1$, $A \subseteq \lambda A$.
\end{lemma}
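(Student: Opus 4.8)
The plan is a one-line convexity argument: fix $\lambda \geq 1$ and an arbitrary $x \in A$, and show $x \in \lambda A$ by exhibiting a point of $A$ whose scaling by $\lambda$ equals $x$, i.e.\ by showing $x/\lambda \in A$.

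First I would use symmetry: since $x \in A$, also $-x \in A$. Then I would use convexity along the segment joining $x$ and $-x$: for every $t \in [0,1]$ we have $(1-t)x + t(-x) = (1-2t)\,x \in A$. It remains to choose $t$ so that $1 - 2t = 1/\lambda$, namely $t = \tfrac12\paren*{1 - 1/\lambda}$. Because $\lambda \geq 1$ we have $0 \leq 1 - 1/\lambda \le 1$, hence $t \in [0, \tfrac12] \subseteq [0,1]$, so this choice is legitimate and yields $x/\lambda = (1-2t)x \in A$, i.e.\ $x = \lambda\cdot(x/\lambda) \in \lambda A$. Since $x \in A$ was arbitrary, $A \subseteq \lambda A$; non-emptiness of $A$ is only needed so that the statement is not vacuous (and to have some $x$ to start with, though the inclusion holds vacuously otherwise).

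There is no real obstacle here — the only thing to be careful about is the range check on $t$, which is exactly where the hypothesis $\lambda \geq 1$ (rather than $\lambda > 0$) is used: if $\lambda \in (0,1)$ then $1 - 1/\lambda < 0$ and the required convex combination leaves $[0,1]$, so the argument (and the statement) genuinely breaks.
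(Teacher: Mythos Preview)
Your proof is correct and uses the same core idea as the paper—that convexity and symmetry force the segment from $-x$ to $x$ to lie in $A$—but your execution is slightly more direct. The paper first proves that $\lambda A$ is convex and that $0 \in A$, and then writes $x = (1 - 1/\lambda)\cdot 0 + (1/\lambda)\cdot \lambda x$ as a convex combination inside $\lambda A$. You instead work entirely in $A$, taking a convex combination of $x$ and $-x$ with parameter $t = \tfrac{1}{2}(1 - 1/\lambda)$ to land on $x/\lambda$ directly, which avoids the auxiliary step of checking that $\lambda A$ is convex. Both arguments use $\lambda \geq 1$ in exactly the same place (the range check on the convex parameter), so neither is more general; yours is just a bit leaner.
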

\begin{proof}
    We start by proving that $\lambda A$ is convex. Indeed let $x, y \in \lambda A$ and $t \in [0,1]$. Then by definition $x/\lambda, y/\lambda \in A$, so by convexity of $A$
    \begin{equation*}
        (1-t)\frac{x}{\lambda} + t\frac{y}{\lambda} \in A,
    \end{equation*}
    which implies
    \begin{equation*}
        (1-t)x + ty = \lambda \cdot \paren*{(1-t)\frac{x}{\lambda} + t\frac{y}{\lambda}} \in \lambda A.
    \end{equation*}
    Next we prove that $0 \in A$. Let $v \in A$. Since $A$ is symmetric, $-v \in A$, and by convexity of $A$
    \begin{equation*}
        0 = \frac{1}{2}x - \frac{1}{2}x = \frac{1}{2}x + \frac{1}{2}(-x) \in A.
    \end{equation*}
    Finally, we prove the main claim. Let $x \in A$. Then by definition $\lambda x \in \lambda A$. But then by convexity of $\lambda A$ and since $\lambda \geq 1$
    \begin{equation*}
        x = \paren*{1 - \frac{1}{\lambda}} 0 + \frac{1}{\lambda} \lambda x \in \lambda A
    \end{equation*}
\end{proof}
\begin{definition}
    A function $f:\R^{d} \to \R$ is
    \begin{itemize}
        \item quasiconvex if for all $x, y \in \R^{d}$ and $t \in [0,1]$, $f((1-t)x + ty) \leq \max\paren*{f(x), f(y)}$.
        \item symmetric if for all $v \in \R^{d}$, $f(v) = f(-v)$.
    \end{itemize}
\end{definition}
\begin{remark}
    Every convex function is quasiconvex. The function $f(x) = \log(x)$ is quasiconvex but not convex. Every norm is quasiconvex (and, in fact, convex) and symmetric.
\end{remark}
\begin{lemma}
\label{lem:4}
    The following holds.
    \begin{itemize}
        \item $f: \R^{d} \to \R$ is quasiconvex and symmetric if and only if for all $y \in \R$, $f^{-1}((-\infty, y])$ is convex and symmetric.
        \item If $f: \R^{d} \to \R$ is quasiconvex and symmetric then $0 \in \argmin_{x \in \R}f(x)$.
    \end{itemize}
\end{lemma}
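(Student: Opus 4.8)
The plan is to establish each item by directly unwinding the definitions of quasiconvexity and symmetry; no machinery beyond what is already in the excerpt is needed, though Lemma \ref{lem:3} gives an alternative route to the second item. For the forward implication of the first item, assume $f$ is quasiconvex and symmetric, fix $y \in \R$, and set $A \defeq f^{-1}((-\infty, y]) = \{x \in \R^{d} : f(x) \le y\}$. To see $A$ is convex, take $u, v \in A$ and $t \in [0,1]$: quasiconvexity gives $f((1-t)u + tv) \le \max\{f(u), f(v)\} \le y$, so $(1-t)u + tv \in A$. To see $A$ is symmetric, take $x \in A$: symmetry of $f$ gives $f(-x) = f(x) \le y$, so $-x \in A$.

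For the converse implication, assume every sublevel set $f^{-1}((-\infty, y])$ is convex and symmetric. For quasiconvexity, fix $u, v \in \R^{d}$ and $t \in [0,1]$, put $y \defeq \max\{f(u), f(v)\}$, and observe that $u, v \in f^{-1}((-\infty, y])$; convexity of this set then places $(1-t)u + tv$ in it, which is exactly $f((1-t)u + tv) \le \max\{f(u), f(v)\}$. For symmetry of $f$, fix $x \in \R^{d}$ and apply the hypothesis with $y \defeq f(x)$: since $x \in f^{-1}((-\infty, f(x)])$ and this set is symmetric, $-x$ lies in it too, i.e.\ $f(-x) \le f(x)$; running the same argument with $-x$ in place of $x$ yields $f(x) \le f(-x)$, hence $f(x) = f(-x)$.

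For the second item, assume $f$ is quasiconvex and symmetric and let $x \in \R^{d}$ be arbitrary. Writing $0 = \frac{1}{2}x + \frac{1}{2}(-x)$ and using quasiconvexity with $t = \frac{1}{2}$ together with $f(-x) = f(x)$ gives $f(0) \le \max\{f(x), f(-x)\} = f(x)$. Since $x$ was arbitrary and $f(0) \in \R$, this shows $0 \in \argmin_{x \in \R^{d}} f(x)$. (Equivalently, by the first item the non-empty set $f^{-1}((-\infty, f(x)])$ is convex and symmetric, hence contains $0$ by Lemma \ref{lem:3}, giving $f(0) \le f(x)$.) The entire argument is elementary; the only point needing a moment's attention is the symmetry half of the converse in the first item, where the hypothesis must be invoked twice to upgrade the one-sided bound $f(-x) \le f(x)$ to an equality. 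There is no genuine obstacle here.
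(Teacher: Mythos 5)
Your proof is correct and takes essentially the same approach as the paper: the paper's (commented-out) proof cites the standard sublevel-set characterization for the first item — which is exactly what you unwind directly — and uses the identical midpoint argument $f(0) = f\paren*{\tfrac{1}{2}x + \tfrac{1}{2}(-x)} \le \max\{f(x), f(-x)\} = f(x)$ for the second.
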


\subsection{Gaussian measures}
\begin{lemma}
    \label{lem:new_1}
    Let $Z \sim \mathcal{N}(0, \sigma^2)$. Then
    \begin{equation*}
        \sqrt{1 - \exp\paren*{-\frac{r^2}{2\sigma^2}}} \leq F_{\abs{Z}}(r) \leq \sqrt{1 - \exp\paren*{-\frac{2r^{2}}{\pi\sigma^2}}}.
    \end{equation*}
\end{lemma}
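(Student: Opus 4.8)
The plan is to reduce this one-dimensional statement to a computation with a planar Gaussian measure via tensorization. Let $Z_1, Z_2$ be i.i.d.\ copies of $Z$, so that $(Z_1,Z_2) \sim \mathcal{N}(0, \sigma^2 I_{2\times 2})$; write $\gamma$ for the law of $(Z_1,Z_2)$ on $\R^2$. For $r \geq 0$, independence gives
\[
    F_{\abs{Z}}(r)^2 = \Prob\paren*{\abs{Z_1} \leq r}\Prob\paren*{\abs{Z_2} \leq r} = \gamma\paren*{S_r}, \qquad S_r \defeq [-r, r]^2 ,
\]
so it suffices to show $1 - \exp(-r^2/(2\sigma^2)) \leq \gamma(S_r) \leq 1 - \exp(-2r^2/(\pi\sigma^2))$ and then take square roots. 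The case $r = 0$ is trivial, so assume $r > 0$ below.

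The only facts about $\gamma$ that I need are that it is rotationally invariant with radially decreasing density $g(x) = (2\pi\sigma^2)^{-1}\exp(-\norm{x}_2^2/(2\sigma^2))$, and that for a centred disk $B_\rho$ of radius $\rho$ a polar-coordinates computation gives $\gamma(B_\rho) = 1 - \exp(-\rho^2/(2\sigma^2))$. For the lower bound, $S_r$ contains its inscribed disk $B_r$, hence $\gamma(S_r) \geq \gamma(B_r) = 1 - \exp(-r^2/(2\sigma^2))$, which is the left inequality.

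For the upper bound I would compare $S_r$ with the centred disk of the \emph{same Lebesgue measure}: since $\mathrm{Leb}(S_r) = 4r^2 = \pi\rho^2$ for $\rho \defeq 2r/\sqrt{\pi}$, we get $\mathrm{Leb}(S_r \setminus B_\rho) = \mathrm{Leb}(B_\rho \setminus S_r)$ (both finite). Setting $c \defeq (2\pi\sigma^2)^{-1}\exp(-\rho^2/(2\sigma^2))$, we have $g(x) \leq c$ for $\norm{x}_2 \geq \rho$ and $g(x) \geq c$ for $\norm{x}_2 \leq \rho$, so, cancelling the integral over $S_r \cap B_\rho$,
\[
    \gamma(S_r) - \gamma(B_\rho) = \int_{S_r \setminus B_\rho} g(x)\,dx - \int_{B_\rho \setminus S_r} g(x)\,dx \leq c\paren*{\mathrm{Leb}(S_r \setminus B_\rho) - \mathrm{Leb}(B_\rho \setminus S_r)} = 0 .
\]
Hence $\gamma(S_r) \leq \gamma(B_\rho) = 1 - \exp(-\rho^2/(2\sigma^2)) = 1 - \exp(-2r^2/(\pi\sigma^2))$, giving the right inequality. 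The only non-routine ingredient is this last ``bathtub''/rearrangement comparison, but once the equal-measure identity for the two symmetric-difference pieces is recorded it is a one-line estimate; everything else is the tensorization identity and an elementary integral.
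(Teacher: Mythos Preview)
Your proof is correct and follows essentially the same approach as the paper: tensorize to a planar Gaussian, bound the square below by its inscribed disk and above by the equal-area disk via the radial monotonicity of the density. The only differences are cosmetic---the paper works in the first quadrant with quarter-disks and first treats the case $\sigma^2 = 1/2$ before rescaling, whereas you use full disks and handle general $\sigma^2$ directly---but the key equal-area rearrangement step is identical.
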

\begin{proof}
    Consider first the case where $\sigma^2 = 1/2$. Then
    \begin{multline*}
        \paren*{F_{\abs{Z}}(r)}^{2} = \paren*{\Prob\paren*{-r \leq Z \leq r}}^{2}
        = \paren*{\frac{2}{\sqrt{\pi}}\int_{0}^{r} e^{-t^2} dt}^{2}
        \\
        = \frac{4}{\pi} \int_{0}^{r}\int_{0}^{r} e^{-(t^2 + s^2)} dt ds
        = \frac{4}{\pi} \int_{S} e^{-(t^{2} + s^{2})} dtds
    \end{multline*}
    where $S \defeq \brace*{(x,y) \in \R^{2} \st 0 \leq x,y \leq r}$ is the square of length $r$ whose lower left corner is at $0$. For a radius $\rho > 0$, define the quarter disks $D(\rho) \defeq \brace*{(x, y) \subset \R^{2} \st x, y \geq 0, \sqrt{x^2 + y^{2}} \leq \rho}$. Clearly, $D(r) \subset S$, so that
    \begin{align*}
        \frac{4}{\pi} \int_{S} e^{-(t^{2} + s^{2})} dtds &\leq \frac{4}{\pi} \int_{D(r)} e^{-(t^{2} + s^{2})} dtds = 1 - \exp\paren*{-r^{2}}
    \end{align*}
    where the last equality is obtained by an explicit integration using polar coordinates. On the other hand, consider the quarter disk $D(2r/\sqrt{\pi})$, and define $A \defeq D(2r/\sqrt{\pi}) \setminus S$ and $B \defeq S \setminus D(2r/\sqrt{\pi})$. Since $S$ and $D(2r/\sqrt{\pi})$ have the same area, so do $A$ and $B$. But for all $(t, s) \in A$ and all $(x, y) \in B$, we have
    \begin{equation*}
        t^{2} + s^{2} \leq \frac{4r^{2}}{\pi} \leq x^{2} + y^{2} \Rightarrow \exp\paren*{-\paren{t^{2} + s^{2}}} \geq \exp\paren*{-\paren{x^{2} + y^{2}}}
    \end{equation*}
    Therefore
    \begin{align*}
        \frac{4}{\pi} \int_{S} e^{-(t^{2} + s^{2})} dtds &= \frac{4}{\pi} \paren*{\int_{S \cap D(2r/\sqrt{\pi})} e^{-(t^{2} + s^{2})} dtds + \int_{B} e^{-(t^{2} + s^{2})} dtds} \\
        &\leq \frac{4}{\pi} \paren*{\int_{S \cap D(2r/\sqrt{\pi})} e^{-(t^{2} + s^{2})} dtds + \int_{A} e^{-(t^{2} + s^{2})} dtds} \\
        &= \frac{4}{\pi} \int_{D(2r/\sqrt{\pi})} e^{-(t^{2} + s^{2})} dtds = 1 - \exp\paren*{-\frac{4r^{2}}{\pi}}
    \end{align*}
    This proves the statement for $\sigma^{2} = 1/2$. For $\sigma^2 \in (0, \infty)$, note that $Z \overset{d}{=} \sqrt{2\sigma^2} \tilde{Z}$ where $\tilde{Z} \sim \mathcal{N}(0, 1/2)$, so
    \begin{equation*}
        F_{\abs{Z}}(r) = \Prob\paren*{-r \leq Z \leq r} = \Prob\paren*{-\frac{r}{\sqrt{2\sigma^2}} \leq \tilde{Z} \leq \frac{r}{\sqrt{2\sigma^2}}} = F_{\abs{\tilde{Z}}}\paren*{\frac{r}{\sqrt{2\sigma^2}}},
    \end{equation*}
    and applying the result for $\sigma^2 = 1/2$ yields the general result.
\end{proof}

\begin{lemma}
\label{lem:5}
    Let $X, Y$ be random vectors such that $X \mid Y = y \sim \mathcal{N}(y, \Sigma)$ for some fixed $\Sigma \in \S_{++}^{d}$ and for all $y$ in the image of $Y$. Then $X - Y \sim \mathcal{N}(0, \Sigma)$.
\end{lemma}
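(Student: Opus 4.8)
The plan is to compute the conditional distribution of $X - Y$ given $Y$ and observe that it does not depend on the value of $Y$. First I would fix a Borel set $A \subseteq \R^{d}$ and note that, since $X \mid Y = y \sim \mathcal{N}(y, \Sigma)$, shifting by the conditionally constant vector $-y$ gives $(X - Y) \mid Y = y \sim \mathcal{N}(0, \Sigma)$; equivalently $\Prob(X - Y \in A \mid Y = y) = \gamma_{\Sigma}(A)$, where $\gamma_{\Sigma} \defeq \mathcal{N}(0, \Sigma)$, and the right-hand side is constant in $y$. Then by the tower property, $\Prob(X - Y \in A) = \Exp\brack*{\Prob(X - Y \in A \mid Y)} = \gamma_{\Sigma}(A)$, which is exactly the claim.

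An alternative route, which is perhaps cleaner to justify rigorously, is via characteristic functions. For $t \in \R^{d}$, conditioning on $Y$ and using the Gaussian characteristic function gives $\Exp\brack*{e^{i\inp{t}{X}} \mid Y} = e^{i \inp{t}{Y} - \frac{1}{2} t^{T} \Sigma t}$ almost surely, so that $\Exp\brack*{e^{i\inp{t}{X - Y}} \mid Y} = e^{-\frac{1}{2} t^{T} \Sigma t}$ almost surely. Taking expectations yields $\Exp\brack*{e^{i\inp{t}{X - Y}}} = e^{-\frac{1}{2} t^{T} \Sigma t}$, which is the characteristic function of $\mathcal{N}(0, \Sigma)$, and the claim follows from the uniqueness of characteristic functions.

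There is essentially no hard part here; the only point requiring a modicum of care is that the hypothesis ``$X \mid Y = y \sim \mathcal{N}(y, \Sigma)$ for all $y$ in the image of $Y$'' should be read as the assertion that a regular conditional distribution of $X$ given $Y$ is furnished by the Markov kernel $y \mapsto \mathcal{N}(y, \Sigma)$ (which is indeed a kernel, being weakly continuous in $y$), so that the conditional expectations above are well defined and the tower property applies.
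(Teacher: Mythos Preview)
Your first argument is correct and is essentially identical to the paper's proof: the paper fixes a Borel set, applies the tower property $\Prob(X-Y\in B)=\Exp[\Prob(X-Y\in B\mid Y)]$, and observes that the inner conditional probability equals $\Prob(Z\in B)$ for $Z\sim\mathcal{N}(0,\Sigma)$ independently of $Y$. Your characteristic-function alternative is also fine but is not needed.
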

\begin{proof}
    Let $B$ be a Borel subset of $\R^{d}$, and let $Z \sim \mathcal{N}(0, \Sigma)$. We have
    \begin{equation*}
        \Prob\paren*{X - Y \in B} = \Exp\brack*{\Prob\paren*{X - Y \in B \st Y}} = \Exp\brack*{\Prob\paren*{Z \in B}} = \Prob\paren*{Z \in B}.
    \end{equation*}
\end{proof}
\begin{lemma}[Anderson's Lemma]
\label{lem:6}
    Let $Z \sim \mathcal{N}(0, \Sigma)$ for some $\Sigma \in S_{++}^{d}$ and $d \in \N$. Let $A \subset \R^{d}$ be a convex symmetric set. Then for all $a \in \R^{d}$, we have
    \begin{equation*}
        \Prob\paren*{Z \in A} \geq \Prob\paren*{Z \in A + a}.
    \end{equation*}
\end{lemma}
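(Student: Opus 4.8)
The plan is to reduce the statement to a purely geometric inequality about volumes of intersections of convex symmetric sets, and then settle that inequality via the Brunn--Minkowski inequality. Write $\phi_{\Sigma}$ for the density of $\mathcal{N}(0,\Sigma)$, which exists since $\Sigma \in S_{++}^{d}$. The substitution $z = x+a$ gives $\Prob(Z \in A+a) = \int_{A+a}\phi_{\Sigma}(z)\,dz = \int_{A}\phi_{\Sigma}(x+a)\,dx$, so it suffices to prove $\int_{A}\phi_{\Sigma}(x)\,dx \geq \int_{A}\phi_{\Sigma}(x+a)\,dx$. Using the layer-cake identity $\phi_{\Sigma}(x) = \int_{0}^{\infty}\mathbbm{1}[\phi_{\Sigma}(x) > u]\,du$ together with Tonelli's theorem, this is equivalent to $\int_{0}^{\infty}\abs{A \cap K_u}\,du \geq \int_{0}^{\infty}\abs{A \cap (K_u - a)}\,du$, where $K_u \defeq \brace*{x : \phi_{\Sigma}(x) > u}$ and $\abs{\cdot}$ denotes Lebesgue measure. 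Each superlevel set $K_u$ is either empty or an open ellipsoid centred at the origin; in particular it is bounded, convex and symmetric, and $A \cap K_u$ has finite measure regardless of whether $A$ is bounded. Hence it is enough to establish the pointwise claim: for every bounded convex symmetric $K$ and every $b \in \R^{d}$, $\abs{A \cap K} \geq \abs{A \cap (K+b)}$.

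To prove this, set $g(b) \defeq \abs{A \cap (K+b)}$. First, $g$ is even: the change of variables $x \mapsto -x$ together with $A = -A$ and $K = -K$ yields $g(-b) = g(b)$. Second, $g$ is quasiconcave: the set $C \defeq \brace*{(x,b) \in \R^{d} \times \R^{d} : x \in A,\ x-b \in K}$ is convex, being the intersection of $A \times \R^{d}$ with the preimage of $K$ under the linear map $(x,b) \mapsto x-b$; its $b$-sections are $C_b = A \cap (K+b)$; and by the Brunn--Minkowski inequality the map $b \mapsto \abs{C_b}^{1/d}$ is concave on the convex set $\brace*{b : \abs{C_b} > 0}$, so $g$ is quasiconcave. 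Any even quasiconcave function attains its maximum at $0$, since $g(0) = g\bigl(\tfrac12 b + \tfrac12(-b)\bigr) \geq \min\brace*{g(b), g(-b)} = g(b)$. Applying this with $b = -a$ and $K = K_u$ gives $\abs{A \cap K_u} \geq \abs{A \cap (K_u - a)}$ for every $u$, and integrating over $u \in (0,\infty)$ delivers the required inequality between the two integrals, hence $\Prob(Z \in A) \geq \Prob(Z \in A+a)$.

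The routine parts are the translation, the layer-cake reduction, and the elementary fact that an even quasiconcave function peaks at the origin. The substantive point, which I expect to be the main obstacle, is deducing the quasiconcavity of $b \mapsto \abs{A \cap (K+b)}$ from the convexity of $C$: this is exactly where Brunn--Minkowski (equivalently, the marginal-of-a-log-concave-function case of Pr\'ekopa--Leindler) is needed. Minor extra care is required in handling the support $\brace*{b : \abs{C_b} > 0}$ of $g$ and in checking the measurability needed for Tonelli, but these cause no real difficulty since every $A \cap K_u$ is bounded.
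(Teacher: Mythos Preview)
The paper does not actually prove this lemma; it is stated in the preliminaries as a known result (Anderson's Lemma), with references in the text to standard sources such as \citet{keener2010theoretical} and \citet{bogachevGaussianMeasures2015}. Your argument is correct and is essentially Anderson's original proof: reduce via the layer-cake formula to a volume inequality $\abs{A \cap K_u} \geq \abs{A \cap (K_u - a)}$ for the symmetric convex superlevel sets $K_u$ of the Gaussian density, and then establish that inequality by showing $b \mapsto \abs{A \cap (K+b)}$ is even and quasiconcave, the latter coming from the Brunn--Minkowski (or Pr\'ekopa--Leindler) inequality applied to sections of the convex set $C = \brace*{(x,b) : x \in A,\ x - b \in K}$. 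There is nothing to compare against in the paper itself.
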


\begin{lemma}[Gaussian Poincaré/Concentration]
\label{lem:new_2}
    Let $d \in \N$, $Z \sim \mathcal{N}(0, I_{d \times d})$, and $f: \R^{d} \to \R$ be an $L$-Lipschitz function with respect to the Euclidean metric. Then $\Var\brack*{f(Z)} \leq L^{2}$ and 
    \begin{equation*}
        \Prob\paren*{f(Z) - \Exp\brack*{f(Z)} \geq t} \leq \exp\paren*{-\frac{t^{2}}{2L^2}}.
    \end{equation*}
\end{lemma}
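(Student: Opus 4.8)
\textbf{Proof plan for Lemma~\ref{lem:new_2}.} This is the classical Gaussian Poincar\'e inequality together with the Gaussian concentration inequality with sharp constants, and one legitimate option is to cite it from a standard reference. For completeness I would give a self-contained proof via the Ornstein--Uhlenbeck semigroup. The first step is to reduce to a \emph{bounded, smooth} $f$: truncating $f$ to $f_{R} \defeq \max(\min(f, f(0)+R), f(0)-R)$ and then convolving with a smooth mollifier $\varphi_{\eps}$ produces $f_{R,\eps} \defeq f_{R} * \varphi_{\eps}$ which is $C^{\infty}$, bounded, and still $L$-Lipschitz (indeed $\norm{\nabla f_{R,\eps}}_{\infty} \leq L$, since convolution does not increase the Lipschitz constant), and $f_{R,\eps} \to f_{R} \to f$ as $\eps \downarrow 0$ then $R \to \infty$. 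Because $f$ is Lipschitz it has at most linear growth, $\abs{f(Z)} \leq \abs{f(0)} + L\norm{Z}$, so $f(Z)$ lies in every $L^{q}$ and has a finite moment generating function; the same domination holds for all the approximants, so dominated and monotone convergence let us pass both the variance bound and the bound $\Exp\brack{e^{\lambda(f_{R,\eps}(Z) - \Exp\brack{f_{R,\eps}(Z)})}} \leq e^{\lambda^2 L^2/2}$ to the limit, and likewise $\Exp\brack{f_{R,\eps}(Z)} \to \Exp\brack{f(Z)}$.

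For $f$ bounded and $C^{\infty}$ with $\norm{\nabla f}_{\infty} \leq L$, recall the Ornstein--Uhlenbeck semigroup $P_t f(x) = \Exp\brack{f(e^{-t}x + \sqrt{1-e^{-2t}}Z)}$ under $Z \sim \mathcal{N}(0, I_{d\times d})$, with generator $\mathcal{L}f = \Delta f - \inp{x}{\nabla f}$, forward equation $\partial_t P_t f = \mathcal{L}P_t f$, commutation $\nabla P_t f = e^{-t}P_t(\nabla f)$, integration-by-parts identity $\Exp\brack{\phi\,\mathcal{L}\psi} = -\Exp\brack{\inp{\nabla\phi}{\nabla\psi}}$, and $P_t f \to \Exp\brack{f(Z)}$ pointwise as $t \to \infty$ (all standard). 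From the commutation identity and Jensen's inequality, $\norm{\nabla P_t f(x)} = e^{-t}\norm{\Exp\brack{\nabla f(e^{-t}x + \sqrt{1-e^{-2t}}Z)}} \leq e^{-t}L$ for every $x$. For the variance, $\tfrac{d}{dt}\Exp\brack{(P_t f)^2} = 2\Exp\brack{(P_t f)\,\mathcal{L}P_t f} = -2\Exp\brack{\norm{\nabla P_t f}^2}$, so $\Var\brack{f(Z)} = \Exp\brack{f(Z)^2} - \Exp\brack{f(Z)}^2 = -\int_0^{\infty}\tfrac{d}{dt}\Exp\brack{(P_t f)^2}\,dt = 2\int_0^{\infty}\Exp\brack{\norm{\nabla P_t f}^2}\,dt \leq 2\int_0^{\infty} e^{-2t}L^2\,dt = L^2$. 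For the moment generating function, assume $\Exp\brack{f(Z)} = 0$ (else replace $f$ by $f - \Exp\brack{f(Z)}$), fix $\lambda \in \R$, and set $u(t) \defeq \Exp\brack{e^{\lambda P_t f}}$, so $u(\infty) = 1$; then $u'(t) = \lambda\Exp\brack{(\mathcal{L}P_t f)e^{\lambda P_t f}} = -\lambda^2\Exp\brack{\norm{\nabla P_t f}^2 e^{\lambda P_t f}} \geq -\lambda^2 L^2 e^{-2t}\,u(t)$, hence $(\log u)'(t) \geq -\lambda^2 L^2 e^{-2t}$, and integrating over $[0,\infty)$ gives $-\log u(0) = \log u(\infty) - \log u(0) \geq -\lambda^2 L^2/2$, i.e.\ $\Exp\brack{e^{\lambda f(Z)}} \leq e^{\lambda^2 L^2/2}$.

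The tail bound is then a Chernoff argument: for $\lambda > 0$, $\Prob\paren{f(Z) - \Exp\brack{f(Z)} \geq t} \leq e^{-\lambda t}\Exp\brack{e^{\lambda(f(Z) - \Exp\brack{f(Z)})}} \leq \exp(\lambda^2 L^2/2 - \lambda t)$, and $\lambda = t/L^2$ yields $\exp(-t^2/(2L^2))$. (The variance bound can alternatively be obtained from the moment generating function bound by comparing the $\lambda^2$-coefficients at $\lambda = 0$.) None of the steps is deep; the only things requiring care are organizing the truncation/mollification limits so that the two inequalities transfer from the smooth bounded case to a general $L$-Lipschitz $f$, and invoking the standard Ornstein--Uhlenbeck identities, so the main obstacle is simply bookkeeping in the approximation step rather than any genuine difficulty.
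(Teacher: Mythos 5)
The paper does not actually prove Lemma~\ref{lem:new_2}: like Anderson's Lemma just above it, it is stated as a classical fact to be cited (Gaussian Poincar\'e plus the Borell--TIS / Gaussian Lipschitz concentration bound, both standard). Your proposal fills this in with a self-contained proof via the Ornstein--Uhlenbeck semigroup, which is a legitimate and correct route. The semigroup-interpolation argument is the standard one: the commutation $\nabla P_t f = e^{-t}P_t(\nabla f)$ gives $\norm{\nabla P_t f}_\infty \leq e^{-t}L$, the variance identity $\Var\brack{f(Z)} = 2\int_0^\infty \Exp\brack{\norm{\nabla P_t f}^2}\,dt$ yields the Poincar\'e inequality, and the Herbst-type differential inequality on $u(t) = \Exp\brack{e^{\lambda P_t f}}$ gives the sub-Gaussian moment generating function bound with the sharp constant, after which Chernoff finishes the tail bound. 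The reduction to bounded smooth $f$ is handled correctly: truncation against constants and convolution with a mollifier both preserve the Lipschitz constant, and the linear-growth domination $\abs{f(Z)} \leq \abs{f(0)} + L\norm{Z}$ controls both the second moment and the exponential moments uniformly over the approximating family, so the limits pass as you claim. The one thing worth being explicit about if this were to appear in print is the step $u'(t) = \lambda\Exp\brack{(\mathcal{L}P_t f)e^{\lambda P_t f}} = -\lambda^2\Exp\brack{\norm{\nabla P_t f}^2 e^{\lambda P_t f}}$, which uses the stated integration-by-parts identity with $\phi = e^{\lambda P_t f}$, $\psi = P_t f$; you do flag this identity as one of the "standard" OU facts being invoked, so it is covered, but spelling out which pair $(\phi,\psi)$ you plug in would make the chain airtight. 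Since the paper simply treats the lemma as background, a citation (e.g.\ to Boucheron--Lugosi--Massart or Ledoux) would suffice for the paper's purposes; your proof is a correct, more self-contained alternative and buys the reader explicit constants and no external dependency, at the cost of some approximation bookkeeping.
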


\subsubsection{Concentration of norms of Gaussian vectors}
For this subsection, fix $d \in \N$, an arbitrary norm $\norm{\cdot}$ on $\R^{d}$, and a covariance matrix $\Sigma \in S_{++}^{d}$. Let $Z \sim \mathcal{N}(0, \Sigma)$, and define $M \defeq \Exp\brack*{\norm{Z}}$. Let $S$ denote the unit sphere of the dual norm $\norm{\cdot}_{*}$, and recall that $\norm{x} = \norm{x}_{**} = \sup_{v \in S} \abs{\inp{v}{x}}$. Define $R \defeq \sup_{v \in S} v^{T} \Sigma v$, and $v_{*} = \max_{v \in S} v^{T} \Sigma v$ where the maximum is attained since $S$ is compact and the function is continuous.
\begin{lemma}
\label{lem:new_4}
    The function $f: \R^{d} \to \R$ given by $f(x) = \norm{\Sigma^{1/2}x}$ is $\sqrt{R}$-Lipschitz in the Euclidean metric.
\end{lemma}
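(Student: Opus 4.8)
The plan is to reduce the claim to Cauchy--Schwarz using the dual representation of the norm. First I would invoke the reverse triangle inequality: for any $x, x' \in \R^{d}$,
\begin{equation*}
    \abs*{f(x) - f(x')} = \abs*{\norm{\Sigma^{1/2}x} - \norm{\Sigma^{1/2}x'}} \leq \norm{\Sigma^{1/2}(x - x')}.
\end{equation*}
So it suffices to show $\norm{\Sigma^{1/2}z} \leq \sqrt{R}\,\norm{z}_2$ for all $z \in \R^{d}$.

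Next I would use the bidual identity recalled just before the statement, namely $\norm{y} = \norm{y}_{**} = \sup_{v \in S}\abs{\inp{v}{y}}$, applied to $y = \Sigma^{1/2}z$. Since $\Sigma^{1/2}$ is symmetric, $\inp{v}{\Sigma^{1/2}z} = \inp{\Sigma^{1/2}v}{z}$, and by Cauchy--Schwarz $\abs{\inp{\Sigma^{1/2}v}{z}} \leq \norm{\Sigma^{1/2}v}_2\,\norm{z}_2$. Taking the supremum over $v \in S$ gives
\begin{equation*}
    \norm{\Sigma^{1/2}z} \leq \paren*{\sup_{v \in S}\norm{\Sigma^{1/2}v}_2}\norm{z}_2,
\end{equation*}
and $\norm{\Sigma^{1/2}v}_2^2 = v^{T}\Sigma v \leq \sup_{v \in S} v^{T}\Sigma v = R$ by definition of $R$. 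Combining the two displays yields $\abs{f(x) - f(x')} \leq \sqrt{R}\,\norm{x - x'}_2$, as claimed.

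There is no real obstacle here: the only thing to be slightly careful about is that $f$ is a genuine function into $\R$ (the norm and $\Sigma^{1/2}$ are both finite-valued on $\R^{d}$), and that the supremum defining $R$ is attained since $S$ is compact and $v \mapsto v^{T}\Sigma v$ is continuous — this is already noted in the surrounding text, so the argument is complete as stated.
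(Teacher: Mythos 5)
Your proof is correct and follows essentially the same route as the paper's: reverse triangle inequality, then the dual representation $\norm{\Sigma^{1/2}z} = \sup_{v\in S}\inp{\Sigma^{1/2}v}{z}$, then Cauchy--Schwarz and the definition of $R$. You spell out a couple of intermediate steps (symmetry of $\Sigma^{1/2}$, the absolute value in the dual supremum) that the paper leaves implicit, but the argument is identical.
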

\begin{proof}
    \begin{multline*}
        \abs{f(x) - f(y)} = \abs*{\norm{\Sigma^{1/2}x} - \norm{\Sigma^{1/2}y}} \leq \norm{\Sigma^{1/2}(x - y)} \\
        = \sup_{v \in S} \inp{\Sigma^{1/2}v}{x-y}\leq \paren*{\max_{v \in S} \norm{\Sigma^{1/2}v}_{2}} \norm{x - y}_{2}
    \end{multline*}
\end{proof}

\begin{lemma}
\label{lem:new_3}
    For all $t \geq 0$,
    \begin{align*}
        &M^{2} \leq \Exp\brack*{\norm{Z}^2} \leq \paren*{1 + \frac{\pi}{2}} M^2, \\
        &\Prob\paren*{M - \norm{Z} \geq t} \leq \exp\paren*{-\frac{t^2}{\pi M^2}}.
    \end{align*}
\end{lemma}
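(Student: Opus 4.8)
The plan is to realise $\norm{Z}$ as a Lipschitz function of a standard Gaussian and apply the Gaussian concentration/Poincaré inequality (Lemma \ref{lem:new_2}), after first establishing the single key inequality $2R \leq \pi M^2$ relating the directional variance parameter $R$ to the mean $M$.

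\textbf{Step 1 (the key comparison $R \leq \pi M^2/2$).} Let $v_* \in S$ attain $R = v_*^{T}\Sigma v_*$, which exists by compactness of $S$ and continuity. Since $\inp{v_*}{Z} \sim \mathcal{N}(0, R)$ and $\norm{Z} = \sup_{v \in S}\abs{\inp{v}{Z}} \geq \abs{\inp{v_*}{Z}}$, taking expectations and using the elementary identity $\Exp\abs{\mathcal{N}(0,\sigma^2)} = \sigma\sqrt{2/\pi}$ gives $M \geq \sqrt{2R/\pi}$, i.e.\ $2R \leq \pi M^2$.

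\textbf{Step 2 (the moment bounds).} The lower bound $M^2 \leq \Exp\brack{\norm{Z}^2}$ is immediate from Jensen's inequality. For the upper bound, write $\norm{Z} = f(W)$ with $W \sim \mathcal{N}(0, I_{d\times d})$ and $f(x) = \norm{\Sigma^{1/2}x}$; by Lemma \ref{lem:new_4}, $f$ is $\sqrt{R}$-Lipschitz, so the variance bound in Lemma \ref{lem:new_2} yields $\Var\brack{\norm{Z}} = \Var\brack{f(W)} \leq R$. Hence $\Exp\brack{\norm{Z}^2} = \Var\brack{\norm{Z}} + M^2 \leq R + M^2 \leq (1 + \pi/2)M^2$ by Step 1.

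\textbf{Step 3 (concentration).} Apply the deviation bound of Lemma \ref{lem:new_2} to $-f$, which is also $\sqrt{R}$-Lipschitz, noting $\Exp\brack{-f(W)} = -M$; this gives $\Prob\paren{M - \norm{Z} \geq t} \leq \exp(-t^2/(2R))$ for all $t \geq 0$. Since $2R \leq \pi M^2$ by Step 1, the right-hand side is at most $\exp(-t^2/(\pi M^2))$, as claimed. The only mildly subtle point is Step 1 — the observation that a single worst direction already controls $M$ from below — but it is a two-line argument; every other step is a direct appeal to the stated lemmas, so I anticipate no real obstacle.
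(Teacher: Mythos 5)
Your proof is correct and follows essentially the same route as the paper: establish $2R \leq \pi M^2$ by comparing $M$ with $\Exp\brack*{\abs{\inp{v_*}{Z}}} = \sqrt{2R/\pi}$, then apply the $\sqrt{R}$-Lipschitzness of $x \mapsto \norm{\Sigma^{1/2}x}$ from Lemma \ref{lem:new_4} together with the Gaussian Poincaré and concentration inequalities of Lemma \ref{lem:new_2}. The only cosmetic difference is that the paper phrases Step 1 as a sup-inside-expectation interchange via convexity, whereas you simply evaluate at the maximizer $v_*$; these are the same argument.
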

\begin{proof}
    Notice that
    \begin{equation*}
        M = \Exp\brack*{\norm{Z}} = \Exp\brack*{\sup_{v \in S} \abs{\inp{v}{Z}}} \geq \sup_{v \in S} \Exp\brack*{\abs{\inp{v}{Z}}} = \sup_{v \in S} \sqrt{\frac{2}{\pi} v^{T} \Sigma v} = \sqrt{\frac{2}{\pi} v_{*}^{T} \Sigma v_{*}}.
    \end{equation*}
    where the inequality follows by convexity of the supremum and Jensen's inequality, and the third equality by the fact that $\inp{v}{Z} \sim \mathcal{N}(0, v^{T} \Sigma v)$ and an explicit calculation of the expectation. We now prove the first item. The first inequality follows from Jensen's inequality. For the second, notice that $\Sigma^{-1/2}Z \sim \mathcal{N}(0, I_{d \times d})$, so that an application of Lemmas \ref{lem:new_2} and \ref{lem:new_4} yields
    \begin{equation*}
        \Exp\brack*{\norm{Z}^2} - (\Exp\brack*{\norm{Z}})^{2} =  \Var\brack*{\norm{Z}} = \Var\brack*{f(\Sigma^{-1/2}Z)} \leq R = v_{*}^{T} \Sigma v_{*} \leq \frac{\pi}{2} (\Exp\brack*{\norm{Z}})^{2}.
    \end{equation*}
    where $f$ is as defined in Lemma \ref{lem:new_4}. For the second item, notice that $-f$ is also $\sqrt{R}$-Lipschitz, so that again an application of Lemma \ref{lem:new_2} yields
    \begin{multline*}
        \Prob\paren*{M - \norm{Z} > t} = \Prob\paren*{-f(\Sigma^{-1/2}Z) - \Exp\brack*{-f(\Sigma^{-1/2} Z)} > t} \\
        \leq \exp\paren*{-\frac{t^{2}}{2v_{*}^{T}\Sigma v_{*}}} \leq \exp\paren*{-\frac{t^2}{\pi \paren*{\Exp\brack*{\norm{Z}}}^{2}}}.
    \end{multline*}
\end{proof}
\begin{corollary}
    \label{cor:1}
    For all $r \in \R$,
    \begin{equation*}
        l(r) \leq F_{\norm{Z}}(r) \leq \min\brace*{u_{1}(r), u_{2}(r)}
    \end{equation*}
    where
    \begin{gather*}
        l(r) \defeq \brack*{1 - \exp\paren*{- \frac{(r - M)^{2}}{2R}}} \mathbbm{1}_{[M, \infty)}, \\
        u_1(r) \defeq \exp\paren*{\frac{-(M-r)^2}{\pi M^{2}}} \mathbbm{1}_{[0, M)}(r) + \mathbbm{1}_{[M, \infty)}(r), \quad 
        u_2(r) \defeq \sqrt{1 - \exp\paren*{-\frac{2r^2}{\pi R}}} \mathbbm{1}_{[0,\infty)}(r).
    \end{gather*}
\end{corollary}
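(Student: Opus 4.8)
The plan is to prove the three inequalities $l \le F_{\norm{Z}}$, $F_{\norm{Z}} \le u_1$, and $F_{\norm{Z}} \le u_2$ separately, using the indicator structure to restrict attention to the range of $r$ on which each bound is non-trivial, and in each case reducing to one of Lemmas \ref{lem:new_1}, \ref{lem:new_2}, \ref{lem:new_3}.

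\emph{Lower bound.} For $r < M$ we have $l(r) = 0 \le F_{\norm{Z}}(r)$, so it suffices to treat $r \ge M$. The key observation is that $\norm{Z} = f(\Sigma^{-1/2}Z)$, where $f(x) = \norm{\Sigma^{1/2}x}$ and $\Sigma^{-1/2}Z \sim \mathcal{N}(0, I_{d\times d})$; by Lemma \ref{lem:new_4} the function $f$ is $\sqrt{R}$-Lipschitz, and $\Exp[f(\Sigma^{-1/2}Z)] = \Exp[\norm{Z}] = M$. Applying the upper tail bound of Lemma \ref{lem:new_2} with $t = r - M \ge 0$ gives $\Prob(\norm{Z} \ge r) \le \exp(-(r-M)^2/(2R))$, whence $F_{\norm{Z}}(r) \ge 1 - \Prob(\norm{Z} \ge r) \ge l(r)$.

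\emph{Upper bound via $u_1$.} For $r < 0$ both $F_{\norm{Z}}(r)$ and $u_1(r)$ vanish, and for $r \ge M$ we have $u_1(r) = 1 \ge F_{\norm{Z}}(r)$, so assume $0 \le r < M$. Then the event $\{M - \norm{Z} \ge M - r\}$ coincides with $\{\norm{Z} \le r\}$, and the second deviation bound of Lemma \ref{lem:new_3}, applied with $t = M - r > 0$, yields $F_{\norm{Z}}(r) = \Prob(M - \norm{Z} \ge M - r) \le \exp(-(M-r)^2/(\pi M^2)) = u_1(r)$. \emph{Upper bound via $u_2$.} For $r < 0$ both sides vanish, so assume $r \ge 0$. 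Let $v_* \in S$ be a point with $v_*^T \Sigma v_* = R$, which exists since $S$ is compact and $v \mapsto v^T \Sigma v$ is continuous. Since $\norm{Z} = \sup_{v \in S}\abs{\inp{v}{Z}} \ge \abs{\inp{v_*}{Z}}$, we get $F_{\norm{Z}}(r) \le \Prob(\abs{\inp{v_*}{Z}} \le r)$, and because $\inp{v_*}{Z} \sim \mathcal{N}(0, R)$, the upper bound in Lemma \ref{lem:new_1} with $\sigma^2 = R$ gives $\Prob(\abs{\inp{v_*}{Z}} \le r) \le \sqrt{1 - \exp(-2r^2/(\pi R))} = u_2(r)$. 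Taking the pointwise minimum of the $u_1$ and $u_2$ bounds then finishes the argument.

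I do not anticipate a genuine obstacle here: each of the three inequalities collapses to a one-line application of a lemma established just above. The only points requiring a little care are the bookkeeping of the ranges of $r$ covered by the indicator functions (and checking that equality holds at the endpoints $r = 0$ and $r = M$), and, in the lower bound, passing from $\Prob(\norm{Z} > r)$ to $\Prob(\norm{Z} \ge r)$, which only loosens the inequality in the favourable direction.
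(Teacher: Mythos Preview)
Your proposal is correct and follows essentially the same approach as the paper: the lower bound via Gaussian concentration (Lemmas \ref{lem:new_2} and \ref{lem:new_4}), the $u_1$ bound via the lower-tail inequality of Lemma \ref{lem:new_3}, and the $u_2$ bound by projecting onto $v_*$ and applying Lemma \ref{lem:new_1}. The paper's proof is line-for-line the same argument, including the same case splits on $r$.
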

\begin{proof}
    We start with the lower bound. Let $r \geq M$. Then
    \begin{equation*}
        F_{\norm{Z}}(r) = \Prob\paren*{\norm{Z} \leq r} = 1 - \Prob\paren*{\norm{Z} > r} = 1 - \Prob\paren*{\norm{Z} - M > r - M} \geq 1 - \exp\paren*{- \frac{(r - M)^{2}}{2R}}
    \end{equation*}
    where the last inequality follows from Lemmas \ref{lem:new_2} and \ref{lem:new_4}. For the lower bound, we have, by Lemma \ref{lem:new_3}
    \begin{equation*}
        F_{\norm{Z}}(r) = \Prob\paren*{\norm{Z} \leq r} = \Prob\paren*{\norm{Z} - M \leq r - M} = \Prob\paren*{M - \norm{Z} \geq M - r} \leq u_1(r).
    \end{equation*}
    Furthermore,
    \begin{equation*}
        F_{\norm{Z}}(r) = \Prob\paren*{\sup_{v \in S} \abs{\inp{v}{Z}} \leq r} \leq \Prob\paren*{\abs{\inp{v_{*}}{Z}} \leq r} \leq u_{2}(r),
    \end{equation*}
    where the second inequality follows from the fact that $\inp{v_{*}}{Z} \sim \mathcal{N}(0, R)$ and Lemma \ref{lem:new_1}.
\end{proof}


\subsection{Inverse Gamma measure}
Let $\alpha, \beta > 0$. The inverse gamma measure $\text{Inv-Gamma}(\alpha, \beta)$ on $(0, \infty)$ has density
\begin{equation*}
    f_{\alpha, \beta}(x) \defeq \frac{\beta^{\alpha}}{\Gamma(\alpha)} x^{-\alpha-1}\exp\paren*{-\frac{\beta}{x}}
\end{equation*}
with respect to Lebesgue measure, where $\Gamma$ is the gamma function.

\begin{lemma}
\label{lem:7}
    Let $X \sim \text{Inv-Gamma}(\alpha, \beta)$ and $Z \sim \text{Inv-Gamma}(\alpha, \alpha)$. Let $r > 0$, and define
    \begin{equation*}
        x_{\alpha, \beta}(r) \defeq \frac{\beta\brack*{\exp(r)-\exp(-r)}}{2\alpha r} \quad\quad p_{\alpha}(r) \defeq \Prob\paren*{\frac{1-\exp(-2r)}{2r} \leq Z \leq \frac{\exp(2r) - 1}{2 r}}
    \end{equation*}
    Then for all $x \in (0, \infty)$
    \begin{equation*}
        p_{\alpha}(r) = \Prob\paren*{\abs{\log(X/x_{\alpha, \beta}(r))} \leq r} \geq \Prob\paren*{\abs{\log(X/x)} \leq r}.
    \end{equation*}
\end{lemma}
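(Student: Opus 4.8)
The plan is to first record a simple scaling identity inside the inverse-gamma family, then read off the equality by direct substitution, and finally show that $x_{\alpha,\beta}(r)$ is the maximizer over $x > 0$ of the probability in question by a one-variable calculus argument.

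First I would note that a change of variables in the density $f_{\alpha,\beta}$ gives $\tfrac{\beta}{\alpha} Z \sim {\normalfont\text{Inv-Gamma}}(\alpha,\beta)$ whenever $Z \sim {\normalfont\text{Inv-Gamma}}(\alpha,\alpha)$, so that $X \overset{d}{=} \tfrac{\beta}{\alpha} Z$. Since $\abs{\log(X/x)} \leq r$ is equivalent to $x e^{-r} \leq X \leq x e^{r}$, this yields, for every $x \in (0,\infty)$,
\begin{equation*}
    \Prob\paren*{\abs{\log(X/x)} \leq r} = \Prob\paren*{\frac{\alpha x}{\beta}\, e^{-r} \leq Z \leq \frac{\alpha x}{\beta}\, e^{r}}.
\end{equation*}
Taking $x = x_{\alpha,\beta}(r) = \tfrac{\beta(e^{r}-e^{-r})}{2\alpha r}$ and using $\tfrac{\alpha}{\beta} x_{\alpha,\beta}(r) = \tfrac{e^{r}-e^{-r}}{2r}$, the two endpoints become $\tfrac{1-e^{-2r}}{2r}$ and $\tfrac{e^{2r}-1}{2r}$, which is exactly the event defining $p_{\alpha}(r)$. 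This settles the equality $p_{\alpha}(r) = \Prob(\abs{\log(X/x_{\alpha,\beta}(r))} \leq r)$.

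For the inequality, I would set $g(x) \defeq F_{X}(x e^{r}) - F_{X}(x e^{-r})$, which equals the left-hand side of the display above, and analyse it on $(0,\infty)$. Since the density $f_{\alpha,\beta}$ is continuous, $g$ is $C^{1}$ with $g'(x) = e^{r} f_{\alpha,\beta}(x e^{r}) - e^{-r} f_{\alpha,\beta}(x e^{-r})$; inserting the closed form of $f_{\alpha,\beta}$ and taking logarithms, the stationarity equation $g'(x) = 0$ collapses to $\beta(e^{r}-e^{-r})/x = 2\alpha r$, so $x_{\alpha,\beta}(r)$ is the \emph{unique} critical point of $g$. Moreover $g(x) \to 0$ as $x \to 0^{+}$ and as $x \to \infty$, because $F_{X}(x e^{\pm r}) \to 0$ and $\to 1$ respectively, while $g(x_{\alpha,\beta}(r)) = p_{\alpha}(r) > 0$. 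A continuous function on $(0,\infty)$ that vanishes at both ends attains its maximum at an interior point, which is necessarily a critical point, hence equal to $x_{\alpha,\beta}(r)$; therefore $g(x) \leq g(x_{\alpha,\beta}(r)) = p_{\alpha}(r)$ for all $x > 0$, which is the claimed inequality.

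The only step that needs a moment's care is concluding that the unique stationary point is a maximizer rather than a minimizer, and this is handled cleanly by the boundary vanishing of $g$ together with its positivity at $x_{\alpha,\beta}(r)$. As an alternative one may observe that $\log X$ has a log-concave density, proportional to $\exp(-\alpha u - \beta e^{-u})$ (whose logarithm $-\alpha u - \beta e^{-u}$ is concave), so that $u \mapsto g(e^{u})$ is unimodal; either way there is no real obstacle, and the scaling identity together with the simplification of $g'(x)=0$ are routine computations.
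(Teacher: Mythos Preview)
Your proof is correct and follows essentially the same route as the paper's: both compute the derivative of $x \mapsto \Prob(\abs{\log(X/x)} \leq r)$ via the density, solve the resulting equation to find the unique critical point $x_{\alpha,\beta}(r)$, and use the scaling $X \overset{d}{=} (\beta/\alpha) Z$ to identify the maximal value with $p_{\alpha}(r)$. The only cosmetic differences are the order of the two claims and that the paper concludes the critical point is a maximum by checking the sign of the derivative on each side, whereas you appeal to the boundary vanishing of $g$; both are equally valid.
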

\begin{proof}
    Fix $r \in (0, \infty)$. Define $h_r(x) \defeq \Prob\paren*{\abs{\log(X/x)} \leq r}$. Then we have
    \begin{align*}
        \frac{d}{dx}(h_r(x)) &= \frac{d}{dx}\paren*{\Prob\paren*{\abs{\log(X/x)} \leq r}} \\
        &= \frac{d}{dx}\paren*{\Prob\paren*{x \exp(-r) \leq X \leq x \exp(r)}} \\
        &= \frac{d}{dx} \paren*{\int_{x\exp(-r)}^{x\exp(r)}f_{\alpha, \beta}(t)dt} \\
        &= \exp(r) f_{\alpha,\beta}(x\exp(r)) - \exp(-r) f_{\alpha, \beta}(x \exp(-r))
    \end{align*}
    where in the last line we used Leibniz integral rule. Setting the derivative to $0$ and solving yields $x_{\alpha, \beta}(r)$. Examining its derivative, we notice that $h_r$ is non-decreasing on $(0, x_{\alpha, \beta}(r)]$ and non-increasing on $[x_{\alpha, \beta}(r), \infty)$. Therefore $x_{\alpha, \beta}(r)$ is the global maximizer of $h_r$. Now
    \begin{align*}
        \Prob\paren*{\abs{\log(X/x_{\alpha, \beta}(r))} \leq r} &= \Prob\paren*{x_{\alpha, \beta}(r)\exp(-r) \leq X \leq x_{\alpha, \beta}(r)\exp(r)} \\
        &= \Prob\paren*{\frac{1-\exp(-2r)}{2r} \leq \frac{\alpha}{\beta} X \leq \frac{\exp(2r) - 1}{2 r}} \\
        &= p_{\alpha}(r)
    \end{align*}
    where in the last line we used that if $X \sim \text{Inv-Gamma}(\alpha, \beta)$, then $cX \sim \text{Inv-Gamma}(\alpha, c \cdot \beta)$ for all $c > 0$.
\end{proof}

\begin{lemma}
\label{lem:8}
    Let $p_{\alpha}$ be as defined in Lemma \ref{lem:7}. The following holds.
    \begin{itemize}
        \item $p_{\alpha}(r)$ is non-decreasing in $\alpha$ for all $r > 0$.
        \item $p_{\alpha}(r)$ is strictly increasing in $r$ for all $\alpha > 0$ and $\im(p_{\alpha}) = (0, 1)$
    \end{itemize}
\end{lemma}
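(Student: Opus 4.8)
The plan is to work with $Y_{\alpha} \defeq \log Z_{\alpha}$, where $Z_{\alpha}$ has the inverse-gamma$(\alpha,\alpha)$ density $f_{\alpha,\alpha}$; the change of variables $z = e^{y}$ turns this into
\[
g_{\alpha}(y) = \frac{\alpha^{\alpha}}{\Gamma(\alpha)}\, e^{-\alpha h(y)}, \qquad h(y) \defeq y + e^{-y},
\]
so $h$ is smooth, strictly convex ($h'' = e^{-y} > 0$), with its unique minimum at $y = 0$. Set $a(r) \defeq (1-e^{-2r})/(2r)$ and $b(r) \defeq (e^{2r}-1)/(2r)$; directly from the definition of $p_{\alpha}$ in Lemma~\ref{lem:7}, $p_{\alpha}(r) = \Prob(a(r) \le Z_{\alpha} \le b(r)) = \int_{\log a(r)}^{\log b(r)} g_{\alpha}(y)\,dy$. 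I would first record two elementary facts about the endpoints. Since $e^{2r} > 1 + 2r$ and $(1+2r)e^{-2r} < 1$ for $r > 0$, we get $a(r) < 1 < b(r)$, hence $\log a(r) < 0 < \log b(r)$. And since $b(r)/a(r) = e^{2r}$ while $1/a(r) - 1/b(r) = \tfrac{2r}{1-e^{-2r}} - \tfrac{2r}{e^{2r}-1} = 2r$, we get $h(\log b(r)) - h(\log a(r)) = \log\!\big(b(r)/a(r)\big) - \big(1/a(r) - 1/b(r)\big) = 0$; write $H(r)$ for this common value. By strict convexity of $h$ together with the location of its minimum, $[\log a(r), \log b(r)]$ is exactly the sublevel set $\{\, y : h(y) \le H(r)\,\}$.

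For monotonicity in $\alpha$ (the first item), I would differentiate $p_{\alpha}(r)$ in $\alpha$: the limits of integration do not depend on $\alpha$ and the interval is bounded, so differentiation under the integral sign is valid by a routine domination argument. Since $\partial_{\alpha}\log g_{\alpha}(y) = c_{\alpha} - h(y)$ where $c_{\alpha} \defeq \tfrac{d}{d\alpha}\log\tfrac{\alpha^{\alpha}}{\Gamma(\alpha)}$, integrating $\partial_{\alpha} g_{\alpha} = g_{\alpha}(c_{\alpha} - h)$ over $\R$ and using $\tfrac{d}{d\alpha}\int_{\R} g_{\alpha} = 0$ forces $c_{\alpha} = \Exp[h(Y_{\alpha})] \eqdef m_{\alpha}$, that is, $\int_{\R} g_{\alpha}(y)(m_{\alpha} - h(y))\,dy = 0$. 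Hence
\[
\partial_{\alpha} p_{\alpha}(r) = \int_{\log a(r)}^{\log b(r)} g_{\alpha}(y)\,(m_{\alpha} - h(y))\,dy = - \int_{\{h > H(r)\}} g_{\alpha}(y)\,(m_{\alpha} - h(y))\,dy .
\]
Now I would split on the sign of $m_{\alpha} - H(r)$. If $m_{\alpha} \ge H(r)$, then on $[\log a(r),\log b(r)]$ we have $h \le H(r) \le m_{\alpha}$, so the first integrand is $\ge 0$ and $\partial_{\alpha} p_{\alpha}(r) \ge 0$. If $m_{\alpha} \le H(r)$, then on $\{h > H(r)\}$ we have $h \ge H(r) \ge m_{\alpha}$, so the integrand in the second expression is $\le 0$, and again $\partial_{\alpha} p_{\alpha}(r) \ge 0$. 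Either way $p_{\alpha}(r)$ is non-decreasing in $\alpha$.

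For the second item, I would show that the window of integration strictly grows with $r$, which suffices because $g_{\alpha} > 0$ everywhere. A short computation gives $\tfrac{d}{dr}\log a(r) = \tfrac{2e^{-2r}}{1-e^{-2r}} - \tfrac1r < 0$ (this is equivalent to $(1+2r)e^{-2r} < 1$), and $\tfrac{d}{dr}\log b(r) = \tfrac{2e^{2r}}{e^{2r}-1} - \tfrac1r > 0$ (equivalent to $e^{2r}(2r-1)+1 > 0$, which holds since this quantity vanishes at $r = 0$ and has derivative $4re^{2r} > 0$). So $\log a(\cdot)$ is strictly decreasing and $\log b(\cdot)$ strictly increasing, and for $r' < r$ the interval $[\log a(r'), \log b(r')]$ lies strictly inside $[\log a(r), \log b(r)]$ with a set of positive length left over, whence $p_{\alpha}(r') < p_{\alpha}(r)$. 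Finally $p_{\alpha}$ is continuous, and since $Y_{\alpha}$ has a continuous c.d.f.\ we may write $p_{\alpha}(r) = F_{Y_{\alpha}}(\log b(r)) - F_{Y_{\alpha}}(\log a(r))$; as $r \downarrow 0$ both endpoints tend to $0$ so this tends to $F_{Y_{\alpha}}(0) - F_{Y_{\alpha}}(0) = 0$, and as $r \to \infty$ we have $\log b(r) \to +\infty$ and $\log a(r) \to -\infty$ so it tends to $1 - 0 = 1$. A continuous, strictly increasing function on $(0,\infty)$ with these one-sided limits has image exactly $(0,1)$.

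The main obstacle is the $\alpha$-monotonicity; everything in the $r$-direction is elementary calculus resting on $e^{x} > 1 + x$. The essential ingredients for the first item are the recognition that $\log Z_{\alpha}$ carries the exponential-family density $g_{\alpha} \propto e^{-\alpha h}$ with $h$ convex, the identity $h(\log a(r)) = h(\log b(r))$ which identifies the window of integration with a sublevel set of $h$, and the formula $\partial_{\alpha} g_{\alpha} = g_{\alpha}(m_{\alpha} - h)$ with $m_{\alpha} = \Exp[h(Y_{\alpha})]$. Once these are in place, the sign of $\partial_{\alpha} p_{\alpha}(r)$ falls out of the two-line case split above.
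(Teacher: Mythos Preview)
The paper states Lemma~\ref{lem:8} without proof, so there is no argument to compare against. Your proof is correct. The change of variables $Y_\alpha = \log Z_\alpha$ yielding the exponential-family form $g_\alpha \propto e^{-\alpha h}$ with $h(y)=y+e^{-y}$ strictly convex, together with the identity $h(\log a(r)) = h(\log b(r))$ that identifies the integration window $[\log a(r), \log b(r)]$ with the sublevel set $\{h \le H(r)\}$, is exactly what is needed to make the $\alpha$-monotonicity fall out of the sign of $\partial_\alpha g_\alpha = g_\alpha(m_\alpha - h)$; your two-case split on the sign of $m_\alpha - H(r)$ is clean and in fact gives the strict inequality $\partial_\alpha p_\alpha(r) > 0$, since in either case the relevant integrand is strictly one-signed on a set of positive measure. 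The second item is routine once the endpoint monotonicity and the limits $a(r),b(r)\to 1$ as $r\downarrow 0$ and $a(r)\to 0$, $b(r)\to\infty$ as $r\to\infty$ are checked, and your derivative computations and limit evaluations are correct.
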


\section{Suprema of truncated empirical processes}
\subsection{Truncation function}
Let $\alpha, \beta \in \R$ such that $\alpha \leq \beta$. Define
\begin{equation}
\label{eq:def_1}
    \phi_{\alpha, \beta}(x) \defeq
    \begin{dcases*}
        \beta & \quad if \quad $x > \beta$, \\
        x & \quad if \quad $x \in [\alpha, \beta]$, \\
        \alpha & \quad if \quad $x < \alpha$.
    \end{dcases*}
\end{equation}

\begin{lemma}
\label{lem:trunc_1}
    The following holds for all $x \in \R$.
    \begin{itemize}
        \item $c \cdot \phi_{\alpha, \beta}(x) = \phi_{c \alpha, c \beta}(cx)$ for all $c \in [0, \infty)$.
        \item $-\phi_{\alpha, \beta}(x) = \phi_{-\beta, -\alpha}(-x)$.
        \item $\phi_{\alpha, \beta}(x) + y = \phi_{\alpha + y, \beta + y} (x + y)$ for all $y \in \R$.
    \end{itemize}
\end{lemma}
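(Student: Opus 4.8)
The plan is to prove each of the three identities by a direct case analysis on the position of $x$ relative to the truncation thresholds, exploiting the fact that in each case the relevant affine map of $\R$ --- scaling by $c$, negation, or translation by $y$ --- carries the defining intervals of $\phi_{\alpha,\beta}$ onto the defining intervals of the function on the right-hand side. Throughout, one uses only the piecewise definition \eqref{eq:def_1}.

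For the first identity, I would first dispose of the degenerate case $c = 0$: the left-hand side is then $0$, while $c\alpha = c\beta = 0$ forces $\phi_{0,0} \equiv 0$ (since $0 \in [0,0]$), so the right-hand side is also $0$. For $c > 0$, the map $t \mapsto ct$ is a strictly increasing bijection of $\R$, so $x > \beta \iff cx > c\beta$, $x \in [\alpha,\beta] \iff cx \in [c\alpha, c\beta]$, and $x < \alpha \iff cx < c\alpha$; in each of these three cases, multiplying the value $\phi_{\alpha,\beta}(x) \in \{\beta, x, \alpha\}$ by $c$ yields exactly the corresponding value $\phi_{c\alpha,c\beta}(cx) \in \{c\beta, cx, c\alpha\}$.

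For the second identity, negation is a strictly decreasing bijection of $\R$ that swaps the two endpoints: $x > \beta \iff -x < -\beta$, $x \in [\alpha,\beta] \iff -x \in [-\beta,-\alpha]$, and $x < \alpha \iff -x > -\alpha$. When $x > \beta$ we have $\phi_{\alpha,\beta}(x) = \beta$, hence $-\phi_{\alpha,\beta}(x) = -\beta = \phi_{-\beta,-\alpha}(-x)$ since $-x < -\beta$; the middle and lower cases are identical in structure. For the third identity, the translation $t \mapsto t + y$ is a strictly increasing bijection of $\R$ shifting both thresholds by $y$, so $x > \beta \iff x + y > \beta + y$, and likewise on the other two regions; adding $y$ to the three possible values $\beta, x, \alpha$ of $\phi_{\alpha,\beta}(x)$ produces $\beta + y$, $x + y$, $\alpha + y$, which are precisely the values of $\phi_{\alpha+y,\beta+y}(x+y)$ on the corresponding regions.

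I do not expect any genuine obstacle here; the only point requiring care is the degenerate case $c = 0$ in the first identity, where the interval $[c\alpha, c\beta]$ collapses to the single point $\{0\}$ and one must check that the piecewise definition still returns $0$.
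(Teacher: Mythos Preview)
Your proposal is correct and takes essentially the same approach as the paper, which simply says ``Just check the three possible cases for each item.'' Your write-up is in fact more detailed than the paper's one-line proof, including the explicit handling of the degenerate case $c=0$.
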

\begin{proof}
    Just check the three possible cases for each item.
\end{proof}

Fix $n \in \N$. For a real valued sequence $a \defeq (a_i)_{i=1}^{n}$, define the sequence $a^{*} = (a^{*}_{i})_{i=1}^{n}$ by $a^{*}_i \defeq a_{\pi(i)}$ for all $i \in [n]$ and where $\pi: [n] \to [n]$ is a permutation that orders $a$ non-decreasingly, i.e.\ $a_{\pi(1)} \leq \dotsc \leq a_{\pi(n)}$. Note that this is well-defined since any such permutation gives the same $a^{*}$. Addition and scalar multiplication of sequences are as usual. For two sequences $a, b$, we say that $a \leq b$ if $a_{i} \leq b_i$ for all $i \in [n]$.

\begin{lemma}
\label{lem:trunc_2}
    Let $a = (a_i)_{i=1}^{n}$ and $b = (b_i)_{i=1}^{n}$ be real valued sequences.
    \begin{equation*}
        a \leq b \Rightarrow a^{*} \leq b^{*}
    \end{equation*}
\end{lemma}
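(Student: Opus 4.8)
The plan is to work directly with the order statistics rather than trying to relate the two sorting permutations to one another; those permutations can be entirely different, so there is no hope of comparing $a^{*}$ and $b^{*}$ ``coordinate by coordinate after re-indexing''. First I would record the elementary counting characterization of the $i$-th smallest entry: for any real sequence $c = (c_i)_{i=1}^{n}$, any $t \in \R$, and any $i \in [n]$,
\begin{equation*}
    c^{*}_{i} \leq t \quad\Leftrightarrow\quad \abs*{\brace*{j \in [n] \st c_j \leq t}} \geq i .
\end{equation*}
This follows at once from $c^{*}_{1} \leq \dotsc \leq c^{*}_{n}$: if $c^{*}_{i} \leq t$ then $c^{*}_{1}, \dotsc, c^{*}_{i}$ are all $\leq t$, so at least $i$ indices $j$ satisfy $c_j \leq t$; conversely, if $c^{*}_{i} > t$ then $c^{*}_{i}, \dotsc, c^{*}_{n}$ all exceed $t$, leaving at most $i-1$ indices with $c_j \leq t$.

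Given this, the argument is immediate. Fix $i \in [n]$ and set $t \defeq b^{*}_{i}$. The characterization applied to $b$ produces a set $T \subseteq [n]$ with $\abs{T} \geq i$ and $b_j \leq b^{*}_{i}$ for all $j \in T$. For each $j \in T$ the hypothesis $a \leq b$ gives $a_j \leq b_j \leq b^{*}_{i}$, hence $\abs*{\brace*{j \in [n] \st a_j \leq b^{*}_{i}}} \geq \abs{T} \geq i$. Applying the characterization to $a$ with $t = b^{*}_{i}$ then yields $a^{*}_{i} \leq b^{*}_{i}$. As $i \in [n]$ was arbitrary, $a^{*} \leq b^{*}$.

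I do not expect any real obstacle. The only step that merits a moment's care is the counting characterization, specifically that it is unaffected by repeated values in the sequence---and it is, precisely because every inequality involved is non-strict. Alternatively one could invoke the min--max formula $c^{*}_{i} = \min_{\abs{S}=i} \max_{j \in S} c_j$ together with the evident monotonicity of $S \mapsto \max_{j \in S} c_j$ under the pointwise order on sequences, but the counting argument is shorter and fully self-contained.
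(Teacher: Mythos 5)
Your proof is correct, and it takes a genuinely different route from the paper's. The paper works directly with the two sorting permutations $\pi$ and $\sigma$ of $a$ and $b$, and argues $a_{\pi(i)} \leq b_{\sigma(i)}$ by a two-case pigeonhole argument: either $\pi(i)$ already lies among $\{\sigma(1),\dots,\sigma(i)\}$, giving $a_{\pi(i)} \leq b_{\pi(i)} \leq b_{\sigma(i)}$, or else a counting argument produces some $j > i$ with $\pi(j) \in \{\sigma(1),\dots,\sigma(i)\}$ and one chains $a_{\pi(i)} \leq a_{\pi(j)} \leq b_{\pi(j)} \leq b_{\sigma(i)}$. You instead invoke the counting characterization $c^{*}_{i} \leq t \Leftrightarrow \abs{\{j : c_j \leq t\}} \geq i$ (equivalently, the equivariance of quantiles under monotone pointwise domination of the empirical distribution) and then the conclusion is a one-liner. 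Your remark that "there is no hope of comparing coordinate by coordinate after re-indexing" slightly overstates the case — the paper does precisely that comparison, just with a nontrivial case split — but your avoidance of the permutations entirely makes the argument more transparent and self-contained, and the min--max formulation you mention in passing would also work. Both proofs are equally valid; yours is arguably the cleaner.
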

\begin{proof}
    Let $\pi$ and $\sigma$ be permutations of $[n]$ that order $a$ and $b$ non-decreasingly, respectively. Let $i \in [n]$. We show that $a_{\pi(i)} \leq b_{\sigma(i)}$. We consider two cases. If $\pi(i) \in \brace*{\sigma(1), \dotsc, \sigma(i)}$, then        $a_{\pi(i)} \leq b_{\pi(i)} \leq b_{\sigma(i)}$. Otherwise, $\pi(i) \in \brace*{\sigma(i+1), \dotsc, \sigma(n)}$. This implies that there exists a $j \in \brace*{i+1, \dotsc, n}$ such that $\pi(j) \in \brace*{\sigma(1), \dotsc, \sigma(i)}$, from which we conclude that $a_{\pi(i)} \leq a_{\pi(j)} \leq b_{\pi(j)} \leq b_{\sigma(i)}$.
\end{proof}

Let $k \in \brace*{1, \dotsc, \floor{n/2}}$. Define
\begin{equation}
\label{eq:def_2}
    \varphi_{k}(a) \defeq \sum_{i=1}^{n} \phi_{a^{*}_{1+ k}, a^{*}_{n-k}}(a_i). 
\end{equation}

\begin{lemma}
\label{lem:trunc_3}
    The following holds for all real-valued sequences $a = (a_i)_{i=1}^{n}$.
    \begin{itemize}
        \item $c \cdot \varphi_{k}(a) = \varphi_k(c \cdot a)$ for all $c \in \R$.
        \item $\varphi_{k}(a) + n \cdot c = \varphi_{k}(a + c)$ for all $c \in \R$.
        \item $\varphi_k(a) \leq \varphi_k(b)$ for all sequences $b = (b_i)_{i=1}^{n}$ such that $a \leq b$.
    \end{itemize}
\end{lemma}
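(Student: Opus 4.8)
The plan is to derive each of the three items from the corresponding elementary identity for $\phi_{\alpha,\beta}$ recorded in Lemma \ref{lem:trunc_1}, after first noting how the clipping thresholds $a^*_{1+k}$ and $a^*_{n-k}$ transform under the relevant operation. First I would observe that order statistics behave predictably: if $\pi$ is a permutation sorting $a$ non-decreasingly, then $\pi$ also sorts $a + c$ non-decreasingly and, when $c \ge 0$, sorts $c\,a$ non-decreasingly, whereas when $c < 0$ the reversed permutation $i \mapsto \pi(n+1-i)$ sorts $c\,a$ non-decreasingly. Hence $(a+c)^* = a^* + c$; $(c\,a)^* = c\,a^*$ if $c \ge 0$; and $(c\,a)^*_i = c\,a^*_{n+1-i}$ if $c < 0$, so in particular $(c\,a)^*_{1+k} = c\,a^*_{n-k}$ and $(c\,a)^*_{n-k} = c\,a^*_{1+k}$ in that last case. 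Since $a^*_{1+k} \le a^*_{n-k}$, every pair of thresholds arising below is correctly ordered.

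For item 2, combine $(a+c)^*_{1+k} = a^*_{1+k} + c$ and $(a+c)^*_{n-k} = a^*_{n-k} + c$ with the third bullet of Lemma \ref{lem:trunc_1}, which gives $\phi_{a^*_{1+k},a^*_{n-k}}(a_i) + c = \phi_{(a+c)^*_{1+k},(a+c)^*_{n-k}}((a+c)_i)$ for each $i$; summing over $i \in [n]$ yields $\varphi_k(a) + n\,c = \varphi_k(a+c)$. For item 1 with $c \ge 0$, the first bullet of Lemma \ref{lem:trunc_1} gives $c\,\phi_{a^*_{1+k},a^*_{n-k}}(a_i) = \phi_{c\,a^*_{1+k},\,c\,a^*_{n-k}}(c\,a_i) = \phi_{(c\,a)^*_{1+k},(c\,a)^*_{n-k}}((c\,a)_i)$, and summing over $i$ gives $c\,\varphi_k(a) = \varphi_k(c\,a)$. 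For $c < 0$ I would first derive the auxiliary identity $c\,\phi_{\alpha,\beta}(x) = \phi_{c\beta,\,c\alpha}(c\,x)$ by writing $c = -|c|$ and chaining the second bullet ($-\phi_{\alpha,\beta}(x) = \phi_{-\beta,-\alpha}(-x)$) with the first; applying it with $\alpha = a^*_{1+k}$, $\beta = a^*_{n-k}$, $x = a_i$ gives $c\,\phi_{a^*_{1+k},a^*_{n-k}}(a_i) = \phi_{(c\,a)^*_{1+k},(c\,a)^*_{n-k}}((c\,a)_i)$, and summing over $i$ again gives $c\,\varphi_k(a) = \varphi_k(c\,a)$. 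The case $c = 0$ is immediate.

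The one item needing a genuinely new ingredient is item 3, the monotonicity of the clipping map, and this is the part I expect to be the main obstacle because the thresholds for $a$ and for $b$ differ, so a naive stepwise comparison can pass through a $\phi_{\alpha',\beta}$ with $\alpha' > \beta$ outside the defined domain. I would sidestep this by noting that $\phi_{\alpha,\beta}(x) = \min(\max(x,\alpha),\beta)$ whenever $\alpha \le \beta$ (a one-line check of the three cases in (\ref{eq:def_1})), and that this expression, being a composition of $\max$ and $\min$, is non-decreasing in each of $x,\alpha,\beta$ simultaneously; consequently $\alpha \le \alpha'$, $\beta \le \beta'$, $x \le x'$ (with $\alpha \le \beta$ and $\alpha' \le \beta'$) imply $\phi_{\alpha,\beta}(x) \le \phi_{\alpha',\beta'}(x')$. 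Now, given $a \le b$, Lemma \ref{lem:trunc_2} yields $a^* \le b^*$, hence $a^*_{1+k} \le b^*_{1+k}$ and $a^*_{n-k} \le b^*_{n-k}$; combined with $a_i \le b_i$ this gives $\phi_{a^*_{1+k},a^*_{n-k}}(a_i) \le \phi_{b^*_{1+k},b^*_{n-k}}(b_i)$ for every $i$, and summing over $i \in [n]$ gives $\varphi_k(a) \le \varphi_k(b)$. Throughout, the only bookkeeping to watch is that every $\phi_{\cdot,\cdot}$ appearing has its first index no larger than its second, which follows from $a^*_{1+k} \le a^*_{n-k}$ together with the threshold identities from the first paragraph.
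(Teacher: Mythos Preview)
Your proof is correct. For items 1 and 2 your argument matches the paper's: reduce to the pointwise identities of Lemma~\ref{lem:trunc_1} after tracking how the order statistics transform under scaling and shifting. (In fact the paper's written proof handles item 1 and then, under the label ``second item,'' jumps directly to the monotonicity claim; the shift identity is left implicit, so your treatment of item 2 fills that gap.)

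For item 3 you take a genuinely different route. You observe that $\phi_{\alpha,\beta}(x)=\min(\max(x,\alpha),\beta)$ is nondecreasing in each of its three arguments, combine this with $a^*_{1+k}\le b^*_{1+k}$, $a^*_{n-k}\le b^*_{n-k}$ (from Lemma~\ref{lem:trunc_2}) and $a_i\le b_i$, and compare termwise before summing. The paper instead expands $\varphi_k(a)$ explicitly as $k\,a^*_{1+k}+\sum_{i=k+1}^{n-k}a^*_i+k\,a^*_{n-k}$ and then applies $a^*\le b^*$ directly to this trimmed sum. Both arguments are short; yours avoids the explicit expansion and the attendant index bookkeeping, while the paper's version makes the structure of $\varphi_k$ as a sum of order statistics more visible, which is useful elsewhere (e.g.\ Lemma~\ref{lem:trunc_20}).
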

\begin{proof}
    We start with the first item. Let $\pi: [n] \to [n]$ be a permutation that orders $a$ non-decreasingly. The case $c = 0$ is trivial. Now consider the case $c > 0$.  Then since $c > 0$, $\pi$ also orders $c \cdot a$ non-decreasingly. Therefore $(c \cdot a)_{i}^{*} = c \cdot a^{*}_i$ and
    \begin{equation*}
        c \cdot \varphi_{k}(a) = \sum_{i=1}^{n} c \cdot \phi_{a^{*}_{k}, a^{*}_{n-k}}(a_i) = \sum_{i=1}^{n} \phi_{c \cdot a^{*}_{k}, c \cdot a_{n-k}^{*}}(c \cdot a_i) =  \sum_{i=1}^{n} \phi_{(c \cdot a)^{*}_{k}, (c \cdot a)_{n-k}^{*}}(c \cdot a_i) = \varphi_{k}(c \cdot a),
    \end{equation*}
    where the second equality follows from the first item of Lemma \ref{lem:trunc_1}.
    Now consider the case $c = -1$. Then the permutation $\pi$ orders $-a$ non-increasingly so that $(-a)_{i}^{*} = -a_{n-i}^{*}$ and
    \begin{equation*}
        -\varphi_{k}(a) =  \sum_{i=1}^{n} - \phi_{a^{*}_{k}, a^{*}_{n-k}}(a_i) =  \sum_{i=1}^{n} \phi_{-a^{*}_{n-k}, -a^{*}_{k}}(-a_i) =  \sum_{i=1}^{n} \phi_{(-a)^{*}_{k}, (-a)^{*}_{n-k}}(-a_i) = \varphi_{k}(-a),
    \end{equation*}
    where the second equality follows from the second item of Lemma \ref{lem:trunc_2}. For the case $c < 0$, we have
    \begin{equation*}
        c \cdot \varphi_{k}(a) = (-c) \cdot -\varphi_{k}(a) = (-c) \cdot \varphi_{k}(-a) = \varphi_{k}(c \cdot a).
    \end{equation*}

    For the second item, we have by Lemma \ref{lem:trunc_2} that $a^{*} \leq b^{*}$, from which we conclude
    \begin{equation*}
        \varphi_{k}(a) =  \sum_{i=1}^{n} \phi_{a^{*}_{k}, a^{*}_{n-k}}(a_i) = k a^{*}_{k} +  \sum_{i=k+1}^{n-k-1} a_{i}^{*} + k a_{n-k}^{*} \leq k b^{*}_{k} +  \sum_{i=k+1}^{n-k-1} b_{i}^{*} + k b_{n-k}^{*} = \varphi_{k}(b)
    \end{equation*}
\end{proof}

\begin{lemma}
\label{lem:trunc_20}
    Let $a = (a_i)_{i=1}^{n}$ and $b = (b_i)_{i=1}^{n}$ be real valued sequences such that $b \geq 0$. Then
    \begin{equation*}
        \varphi_{k}(a+b) \geq \varphi_k(a) + \sum_{i=1}^{n-2k} b_{i}^{*}
    \end{equation*}
\end{lemma}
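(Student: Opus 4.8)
The plan is to reduce the statement to the corresponding inequality for the \emph{trimmed sum} $T(x) \defeq \sum_{i=k+1}^{n-k} x^{*}_{i}$ and then prove the latter by a layering argument. The first ingredient is the closed form
\[
\varphi_{k}(x) = T(x) + k\paren*{x^{*}_{1+k} + x^{*}_{n-k}},
\]
which follows at once from \eqref{eq:def_2}: sorting the sequence and applying \eqref{eq:def_1} entrywise, the bottom $k+1$ order statistics are each clamped up to $x^{*}_{1+k}$, the middle $n-2k-2$ are untouched, and the top $k+1$ are each clamped down to $x^{*}_{n-k}$, so $\varphi_{k}(x) = (k+1)x^{*}_{1+k} + \sum_{i=k+2}^{n-k-1}x^{*}_{i} + (k+1)x^{*}_{n-k}$, which rearranges to the display. (We assume $1+k \le n-k$, as needed for $\phi_{x^{*}_{1+k},x^{*}_{n-k}}$ to be defined; when $n = 2k$ the right-hand side of the lemma is $\varphi_{k}(a)$.) Since $b \ge 0$ gives $a+b \ge a$, Lemma~\ref{lem:trunc_2} yields $(a+b)^{*}_{i} \ge a^{*}_{i}$ for all $i$, so the two boundary terms above only increase and hence $\varphi_{k}(a+b) - \varphi_{k}(a) \ge T(a+b) - T(a)$. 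It therefore suffices to prove $T(a+b) \ge T(a) + \sum_{i=1}^{n-2k} b^{*}_{i}$.

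For this I would record three elementary facts about $T$, each immediate from Lemma~\ref{lem:trunc_2} and the identity $\sum_{i} x^{*}_{i} = \sum_{i} x_{i}$: (i) $u \le v$ implies $T(u) \le T(v)$; (ii) $T(u + s\mathbbm{1}_{[n]}) = T(u) + (n-2k)s$ for all $s \in \R$; and (iii) for $B \subseteq [n]$ and $t \ge 0$, $T(u) - T(u - t\mathbbm{1}_{B}) = \sum_{i=k+1}^{n-k}\brack*{u^{*}_{i} - (u - t\mathbbm{1}_{B})^{*}_{i}}$ is a sum of nonnegative terms and hence bounded by the full sum $\sum_{i=1}^{n}\brack*{u^{*}_{i} - (u - t\mathbbm{1}_{B})^{*}_{i}} = t\abs{B}$, so $T(u - t\mathbbm{1}_{B}) \ge T(u) - t\abs{B}$. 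From (i)--(iii) the two-valued bound $T(u + t\mathbbm{1}_{A}) \ge T(u) + t\max\brace*{0,\abs{A}-2k}$ follows: if $\abs{A} \le 2k$ it is (i), and if $\abs{A} = m > 2k$ one writes $u + t\mathbbm{1}_{A} = (u + t\mathbbm{1}_{[n]}) - t\mathbbm{1}_{A^{c}}$ and combines (ii) with (iii) (using $\abs{A^{c}} = n-m$) to get $T(u + t\mathbbm{1}_{A}) \ge T(u) + (n-2k)t - (n-m)t = T(u) + (m-2k)t$.

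To pass to arbitrary $b \ge 0$ I would use the super-level-set decomposition: with $0 = v_{0} < v_{1} < \dotsb < v_{r}$ the distinct values of $b$, $A_{j} \defeq \brace*{i : b_{i} \ge v_{j}}$, and $\lambda_{j} \defeq v_{j} - v_{j-1} > 0$, one has $b = \sum_{j=1}^{r}\lambda_{j}\mathbbm{1}_{A_{j}}$ with nested supports $[n] \supseteq A_{1} \supseteq \dotsb \supseteq A_{r}$. Adding the layers one at a time and telescoping the two-valued bound gives $T(a+b) \ge T(a) + \sum_{j=1}^{r}\lambda_{j}\max\brace*{0,\abs{A_{j}}-2k}$, and it remains to see that this last sum equals $\sum_{i=1}^{n-2k} b^{*}_{i}$. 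The nestedness forces $b^{*}_{i} \ge v_{j} \Leftrightarrow i \ge n - \abs{A_{j}} + 1$, whence $b^{*}_{i} = \sum_{j \,:\, \abs{A_{j}} \ge n-i+1}\lambda_{j}$; summing over $i \le n-2k$ and exchanging the order of summation, the coefficient of each $\lambda_{j}$ is the number of $i \in \brace*{1,\dotsc,n-2k}$ with $i \ge n - \abs{A_{j}} + 1$, which is exactly $\max\brace*{0,\abs{A_{j}}-2k}$. Together with the reduction of the first paragraph, this proves the lemma.

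I expect the main obstacle to be the two-valued inequality, and within it the identity $u + t\mathbbm{1}_{A} = (u + t\mathbbm{1}_{[n]}) - t\mathbbm{1}_{A^{c}}$ together with (iii): this is exactly where the $2k$ ``wasted'' coordinates enter — one cannot hope for the stronger $T(a+b) \ge T(a) + \sum_{i} b^{*}_{i}$, since the largest $2k$ coordinates of $b$ may land on entries that get trimmed away — and it is the only place where the cutoff at rank $k$ interacts nontrivially with an arbitrary set $A$. The rest is routine order-statistic bookkeeping, using only Lemmas~\ref{lem:trunc_2} and~\ref{lem:trunc_3}.
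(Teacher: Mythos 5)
Your proof is correct, and it takes a genuinely different route from the paper's. Both proofs begin the same way: write $\varphi_k(x) = T(x) + k\paren{x^*_{1+k} + x^*_{n-k}}$ with $T(x) = \sum_{i=k+1}^{n-k}x^*_i$, dispose of the two clamp terms via $(a+b)^* \ge a^*$, and reduce to the trimmed-sum inequality $T(a+b) \ge T(a) + \sum_{i=1}^{n-2k}b^*_i$. The paper then proves this in one shot by a permutation/index-set argument: it represents the trimmed sum of $a+b$ as a maximum over $k$-element deletions from the index set $I_2$ of the $n-k$ smallest entries of $a+b$, and substitutes a carefully chosen deletion set $S^*$ that contains every index among the $k$ smallest under $a$; this choice forces $(I_2\setminus S^*)\subset J_1$, decoupling the $a$-part (bounded via that containment) from the $b$-part (bounded by the $n-2k$ smallest entries of $b$). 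You instead decompose $b$ into nonnegative indicator layers $b = \sum_j \lambda_j \mathbbm{1}_{A_j}$ with nested supports, prove for each layer the two-valued bound $T(u + t\mathbbm{1}_A) \ge T(u) + t\max\brace{0, \abs{A}-2k}$ using only monotonicity, translation-invariance, and a $1$-Lipschitz estimate of $T$, telescope, and then verify by Fubini on the double sum that the telescoped lower bound is exactly $\sum_{i=1}^{n-2k}b^*_i$. The paper's argument is more compact but harder to audit at a glance; yours is longer but more modular — the three elementary facts (i)--(iii) and the two-valued bound are each transparent and reusable, and the identity closing your proof shows the telescoped bound is sharp (equal to, not merely dominated by, the $n-2k$ smallest entries of $b$), which makes the ``$2k$ wasted coordinates'' intuition precise. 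One cosmetic note: the intermediate grouping into ``bottom $k+1$, middle $n-2k-2$, top $k+1$'' double-counts when $n = 2k+1$; the final closed form $T(x) + k(x^*_{1+k}+x^*_{n-k})$ you extract is nevertheless correct for all $n \ge 2k+1$, so this affects only the exposition, not the proof.
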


\begin{proof}
    Let $\pi$ and $\sigma$ be permutations that order $a+b$ and $a$ non-decreasingly, respectively. By definition
    \begin{equation*}
        \varphi_k(a+b) = k (a+b)_{1+k}^{*} + \sum_{i=k+1}^{n-k} (a+b)_{i}^{*} + k (a+b)_{n-k}^{*}.
    \end{equation*}
    We lower bound each of the three terms separately. For the first, define the sets $I_{1} \defeq \brace*{\pi(1), \dotsc, \pi(1+k)}$ and $J_{1} \defeq \brace*{\sigma(1+k), \dotsc, \sigma(n)}$, and notice that
    \begin{equation*}
        \abs*{I_{1} \cap J_1} = \abs{I_1} + \abs{J_1} - \abs{I_1 \cup J_1} \geq (1+k) + (n-k) - n = 1.
    \end{equation*}
    Therefore, we have
    \begin{equation*}
        (a+b)_{1+k}^{*} = a_{\pi(1+k)} + b_{\pi(1 + k)} = \max\brace*{a_i + b_i \mid i \in I_1} \geq \max\brace*{a_i + b_{i} \mid i \in I_1 \cap J_1} \geq a_{\sigma(1+k)},
    \end{equation*}
    where the last inequality uses the non-negativity of $b$. Similarly, for the third term, define the sets $I_2 \defeq \brace*{\pi(1), \dotsc, \pi(n-k)}$ and $J_2 \defeq \brace*{\sigma(n-k), \dotsc, \sigma(n)}$, and notice that
    \begin{equation*}
        \abs*{I_{2} \cap J_2} = \abs{I_2} + \abs{J_2} - \abs{I_2 \cup J_2} \geq (n-k) + (1+k) - n = 1.
    \end{equation*}
    Therefore, we get
    \begin{equation*}
        (a+b)_{n-k}^{*} = a_{\pi(n-k)} + b_{\pi(n-k)} = \max\brace*{a_i + b_i \mid i \in I_2} \geq \max\brace*{a_i + b_{i} \mid i \in I_2 \cap J_2} \geq a_{\sigma(n-k)},
    \end{equation*}
    where again we used the non-negativity of $b$ in the last inequality. It remains to lower bound the second term. Let $S^{*} \subset I_2$ such that $\abs{S} = k$ and $\brace*{\sigma(1), \dotsc, \sigma(k)} \cap I_{2} \subset S$. Notice that
    \begin{align*}
         \sum_{i=k+1}^{n-k} (a+b)_{i}^{*} &=  \sum_{i=k+1}^{n-k} (a_{\pi(i)} + b_{\pi(i)}) \\
        &= \max_{\substack{S \subset I_2 \\ \abs{S} = k}} \sum_{i \in I_2 \setminus S} (a_{i} + b_i) \\
        &\geq  \sum_{i \in I_{2} \setminus S^{*}} a_i +  \sum_{i \in I_{2} \setminus S^{*}} b_i
    \end{align*}
    Let us further bound each term. For the first, notice that by definition of $S^{*}$, we have $(I_{2} \setminus S^{*}) \subset J_{1}$ and $\abs*{I_2 \setminus S^{*}} = n-2k$, therefore
    \begin{equation*}
         \sum_{i \in I_{2} \setminus S^{*}} a_i \geq \min_{\substack{T \subset J_1 \\ \abs*{T} = n-2k}}  \sum_{i \in T} a_i =  \sum_{i=k+1}^{n-k} a_{\sigma(i)}.
    \end{equation*}
    For the second, we have
    \begin{equation*}
         \sum_{i \in I_2 \setminus S^{*}} b_i \geq \min_{\substack{T \subset [n] \\ \abs*{T} = n-2k}}  \sum_{i \in T} b_i =  \sum_{i=1}^{n-2k} b_{i}^{*}
    \end{equation*}
    Combining the bounds yields the desired result.
\end{proof}

\subsection{Suprema of truncated empirical processes}

Let $\mathcal{T}$ be a countable index set, and let $(\brace{Z_{i, s}}_{s \in \mathcal{T}})_{i=1}^{n}$ be independent real-valued $\mathcal{T}$-indexed stochastic processes. Define $Z \defeq \sup_{s \in \mathcal{T}} \sum_{i=1}^{n} Z_{i, s}$. For $s \in \mathcal{T}$, define $Z_{s} \defeq (Z_{i, s})_{i=1}^{n}$. For \iid Rademacher random variables $(\eps_i)_{i=1}^{n}$, define $\mu \defeq \Exp\brack*{\sup_{s \in \mathcal{T}} \sum_{i=1}^{n} \eps_i Z_{i, s}}$. We assume throughout that $\sigma^{2} \defeq \sup_{s \in \mathcal{T}} \sum_{i=1}^{n} \Exp\brack*{Z_{i, s}^{2}} < \infty$. We start by recalling the following result.

\begin{lemma}[\cite{kleinConcentrationMeanMaxima2005}]
\label{lem:trunc_5}
    Assume that for all $s \in \mathcal{T}$ and $i \in [n]$, $\Exp\brack*{Z_{i, s}} = 0$, and that $R \defeq \sup_{(s, i) \in \mathcal{T} \times [n]} \norm{Z_{i, s}}_{\infty} < \infty$. Define $v \defeq 2R\Exp\brack*{Z} + \sigma^{2}$. Then
    \begin{equation*}
        \Prob\paren*{Z \geq \Exp\brack*{Z} + t} \leq \exp\paren*{-\frac{4v}{9R^{2}}h\paren*{\frac{3Rt}{2v}}},
    \end{equation*}
    where $h(t) \defeq 1 + t - \sqrt{1 + 2t}$ with inverse $h^{-1}(t) = t + \sqrt{2t}$. Consequently, with probability at least $1-\delta$
    \begin{equation*}
        Z < \Exp\brack{Z} + \frac{3R \log(1/\delta)}{2} + \sqrt{2v\log(1/\delta)}. 
    \end{equation*}
\end{lemma}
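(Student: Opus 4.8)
The first displayed inequality in the statement is the Klein--Rio Bernstein-type concentration bound for the supremum $Z$ of a sum of independent, centred, uniformly bounded $\mathcal{T}$-indexed processes, and I would simply invoke it from \cite{kleinConcentrationMeanMaxima2005} with the stated choice $v = 2R\,\Exp[Z] + \sigma^2$; nothing about it needs to be re-derived. The whole content of the proof is therefore the passage from this tail bound to the ``Consequently'' statement, which is a routine inversion.

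First I would record the elementary fact that $h\colon[0,\infty)\to[0,\infty)$, $h(t) = 1 + t - \sqrt{1+2t}$, is a strictly increasing bijection with $h(0)=0$: indeed $h'(t) = 1 - (1+2t)^{-1/2} > 0$ for $t>0$, and $h(t)\to\infty$. Its inverse is $h^{-1}(u) = u + \sqrt{2u}$, as one checks by writing $y = h(t)$, so that $\sqrt{1+2t} = 1 + t - y \ge 0$, and squaring to obtain the quadratic $t^2 - 2yt + (y^2 - 2y) = 0$, whose unique nonnegative root is $t = y + \sqrt{2y}$.

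Next, set $g(t) \defeq \tfrac{4v}{9R^2}\,h\!\left(\tfrac{3Rt}{2v}\right)$. Since $h$ is an increasing bijection of $[0,\infty)$ and $t \mapsto 3Rt/(2v)$ is a positive rescaling, $g$ is itself a strictly increasing bijection of $[0,\infty)$ onto itself with $g(0)=0$; hence for every $\delta\in(0,1)$ there is a unique $t_\delta > 0$ with $g(t_\delta) = \log(1/\delta)$, and the tail bound gives $\Prob\paren*{Z \ge \Exp[Z] + t_\delta} \le \exp(-g(t_\delta)) = \delta$, i.e.\ $\Prob\paren*{Z < \Exp[Z] + t_\delta} \ge 1 - \delta$. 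Solving for $t_\delta$: from $h\!\left(\tfrac{3Rt_\delta}{2v}\right) = \tfrac{9R^2}{4v}\log(1/\delta)$ we get $\tfrac{3Rt_\delta}{2v} = h^{-1}\!\left(\tfrac{9R^2}{4v}\log(1/\delta)\right) = \tfrac{9R^2}{4v}\log(1/\delta) + \sqrt{\tfrac{9R^2}{2v}\log(1/\delta)}$, and multiplying through by $2v/(3R)$ gives exactly $t_\delta = \tfrac{3R}{2}\log(1/\delta) + \sqrt{2v\log(1/\delta)}$. Substituting this $t_\delta$ into the previous line completes the argument.

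There is no genuine obstacle here, since the deep ingredient --- the concentration inequality of \cite{kleinConcentrationMeanMaxima2005} --- is quoted rather than proved. The only points that demand a little care are verifying that $h$ is invertible on all of $[0,\infty)$ with the asserted inverse (so that the computation of $t_\delta$ is legitimate) and that $g$ maps $[0,\infty)$ onto $[0,\infty)$ (so that $t_\delta$ exists for every $\delta\in(0,1)$, including $\delta$ close to $1$).
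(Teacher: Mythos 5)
Your proposal is correct. The paper states Lemma \ref{lem:trunc_5} purely as a citation to \cite{kleinConcentrationMeanMaxima2005} and offers no proof of its own, so the only thing to check is your derivation of the ``Consequently'' clause from the quoted tail bound, and that derivation is exact: $h^{-1}(u)=u+\sqrt{2u}$ is indeed the inverse (not merely an upper bound on it), so $t_\delta$ solves $g(t_\delta)=\log(1/\delta)$ precisely, and the algebra $\tfrac{2v}{3R}\cdot\tfrac{9R^2}{4v}=\tfrac{3R}{2}$ and $\tfrac{2v}{3R}\sqrt{\tfrac{9R^2}{2v}\log(1/\delta)}=\sqrt{2v\log(1/\delta)}$ checks out. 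This is the natural way to pass from the Bennett-form bound to the Bernstein-form one, and it is what the cited lemma is implicitly taken to mean here.
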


The following result is due to \citet{lugosiRobustMultivariateMean2021}.
\begin{lemma}
\label{lem:trunc_6}
    Let $T > 0$. Then with probability at least $1 - \delta$
    \begin{multline*}
        \sup_{s \in \mathcal{T}} \abs*{\brace*{i \in [n] \mid\abs{Z_{i, s}} > T}} \\
        < \inf_{\eps \in (0, 1)} \brace*{\frac{2\mu}{\eps T} + \frac{\sigma^{2}}{(1-\eps)^{2}T^{2}} + \sqrt{\paren*{\frac{8\mu}{\eps T} + \frac{2\sigma^{2}}{(1-\eps)^{2}T^{2}}}\log(1/\delta)} + \frac{3\log(1/\delta)}{2}}.
    \end{multline*}
\end{lemma}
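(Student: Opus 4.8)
The plan is to replace the indicator $\mathbbm{1}_{\brace*{\abs{x} > T}}$ by a Lipschitz surrogate, so that the symmetrization/contraction machinery applies, and then feed the resulting centered process into Lemma \ref{lem:trunc_5}. Fix $\eps \in (0,1)$ and let $g_{\eps}: \R \to [0,1]$ be the function that equals $0$ on $[-(1-\eps)T, (1-\eps)T]$, equals $1$ outside $[-T, T]$, and is affine on the two bridging intervals. Then $g_{\eps}$ is $\tfrac{1}{\eps T}$-Lipschitz, satisfies $g_{\eps}(0) = 0$, and $\mathbbm{1}_{\brace*{\abs{x} > T}} \leq g_{\eps}(x) \leq \mathbbm{1}_{\brace*{\abs{x} > (1-\eps)T}}$ for all $x$. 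Hence, writing $W_{\eps} \defeq \sup_{s \in \mathcal{T}}\sum_{i=1}^{n} g_{\eps}(Z_{i,s})$, we have $\sup_{s \in \mathcal{T}}\abs*{\brace*{i \st \abs{Z_{i,s}} > T}} \leq W_{\eps}$, and it suffices to bound $W_{\eps}$ with probability $1-\delta$ by the quantity appearing inside the infimum for this particular $\eps$. Since the left-hand side does not depend on $\eps$ and the infimand is a deterministic function of $\eps$, the passage to the infimum over $\eps \in (0,1)$ at the end is justified by taking a minimizing sequence $\eps_k$ and using continuity from above of $\Prob$, together with the fact that the left-hand side is integer-valued.

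To bound $W_{\eps}$, I would decompose $\sum_i g_{\eps}(Z_{i,s}) = \sum_i \paren*{g_{\eps}(Z_{i,s}) - \Exp\brack*{g_{\eps}(Z_{i,s})}} + \sum_i \Exp\brack*{g_{\eps}(Z_{i,s})}$ and take suprema, so that $W_{\eps} \leq \sup_{s}\sum_i \widetilde{Z}_{i,s} + \sup_s \sum_i \Exp\brack*{g_{\eps}(Z_{i,s})}$ with $\widetilde{Z}_{i,s} \defeq g_{\eps}(Z_{i,s}) - \Exp\brack*{g_{\eps}(Z_{i,s})}$. The deterministic term is handled by Chebyshev's inequality: $\Exp\brack*{g_{\eps}(Z_{i,s})} \leq \Prob\paren*{\abs{Z_{i,s}} > (1-\eps)T} \leq \Exp\brack*{Z_{i,s}^{2}}/[(1-\eps)^{2}T^{2}]$, so summing over $i$ and using the definition of $\sigma^2$ gives $\sup_s \sum_i \Exp\brack*{g_{\eps}(Z_{i,s})} \leq \sigma^2/[(1-\eps)^2T^2]$. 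The same computation, via $g_{\eps}^2 \leq g_{\eps} \leq 1$, bounds the wimpy variance of the centered process: $\sup_s \sum_i \Exp\brack*{\widetilde{Z}_{i,s}^2} \leq \sup_s \sum_i \Exp\brack*{g_{\eps}(Z_{i,s})} \leq \sigma^2/[(1-\eps)^2T^2]$.

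For the expected centered supremum, I would use symmetrization followed by the one-sided Ledoux--Talagrand contraction principle, applied conditionally on the $Z_{i,s}$ to the $\tfrac{1}{\eps T}$-Lipschitz, origin-preserving map $g_{\eps}$: this gives $\Exp\brack*{\sup_{s}\sum_i \widetilde{Z}_{i,s}} \leq 2\Exp\brack*{\sup_s \sum_i \eps_i g_{\eps}(Z_{i,s})} \leq \tfrac{2}{\eps T}\Exp\brack*{\sup_s\sum_i \eps_i Z_{i,s}} = \tfrac{2\mu}{\eps T}$. Now applying Lemma \ref{lem:trunc_5} to $(\widetilde{Z}_{i,s})$ with $R = 1$ and $v \leq 2\cdot\tfrac{2\mu}{\eps T} + \tfrac{\sigma^2}{(1-\eps)^2T^2}$ yields, with probability at least $1-\delta$, $\sup_s\sum_i\widetilde{Z}_{i,s} < \tfrac{2\mu}{\eps T} + \tfrac{3\log(1/\delta)}{2} + \sqrt{2v\log(1/\delta)}$, and $\sqrt{2v\log(1/\delta)} \leq \sqrt{\paren*{\tfrac{8\mu}{\eps T} + \tfrac{2\sigma^2}{(1-\eps)^2T^2}}\log(1/\delta)}$. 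Adding back the deterministic term $\sigma^2/[(1-\eps)^2T^2]$ reproduces exactly the claimed bound for this $\eps$, and taking the infimum over $\eps \in (0,1)$ completes the argument.

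The argument is mostly careful bookkeeping, and I expect the only delicate points to be: (i) choosing the surrogate $g_{\eps}$ with precisely the right plateaus, so that its Lipschitz constant is $\tfrac{1}{\eps T}$ and the Chebyshev step produces the $(1-\eps)^{-2}$ factor, ensuring all numerical constants match the statement exactly; and (ii) the legitimacy of replacing the per-$\eps$ bound by the infimum over $\eps$, which relies on the left-hand side being a deterministic infimum of a fixed function of a single integer-valued random variable. Everything else is the standard symmetrization--contraction--Bousquet chain, with the only external input being Lemma \ref{lem:trunc_5}.
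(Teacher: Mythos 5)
Your proof is correct and matches the paper's argument essentially line by line: the surrogate $g_{\eps}$ you construct is exactly $\chi_{T,\eps}\circ\abs{\cdot}$ from the paper's proof (which the paper attributes to \citet{lugosiRobustMultivariateMean2021}), and the subsequent chain—centering, Chebyshev/Markov for the deterministic mean term and the wimpy variance, symmetrization followed by the contraction principle, then Klein--Rio's inequality (Lemma~\ref{lem:trunc_5}) with $R=1$ and $v \leq 4\mu/(\eps T) + \sigma^2/[(1-\eps)^2T^2]$—is precisely the paper's chain, with identical constants at every step.
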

\begin{proof}
    Let $T > 0$ and $\eps \in (0, 1)$, and define the function $\chi_{T, \eps}: \R \to [0, 1]$ by
    \begin{equation*}
        \chi_{T, \eps}(x) \defeq \begin{dcases*}
            0 & if $x \leq (1-\eps) T$ \\
            \frac{x}{\eps T} - \frac{1-\eps}{\eps} & if $x \in ((1-\eps)T, T]$ \\
            1 & if $x > T$. 
        \end{dcases*}
    \end{equation*}
    Note that $\mathbbm{1}_{(T, \infty)} \leq \chi_{T,\eps} \leq \mathbbm{1}_{((1-\eps)T, \infty)}$ and $\chi_{T, \eps}$ is $(1/\eps T)$-Lipschitz. Now we have
    \begin{multline}
    \label{eq:pf_lem5_1}
        \sup_{s \in \mathcal{T}} \sum_{i=1}^{n} \mathbbm{1}_{(T, \infty)}(\abs{Z_{i, s}}) \leq \sup_{s \in \mathcal{T}} \sum_{i=1}^{n} \chi_{T, \eps}(\abs{Z_{i, s}})
        \\
        \leq \sup_{s \in \mathcal{T}} \sum_{i=1}^{n} \underbrace{\chi_{T, \eps}(\abs{Z_{i, s}}) - \Exp\brack*{\chi_{T, \eps}(\abs{Z_{i, s}})}}_{\textstyle W_{i, s} \defeq } + \sup_{s \in \mathcal{T}} \sum_{i=1}^{n} \Exp\brack*{\chi_{T, \eps}(\abs{Z_{i, s}})} 
    \end{multline}
    The second term of (\ref{eq:pf_lem5_1}) is bounded by
    \begin{equation}
    \label{eq:pf_lem5_2}
        \sup_{s \in \mathcal{T}} \sum_{i=1}^{n} \Exp\brack*{\chi_{T, \eps}(\abs{Z_{i, s}})} \leq \sup_{s \in \mathcal{T}} \sum_{i=1}^{n} \Prob\paren*{\abs{Z_{i, s}} > (1-\eps) T} \leq \frac{\sigma^{2}}{(1-\eps)^{2}T^{2}}.
    \end{equation}
    We now turn to the first term of (\ref{eq:pf_lem5_1}) which we denote by $W$. We note that $\Exp\brack{W_{i, s}} = 0$, $\abs{W_{i, s}} \leq 1$, so by Lemma \ref{lem:5} we have with probability at least $1-\delta$
    \begin{equation}
    \label{eq:pf_lem5_3}
        W < \Exp\brack*{W} + \frac{3\log(1/\delta)}{2} + \sqrt{2\paren*{2\Exp\brack{W} + \alpha^{2}} \log(1/\delta)},
    \end{equation}
    where $\alpha^{2} \defeq \sup_{s \in \mathcal{T}} \sum_{i=1}^{n} \Exp\brack{W_{i, s}^{2}}$.
    It remains to bound $\Exp\brack{W}$ and $\alpha^2$. The former is bounded by
    \begin{multline}
    \label{eq:pf_lem5_4}
        \Exp\brack*{W} = \Exp\brack*{\sup_{s \in \mathcal{T}} \sum_{i=1}^{n} \chi_{T, \eps}(\abs{Z_{i, s}}) - \Exp\brack*{\chi_{T, \eps}(\abs{Z_{i, s}})}} \\
        \leq 2 \Exp\brack*{\sup_{s \in \mathcal{T}} \sum_{i=1}^{n} \eps_i \chi_{T, \eps}(\abs{Z_{i, s}})}
        \leq \frac{2}{\eps T} \Exp\brack*{\sup_{s \in \mathcal{T}} \sum_{i=1}^{n} \eps_i Z_{i,s}},
    \end{multline}
    where the first inequality is by symmetrization and the second by the contraction principle and the $(1/\eps T)$-Lipschitzness of $\chi_{T, \eps} \circ \abs{\cdot}$. The latter is bounded by
    \begin{equation}
    \label{eq:pf_lem5_5}
        \alpha^2 = \sup_{s \in \mathcal{T}} \sum_{i=1}^{n}\Exp\brack*{W_{i, s}^{2}} \leq \sup_{s \in \mathcal{T}} \sum_{i=1}^{n} \Exp\brack*{\chi^{2}_{T, \eps}(\abs{Z_{i, s}})} \leq \sup_{s \in \mathcal{T}} \sum_{i=1}^{n} \Prob\paren*{\abs{Z_{i, s}} > (1 - \eps)T} \leq \frac{\sigma^{2}}{(1 - \eps)^{2}T^{2}}.
    \end{equation}
    Combining (\ref{eq:pf_lem5_3}), (\ref{eq:pf_lem5_4}), and (\ref{eq:pf_lem5_5}) yields that with probability at least $1 - \delta$
    \begin{equation}
    \label{eq:pf_lem5_6}
        W < \frac{2\mu}{\eps T} + \sqrt{\paren*{\frac{8\mu}{\eps T} + \frac{2\sigma^{2}}{(1-\eps)^{2}T^{2}}} \log(1/\delta)} + \frac{3\log(1/\delta)}{2}
    \end{equation}
    Combining (\ref{eq:pf_lem5_1}), (\ref{eq:pf_lem5_2}), (\ref{eq:pf_lem5_6}), and optimizing over $\eps \in (0, 1)$ yields the result. 
\end{proof}

\begin{corollary}
    Using the same notation as in Lemma \ref{lem:6}, we have with probability at least $1-\delta$
    \begin{equation*}
        \sup_{s \in \mathcal{T}} \abs*{\brace{i \in [n] \mid \abs{Z_{i, s}} > T_{0}}} < 8 \log(1/\delta),
    \end{equation*}
    where
    \begin{equation*}
        T_{0} \defeq 2 \max\brace*{\frac{\mu}{\log(1/\delta)}, \sqrt{\frac{\sigma^{2}}{\log(1/\delta)}}}.
    \end{equation*}
    
\end{corollary}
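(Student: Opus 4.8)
The plan is to specialize Lemma~\ref{lem:trunc_6} to the threshold $T = T_0$ and to the cut-off parameter $\eps = 1/2$, and then to verify that the four terms in the resulting high-probability bound add up to something strictly below $8\log(1/\delta)$.

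Write $L \defeq \log(1/\delta)$, and first record the two elementary consequences of the definition $T_0 = 2\max\{\mu/L,\ \sqrt{\sigma^2/L}\}$: from $T_0 \geq 2\mu/L$ we get $\mu/T_0 \leq L/2$, and from $T_0 \geq 2\sqrt{\sigma^2/L}$ we get $\sigma^2/T_0^2 \leq L/4$. (If $T_0 = 0$ then $\sigma^2 = 0$, which forces $\mu = 0$ and $Z_{i,s} = 0$ almost surely for every $s \in \mathcal{T}$ and $i \in [n]$, so the claim is trivial; hence assume $T_0 > 0$.) Next, apply Lemma~\ref{lem:trunc_6} with $T = T_0$, bounding the infimum over $\eps \in (0,1)$ from above by its value at $\eps = 1/2$; this gives that with probability at least $1-\delta$,
\begin{equation*}
    \sup_{s \in \mathcal{T}} \abs*{\brace*{i \in [n] \mid \abs{Z_{i,s}} > T_0}} < \frac{4\mu}{T_0} + \frac{4\sigma^2}{T_0^2} + \sqrt{\paren*{\frac{16\mu}{T_0} + \frac{8\sigma^2}{T_0^2}}L} + \frac{3L}{2}.
\end{equation*}
Substituting $\mu/T_0 \leq L/2$ and $\sigma^2/T_0^2 \leq L/4$ into each term bounds the right-hand side by $2L + L + \sqrt{(8L + 2L)L} + \tfrac32 L = \paren*{\tfrac92 + \sqrt{10}}L$, and since $\tfrac92 + \sqrt{10} \approx 7.66 < 8$ this is strictly less than $8L = 8\log(1/\delta)$, as claimed.

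I do not expect any real obstacle here; the whole proof is a one-line calculation once Lemma~\ref{lem:trunc_6} is available. The only point to get right is the book-keeping of constants — namely that the factor $2$ in the definition of $T_0$ together with the choice $\eps = 1/2$ is enough to keep $\tfrac92 + \sqrt{10}$ below $8$. A slightly larger multiplicative constant in $T_0$, or a marginally smaller $\eps$, would work equally well, so the particular value $8$ is not delicate.
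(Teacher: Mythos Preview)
Your proof is correct and follows exactly the paper's approach: the paper's own proof is the one-liner ``the result follows from taking $\eps = 1/2$ in the bound of Lemma~\ref{lem:trunc_6}, replacing $T$ by $T_0$, and straightforwardly bounding the resulting expression,'' and you have carried out precisely those steps with the right arithmetic, including the check $\tfrac{9}{2} + \sqrt{10} < 8$.
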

\begin{proof}
    The result follows from taking $\eps = 1/2$ in the bound of Lemma \ref{lem:6}, replacing $T$ by $T_{0}$, and straightforwardly bounding the resulting expression.
\end{proof}

\begin{lemma}
    \label{lem:trunc_7}
    Let $\delta \in (0, 1)$ be such that $k \defeq 8 \log(2/\delta)$ is an integer satisfying $1 \leq k \leq \floor{n/2}$. Assume that for all $s \in \mathcal{T}$ and $i \in [n]$, $\Exp\brack*{Z_{i, s}} = 0$. Then with probability at least $1-\delta$
    \begin{equation*}
        \sup_{s \in \mathcal{T}} \abs*{\brace{i \in [n] \mid \abs{Z_{i, s}} > T_{0}}} < k,
    \end{equation*}
    and 
    \begin{equation*}
        \sup_{s \in \mathcal{T}} \varphi_{k}(Z_{s}) \leq 50 \max\brace*{\mu, \sqrt{\sigma^{2} \log(2/\delta)}}. 
    \end{equation*}
    where
    \begin{equation*}
        T_{0} \defeq 2 \max\brace*{\frac{\mu}{\log(1/\delta)}, \sqrt{\frac{\sigma^{2}}{\log(1/\delta)}}}.
    \end{equation*}
\end{lemma}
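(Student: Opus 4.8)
The plan is a peeling/truncation scheme: first reduce $\varphi_k$ of the raw sequence to a Winsorized sum, then to a plain sum of \emph{bounded, centred} summands, and finally invoke the Klein--Rio inequality (Lemma~\ref{lem:trunc_5}). Throughout, write $L \defeq \log(2/\delta)$, so $k = 8L$, and set $\widetilde{T} \defeq 2\max\{\mu/L,\ \sqrt{\sigma^2/L}\}$, which is precisely the threshold produced by the Corollary preceding this lemma when it is applied with $\delta$ replaced by $\delta/2$. I would first invoke that Corollary (with $\delta/2$) to obtain an event $\mathcal{G}_1$ with $\Prob(\mathcal{G}_1) \geq 1-\delta/2$ on which $\sup_{s\in\mathcal{T}}\abs{\{i\in[n]\mid \abs{Z_{i,s}} > \widetilde{T}\}} < 8L = k$; since $\log(1/\delta) \leq L$ forces $T_0 \geq \widetilde{T}$, the same bound holds a fortiori with $T_0$ in place of $\widetilde{T}$, which already gives the first assertion.

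The heart of the argument, and the step I expect to be the main obstacle, is the reduction on $\mathcal{G}_1$ from $\varphi_k(Z_s)$ to a plain sum. Fix $s$ and work on $\mathcal{G}_1$: then at most $k-1$ coordinates satisfy $\abs{Z_{i,s}} > \widetilde{T}$, so the ``middle'' order statistics $(Z_s)^*_{1+k},\dots,(Z_s)^*_{n-k}$ all lie in $[-\widetilde{T},\widetilde{T}]$. Letting $\bar{Z}_{i,s} \defeq \phi_{-\widetilde{T},\widetilde{T}}(Z_{i,s})$, I would use the identity $\phi_{\alpha,\beta}\circ\phi_{-\widetilde{T},\widetilde{T}} = \phi_{\alpha,\beta}$ (valid whenever $[\alpha,\beta]\subseteq[-\widetilde{T},\widetilde{T}]$, a case check in the style of Lemma~\ref{lem:trunc_1}) together with the fact that clipping commutes with sorting (Lemma~\ref{lem:trunc_2}) to see that the clipping thresholds inside $\varphi_k$ are unchanged and hence $\varphi_k(Z_s) = \varphi_k(\bar{Z}_s)$ on $\mathcal{G}_1$. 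Next, deterministically, $\varphi_k(\bar{Z}_s) - \sum_{i=1}^n \bar{Z}_{i,s}$ equals $\sum_{i=1}^n\brack*{\phi_{(\bar{Z}_s)^*_{1+k},(\bar{Z}_s)^*_{n-k}}(\bar{Z}_{i,s}) - \bar{Z}_{i,s}}$; the contributions of coordinates above the upper threshold are $\leq 0$, those in between vanish, and there are at most $k$ coordinates below the lower threshold, each contributing at most $(\bar{Z}_s)^*_{1+k} - \bar{Z}_{i,s} \leq 2\widetilde{T}$. This yields the key deterministic bound $\varphi_k(\bar{Z}_s) \leq \sum_{i=1}^n \bar{Z}_{i,s} + 2k\widetilde{T}$ for every $s$, converting the problem into a concentration statement for $\sup_s\sum_i\bar{Z}_{i,s}$.

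For that concentration step I would split $\sum_{i}\bar{Z}_{i,s} = \sum_{i}(\bar{Z}_{i,s} - \Exp\brack{\bar{Z}_{i,s}}) + \sum_{i}\Exp\brack{\bar{Z}_{i,s}}$. The bias is controlled by $\abs{\Exp\brack{\bar{Z}_{i,s} - Z_{i,s}}} \leq \Exp\brack{(\abs{Z_{i,s}}-\widetilde{T})_{+}} \leq \Exp\brack{Z_{i,s}^2}/\widetilde{T}$, so $\sup_s\abs{\sum_i\Exp\brack{\bar{Z}_{i,s}}} \leq \sigma^2/\widetilde{T}$. For the centred part, the array $W_{i,s} \defeq \bar{Z}_{i,s} - \Exp\brack{\bar{Z}_{i,s}}$ is centred, bounded in absolute value by $2\widetilde{T}$, has $\sup_s\sum_i\Exp\brack{W_{i,s}^2} \leq \sup_s\sum_i\Exp\brack{Z_{i,s}^2} \leq \sigma^2$, and satisfies $\Exp\brack{\sup_s\sum_i W_{i,s}} \leq 2\mu$ by symmetrisation and the contraction principle (as in the proof of Lemma~\ref{lem:trunc_6}, using that $\phi_{-\widetilde{T},\widetilde{T}}$ is $1$-Lipschitz and vanishes at $0$). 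Lemma~\ref{lem:trunc_5}, applied with $\delta$ replaced by $\delta/2$, then gives an event $\mathcal{G}_2$ with $\Prob(\mathcal{G}_2)\geq 1-\delta/2$ on which $\sup_s\sum_i W_{i,s} < 2\mu + 3\widetilde{T}L + \sqrt{2(8\widetilde{T}\mu + \sigma^2)L}$.

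Finally, on $\mathcal{G}_1\cap\mathcal{G}_2$ (probability at least $1-\delta$), chaining everything gives $\sup_s\varphi_k(Z_s) \leq 2\mu + 3\widetilde{T}L + \sqrt{2(8\widetilde{T}\mu+\sigma^2)L} + \sigma^2/\widetilde{T} + 2k\widetilde{T}$, and substituting $k=8L$ and $\widetilde{T} = 2\max\{\mu/L,\sqrt{\sigma^2/L}\}$ makes every term a small multiple of $M \defeq \max\{\mu,\sqrt{\sigma^2 L}\}$ --- e.g.\ $\widetilde{T}L = 2M$, $\sigma^2/\widetilde{T}\leq \frac{1}{2}\sqrt{\sigma^2L}\leq\frac{1}{2} M$, $2k\widetilde{T} = 16\widetilde{T}L \leq 32M$, and $\sqrt{2(8\widetilde{T}\mu+\sigma^2)L}\leq 5\sqrt{2}\,M$ --- so the sum is at most $50M = 50\max\{\mu,\sqrt{\sigma^2\log(2/\delta)}\}$. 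This last aggregation of constants is routine; the genuine content is the reduction in the second paragraph, where the structural identities for $\phi_{\alpha,\beta}$ and for sorting (Lemmas~\ref{lem:trunc_1}--\ref{lem:trunc_3}) turn a supremum of a truncated empirical process into an ordinary Bernstein-type concentration problem.
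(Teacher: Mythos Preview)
Your proof is correct and follows essentially the same route as the paper: apply the Corollary at level $\delta/2$ to localize the order statistics, pass from $\varphi_k(Z_s)$ to the Winsorized sum $\sum_i \phi_{-\widetilde T,\widetilde T}(Z_{i,s})$ plus a $2k\widetilde T$ remainder, split into bias (controlled by $\sigma^2/\widetilde T$) and centred part, and finish with Klein--Rio (Lemma~\ref{lem:trunc_5}) together with symmetrisation and contraction to bound the expected supremum by $2\mu$. The only differences are cosmetic: you insert the intermediate identity $\varphi_k(Z_s)=\varphi_k(\bar Z_s)$ before the deterministic bound (the paper does both in one line), you work with the slightly smaller threshold $\widetilde T$ whereas the paper uses $T_0$ throughout, and your bias bound goes via $(|Z|-\widetilde T)_+\le Z^2/\widetilde T$ rather than Cauchy--Schwarz plus Markov---all leading to the same estimates.
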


\begin{proof}
    By Lemma \ref{lem:6}, with probability at least $1-\delta/2$, we have for all $s \in \mathcal{T}$
    \begin{equation*}
        -T_{0} \leq Z^{*}_{1 + k, s} \leq Z^{*}_{n - k, s} \leq T_{0}.
    \end{equation*}
    Therefore
    \begin{align}
        \sup_{s \in \mathcal{T}} \varphi_{k}(Z_{s}) &= \sup_{s \in \mathcal{T}} \sum_{i=1}^{n} \phi_{Z^{*}_{1+k, s}, Z^{*}_{n-k, s}}(Z_{i, s}) \nonumber\\
        &= \sup_{s \in \mathcal{T}}  \sum_{i=1}^{n} \phi_{-T_{0}, T_{0}}(Z_{i, s}) + k (Z_{1 + k, s} + T_{0}) + \underbrace{k(Z^{*}_{n-k, s} - T_{0})}_{\textstyle \leq 0} \nonumber \\
        &\leq \sup_{s \in \mathcal{T}}  \sum_{i=1}^{n} \underbrace{\phi_{-T_{0}, T_{0}}(Z_{i, s}) - \Exp\brack*{\phi_{-T_{0}, T_{0}}(Z_{i, s})}}_{\textstyle W_{i, s} \defeq} + \sup_{s \in \mathcal{T}}  \sum_{i=1}^{n} \Exp\brack*{\phi_{-T_{0}, T_{0}}(Z_{i, s})} + 2kT_{0}. \label{eq:pf_lem7_1}
    \end{align}
    We now bound the second term of (\ref{eq:pf_lem7_1}) by
    \begin{align}
        \sup_{s \in \mathcal{T}}  \sum_{i=1}^{n} \Exp\brack*{\phi_{-T_{0}, T_{0}}(Z_{i, s})} &= \sup_{s \in \mathcal{T}}  \sum_{i=1}^{n} \underbrace{\Exp\brack*{Z_{i, s}}}_{\textstyle = 0} + \underbrace{\Exp\brack*{(T_{0} - Z_{i, s})\mathbbm{1}_{(T_0, \infty)}(Z_{i, s})}}_{\textstyle \leq 0} \\
        &+ \Exp\brack*{(-T_{0} - Z_{i, s}) \mathbbm{1}_{(-\infty, -T_{0})}(Z_{i, s})} \nonumber \\
        &\leq \sup_{s \in \mathcal{T}}  \sum_{i=1}^{n} \Exp\brack*{Z_{i, s}^{2}}^{1/2} \cdot \Prob\paren*{Z_{i, s} < -T_{0}}^{1/2} \leq \frac{\sigma^2}{T_{0}},
        \label{eq:pf_lem7_2}
    \end{align}
    where we used the Cauchy-Schwarz inequality and Markov's inequality respectively. Denote the first term of (\ref{eq:pf_lem7_1}) by $W$, and note that $\Exp\brack*{W_{i, s}} = 0$ and $\abs{W_{i, s}} \leq 2T_{0}$, so by Lemma \ref{lem:5} we have with probability at least $1-\delta/2$
    \begin{equation}
    \label{eq:pf_lem7_3}
        W < \Exp\brack*{W} + 2T_{0} \log(2/\delta) + \sqrt{2 (4T_{0}\Exp\brack{W} + \alpha^{2}) \log(2/\delta)},
    \end{equation}
    where $\alpha \defeq \sup_{s \in \mathcal{T}} \sum_{i=1}^{n} \Exp\brack*{W_{i, s}^{2}}$. It remains to bound $\Exp\brack{W}$ and $\alpha^{2}$. The former is bounded by
    \begin{multline}
    \label{eq:pf_lem7_4}
        \Exp\brack*{W} = \Exp\brack*{\sup_{s \in \mathcal{T}} \sum_{i=1}^{n} \phi_{-T_{0}, T_{0}}(Z_{i, s}) - \Exp\brack*{\phi_{-T_{0}, T_{0}}(Z_{i, s})}} \\ \leq 2\Exp\brack*{\sup_{s \in \mathcal{T}} \sum_{i=1}^{n} \eps_i \phi_{-T_{0}, T_{0}}(Z_{i, s})} \leq 2 \Exp\brack*{\sup_{s \in \mathcal{T}} \sum_{i=1}^{n} \eps_i Z_{i, s}},
    \end{multline}
    where we used symmetrization and the contraction principle along with the $1$-Lipschitzness of $\phi_{-T_{0}, T_{0}}$ respectively. The latter is bounded by
    \begin{equation}
    \label{eq:pf_lem7_5}
        \alpha^{2} \leq \sup_{s \in \mathcal{T}} \sum_{i=1}^{n} \Exp\brack*{W^{2}_{i, s}} \leq \sup_{s \in \mathcal{T}} \sum_{i=1}^{n} \Exp\brack*{\phi^{2}_{-T_0, T_0}(Z_{i, s})} \leq \sup_{s \in \mathcal{T}} \sum_{i=1}^{n} \Exp\brack*{Z^{2}_{i, s}}
    \end{equation}
    Combining (\ref{eq:pf_lem7_3}), (\ref{eq:pf_lem7_4}), (\ref{eq:pf_lem7_5}), and using the definition of $T_{0}$, we obtain with probability at least $1-\delta/2$
    \begin{equation}
    \label{eq:pf_lem7_6}
        W < 16 \max\brace*{\mu, \sqrt{\sigma^{2} \log(1/\delta)}}
    \end{equation}
    Combining (\ref{eq:pf_lem7_1}), (\ref{eq:pf_lem7_2}), (\ref{eq:pf_lem5_6}), and the definition of $T_{0}$ yields the result.
\end{proof}

\section{Proofs of Section \ref{sec:quantile}}
\subsection{Proof of Theorem \ref{thm:bayes}}
By definition of the minimax risk, we have
\begin{align*}
    R^{*}_{\delta}(\ell) = \inf_{d} R_{\delta}(\ell, d) 
    = \inf_{d} \sup_{P \in \mathcal{P}} R_{\delta}(\ell, P, d) 
    = \inf_{d} \sup_{P \in \mathcal{P}} Q_{\ell(P, d(O))}(1-\delta)
    = \inf_{d} \sup_{P \in \mathcal{P}} F^{-}_{\ell(P, d(O))}(1-\delta).
\end{align*}
Applying the sixth item of Lemma \ref{lem:prelims-1} to the last expression yields
\begin{equation*}
    R^{*}_{\delta}(\ell) = \inf_{d} \paren*{\inf_{P \in \mathcal{P}} F_{\ell(P, d(O))}}^{-}(1-\delta).
\end{equation*}
Now let $k \in \N$. Since $\inf_{P \in \mathcal{P}} F_{\ell(P, d(O))} \leq \Exp_{P \sim \pi_{k}}\brack*{F_{\ell(P, d(O)) \mid P}} =  F^{\pi_{k}}_{\ell(P, d(O))}$, where $O \mid P \sim P$ inside the expectation, we have by the second item of Lemma \ref{lem:prelims-1}
\begin{equation*}
    R^{*}_{\delta}(\ell) \geq \inf_{d} \paren*{F^{\pi_{k}}_{\ell(P, d(O))}}^{-1}(1-\delta) \geq \paren*{\sup_{d} F^{\pi_{k}}_{\ell(P, d(O))}}^{-}(1-\delta) = p_{\ell, k}^{-}(1-\delta).
\end{equation*}
where the second inequality follows from the third item of Lemma \ref{lem:prelims-1}, and the last by definition of $p_{\ell, k}$. Taking supremum over $k$, and combining our assumptions on the sequence $(p_{\ell, k})_{k \in \N}$ with the last item of Lemma \ref{lem:prelims-1}  yields the result.

\subsection{Proof of Proposition \ref{prop:invar}}
The first statement follows from the assumption on $\varphi$ and Lemma \ref{lem:invar}. For the second statement, define $S \defeq \brace*{R_{\delta}(\ell, P, d) \st P \in \mathcal{P}} \subset [-\infty, \infty)$ and $x_0 \defeq \sup S$. If $x_{0} = -\infty$, then $\varphi(x_0) = -\infty$, and $\varphi(\ell(P, d(O)))) = \ell(P, d(O)) = -\infty$ with probability at least $1-\delta$ for all $P$, so the statement holds. Otherwise $x_0 \in \R$. Now for any $x \in S$, we have $x \leq x_{0}$, so $\varphi(x) \leq \varphi(x_0)$, and hence $\sup_{x \in S} \varphi(x) \leq \varphi(x_0)$. On the other hand, let $(x_k)_{k \in \N}$ be an increasing sequence in $S$ such that $x_k \to x_{0}$ as $k \to \infty$. Then by the left-continuity of $\varphi$, we obtain $\sup_{x \in S} \varphi(x) \geq \lim_{k \to \infty}\varphi(x_k) = \varphi(x_0)$, which proves the statement. For the last statement, suppose that $d^{*} \in \argmin_{d} R_{\delta}(\ell, d)$, then by assumption $R_{\delta}(\ell, d^{*}) < \infty$, so that by the second statement $R_{\delta}(\varphi \circ \ell, d^{*}) = \varphi\paren*{R_{\delta}(\ell, d^{*})}$. Now let $d$ be any other decision rule. If $R_{\delta}(\ell, d) < \infty$, then by the minimality of $d^{*}$ we get $R_{\delta}(\ell, d^{*}) \leq R_{\delta}(\ell, d)$, and since $\varphi$ is increasing and using the second statement again, $R_{\delta}(\varphi \circ \ell, d^{*}) = \varphi\paren*{R_{\delta}(\ell, d^{*})} \leq \varphi\paren*{R_{\delta}(\ell, d)} = R_{\delta}(\varphi \circ \ell, d)$. If $R_{\delta}(\ell, d) = \infty$ then there exists $P_{0} \in \mathcal{P}$ such that $R_{\delta}(\ell, P_{0}, d) \geq R_{\delta}(\ell, d^{*})$, but then since $\varphi$ is increasing, $R_{\delta}(\varphi \circ \ell, d) = \sup_{P \in \mathcal{P}} R_{\delta}(\varphi \circ \ell, P, d) \geq \varphi(R_{\delta}(\ell, P_{0}, d)) \geq \varphi(R_{\delta}(\ell, d^{*})) = R_{\delta}(\varphi \circ \ell, d^{*})$. This proves the last statement.

\subsection{Proof of Proposition \ref{prop:mean}}
\begin{proof}
    We present here the proof for the case $\varphi(x) = x$. The general statement follows from Proposition \ref{prop:invar}. Our aim is to apply Theorem \ref{thm:bayes}. We select $\pi_k \defeq \mathcal{N}(0, \Sigma/\lambda_k)$ for a decreasing strictly positive sequence $(\lambda_k)_{k \in \N}$ satisfying $\lambda_k \to 0$ as $k \to \infty$. We want to compute, for all $t \in \R$,
    \begin{equation*}
        p_{\ell, k}(t) = \sup_{\hat{\mu}} \Prob\paren*{e\paren*{\hat{\mu}((X_i)_{i=1}^{n} - \mu)} \leq t},
    \end{equation*}
    where $\mu \sim \pi_k$ and $X_i \mid \mu \sim \mathcal{N}(\mu, \Sigma)$ for all $i \in [n]$ independently. A classical Bayesian calculation shows that
    $\mu \mid (X_i)_{i=1}^{n} \sim \mathcal{N}\paren*{\overline{X}_k, \Sigma_{k}}$ where $\overline{X}_k \defeq \frac{n}{n+\lambda_k} \overline{X}$ and $\Sigma_k \defeq \frac{1}{n + \lambda_k} \Sigma$, where $\overline{X} \defeq n^{-1}\sum_{i=1}^{n}X_i$ is the sample mean.
    Now we compute, for $Z_{k} \sim \mathcal{N}(0, \Sigma_{k})$,
    \begin{align*}
        p_{\ell, k}(t) &= \sup_{\hat{\mu}} \Prob\paren*{e\paren*{\hat{\mu}((X_i)_{i=1}^{n} - \mu)} \leq t} \\
        &= \Exp\brack*{\sup_{a \in \R^{d}} \Prob\paren*{e\paren*{\mu - a} \leq t \st (X_i)_{i=1}^{n}}} \\
        &= \Exp\brack*{\sup_{a \in \R^{d}} \Prob\paren*{\mu - a \in e^{-1}((-\infty, t]) \mid (X_i)_{i=1}^{n}}} \\
        &= \Exp\brack*{\Prob\paren*{\mu - \overline{X}_{k} \in e^{-1}((-\infty, t] \mid (X_i)_{i=1}^{n}}} \\
        &= \Prob\paren*{e(Z_{k}) \leq t} = F_{e(Z_k)}(t)
    \end{align*}
    The second line follows from conditioning on $(X_i)_{i=1}^{n}$ and the symmetry of $e$. The fourth line follows from combining the assumptions on $e$ with the first item of Lemma \ref{lem:4}, as well as an application of Lemma \ref{lem:6}, known as Anderson's Lemma. The last line follows from the fact that $\mu - \overline{X}_k \mid (X_i)_{i=1}^{n} \overset{d}{=} Z_k$. To conclude it remains to prove the needed properties for the sequence $(p_{\ell, k})_{k \in N}$. The right-continuity follows directly from the fact that $F_{e(Z_k)}$ is a CDF. To see that the sequence is decreasing, define $n_k \defeq n + \lambda_k$ and note that $n_{k} \geq n_{k+1}$. Then, for all $t \in \R$
    \begin{multline*}
        F_{e(Z_k)}(t) = \Prob\paren*{e(Z_k) \leq t}
        = \Prob\paren*{Z_k \in e^{-1}((-\infty, t])} 
        = \Prob\paren*{\sqrt{\frac{n_{k+1}}{n_k}} \cdot Z_{k+1} \in e^{-1}((-\infty, t])} \\
        = \Prob\paren*{Z_{k+1} \in \sqrt{\frac{n_{k}}{n_{k+1}}} \cdot  e^{-1}((-\infty, t])} 
        \geq \Prob\paren*{Z_{k+1} \in e^{-1}((-\infty, t])}
        = F_{e(Z_{k+1})}(t),
    \end{multline*}
    where the inequality follows from the fact that $\sqrt{n_k/n_{k+1}} \geq 1$, $e^{-1}((-\infty, t]$ is convex and symmetric, and Lemma \ref{lem:3}. Finally, let $Z \sim \mathcal{N}(0, \Sigma/n)$. We compute
    \begin{multline*}
        \lim_{k \to \infty} F_{e(Z_k)}(t) = \lim_{k \to \infty} \Prob\paren*{Z_k \in e^{-1}((-\infty, t])}
        = \lim_{k \to \infty} \Prob\paren*{Z \in \sqrt{\frac{n_k}{n}} \cdot e^{-1}((-\infty, t])} \\
        = \Prob\paren*{Z \in \bigcap_{k=1}^{\infty} \brace*{\sqrt{\frac{n_k}{n}} \cdot e^{-1}((-\infty,t])}}
        = \Prob\paren*{Z \in e^{-1}((-\infty, t])}
        = F_{e(Z)}(t),
    \end{multline*}
    Finally, the worst-case risk of the sample mean is given by $Q_{e(Z)}(1 - \delta)$ as can be checked with a simple explicit calculation. An application of Theorem \ref{thm:bayes} concludes the proof.
\end{proof}


\subsection{Proof of Proposition \ref{prop:var}}
\begin{proof}
    We aim at applying Theorem \ref{thm:bayes}. We select $\pi_k \defeq \text{Inv-Gamma}(\lambda_k, \lambda_k)$ for a decreasing strictly positive sequence $(\lambda_k)_{k=1}^{\infty}$ satisfying $\lambda_k \to 0$ as $k \to \infty$. We need to compute
    \begin{equation*}
        p_{\ell, k}(t) = \sup_{\hat{\sigma}} \Prob\paren*{\log\paren*{\frac{\sigma^{2}}{\hat{\sigma}^{2}((X_i)_{i=1}^{n})}} \leq t},
    \end{equation*}
    where $\sigma^{2} \sim \pi_k$ and $X_i \mid \sigma^2 \sim \mathcal{N}(\mu, \sigma^2)$ for all $i \in [n]$ independently. A classical Bayesian calculation shows that $\sigma^2 \mid (X_i)_{i=1}^{n} \sim \text{Inv-Gamma}(\alpha_k, \beta_k)$, where $\alpha_k \defeq n/2 + \lambda_k$ and $\beta_k \defeq \lambda_k + \sum_{i=1}^{n}(X_i - \mu)^{2}/2$. Recalling the definition of the fucntion $p_{\alpha}$ from the statement, we obtain
    \begin{align*}
        \sup_{\hat{\sigma}} \Prob\paren*{\log\paren*{\frac{\sigma^{2}}{\hat{\sigma}^{2}((X_i)_{i=1}^{n})}} \leq t}
        = \Exp\brack*{\sup_{b \in (0, \infty)} \Prob\paren*{\log\paren*{\frac{\sigma^{2}}{b}} \leq t \st (X_i)_{i=1}^{n}}} = \Exp\brack*{p_{\alpha_k}(t)} = p_{\alpha_k}(t)
    \end{align*}
    where the last equality follows from Lemma \ref{lem:7}. It is straightforward to check that $p_{\alpha}$ is continuous for all values of $\alpha \in (0, \infty)$. Furthermore, by Lemma \ref{lem:8}, the sequence $(p_{\alpha_k})_{k \in \N}$ is decreasing with limit $p_{n/2}$. This provides us with the first part needed for Theorem \ref{thm:bayes}. Now note that, for any $\sigma^{2} \in (0, \infty)$ and $X_i \sim \mathcal{N}(\mu, \sigma^2)$ for all $i \in [n]$ and independently, we have $(n \cdot \sigma^2)/\sum_{i=1}^{n}(X_i - \mu)^{2} \sim \text{Inv-Gamma}(n/2, n/2)$, so that for the estimator $\hat{\sigma}^{2}$ defined in the theorem, we have
   \begin{align*}
       &\Prob\paren*{\abs*{\log(\sigma^2/\hat{\sigma}^{2}((X_i)_{i=1}^{n}))} \leq p_{n/2}^{-1}(1-\delta)} \\
       &= \Prob\paren*{\exp(-p_{n/2}^{-1}(1-\delta)) \leq \frac{\sigma^2}{\hat{\sigma}^{2}((X_i)_{i=1}^{n})} \leq \exp(p_{n/2}^{-1}(1-\delta))}\\
       &= \Prob\paren*{\frac{1-\exp(-2p^{-1}_{n/2}(1-\delta))}{2p^{-1}_{n/2}(1-\delta)} \leq \frac{n \cdot \sigma^{2}}{\sum_{i=1}^{n}(X_i - \mu)^{2}} \leq \frac{\exp(2p^{-1}_{n/2}(1-\delta)) - 1}{2p^{-1}_{n/2}(1-\delta)}} \\
       &= p_{n/2}(p_{n/2}^{-1}(1 - \delta))\\
       &= 1-\delta
   \end{align*}
   and therefore the worst-case risk of this estimator is equal to $p_{n/2}^{-1}(1-\delta)$. Applying Theorem \ref{thm:bayes} proves the minimaxity of this estimator. An explicit calculation of the worst-case risk of the sample variance combined with the uniqueness of the minimizer in Lemma \ref{lem:7} shows that it is not minimax.
\end{proof}

\section{Proofs of Section \ref{sec:gaussian}}
\subsection{Proof of Theorem \ref{thm:lin_reg}}
Before we proceed with the proof, we start with a simple lemma.
\begin{lemma}
\label{lem:emergency}
    Under the setup of Theorem \ref{thm:lin_reg}, the functions $\widetilde{\mathcal{E}}, \widetilde{E}$ are strictly convex and symmetric with unique minimizer $0$. Furthermore, if $(X, Y) \sim P \in \mathcal{P}_{\text{Gauss}}(P_{X}, \sigma^{2})$ so that $Y = \inp{w^{*}}{X} + \eta$, then $E(v) = \widetilde{E}(v - w^{*})$ for all $v$, and $w^{*}$ is the unique minimizer of $E(w)$.
\end{lemma}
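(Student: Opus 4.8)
# Proof proposal for Lemma \ref{lem:emergency}

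The plan is to verify each claimed property in turn, deriving the statements about $E$ from those about $\widetilde{E}$ at the end. I would start with the observation that $\widetilde{E}(\Delta) = \Exp[e(\inp{\Delta}{X} + \eta)]$ is finite everywhere by hypothesis, so all the expressions below are well-defined.

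\textbf{Convexity.} For fixed realizations of $(X, \eta)$, the map $\Delta \mapsto \inp{\Delta}{X} + \eta$ is affine, and $e$ is convex, so $\Delta \mapsto e(\inp{\Delta}{X} + \eta)$ is convex; taking expectations preserves convexity, giving convexity of $\widetilde{E}$ and hence of $\widetilde{\mathcal{E}}$ (which differs from $\widetilde{E}$ by the constant $\widetilde{E}(0)$). For \emph{strict} convexity, suppose $\widetilde{E}$ were affine on a segment $[\Delta_0, \Delta_1]$ with $\Delta_0 \neq \Delta_1$. Since each integrand is convex, equality in Jensen's inequality for the convex combination forces $t \mapsto e(\inp{\Delta_t}{X} + \eta)$ to be affine on $[0,1]$ for $P_X \times \mathcal{N}(0,\sigma^2)$-almost every $(X, \eta)$, where $\Delta_t \defeq (1-t)\Delta_0 + t\Delta_1$. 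Fix such an $X$ with $\inp{\Delta_1 - \Delta_0}{X} \neq 0$ — such $X$ exist with positive probability because the support of $P_X$ is not contained in a hyperplane — then as $t$ ranges over $[0,1]$ and $\eta$ over $\R$, the argument $\inp{\Delta_t}{X} + \eta$ ranges over a genuine two-dimensional set, and affineness of $e$ on a nondegenerate interval contradicts the strict convexity of $e$. Hence $\widetilde{E}$, and therefore $\widetilde{\mathcal{E}}$, is strictly convex.

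\textbf{Symmetry and the minimizer.} Symmetry of $e$ and of the law of $\eta$ (a centered Gaussian is symmetric) give, using $X \indep \eta$,
\begin{equation*}
    \widetilde{E}(-\Delta) = \Exp[e(-\inp{\Delta}{X} + \eta)] = \Exp[e(\inp{\Delta}{X} - \eta)] = \Exp[e(\inp{\Delta}{X} + \eta)] = \widetilde{E}(\Delta),
\end{equation*}
where the second equality replaces $\eta$ by $-\eta$ (same law, independent of $X$) and the third applies $e(-t) = e(t)$. So $\widetilde{E}$ and $\widetilde{\mathcal{E}}$ are symmetric. A symmetric strictly convex function has $0$ as its unique minimizer: if $\Delta \neq 0$ minimized it, then so would $-\Delta$, and then $0 = \tfrac12\Delta + \tfrac12(-\Delta)$ would give $\widetilde{E}(0) < \widetilde{E}(\Delta)$ by strict convexity, a contradiction; existence of a minimizer follows since a strictly convex coercive-or-not function — here we may invoke differentiability at $0$ with $\nabla\widetilde E(0) = \Exp[\nabla e(\eta)] = 0$ (the last equality because $\nabla e$ is odd, $e$ being symmetric and differentiable, and $\eta$ symmetric), so $0$ is a stationary point of a convex function, hence a global minimizer, and strict convexity makes it unique.

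\textbf{Transfer to $E$.} Finally, if $(X,Y) \sim P \in \mathcal{P}_{\text{Gauss}}(P_X, \sigma^2)$ then $Y = \inp{w^*}{X} + \eta$ with $(X, \eta) \sim P_X \times \mathcal{N}(0,\sigma^2)$, so for any $v \in \R^d$,
\begin{equation*}
    E(v) = \Exp[e(\inp{v}{X} - Y)] = \Exp[e(\inp{v - w^*}{X} - \eta)] = \Exp[e(\inp{v-w^*}{X} + \eta)] = \widetilde{E}(v - w^*),
\end{equation*}
using symmetry of $e$ and of $\eta$ in the penultimate step. Since $\widetilde{E}$ has unique minimizer $0$, $E$ has unique minimizer $w^*$. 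The main obstacle is the strict convexity argument: one must rule out degeneracy carefully using the assumption that $P_X$ is not supported on a hyperplane, together with the fact that the Gaussian noise smears the argument of $e$ over a full interval so that strict convexity of $e$ actually bites.
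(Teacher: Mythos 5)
Your proof is correct and follows the same overall route as the paper's. Two details differ in a useful way: (i) you obtain $\Exp\brack*{e'(\eta)}=0$ from the oddness of $e'$, an immediate consequence of the symmetry and differentiability of $e$, whereas the paper reaches the same conclusion via Stein's lemma applied to $e$; both work, and your argument is slightly shorter. (ii) You spell out why the strict convexity of $e$ actually yields strict convexity of $\widetilde{E}$: the inequality in Jensen is strict because, by the standing assumption that the support of $P_X$ is not contained in any hyperplane, $\inp{\Delta - \Delta'}{X} \ne 0$ with positive probability whenever $\Delta \ne \Delta'$, so the integrand sees genuinely distinct arguments on a set of positive measure. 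The paper's proof simply asserts the strict inequality ``from the strict convexity of $e$'' without this step, so your version closes a small gap. Otherwise the two arguments coincide: symmetry of $\widetilde{E}$ from the symmetry of $e$ and of the law of $\eta$, transfer to $E$ via the translation $v \mapsto v - w^{*}$, and uniqueness of the minimizer from strict convexity together with the vanishing gradient at $0$.
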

\begin{proof}
    We prove the convexity and symmetry of $\widetilde{E}$ first. We start with the symmetry.
    \begin{equation*}
        \widetilde{E}(-\Delta) = \Exp\brack*{e(- \inp{\Delta}{X} + \eta)}
        = \Exp\brack*{e(\inp{\Delta}{X} - \eta)}
        = \Exp\brack*{e(\inp{\Delta}{X} + \eta)} = \widetilde{E}(\Delta),
    \end{equation*}
    where the second equality follows from the symmetry of $e$, and the fourth equality follows from $\eta \overset{d}{=} -\eta$.
    For the strict convexity, let $t \in (0, 1)$ and $\Delta, \Delta' \in \R^{d}$. Then
    \begin{equation*}
        \widetilde{E}((1-t) \Delta + t \Delta') = \Exp\brack*{e\paren*{(1-t)\brace*{\inp{\Delta}{X} + \eta} + t\brace*{\inp{\Delta'}{X} + \eta}}} < (1-t) \widetilde{E}(\Delta) + t \widetilde{E}(\Delta'), 
    \end{equation*}
    where the inequality follows from the strict convexity of $e$. Therefore $\widetilde{E}$ is strictly convex and symmetric, and since $\widetilde{\mathcal{E}}$ and $\widetilde{E}$ differ by a constant, the same holds for $\widetilde{\mathcal{E}}$. 
    
    For the second statement, notice that, by symmetry of $\eta$,
    \begin{equation*}
        E(v) = \Exp\brack*{e(\inp{v}{X} - Y)} = \Exp\brack*{e(\inp{v-w^{*}}{X} - \eta)} = \Exp\brack*{e(\inp{v-w^{*}}{X} + \eta)} = \widetilde{E}(v-w^{*}).
    \end{equation*}
    After routine calculations and an application of the chain rule, this also shows that $E$ is strictly convex, symmetric, and differentiable at $w^{*}$ with $\nabla E(w^{*}) = \nabla \widetilde{E}(0)$. We compute
    \begin{equation*}
        \nabla E(w^{*}) = \nabla \widetilde{E}(0) =  \Exp\brack*{\nabla e(\eta)} = \Exp\brack*{e'(\eta) X} = \Exp\brack*{e'(\eta)} \Exp\brack*{X},
    \end{equation*}
    where $\eta \sim \mathcal{N}(0, \sigma^{2})$ and the last equality follows from the independence of $\eta$ and $X$. Now
    \begin{equation*}
        \Exp\brack*{e'(\eta)} = \frac{1}{\sigma^{2}}\Exp\brack*{e(\eta) \eta} = \frac{1}{\sigma^{2}} \Exp\brack*{e(-\eta) \cdot (-\eta)} = -\frac{1}{\sigma^{2}}\Exp\brack*{e(\eta)\eta} = -\Exp\brack*{e'(\eta)}
    \end{equation*}
    where the first and last equalities are by Stein's lemma, the second since $\eta \overset{d}{=} -\eta$, and the third by the symmetry of $e$. This proves that $\Exp\brack{e'(\eta)} = 0$, and hence that $w^{*}$ is the unique minimizer of $E$ by strong convexity.
\end{proof}

We now present the main proof of the theorem.

\begin{proof}[Proof of Theorem \ref{thm:lin_reg}]
    Our strategy is to use Theorem \ref{thm:bayes} with a properly chosen sequence of distributions $(\pi_k)_{k \in \N}$. Notice that, associated to each $P \in \mathcal{P}_{\text{Gauss}}(P_{X}, \sigma^2)$ is a unique minimizer $w^{*} \in \R^{d}$ of the expected error $E(w)$. So putting a distribution on the set of the latter, $\R^{d}$, induces a distribution on the set of the former, $\mathcal{P}_{\text{Gauss}}(P_{X}, \sigma^2)$. Specifically, let $(\lambda_k)_{k \in \N}$ be a strictly positive sequence converging to $0$, and define $\pi_{k} \defeq \mathcal{N}(0, \lambda_k^{-1} \cdot (\sigma^2/n) \cdot I_{d \times d})$. With the goal of applying Theorem \ref{thm:bayes}, we need to compute
    \begin{equation*}
        p_{k}(t) \defeq \sup_{\hat{w}} \Prob\paren*{\mathcal{E}(\hat{w}((X_i, Y_i)_{i=1}^{n})) \leq t},
    \end{equation*}
    where $w^{*} \sim \pi_k$, $(X_i)_{i=1}^{n} \sim P_{X}^{n}$, and $Y_{i} \mid (w^{*}, X_i) \sim \mathcal{N}(\inp{w^{*}}{X_i}, \sigma^2)$ for all $i \in [n]$, and independently. A basic calculation shows that $w^{*} \mid (X_i, Y_i)_{i=1}^{n} \sim \mathcal{N}(w_{k}, (\sigma^2/n) \Sigma_{k}^{-1})$, where
        \begin{equation*}
        w_{k} \defeq \Sigma_{k}^{-1} \paren*{\frac{1}{n}\sum_{i=1}^{n}Y_{i}X_i}, \quad \Sigma_{k} \defeq \widehat{\Sigma}_{n} + \lambda_k I_{d}, \quad \widehat{\Sigma}_{n} \defeq \frac{1}{n}\sum_{i=1}^{n} X_{i}X_{i}^{T}
    \end{equation*}
    Therefore, using Lemma \ref{lem:emergency},
    \begin{align*}
        p_{k}(t) &= \sup_{\hat{w}} \Prob\paren*{\mathcal{E}(\hat{w}) \leq t} = \sup_{\hat{w}} \Prob\paren*{\widetilde{\mathcal{E}}(\hat{w} - w^{*}) \leq t} \\
        &= \sup_{\hat{w}} \Exp\brack*{\Prob\paren*{\widetilde{\mathcal{E}}(\hat{w} - w^{*}) \leq t \st (X_i, Y_i)_{i=1}^{n}}} \\
        &= \Exp\brack*{\sup_{v \in \R^{d}} \Prob\paren*{w^{*} - v \in \widetilde{\mathcal{E}}^{-1}((-\infty,t]) \st (X_i, Y_i)_{i=1}^{n}}} \\
        &= \Exp\brack*{\Prob\paren*{w^{*} - w_k \in \widetilde{\mathcal{E}}^{-1}((-\infty,t]) \st (X_i, Y_i)_{i=1}^{n}}} \\
        &= \Prob\paren*{\widetilde{\mathcal{E}}(Z_k) \le t} = F_{\widetilde{\mathcal{E}}(Z_k)}(t)
    \end{align*}
    where $Z_{k} \mid (X_i)_{i=1}^{n} \sim \mathcal{N}(0, (\sigma^{2}/n)\Sigma_{k}^{-1})$. The fifth equality is obtained by combining the first item of Lemma \ref{lem:emergency} with the first item of Lemma \ref{lem:4}, and an application of Lemma \ref{lem:6}. With the goal of applying Theorem \ref{thm:bayes}, we verify the needed properties on the sequence $(p_{k})_{k \in \N}$. 
    
    First, since each $p_{k}$ is a CDF, it is right-continuous. To show that $(p_k)_{k \in \N}$ is decreasing, let $k \in \N$. Since $\lambda_{k} \geq \lambda_{k+1}$ by assumption,  $\Sigma_{k} \succeq \Sigma_{k+1}$, and therefore $\Sigma_{k}^{-1} \preceq \Sigma_{k+1}^{-1}$. We conclude that $Z_{k+1} \overset{d}{=} Z_{k} + Y_{k}$ where $Y_{k} \indep Z_{k} \mid (X_i)_{i=1}^{n}$ and $Y_k \mid (X_i)_{i=1}^{n} \sim \mathcal{N}(0, \frac{\sigma^{2}}{n} \brace*{\Sigma_{k+1}^{-1} - \Sigma_{k}^{-1}})$. Now
    \begin{align*}
        F_{\widetilde{\mathcal{E}}(Z_{k+1}) \mid (X_i)_{i=1}^{n}}(t) &= \Prob\paren*{\widetilde{\mathcal{E}}(Z_{k+1}) \leq t \st (X_i)_{i=1}^{n}} \\
        &= \Prob\paren*{Z_{k+1} \in \widetilde{\mathcal{E}}^{-1}((-\infty, t]) \st (X_i)_{i=1}^{n}} \\
        &= \Prob\paren*{Z_{k} + Y_{k} \in \widetilde{\mathcal{E}}^{-1}((-\infty, t]) \st (X_i)_{i=1}^{n}} \\
        &= \Exp\brack*{\Prob\paren*{Z_k + Y_{k} \in \widetilde{\mathcal{E}}^{-1}((-\infty, t]) \st (X_i)_{i=1}^{n}, Y_k}} \\
        &\leq \Exp\brack*{\sup_{a \in \R^{d}}\Prob\paren*{Z_k + a \in \widetilde{\mathcal{E}}^{-1}((-\infty, t]) \st (X_i)_{i=1}^{n}, Y_k}} \\
        &= \Exp\brack*{\Prob\paren*{Z_k \in \widetilde{\mathcal{E}}^{-1}((-\infty, t]) \st (X_i)_{i=1}^{n}, Y_k}} \\
        &= F_{\widetilde{\mathcal{E}}(Z_{k}) \mid (X_i)_{i=1}^{n}}(t),
    \end{align*}
    where the penultimate equality follows from Lemma \ref{lem:5} and the fact that, given $((X_i)_{i=1}^{n}, Y_k)$, $Z_k$ is a centred Gaussian vector. Taking expectation of both sides with respect to $(X_i)_{i=1}^{n}$ proves that the sequence $(p_k)_{k \in \N}$ is decreasing. It remains to compute its limit.

    By the monotone convergence theorem, we have
    \begin{align}
        \lim_{k \to \infty} F_{\widetilde{\mathcal{E}}(Z_k)}(t) &= \lim_{k \to \infty} \Prob\paren*{\widetilde{\mathcal{E}}(Z_k) \leq t} \nonumber \\
        &= \lim_{k \to \infty} \Exp\brack*{\Prob\paren*{\widetilde{\mathcal{E}}(Z_k) \leq t \st (X_i)_{i=1}^{n}}} \nonumber \\
        &= \Exp\brack*{\lim_{k \to \infty} \Prob\paren*{\widetilde{\mathcal{E}}(Z_k) \leq t \st (X_i)_{i=1}^{n}}}. \label{eq:pf_thm_3_6}
    \end{align}
    Furthermore, letting $Z \sim \mathcal{N}(0, I_{d \times d})$, we have
    \begin{align}
        \lim_{k\to \infty} \Prob\paren*{\widetilde{\mathcal{E}}(Z_k) \leq t \st (X_i)_{i=1}^{n}} &= \lim_{k\to \infty} \Prob\paren*{Z_k \in \widetilde{\mathcal{E}}^{-1}((-\infty, t]) \st (X_i)_{i=1}^{n}} \nonumber \\
        &= \lim_{k\to \infty} \Prob\paren*{Z \in \frac{\sqrt{n}}{\sigma}\Sigma_k^{1/2} \widetilde{\mathcal{E}}^{-1}((-\infty, t]) \st (X_i)_{i=1}^{n}} \nonumber \\
        &= \Prob\paren*{Z \in \bigcap_{k=1}^{\infty} \brace*{\frac{\sqrt{n}}{\sigma}\Sigma_k^{1/2} \widetilde{\mathcal{E}}^{-1}((-\infty, t])} \mid (X_i)_{i=1}^{n}} \nonumber \\
        &= \Prob\paren*{Z \in \frac{\sqrt{n}}{\sigma} \widehat{\Sigma}_{n}^{1/2} \widetilde{\mathcal{E}}^{-1}((-\infty, t]) \st (X_i)_{i=1}^{n}}, \label{eq:pf_thm_3_7}
    \end{align}
    where the second line follows from the fact that $Z_k \overset{d}{=} \frac{\sigma}{\sqrt{n}}\Sigma^{1/2}_{k} Z$ and the third line from the continuity of probability and the fact that for all $k \in \N$,
    \begin{equation*}
        \frac{\sqrt{n}}{\sigma}\Sigma_{k+1}^{1/2} \widetilde{\mathcal{E}}^{-1}((-\infty, t]) \subset \frac{\sqrt{n}}{\sigma}\Sigma_k^{1/2} \widetilde{\mathcal{E}}^{-1}((-\infty, t]).
    \end{equation*}
    Indeed, by the spectral theorem, there exists an orthogonal matrix $Q$ and a diagonal matrix $\Lambda$ such that $\widehat{\Sigma}_{n} = Q \Lambda Q^{T}$, so  $\Sigma_{k}^{1/2} = Q (\Lambda^{1/2} + \lambda_k^{1/2} I) Q^{T}$. Now since $\lambda_{k+1} \leq \lambda_{k}$, we have by Lemma \ref{lem:3}
    \begin{equation*}
        (\Lambda^{1/2} + \lambda_{k+1}^{1/2}) Q^{T}\widetilde{\mathcal{E}}^{-1}((-\infty, t]) \subset (\Lambda^{1/2} + \lambda_{k}^{1/2}) Q^{T}\widetilde{\mathcal{E}}^{-1}((-\infty, t]),
    \end{equation*}
    Mapping the above sets through $Q$ yields the desired statement. Now if $\rank({\widehat{\Sigma}_{n}}) < d$, then $\dim(\im(\widehat{\Sigma}_{n}^{1/2})) < d$ and
    \begin{equation}
    \label{eq:pf_thm_3_8}
        0 \leq \Prob\paren*{Z \in \frac{\sqrt{n}}{\sigma} \widehat{\Sigma}_{n}^{1/2} \widetilde{\mathcal{E}}^{-1}((-\infty, t]) \st (X_i)_{i=1}^{n}} \leq \Prob\paren*{Z \in \im(\widehat{\Sigma}_{n}^{1/2})} = 0
    \end{equation}
    where the last equality follows since $Z$ is a standard normal vector, so its distribution is absolutely continuous with respect to Lebesgue measure on $\R^{d}$, and Lebesgue measure assigns zero measure to all hyperplanes. Otherwise, $\rank(\widehat{\Sigma}_{n}) = d$, and we get
    \begin{equation}
    \label{eq:pf_thm_3_9}
        \Prob\paren*{Z \in \frac{\sqrt{n}}{\sigma} \widehat{\Sigma}_{n}^{1/2} \widetilde{\mathcal{E}}^{-1}((-\infty, t]) \st (X_i)_{i=1}^{n}} = \Prob\paren*{\widetilde{\mathcal{E}}\paren*{\frac{\sigma}{\sqrt{n}} \widehat{\Sigma}_{n}^{-1/2}Z} \leq t \st (X_i)_{i=1}^{n}}
    \end{equation}
    Combining (\ref{eq:pf_thm_3_6}), (\ref{eq:pf_thm_3_7}), (\ref{eq:pf_thm_3_8}), and (\ref{eq:pf_thm_3_9}) proves that
    \begin{equation*}
        \lim_{k \to \infty} p_{k}(t) = \Exp\brack*{\Prob\paren*{\widetilde{\mathcal{E}}\paren*{\frac{\sigma}{\sqrt{n}} \widehat{\Sigma}_{n}^{-1/2}Z} \leq t \st (X_i)_{i=1}^{n}} \mathbbm{1}_{\brace*{\rank(\widehat{\Sigma}_{n}) = d}}((X_i)_{i=1}^{n})}
    \end{equation*}
    which can be interpreted as the CDF of the random variable 
    \begin{equation*}
        A((X_i)_{i=1}^{n}, Z) \defeq \begin{dcases*}
            \widetilde{\mathcal{E}}\paren*{\frac{\sigma}{\sqrt{n}} \widehat{\Sigma}_{n}^{-1/2} Z} & if $\rank(\widehat{\Sigma}_{n}) = d$ \\
            \infty & otherwise
        \end{dcases*}
    \end{equation*}
    so we write $\lim_{k \to \infty} p_{k}(t) = F_{A}(t)$.
    It remains to show that the worst case risk of the procedures defined in the theorem is $Q_{A}(1-\delta)$. Let $\hat{w}$ be a procedure satisfying the condition stated in the theorem and fix $w^{*} \in \R^{d}$. Then, on the event that $\rank(\widehat{\Sigma}_{n}) = d$, and through an elementary explicit calculation, we have $\hat{w} - w^{*} = \widehat{\Sigma}_{n}^{-1} (\frac{1}{n} \sum_{i=1}^{n} \eta_i X_i)$ where $\eta_{i} \sim \mathcal{N}(0, \sigma^{2})$ are \iid. Therefore, $\frac{1}{n} \sum_{i=1}^{n} \eta_i X_i \mid (X_i)_{i=1}^{n} \sim \mathcal{N}(0, \sigma^{2}/n \cdot \widehat{\Sigma}_{n})$, and hence $\hat{w} - w^{*} \mid (X_i)_{i=1}^{n} \sim \mathcal{N}(0, \sigma^{2}/n \cdot \widehat{\Sigma}_{n}^{-1})$, so the worst case risk of this procedure is upper bounded by $Q_{A}(1-\delta)$. Applying Theorem \ref{thm:bayes} concludes the proof.
\end{proof}

\subsection{Proof of Proposition \ref{prop:asymp}}
The proof is a simple application of the second-order delta method. Let $(Z_{n}, (X_i)_{i=1}^{n})$ be such that $(X_i)_{i=1}^{n} \sim P_{X}^{n}$ and $Z_{n} \mid (X_i)_{i=1}^{n} \sim \mathcal{N}(0, \frac{\sigma^2}{n} \widehat{\Sigma}_{n}^{-1})$ whenever $\widehat{\Sigma}_{n}$ is invertible and set $Z_{n} = 0$ otherwise. The conclusion of Theorem \ref{thm:lin_reg} can then be rewritten as
\begin{equation*}
    R^{*}_{n, \delta}(\mathcal{P}_{\text{Gauss}}(P_{X}, \sigma^{2})) = Q_{\widetilde{\mathcal{E}}(Z_n)}(1-\delta),
\end{equation*}
with the additional specification that $\widetilde{\mathcal{E}}(Z_{n}) \defeq \infty$ whenever $(X_i)_{i=1}^{n}$ is such that $\widehat{\Sigma}_{n}$ is singular. Recall that $Z \sim \mathcal{N}(0, I_{d \times d})$. By a property of Gaussian vectors, we have that on the event that $\widehat{\Sigma}_{n}$ is invertible, $Z_{n} \overset{d}{=} \frac{\sigma}{\sqrt{n}} \widehat{\Sigma}_{n}^{-1/2} Z$. 
By the weak law of large numbers and the continuous mapping theorem, we have $
\widehat{\Sigma}_{n}^{-1/2} \overset{p}{\to} \Sigma^{-1/2}$, so that an application of Slutsky's theorem yields $\sqrt{n} \cdot Z_{n} \overset{d}{\to} \mathcal{N}(0, \sigma^{2} \Sigma^{-1})$. Now by assumption, $\widetilde{\mathcal{E}}$ is twice differentiable at $0$ where its gradient vanishes by Lemma \ref{lem:emergency}, and where its Hessian is given by $\nabla^{2} \widetilde{\mathcal{E}}(0) = \Exp\brack*{e''(\eta) XX^{T}} = 2 \alpha \Sigma$ by independence of $\eta$ and $X$. Therefore, by an application of the delta method, we obtain
\begin{equation*}
    \lim_{n \to \infty} n \cdot \widetilde{\mathcal{E}}(Z_n) \overset{d}{\to} 
    \sigma^{2} \alpha \norm{Z}_2^2
\end{equation*}
Since convergence in distribution implies the pointwise convergence of quantiles, we obtain the first equality in the proposition. The second statement follows from Lemma \ref{lem:new_1}.

\subsection{Proof of Lemma \ref{lem:infinite}}
We start with the first statement. Let $\delta \in (\eps_{n}, 1)$. By the monotone convergence theorem and the fact that $\widetilde{\mathcal{E}}(w) < \infty$ for all $w \in \R^{d}$ by assumption on $P_{X}$, we have
\begin{equation*}
    \lim_{t \to \infty} F_{\widetilde{\mathcal{E}}(Z)}(t) = \lim_{t \to \infty} \Exp\brack*{\Prob\paren*{\widetilde{\mathcal{E}}(Z) \leq t \st (X_i)_{i=1}^{n}} \mathbbm{1}_{\brace*{\rank(\widehat{\Sigma}_{n}) = d}}((X_i)_{i=1}^{n})} = 1 - \eps_{n}
\end{equation*}
Therefore, since $1-\delta < 1-\eps_{n}$ there exists a $t \in \R$, such that $F_{\widetilde{\mathcal{E}}(Z)}(t) \geq 1-\delta$, so $Q_{\widetilde{\mathcal{E}}(Z)}(1-\delta) < \infty$. On the other hand, for all $t \in \R$, $F_{\widetilde{\mathcal{E}}(Z)}(t) < 1-\eps_{n}$, so for any $\delta \in [0, \eps_{n}]$, $Q_{\widetilde{\mathcal{E}}(Z)}(1-\delta) = \infty$. As for the lower bound on $\eps_{n}$, \citet[Lemma 1]{elhanchiOptimalExcessRisk2023a} proved that there exists a $w_{0} \in S^{d-1}$ such that $\rho(P_{X}) = \sup_{w \in \R^{d} \setminus \brace{0}} \Prob\paren*{\inp{w}{X} = 0} = \Prob\paren*{\inp{w_0}{X} = 0} < 1$. Therefore
\begin{equation*}
    \eps_{n} = \Prob\paren*{\lambdamin(\widehat{\Sigma}_{n}) = 0} \geq \Prob\paren*{\bigcap_{i=1}^{n} \brace*{\inp{w_0}{X_i} = 0}} = \rho(P_{X})^{n}.
\end{equation*}
The upper bound on $\eps_{n}$ follows from the proof of \citep[Theorem 4]{elhanchiOptimalExcessRisk2023a}. 


\subsection{Proof of Proposition \ref{prop:bounds}}
\begin{proof}
    In this proof we will let $Z \sim \mathcal{N}(0, \frac{\sigma^2}{n}\widetilde{\Sigma}^{-1}_{n})$, so that the minimax risk is given by $Q_{\norm{Z}_2^2}(1-\eps_{n} - \delta)/2$.
    Define the random variables
    \begin{equation*}
        M((X_i)_{i=1}^{n}) \defeq \begin{dcases*}
            \Exp\brack*{\norm{Z}_{2} \st (X_i)_{i=1}^{n}} & if $\rank(\widehat{\Sigma}_{n}) = d$ \\
            \infty & otherwise
        \end{dcases*} 
    \end{equation*}
    \begin{equation*}
        R((X_i)_{i=1}^{n}) \defeq \begin{dcases*}
            \lambdamax\paren*{\frac{\sigma^2}{n}\widetilde{\Sigma}_{n}^{-1}} & if $\rank(\widehat{\Sigma}_{n}) = d$ \\
            \infty & otherwise
        \end{dcases*}
    \end{equation*}
    To simplify notation, we will write $M$ and $R$ only, and leave the dependence on $(X_i)_{i=1}^{n}$ implicit. 
    
    \textbf{Upper bound.} We have, for all $r \in \R$,
    \begin{align*}
        F_{\norm{Z}_2^{2}}(r^2) &= \Exp\brack*{\Prob\paren*{\norm{Z}_2 \leq r \st (X_i)_{i=1}^{n}} \mathbbm{1}_{\brace*{\rank(\widehat{\Sigma}_{n}) = d}}((X_{i})_{i=1}^{n})} \\
        &= \Exp\brack*{\brace*{1 - \Prob\paren*{\norm{Z}_{2} > r \st (X_i)_{i=1}^{n}}} \mathbbm{1}_{\brace*{\rank(\widehat{\Sigma}_{n}) = d}}((X_{i})_{i=1}^{n})} \\
        &\geq \Exp\brack*{\brace*{1 - \exp\paren*{-\frac{\abs{r-M}^2}{2R}}}\mathbbm{1}_{[M, \infty)}(r) \mathbbm{1}_{\brace*{\rank(\widehat{\Sigma}_{n}) = d}}((X_{i})_{i=1}^{n})} \\
        &= \Exp\brack*{\brace*{1 - \exp\paren*{-\frac{\abs{r-M}^2}{2R}}}\mathbbm{1}_{[0, r]}(M)} \\
        &= \Prob\paren*{M \leq r} - \Exp\brack*{\exp\paren*{-\frac{\abs{r - M}^2}{2R}} \mathbbm{1}_{[0, r]}(M)} \eqdef L(r)
    \end{align*}
    where the inequality follows from the Gaussian concentration (Lemma \ref{lem:new_3}), and where the expression inside the expectation is defined to be $0$ whenever $\rank(\widehat{\Sigma}) < d$. The penultimate equality follows from the fact that $\mathbbm{1}_{[M, \infty)}(r) \mathbbm{1}_{\rank(\widehat{\Sigma}) = d}((X_{i})_{i=1}^{n}) = \mathbbm{1}_{[0, r]}(M)$. 
    Now let $0 < c \leq 1$ and define $q \defeq Q_{M}(1 - \eps_{n} - c\delta)$. Then, recalling the definition of $W$ from the statement,
    \begin{align*}
        L(r + q) &\geq \Prob\paren*{M \leq q} + \Prob\paren*{M \in (q, r + q]} \\ &\quad - \Exp\brack*{\exp\paren*{-\frac{r^2}{2R}} \mathbbm{1}_{[0, q]}(M)} - \Exp\brack*{\underbrace{\exp\paren*{-\frac{\abs{r - M}^2}{2R}}}_{\textstyle \leq 1}\mathbbm{1}_{(q, r + q]}(M)} \\
        &\geq \Prob\paren*{M \leq q} - \Exp\brack*{\exp\paren*{-\frac{r^2}{2R}} \mathbbm{1}_{[0, q]}(M)} \\
        &\geq 1 - \eps_{n} - c\delta - \Exp\brack*{\exp\paren*{-\frac{r^2}{2R}}\mathbbm{1}_{\rank(\widehat{\Sigma}) = d}((X_{i})_{i=1}^{n})} \\
        &=\Exp\brack*{\brace*{1 - \exp\paren*{-\frac{r^{2}}{2R}}} \mathbbm{1}_{\rank(\widehat{\Sigma}) = d}((X_{i})_{i=1}^{n})} - c\delta \\
        &= \Prob\paren*{\sqrt{\frac{2\sigma^{2}}{n}W} \leq r} - c\delta
    \end{align*}
    hence taking $r = \sqrt{\frac{2\sigma^{2}}{n}Q_{W}(1 - \eps_{n} - c \delta)}$ and $c = 1/2$ in the last display yields
    \begin{equation*}
        L\paren*{Q_{M}(1 - \eps_{n} - \delta/2) + \sqrt{\frac{2\sigma^{2}}{n}Q_{W}(1 - \eps_{n} - \delta/2)}} \geq 1 - \eps_{n} - \delta
    \end{equation*}
    And since $F_{\norm{Z}_2^2} \circ \varphi \geq L$ where $\varphi(r) = r^2$, we get by the second item of Lemma \ref{lem:prelims-1} that $\varphi^{-1} \circ Q_{\norm{Z}_2^2} \leq L^{-}$. Applying $\varphi$ to both sides yields and using Lemma \ref{lem:invar} we obtain,
    \begin{align*}
        Q_{\norm{Z}_2^2}(1 - \eps_{n} - \delta) &\leq (L^{-}(1 - \eps_{n} - \delta))^{2} \\
        &\leq \paren*{Q_{M}(1 - \eps_n - \delta/2) + \sqrt{\frac{2\sigma^{2}}{n}Q_{W}(1 - \eps_{n} - \delta/2)}}^2 \\
        &\leq 2 \brack*{Q_{M^{2}}(1 - \eps_{n} - \delta/2) + \frac{2\sigma^{2}}{n}Q_{W}(1 - \eps_{n} - \delta/2)} \\
        &\leq \frac{2\sigma^{2}}{n} \paren*{Q_{\Tr\paren*{\widetilde{\Sigma}^{-1}}}(1-\eps_{n}-\delta/2) + 2 Q_{W}(1 - \eps_{n} - \delta/2)} \\
        &\leq 4 \cdot \frac{\sigma^{2}}{n} \paren*{Q_{\Tr\paren*{\widetilde{\Sigma}^{-1}}}(1-\eps_{n}-\delta/2) + Q_{W}(1 - \eps_{n} - \delta/2)},
    \end{align*}
    where in the penultimate inequality, we used the fact that $M^2 \leq \Tr\paren*{\widetilde{\Sigma}_{n}^{-1}}$ by Jensen's inequality.

    \textbf{Lower bound.} For any $(X_i)_{i=1}^{n}$, define $v((X_i)_{i=1}^{n})$ to be the eigenvector corresponding to the smallest eigenvalue of $\widetilde{\Sigma}_{n}$. Then we have.
    \begin{align*}
         F_{\norm{Z}_2^2}(r^2) &= \Exp\brack*{\Prob\paren*{\norm{Z}_2 \leq r \st (X_i)_{i=1}^{n}} \mathbbm{1}_{\rank(\widehat{\Sigma}) = d}((X_{i})_{i=1}^{n})} \\
         &\leq \Exp\brack*{\Prob\paren*{\abs{\inp{v((X_i)_{i=1}^{n})}{Z}} \leq r \st (X_i)_{i=1}^{n}}\mathbbm{1}_{\rank(\widehat{\Sigma}) = d}((X_{i})_{i=1}^{n})} \\
         &\leq \Exp\brack*{\sqrt{1 - \exp\paren*{-\frac{2r^{2}}{\pi R}}} \mathbbm{1}_{\rank(\widehat{\Sigma}) = d}((X_{i})_{i=1}^{n})} \\
         &\leq \Exp\brack*{\brace*{1 - \frac{1}{2}\exp\paren*{-\frac{2r^{2}}{\pi R}}} \mathbbm{1}_{\rank(\widehat{\Sigma}) = d}((X_{i})_{i=1}^{n})} \\
         &= \frac{1}{2}(1 - \eps_{n}) + \frac{1}{2} \Exp\brack*{\brace*{1 - \exp\paren*{-\frac{2r^{2}}{\pi R}}} \mathbbm{1}_{\rank(\widehat{\Sigma}) = d}((X_{i})_{i=1}^{n})} \\
         &= \frac{1}{2}\paren*{1 - \eps_{n} + \Prob\paren*{\sqrt{\frac{\pi \sigma^2}{2n}W} \leq r}} \eqdef U_1(r)
    \end{align*}
    Where the third line follows from Lemma \ref{lem:new_1}. Now let $\eps > 0$ and define
    \begin{equation*}
        r(\eps) \defeq \sqrt{\frac{\pi \sigma^2}{2n} Q_{W}(1 - \eps_{n} - 2\delta)} - \eps
    \end{equation*}
    Then
    \begin{equation*}
        U_1(r(\eps)) < \frac{1}{2}\paren*{1 - \eps_{n} + 1 - \eps_{n} - 2\delta} = 1 - \eps_{n} - \delta.
    \end{equation*}
    Since this holds for all $\eps > 0$, we obtain $U_{1}^{-}(1 - \eps_{n} - \delta) \geq r(0)$. Therefore,
    \begin{equation*}
        Q_{\norm{Z}_2^2}(1 - \eps_{n} - \delta) \geq (U_{1}^{-}(1 - \eps_{n} - \delta))^{2} \geq r^2(0) = \frac{\pi \sigma^2}{2n} Q_{W}(1 - \eps_{n} - 2\delta)
    \end{equation*}
    This finishes the proof of the first part of the lower bound. 
    For the second part of the lower bound, we also have by Gaussian concentration, and in particular Lemma \ref{lem:new_3},
    \begin{align*}
        F_{\norm{Z}_2^2}(r^2) &= \Exp\brack*{\Prob\paren*{\norm{Z}_2 \leq r \st (X_i)_{i=1}^{n}} \mathbbm{1}_{\rank(\widehat{\Sigma}) = d}((X_{i})_{i=1}^{n})} \\
        &\leq \Exp\brack*{\exp\paren*{-\frac{\abs{M - r}^{2}}{\pi M^2}} \mathbbm{1}_{[0, M]}(r)\mathbbm{1}_{\rank(\widehat{\Sigma}) = d}((X_{i})_{i=1}^{n}) + \mathbbm{1}_{(M, \infty)}(r)\mathbbm{1}_{\rank(\widehat{\Sigma}) = d}((X_{i})_{i=1}^{n})} \\
        &= \Exp\brack*{\exp\paren*{-\frac{\abs{M-r}^2}{\pi M^2}} \mathbbm{1}_{[r, \infty)}(M) + \mathbbm{1}_{[0, r)}(M)} \\
        &= \Prob\paren*{M < r} + \Exp\brack*{\exp\paren*{-\frac{\abs{M-r}^2}{\pi M^2}} \mathbbm{1}_{[r, \infty)}(M)} \eqdef U_2(r)
    \end{align*}
    Let $a \in (0, 1)$, $c > 1$, and $q \defeq Q_{M}(1 - \eps_n - c\delta)$. Then we have,
    \begin{align*}
        U((1-a)q) &= \Exp\brack*{\underbrace{\exp\paren*{-\frac{\abs*{M - (1-a)q}^2}{\pi M^2}}}_{\textstyle \leq 1} \mathbbm{1}_{[(1-a)q, q)}(M)} + \Exp\brack*{\exp\paren*{-\frac{\abs*{M-(1-a)q}^2}{\pi M^2}} \mathbbm{1}_{[q, \infty)}(M)} \\
        &\quad + \Prob\paren*{M < q} - \Prob\paren*{(1-a)q \leq M < q} \\
        &\leq \Prob\paren*{M < q} + \Exp\brack*{\exp\paren*{-\frac{\abs{M-(1-a)q}^2}{\pi M^2}} \mathbbm{1}_{[q, \infty)}(M)} \\
        &\leq \Prob\paren*{M < q} + \exp\paren*{-\frac{a^2}{\pi}} \Prob\paren*{q \leq M < \infty} \\
        &\leq \Prob\paren*{M < q} + \exp\paren*{-\frac{a^2}{\pi}} (1 - \eps_{n} - \Prob\paren*{M < q}) \\
        &\leq \paren*{1 - \exp\paren*{-\frac{a^2}{\pi}}} \Prob\paren*{M < q} + \exp\paren*{-\frac{a^2}{\pi}}(1 - \eps_{n}) \\
        &\leq \paren*{1 - \exp\paren*{-\frac{a^2}{\pi}}} (1 - \eps_{n} - c\delta) + \exp\paren*{-\frac{a^2}{\pi}}(1-\eps_{n}) \\
        &= 1 - \eps_{n} - \paren*{1 - \exp\paren*{-\frac{a^2}{\pi}}} c \delta \\
        &< 1 - \eps_{n} - \delta
    \end{align*}
    where the last line follows from taking $a = 0.96$, and $c = 4$, and noticing that with these choices $c \paren*{1 - \exp\paren*{-\frac{a^2}{\pi}}} > 1$. Now since $F_{\norm{Z}_2^2} \circ \varphi \leq U_2$ where $\varphi(r) = r^2$, we get by the second item of Lemma \ref{lem:prelims-1} and an application of Lemma \ref{lem:invar},
    \begin{align*}
        Q_{\norm{Z}_2^2}(1 - \eps_{n} - \delta) &\geq (U^{-}(1 - \eps_{n} - \delta))^2 \\
        &\geq \paren*{\frac{1}{25}Q_{M}(1 - \eps_{n} - 4\delta)}^2 \\
        &= \frac{1}{625} Q_{M^{2}}(1 - \eps_{n} - 4\delta) \\
        & \geq \frac{1}{625 (1 + \pi/2)} \frac{\sigma^2}{n} Q_{\Tr\paren*{\widetilde{\Sigma}^{-1}}}(1 - \eps_{n} - 4\delta).
    \end{align*}
    Averaging the two lower bounds yields the result.
\end{proof}

\subsection{Proof of Lemma \ref{lem:bounds}}
We start with the bounds on $Q_{\Tr(\widetilde{\Sigma}^{-1}_{n})}(1 - \delta)$, and in particular the lower bound. If $(a_i)_{i=1}^{d}$ is a finite sequence of non-negative real numbers, then twice applying the AM-GM inequality we obtain
\begin{equation*}
    \frac{d}{\sum_{i=1}^{d}{\frac{1}{a_i}}} \leq \paren*{\prod_{i=1}^{d} a_i}^{1/d} \leq \frac{\sum_{i=1}^{n}a_{i}}{d} \implies \sum_{i=1}^{d} \frac{1}{a_i} \geq \frac{d^{2}}{\sum_{i=1}^{d}a_i}.
\end{equation*}
Using this, we have
\begin{equation*}
    \Tr\paren*{\widetilde{\Sigma}_{n}^{-1}} = \sum_{i=1}^{d} \lambda_i(\widetilde{\Sigma}_{n}^{-1}) = \sum_{i=1}^{d} \frac{1}{\lambda_i(\widetilde{\Sigma}_{n})} \geq \frac{d^{2}}{\Tr(\widetilde{\Sigma}_{n})}.
\end{equation*}
Now, since $\Exp\brack{\Tr(\widetilde{\Sigma}_{n})} = d$, we have
\begin{equation*}
    \Prob\paren*{\frac{d^2}{\Tr(\widetilde{\Sigma}_{n})} \leq t} = \Prob\paren*{\Tr(\widetilde{\Sigma}_{n}) \geq \frac{d^2}{t}} \leq \frac{\Exp\brack{\Tr(\widetilde{\Sigma}_{n})}}{d^{2}/t} = \frac{t}{d}.
\end{equation*}
Applying the second item of Lemma \ref{lem:prelims-1}, we obtain the desired lower bound
\begin{equation*}
    Q_{\Tr(\widetilde{\Sigma}_{n}^{-1})}(1 - \delta) \geq Q_{d^{2}/\Tr(\widetilde{\Sigma}_{n})}(1-\delta) \geq d \cdot (1-\delta).
\end{equation*}
The upper bound follows from the simple observation $\Tr(\widetilde{\Sigma}_{n}^{-1}) \leq d \cdot \lambdamax(\widetilde{\Sigma}^{-1}_{n})$. We now move to bounds on $Q_{W}(1-\delta)$, and we start with the lower bound. By definition, we have
\begin{equation*}
    1-\delta \leq \Prob\paren*{W \leq Q_{W}(1-\delta)} = 1 - \Exp\brack*{\exp(-Q_{W}(1-\delta) \cdot \lambdamin(\widetilde{\Sigma}_{n}))},
\end{equation*}
hence, by Jensen's inequality
\begin{equation*}
    \delta \geq \Exp\brack*{\exp(-Q_{W}(1-\delta) \cdot \lambdamin(\widetilde{\Sigma}_{n}))} \geq \exp\paren*{-Q_{W}(1-\delta) \cdot \Exp\brack*{\lambdamin(\widetilde{\Sigma}_{n})}},
\end{equation*}
and using the variational characterization of the smallest eigenvalue we get, for any $v \in S^{d-1}$,
\begin{equation*}
    \Exp\brack*{\lambdamin(\widetilde{\Sigma}_{n})} = \Exp\brack*{\inf_{v \in S^{d-1}} \frac{1}{n} \sum_{i=1}^{n} \inp{v}{\Sigma^{-1/2}X_i}^{2}} \leq \Exp\brack*{\inp{v}{\Sigma^{-1/2}X}^{2}} = 1.
\end{equation*}
Therefore $Q_{W}(1-\delta) \geq \log(1/\delta)$ as desired. For the upper bound, let $q \defeq Q_{\lambdamax(\widetilde{\Sigma}_{n}^{-1})}(1-\delta/2)$ and define the event $A \defeq \brace*{\lambdamax(\widetilde{\Sigma}^{-1}_{n}) \leq q}$ which satisfies $\Prob\paren*{A} \geq 1-\delta/2$. Notice that
\begin{equation*}
    \Prob\paren*{W \leq t} \geq \Exp\brack*{\brace*{1 - \exp\paren*{-\frac{t}{\lambdamax(\widetilde{\Sigma}^{-1}_{n})}}} \mathbbm{1}_{A}((X_i)_{i=1}^{n})} \geq (1-\delta/2)\paren*{1 - \exp(t/q)}.
\end{equation*}
Taking $t \geq q \cdot \log(2/\delta)$ ensures that the above probability is at least $1-\delta$. By the minimality of the quantile, we get that $Q_{W}(1-\delta) \leq Q_{\lambdamax(\widetilde{\Sigma}_{n}^{-1})}(1-\delta/2) \cdot \log(2/\delta)$, which is the desired upper bound.

\subsection{Proof of Corollary \ref{cor:suff}}
\begin{proof}
We claim that for all the allowed sample sizes,
\begin{equation*}
    Q_{\lambdamax(\widetilde{\Sigma}_{n}))}(1-\delta/2) \leq 2.
\end{equation*}
Indeed, the restriction on the sample size $n$ is chosen in such a way that by the upper bound in Proposition \ref{prop:asymp_lower}, we have
\begin{equation*}
    Q_{1 - \lambdamin(\widetilde{\Sigma}_{n})}(1-\delta/2) \leq \frac{1}{2}
\end{equation*}
Now if $1 - \lambdamin(\widetilde{\Sigma}_{n}) \leq 1/2$, then $\lambdamin(\widetilde{\Sigma}_{n}) \geq 1/2$ and $\lambdamax(\widetilde{\Sigma}_{n}^{-1}) = \lambdamin^{-1}(\widetilde{\Sigma}_{n}) \leq 2$. Therefore
\begin{equation*}
    \Prob\paren*{\lambdamax(\widetilde{\Sigma}^{-1}_{n}) \leq 2} \geq \Prob\paren*{\lambdamin(\widetilde{\Sigma}_{n}) \leq 1/2} \geq \Prob\paren*{\lambdamin(\widetilde{\Sigma}_{n}) \leq Q_{1 - \lambdamin(\widetilde{\Sigma}_{n})}(1-\delta/2)} \geq 1-\delta/2.
\end{equation*}
which finishes the proof of the bound on $Q_{\lambdamax(\widetilde{\Sigma}_{n}))}(1-\delta/2)$. Now appealing to Lemma \ref{lem:bounds} proves the result.
\end{proof}

\subsection{Proof of Proposition \ref{prop:lower_bound_p_norm}}
Using Lemma 2.5 in \cite{adilFastAlgorithmsEll_p2023}, we have for the $p$-th power error $e(t) = \abs{t}^{p}/[p(p-1)]$,
\begin{equation*}
    \widetilde{\mathcal{E}}(\Delta) = \Exp\brack*{e(\inp{\Delta}{X} + \eta)} - \Exp\brack*{e(\eta)} \geq \frac{1}{8(p-1)} \Delta^{T}\Exp\brack*{e''(\eta) XX^{T}}\Delta.
\end{equation*}
Since $e''(t) = \abs{t}^{p-2}$, and $\eta$ and $X$ are independent,
\begin{equation*}
     \widetilde{\mathcal{E}}(\Delta) \geq \frac{1}{8(p-1)} \cdot m(p-2) \sigma^{p-2} \Delta^{T} \Sigma \Delta.
\end{equation*}
Therefore, by Theorem \ref{thm:lin_reg},
\begin{equation*}
    R_{n, \delta}(\mathcal{P}_{\text{Gauss}}(P_{X}, \sigma^{2})) = Q_{\widetilde{\mathcal{E}}(Z)}(1-\delta) \geq \frac{m(p-2) \sigma^{p-2}}{8(p-1)} \cdot \frac{\sigma^{2}}{n} Q_{\norm{A}_{2}^{2}}(1 - \delta)
\end{equation*}
where $A \sim \mathcal{N}(0, \widetilde{\Sigma}_{n}^{-1})$. Now noting that $\frac{\sigma^2}{n} Q_{\norm{A}_2^{2}}(1-\delta)$ is the minimax risk under the square error, applying Proposition \ref{prop:bounds} and Lemma \ref{lem:bounds}, and using the constraint on $\delta$, we obtain the desired lower bound.

\section{Proofs of Section \ref{sec:general}}
\subsection{Proof of Theorem \ref{thm:guarantee_1}}
Fix a distribution $P \in \mathcal{P}_{2}(P_{X}, \sigma^2)$. We will prove an upper bound on the risk of the proposed procedure under $P$. We follow the approach developed by \citet{lugosiRiskMinimizationMedianofmeans2019}. Define
\begin{equation*}
    \phi(w) = \max_{v \in \R^{d}} \psi_{k}(w, v),
\end{equation*}
and note that by definition of $\hat{w}_{n, \delta}$, we have
\begin{equation}
\label{eq:g_0_1}
    \psi_{k}(\hat{w}_{n, k}, w^{*})\leq \phi(\hat{w}_{n, k}) \leq \phi(w^{*}).
\end{equation}
The key idea of the proof is to show that $\norm{\hat{w}_{n, k} - w^{*}}_{\Sigma}$ is small, by simultaneously showing that
\begin{itemize}
    \item For all $w \in \R^{d}$, if $\norm{w - w^{*}}_{\Sigma}$ is large, then so is $\psi_{k}(w, w^{*})$,
    \item $\phi(w^{*})$ is small.
\end{itemize}
The combination of these statements combined with (\ref{eq:g_0_1}) will show that $\norm{\hat{w}_{n,k} - w^{*}}_{\Sigma}$ is indeed small. Define
\begin{equation*}
    \Delta(\delta) \defeq 50 \sqrt{\frac{\sigma^{2} [d + \log(4/\delta)]}{n}}
\end{equation*}
All the following lemmas are stated under the conditions of Theorem \ref{thm:guarantee_1}.
The first step of the proof is a simple application of Lemma \ref{lem:trunc_7}.
\begin{lemma}
    \label{lem:g_0}
    \begin{equation*}
        \Prob\paren*{\sup_{\norm{v}_{\Sigma} \leq 1} n^{-1}\varphi_{k}\brack*{(\inp{\nabla e(\inp{w^{*}}{X_i} - Y_{i})}{v})_{i=1}^{n}} > \Delta(\delta)} \leq \delta/2
    \end{equation*}
\end{lemma}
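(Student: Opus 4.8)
The plan is to invoke Lemma \ref{lem:trunc_7}. Since $e(t) = t^2/2$, we have $\nabla e(\inp{w^*}{X_i} - Y_i) = (\inp{w^*}{X_i} - Y_i)X_i = -\xi_i X_i$, where $\xi_i \defeq Y_i - \inp{w^*}{X_i}$, so that $\inp{\nabla e(\inp{w^*}{X_i} - Y_i)}{v} = -\xi_i \inp{v}{X_i}$. I would apply Lemma \ref{lem:trunc_7} with index set $\mathcal{T} = D$, a countable dense subset of the closed ball $\brace*{v \in \R^d \st \norm{v}_\Sigma \leq 1}$, with processes $Z_{i,v} \defeq -\xi_i \inp{v}{X_i}$, and with the failure level of that lemma set to $\delta/2$; this is exactly why the threshold is taken to be $k = 8\log(4/\delta) = 8\log(2/(\delta/2))$, so that $k$ is the admissible integer required by Lemma \ref{lem:trunc_7}. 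Since for each realization of the data the map $v \mapsto \varphi_k\brack*{(-\xi_i\inp{v}{X_i})_{i=1}^n}$ is continuous (a fixed continuous, piecewise-linear function of the vector $(-\xi_i\inp{v}{X_i})_{i=1}^n$, which depends linearly on $v$), the supremum over $D$ coincides with the supremum over the whole closed ball, which in particular makes the latter measurable.

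It remains to check the hypotheses of Lemma \ref{lem:trunc_7} and to bound the quantities $\mu$ and the variance parameter appearing there. For the mean-zero condition, $\Exp\brack*{Z_{i,v}} = -\inp{v}{\Exp\brack*{\xi X}} = 0$ by the first-order optimality condition for $w^*$ as the unique minimizer of $E$ under any $P \in \mathcal{P}_2(P_X,\sigma^2)$, which reads $\Exp\brack*{\xi X} = 0$. For the variance parameter, the tower rule together with $\esssup(\Exp\brack*{\xi^2\mid X}) \leq \sigma^2$ gives
\[
\sup_{v \in D}\sum_{i=1}^n \Exp\brack*{Z_{i,v}^2} = n\sup_{v \in D}\Exp\brack*{\xi^2\inp{v}{X}^2} \leq n\sigma^2\sup_{v \in D}\Exp\brack*{\inp{v}{X}^2} = n\sigma^2\sup_{v\in D}\norm{v}_\Sigma^2 \leq n\sigma^2 .
\]
For $\mu$, writing the supremum over the $\Sigma$-ball as a dual norm, $\mu = \Exp\brack*{\sup_{v\in D}\inp{\sum_i \eps_i\xi_i X_i}{v}} = \Exp\norm{\Sigma^{-1/2}\sum_i\eps_i\xi_i X_i}_2$, and then Jensen's inequality, orthogonality of the Rademacher signs, the tower rule, and $\Exp\brack*{\norm{\Sigma^{-1/2}X}_2^2} = \Tr(\Sigma^{-1}\Sigma) = d$ yield $\mu \leq (n\sigma^2 d)^{1/2}$.

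Plugging these estimates into Lemma \ref{lem:trunc_7} at level $\delta/2$ gives, with probability at least $1 - \delta/2$,
\[
\sup_{v\in D}\varphi_k\brack*{(-\xi_i\inp{v}{X_i})_{i=1}^n} \leq 50\max\brace*{(n\sigma^2 d)^{1/2},\ (n\sigma^2\log(4/\delta))^{1/2}} \leq 50\sigma\sqrt{n}\cdot\sqrt{d + \log(4/\delta)} .
\]
Dividing by $n$, using $\max\brace*{\sqrt a,\sqrt b}\leq\sqrt{a+b}$, and invoking the continuity remark to replace $\sup_{v\in D}$ by $\sup_{\norm{v}_\Sigma\leq 1}$ produces exactly $\Delta(\delta)$ on the right-hand side, which is the claim. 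The only step requiring genuine work is the bound on the Rademacher complexity $\mu$; everything else is a direct substitution into Lemma \ref{lem:trunc_7}, the only bookkeeping being to confirm that the countable-to-uncountable passage is legitimate and that $\delta/2$ is precisely the level making $k$ admissible.
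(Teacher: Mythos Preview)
Your proof is correct and follows essentially the same approach as the paper's: both apply Lemma~\ref{lem:trunc_7} at level $\delta/2$ to the processes $Z_{i,v} = -\xi_i\inp{X_i}{v}$ (the paper absorbs a $1/n$ factor into $Z_{i,v}$, you divide at the end, which is equivalent by the linearity in Lemma~\ref{lem:trunc_3}), computing $\Exp[Z_{i,v}]=0$ from first-order optimality, $\sigma^2 \leq n\sigma^2$ from the tower property and the $\esssup$ bound on the conditional second moment, and $\mu \leq \sqrt{n\sigma^2 d}$ via Jensen, Rademacher orthogonality, and $\Exp[\norm{X}_{\Sigma^{-1}}^2]=d$. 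Your extra care about the countable-index and continuity issue is a nice addition that the paper leaves implicit.
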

\begin{proof}
    For $v \in \R^{d}$ such that $\norm{v}_{\Sigma} \leq 1$ and $i \in [n]$, define
    \begin{equation*}
        Z_{i, v} \defeq \frac{1}{n} \inp{\nabla e(\inp{w^{*}}{X_i} - Y_{i})}{v} = \frac{1}{n} \xi_i \inp{X_{i}}{v}
    \end{equation*}
    Our aim is to apply Lemma \ref{lem:trunc_7}, so we make the necessary computations here. We have
    \begin{align*}
        \Exp\brack*{Z_{i, v}} &= \frac{1}{n} \inp{\Exp\brack*{\nabla e(\inp{w^{*}}{X_i} - Y_{i})}}{v} = \frac{1}{n}\inp{\nabla E(w^{*})}{v} = 0, \\
        \sup_{\norm{v}_{\Sigma} \leq 1} \sum_{i=1}^{n} \Exp\brack*{Z_{i, v}^{2}} &= \frac{1}{n} \sup_{\norm{v}_{\Sigma} = 1} \Exp\brack*{\xi^{2}\inp{X}{v}^{2}} \leq \frac{\sigma^{2}}{n}. 
    \end{align*}
    where the last inequality follows from the assumption $\Exp\brack*{\xi^{2} \mid X} \leq \sigma^{2}$. Now, for independent Rademacher variables $(\eps_i)_{i=1}^{n}$, we have
    \begin{align*}
        \Exp\brack*{\sup_{\norm{v}_{\Sigma}=1} \sum_{i=1}^{n} \eps_i Z_{i, v}}
        &= \Exp\brack*{\sup_{\norm{v}_{\Sigma} = 1} \inp*{\frac{1}{n}\sum_{i=1}^{n} \eps_{i} \xi_i X_{i}}{v}} \\
        &= \Exp\brack*{\norm*{\frac{1}{n} \sum_{i=1}^{n} \eps_i \xi_i X_i}_{\Sigma^{-1}}} \\
        &\leq \Exp\brack*{\norm*{\frac{1}{n} \sum_{i=1}^{n} \eps_i \xi_i X_i}_{\Sigma^{-1}}^{2}}^{1/2} \\
        &= \sqrt{\frac{\Exp\brack{\xi^{2} \norm{X}_{\Sigma^{-1}}^{2}}}{n}} \leq \sqrt{\frac{\sigma^{2} d}{n}}.
    \end{align*}
    Where again we have used the assumption $\Exp\brack*{\xi^{2} \mid  X} \leq \sigma^{2}$. Recalling that $k = 8\log(2/(\delta/2))$ from the statement of the theorem, and applying Lemma \ref{lem:trunc_7} with the above constants yields the result.
\end{proof}
From this result, we can deduce the following estimate, which will help us bound $\phi(w^{*})$ later on.
\begin{corollary}
\label{cor:g_1}
    For any $r \in (0, \infty)$,
    \begin{equation*}
        \Prob\paren*{\sup_{\norm{v - w^{*}}_{\Sigma} < r} \psi_{k}(w^{*}, v) > r \cdot \Delta(\delta)} \leq \delta/2.
    \end{equation*}
\end{corollary}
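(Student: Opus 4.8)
The plan is to reduce the bound on $\psi_k(w^*, v)$ uniformly over the ball $\{\norm{v-w^*}_\Sigma < r\}$ to the quantity controlled by Lemma \ref{lem:g_0}, using convexity of the error function together with the structural properties of $\varphi_k$ recorded in Lemma \ref{lem:trunc_3} (positive homogeneity and monotonicity). First I would fix $v$ with $\norm{v-w^*}_\Sigma < r$ and write $\Delta \defeq v - w^*$. For each $i$, since $e$ is convex (here $e(t)=t^2/2$, but convexity is all that is needed), we have the pointwise bound
\begin{equation*}
    e(\inp{w^*}{X_i} - Y_i) - e(\inp{v}{X_i} - Y_i) \leq -\inp{\nabla e(\inp{v}{X_i}-Y_i)}{\Delta},
\end{equation*}
or, more conveniently, using convexity the other way and evaluating the gradient at $w^*$,
\begin{equation*}
    e(\inp{w^*}{X_i} - Y_i) - e(\inp{v}{X_i} - Y_i) \leq \inp{\nabla e(\inp{w^*}{X_i}-Y_i)}{w^* - v} = -\inp{\nabla e(\inp{w^*}{X_i}-Y_i)}{\Delta}.
\end{equation*}
Applying Lemma \ref{lem:trunc_3} (third item, monotonicity of $\varphi_k$ in the sequence) gives $\psi_k(w^*,v) = n^{-1}\varphi_k[(e(\inp{w^*}{X_i}-Y_i)-e(\inp{v}{X_i}-Y_i))_i] \leq n^{-1}\varphi_k[(-\inp{\nabla e(\inp{w^*}{X_i}-Y_i)}{\Delta})_i]$.

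Next I would homogenize in $\Delta$. Writing $\Delta = \norm{\Delta}_\Sigma \cdot u$ with $\norm{u}_\Sigma \leq 1$ (and $\norm{\Delta}_\Sigma < r$), the first item of Lemma \ref{lem:trunc_3} yields
\begin{equation*}
    n^{-1}\varphi_k\brack*{(-\inp{\nabla e(\inp{w^*}{X_i}-Y_i)}{\Delta})_i} = \norm{\Delta}_\Sigma \cdot n^{-1}\varphi_k\brack*{(-\inp{\nabla e(\inp{w^*}{X_i}-Y_i)}{u})_i}.
\end{equation*}
Since the index set $\{v : \norm{v-w^*}_\Sigma < r\}$ can be taken countable after the usual continuity/density reduction (or one can work directly with a countable dense subset of the unit $\Sigma$-ball), taking the supremum over $v$ and bounding $\norm{\Delta}_\Sigma < r$ gives
\begin{equation*}
    \sup_{\norm{v-w^*}_\Sigma < r} \psi_k(w^*, v) \leq r \cdot \sup_{\norm{u}_\Sigma \leq 1} n^{-1}\varphi_k\brack*{(-\inp{\nabla e(\inp{w^*}{X_i}-Y_i)}{u})_i}.
\end{equation*}
Because the supremum over $\norm{u}_\Sigma \leq 1$ is invariant under $u \mapsto -u$ (the unit ball is symmetric), this last supremum equals $\sup_{\norm{u}_\Sigma \leq 1} n^{-1}\varphi_k[(\inp{\nabla e(\inp{w^*}{X_i}-Y_i)}{u})_i]$, which is exactly the quantity appearing in Lemma \ref{lem:g_0}. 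Invoking Lemma \ref{lem:g_0} then gives that this exceeds $\Delta(\delta)$ with probability at most $\delta/2$, and hence the event $\{\sup_{\norm{v-w^*}_\Sigma < r}\psi_k(w^*,v) > r\Delta(\delta)\}$ has probability at most $\delta/2$, as claimed.

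The main obstacle — really the only point requiring care — is the measurability/countability of the supremum over the open $\Sigma$-ball and the justification that $\varphi_k$ applied to the gradient-linearized sequence dominates $\varphi_k$ applied to the true excess-error sequence; the latter is immediate from monotonicity of $\varphi_k$ (Lemma \ref{lem:trunc_3}, third item) once the pointwise convexity inequality is in place, and the former is handled exactly as in the setup of Section \ref{sec:general}, where $\varphi_k$ and $\psi_k$ are defined on sequences and the relevant suprema are over the countable index set $\mathcal{T}$ (here, a countable dense subset of the unit $\Sigma$-sphere, using continuity of $v \mapsto \psi_k(w^*,v)$). Everything else is the bookkeeping of positive homogeneity and symmetry already encapsulated in Lemmas \ref{lem:trunc_1}–\ref{lem:trunc_3}.
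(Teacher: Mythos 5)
Your proposal is correct and follows the same route as the paper: linearize the excess error at $w^*$, drop the non-positive quadratic remainder via monotonicity of $\varphi_k$ (third item of Lemma \ref{lem:trunc_3}), homogenize with the scaling property (first item of Lemma \ref{lem:trunc_3}), use symmetry of the $\Sigma$-ball to absorb the sign, and invoke Lemma \ref{lem:g_0}. The only real difference is cosmetic: the paper, specializing to $e(t)=t^2/2$, writes the exact second-order Taylor expansion and then discards the $-\tfrac12\inp{X_i}{v-w^*}^2$ term, whereas you invoke the one-sided convexity inequality directly — for quadratic $e$ these are the same inequality, and your phrasing generalizes cleanly to any convex $e$. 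One small slip: your first candidate inequality, $e(\inp{w^*}{X_i}-Y_i)-e(\inp{v}{X_i}-Y_i)\leq -\inp{\nabla e(\inp{v}{X_i}-Y_i)}{\Delta}$, has the wrong direction — convexity at the point $v$ gives a \emph{lower} bound on this difference, not an upper one. You correctly discard it in favor of the version anchored at $w^*$, which is the one you then carry through, so the proof stands; but the two alternatives are not interchangeable as your "more conveniently" suggests.
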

\begin{proof}
    We have
    \begin{align*}
        \sup_{\norm{v - w^{*}}_{\Sigma} < r} \psi_{k}(w^{*}, v) &= \sup_{\norm{v - w^{*}}_{\Sigma} < r} \varphi_{k}\brack*{\paren*{e(\inp{w^{*}}{X_{i}} - Y_{i}) - e(\inp{v}{X_{i}} - Y_{i})}_{i=1}^{n}} \\
        &= \sup_{\norm{v - w^{*}}_{\Sigma} < r} \varphi_{k}\brack*{\paren*{-\inp{\nabla e(\inp{w^{*}}{X_i} - Y_i)}{v - w^{*}} - \frac{1}{2}\inp{X_i}{v - w^{*}}^{2}}_{i=1}^{n}} \\
        &\leq \sup_{\norm{v - w^{*}}_{\Sigma} < r} \varphi_{k}\brack*{\paren*{-\inp{\nabla e(\inp{w^{*}}{X_i} - Y_i)}{v - w^{*}}}_{i=1}^{n}} \\
        &= \sup_{\norm{v - w^{*}}_{\Sigma} < r} r \cdot \varphi_{k}\brack*{\paren*{\inp*{\nabla e(\inp*{w^{*}}{X_i} - Y_i)}{\frac{v - w^{*}}{r}}}_{i=1}^{n}} \\
        &= r \cdot \sup_{\norm{v}_{\Sigma} < 1} \varphi_{k}\brack*{\paren*{\inp{\nabla e(\inp*{w^{*}}{X_i} - Y_i)}{v}}_{i=1}^{n}}
    \end{align*}
    where the first line is by definition, the second holds since $e$ is quadratic so its second order Taylor expansion is exact, the third by the third item of Lemma \ref{lem:trunc_3}, and the fourth by the first item of Lemma \ref{lem:trunc_3}. Applying Lemma \ref{lem:g_0} to the last line yields the result.
\end{proof}

The key technical novelty of this proof is the following lemma, which uses our new results Proposition \ref{prop:lower_2} and Lemma \ref{lem:trunc_20}.

\begin{lemma}
\label{lem:g_2}
    Let $r \in [8\Delta(\delta), \infty)$. Then
    \begin{equation*}
        \Prob\paren*{\inf_{\norm{v - w^{*}} \geq r} \psi_{k}(v, w^{*}) < \frac{r^{2}}{8} - r\Delta(\delta)} \leq \delta
    \end{equation*}
\end{lemma}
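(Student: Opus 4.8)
\textbf{Proof plan for Lemma \ref{lem:g_2}.}
The goal is a lower bound on $\psi_k(v, w^*) = n^{-1}\varphi_k[(e(\inp{v}{X_i} - Y_i) - e(\inp{w^*}{X_i} - Y_i))_{i=1}^n]$ that is uniform over the region $\norm{v - w^*}_\Sigma \geq r$. The first step is to use the exactness of the second-order Taylor expansion of the quadratic $e(t) = t^2/2$ to write, for each $i$,
\begin{equation*}
    e(\inp{v}{X_i} - Y_i) - e(\inp{w^*}{X_i} - Y_i) = \inp{\nabla e(\inp{w^*}{X_i} - Y_i)}{v - w^*} + \tfrac12 \inp{X_i}{v - w^*}^2.
\end{equation*}
The second term is non-negative. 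Writing $\Delta \defeq v - w^*$ and $u \defeq \Delta/\norm{\Delta}_\Sigma$ (so $\norm{u}_\Sigma = 1$), by homogeneity of $\varphi_k$ (first item of Lemma \ref{lem:trunc_3}) and by Lemma \ref{lem:trunc_20} applied with $a_i = \inp{\nabla e(\inp{w^*}{X_i}-Y_i)}{\Delta}$ and $b_i = \tfrac12\inp{X_i}{\Delta}^2 \geq 0$, we get
\begin{equation*}
    n\,\psi_k(v, w^*) \geq \varphi_k\bigl[(\inp{\nabla e(\inp{w^*}{X_i}-Y_i)}{\Delta})_{i=1}^n\bigr] + \tfrac12\sum_{i=1}^{n-2k} \bigl(\inp{X_i}{\Delta}^2\bigr)_i^*.
\end{equation*}

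For the first (linear) term, I would bound it below by $-\norm{\Delta}_\Sigma$ times the supremum over $\norm{u}_\Sigma \leq 1$ of $\varphi_k[(\inp{\nabla e(\inp{w^*}{X_i}-Y_i)}{u})_{i=1}^n]$, using that $-\varphi_k(a) = \varphi_k(-a)$ (Lemma \ref{lem:trunc_3}, which gives $c \cdot \varphi_k(a) = \varphi_k(c\cdot a)$ for all real $c$) and $-\inf = \sup(-\cdot)$; by Lemma \ref{lem:g_0} this supremum is at most $n\,\Delta(\delta)$ with probability at least $1-\delta/2$, so the linear term is at least $-n\,\norm{\Delta}_\Sigma \Delta(\delta)$. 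For the second (quadratic) term, the key observation is $\sum_{i=1}^{n-2k}(\inp{X_i}{\Delta}^2)_i^* = \norm{\Delta}_\Sigma^2 \sum_{i=1}^{n-2k}(\inp{\widetilde X_i}{\Sigma^{1/2}u/\norm{\Sigma^{1/2}u}_2}^2 \cdot \text{(const)})_i^*$ — more carefully, writing $\inp{X_i}{\Delta} = \inp{\widetilde X_i}{\Sigma^{1/2}\Delta}$ and $w \defeq \Sigma^{1/2}\Delta/\norm{\Sigma^{1/2}\Delta}_2 = \Sigma^{1/2}\Delta/\norm{\Delta}_\Sigma$, we have $\inp{X_i}{\Delta}^2 = \norm{\Delta}_\Sigma^2 \inp{\widetilde X_i}{w}^2 = \norm{\Delta}_\Sigma^2 Y_{i,w}$ in the notation of Proposition \ref{prop:lower_2}. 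Hence, after reindexing the order statistics (the $(n-2k)$ smallest of $n$ values is what appears when we truncate symmetrically — I would match this to the $\overline\lambda_{\mathrm{min}}$ quantity, noting $\sum_{i=1}^{n-2k} (Y_{i,w})_i^* = \sum_{i=k+1}^{n-k} Y_{i,w}^*$ up to the bookkeeping in Proposition \ref{prop:lower_2}), we get $\tfrac12\sum_{i=1}^{n-2k}(\inp{X_i}{\Delta}^2)_i^* \geq \tfrac{n}{2}\norm{\Delta}_\Sigma^2 \cdot \overline\lambda_{\mathrm{min}}(\widetilde\Sigma_n)$ uniformly in the direction.

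Now I invoke Proposition \ref{prop:lower_2}: under the stated sample size condition $n \geq 800^2(\cdots)$ of Theorem \ref{thm:guarantee_1}, the right-hand side of Proposition \ref{prop:lower_2} is at most $1/4$ (this is exactly the role of the sample-size hypothesis — one checks $100\sqrt{8\log(6d)(\lambdamax(S)+1)/n} \leq 1/8$ and $100\sqrt{(R+1)\log(1/\delta)/n}\leq 1/8$ separately), so with probability at least $1-\delta/2$ we have $(1 - 2k/n) - \overline\lambda_{\mathrm{min}}(\widetilde\Sigma_n) \leq 1/4$, hence $\overline\lambda_{\mathrm{min}}(\widetilde\Sigma_n) \geq 1 - 2k/n - 1/4 \geq 1/2$ using $k \leq \floor{n/8}$ so $2k/n \leq 1/4$. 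Combining both events (union bound, total probability $\delta$), on the good event $n\,\psi_k(v, w^*) \geq -n\norm{\Delta}_\Sigma\Delta(\delta) + \tfrac{n}{4}\norm{\Delta}_\Sigma^2$ for every $v$ with $\norm{\Delta}_\Sigma = \norm{v-w^*}_\Sigma \geq r$. The function $t \mapsto t^2/4 - t\Delta(\delta)$ is increasing for $t \geq 2\Delta(\delta)$, and $r \geq 8\Delta(\delta) \geq 2\Delta(\delta)$, so it is minimized over $\{t \geq r\}$ at $t = r$, giving $\psi_k(v,w^*) \geq r^2/4 - r\Delta(\delta) \geq r^2/8 - r\Delta(\delta)$, which is stronger than claimed. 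I expect the main obstacle to be the careful index bookkeeping in translating Lemma \ref{lem:trunc_20}'s $\sum_{i=1}^{n-2k} b_i^*$ into the truncated sum $\sum_{i=k+1}^{n-k}$ of Proposition \ref{prop:lower_2}, and in verifying the constants in the sample-size condition line up so that the Proposition \ref{prop:lower_2} bound is genuinely below $1/4$; both are routine but error-prone.
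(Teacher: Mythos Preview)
Your approach is essentially the paper's: exact quadratic Taylor expansion, then split via Lemma~\ref{lem:trunc_20} into a linear piece controlled by Lemma~\ref{lem:g_0} and a quadratic piece controlled by Proposition~\ref{prop:lower_2}, combined by a union bound. The one stylistic difference is the extension from $\norm{v-w^*}_\Sigma = r$ to $\norm{v-w^*}_\Sigma \geq r$: the paper first proves the bound on the sphere and then uses a homogeneity/monotonicity rescaling (setting $v = w^* + (r/R)(w-w^*)$ and invoking Lemma~\ref{lem:trunc_3} to get $\psi_k(w,w^*)\geq (R/r)\psi_k(v,w^*)$), whereas you keep the $\norm{\Delta}_\Sigma$-dependence throughout and minimize $t\mapsto t^2/4 - t\Delta(\delta)$ over $t\geq r$ directly; both work, and yours is a touch shorter.

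One caution on the point you flagged: $\sum_{i=1}^{n-2k}(Y_{i,w})^*$ (sum of the $n-2k$ smallest, from Lemma~\ref{lem:trunc_20}) and $\sum_{i=k+1}^{n-k}Y_{i,w}^*$ (the middle $n-2k$, from Proposition~\ref{prop:lower_2}) are \emph{not} equal --- for nonnegative $Y$ the former is smaller --- so this is not merely notation. The paper makes the same identification, so you are in good company, but be aware it requires either a slight strengthening of Lemma~\ref{lem:trunc_20} or of Proposition~\ref{prop:lower_2} (both are easy given the slack in the constants) rather than pure bookkeeping.
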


\begin{proof}
    We start with the case $\norm{v - w^{*}}_{\Sigma} = r$. We have
    \begin{align*}
        \inf_{\norm{v - w^{*}}_{\Sigma} = r} \psi_{k}(v, w^{*}) &= \inf_{\norm{v - w^{*}}_{\Sigma} = r} n^{-1} \varphi_{k}\brack*{\paren*{e(\inp{v}{X_{i}} - Y_{i}) - e(\inp{w^{*}}{X_{i}} - Y_{i})}_{i=1}^{n}} \\
        &= \inf_{\norm{v - w^{*}}_{\Sigma} = r} n^{-1} \varphi_{k}\brack*{\paren*{\inp{\nabla e(\inp{w^{*}}{X_i} - Y_i)}{v - w^{*}} + \frac{1}{2} \inp{v - w^{*}}{X_i}^{2} }_{i=1}^{n}} \\
        &= \inf_{v \in S^{d-1}} n^{-1} \varphi_{k}\brack*{\paren*{r \cdot \inp{\nabla e(\inp{w^{*}}{X_i} - Y_i)}{\Sigma^{-1/2}v} + \frac{r^{2}}{2} \inp{v}{\Sigma^{-1/2}X_i}^{2} }_{i=1}^{n}}.
    \end{align*}
    Define $\widetilde{X}_{i} = \Sigma^{-1/2}X_{i}$, and $Z_{i, v} \defeq \inp{v}{\widetilde{X}_i}^{2}$ for $(i, v) \in [n] \times S^{d-1}$. Then we have by Lemma \ref{lem:trunc_20},
    \begin{align*}
        &\inf_{\norm{v - w^{*}}_{\Sigma} = r} \psi_{k}(v, w^{*}) \\
        &\geq r \cdot \inf_{v \in S^{d-1}} n^{-1} \varphi_k\brack*{\paren*{\inp{\nabla e(\inp{w^{*}}{X_i} - Y_i)}{\Sigma^{-1/2}v}}_{i=1}^{n}} + r^{2} \cdot \inf_{v \in S^{d-1}} n^{-1} \sum_{i=1}^{n - 2k} Z_{i, v}^{*} \\
        &= \frac{r^{2}}{2} \cdot \inf_{v \in S^{d-1}} n^{-1} \sum_{i=1}^{n - 2k} Z_{i, v}^{*} - r\cdot \sup_{\norm{v}_{\Sigma} = 1} n^{-1} \varphi_k\brack*{\paren*{\inp{\nabla e(\inp{w^{*}}{X_i} - Y_i)}{v}}_{i=1}^{n}}
    \end{align*}
    The second term is bounded with probability $1-\delta/2$ by $r \cdot \Delta(\delta)$ by Lemma \ref{lem:g_0}. For the first term, the restriction on the sample size in Theorem \ref{thm:guarantee_1} is chosen such that by Proposition \ref{prop:asymp_lower}, with probability at least $1 - \delta^{2}/2 \geq 1-\delta/2$
    \begin{equation*}
        \inf_{v \in S^{d-1}} n^{-1} \sum_{i=1}^{n - 2k} Z_{i, v}^{*} \geq \frac{1}{4}
    \end{equation*}
    Therefore, with probability at least $1-\delta$
    \begin{equation*}
        \inf_{\norm{v - w^{*}}_{\Sigma} = r} \psi_{k}(v, w^{*}) \geq \frac{r^{2}}{8} - r \Delta(\delta).
    \end{equation*}
    We now extend this to all vectors $w \in \R^{d}$ such that $\norm{w - w^{*}}_{\Sigma} \geq r$. On the same event, if $\norm{w - w^{*}}_{\Sigma} = R > r$, then $v \defeq w^{*} + \frac{r}{R} (w-w^{*})$ satisfies $\norm{v - w^{*}}_{\Sigma} = r$, and
    \begin{align*}
        \psi_k(w, w^{*}) &= n^{-1} \varphi_{k}\paren*{(\inp{\nabla e(\inp{w^{*}}{X_i} - Y_{i})}{w - w^{*}} + \frac{1}{2} \inp{w - w^{*}}{X_i}^{2})_{i=1}^{n}} \\
        &= n^{-1} \varphi_{k}\brack*{\paren*{ \frac{R}{r} \inp{\nabla e(\inp{w^{*}}{X_i} - Y_{i})}{v - w^{*}} + \frac{R^2}{r^2}\frac{1}{2} \inp{v - w^{*}}{X_i}^{2}}_{i=1}^{n}} \\
        &\geq n^{-1} \varphi_{k}\brack*{\paren*{ \frac{R}{r} \inp{\nabla e(\inp{w^{*}}{X_i} - Y_{i})}{v-w^{*}} + \frac{R}{r}\frac{1}{2} \inp{v - w^{*}}{X_i}^{2})_{i=1}^{n}}_{i=1}^{n}} \\
        &= \frac{R}{r} \cdot \psi_{k}(v, w^{*}) \\
        &\geq \psi_{k}(v, w^{*})
    \end{align*}
    where the first inequality follows from the fact that $R/r > 1$, $\inp{v - w^{*}}{X_{i}}^{2} > 0$, and Lemma \ref{lem:trunc_3}, and the second inequality follows from the fact that by the condition on $r$, we have $\psi_{k}(v, w^{*}) \geq 0$ on the event we are considering. 
\end{proof}
We are now ready to state the proof of Theorem \ref{thm:guarantee_1}. Set $r \defeq 20 \Delta(\delta)$, and recall from (\ref{eq:g_0_1}) that
\begin{align*}
    \psi_{k}(\hat{w}_{n, k}, w^{*}) \leq \phi(w^{*}) &= \sup_{v \in \R^{d}} \psi_k(w^{*}, v) \\
    &= \max\brace*{\sup_{\norm{v - w^{*}}_{\Sigma} \geq r} \psi_{k}(w^{*}, v), \sup_{\norm{v - w^{*}}_{\Sigma} < r} \psi_{k}(w^{*}, v)} \\
    &= \max\brace*{- \inf_{\norm{v - w^{*}} \geq r} \psi_{k}(v, w^{*}),  \sup_{\norm{v - w^{*}}_{\Sigma} < r} \psi_{k}(w^{*}, v)},
\end{align*}
where the last line uses the fact that $\psi_k(w, v) = -\psi_{k}(v, w)$.
Now by combining Corollary \ref{cor:g_1} and Lemma \ref{lem:g_2}, we have with probability $1-\delta$ that the first term in the above maximum is negative, while the second is bounded by $r \cdot \Delta(\delta) = 20 \Delta^{2}(\delta)$. On the other hand, we have on the same event by Lemma \ref{lem:g_2} that
\begin{equation*}
    \inf_{\norm{v - w^{*}}_{\Sigma} \geq r} \psi_{k}(v, w^{*}) \geq \frac{r^{2}}{8} - r \Delta(\delta) = 30 \Delta^{2}(\delta)
\end{equation*}
Therefore, we conclude that with probability at least $1-\delta$
\begin{equation*}
    \norm{\hat{w}_{n, k} - w^{*}}_{\Sigma} \leq 20 \Delta(\delta).
\end{equation*}
finally, noticing that this implies, with probability at least $1-\delta$
\begin{equation*}
    \mathcal{E}(\hat{w}_{n, k}) = \frac{1}{2}\norm{\hat{w}_{n,k} - w^{*}}_{\Sigma}^{2} \leq 20^{2} \Delta^{2}(\delta),
\end{equation*}
finishes the proof.

\subsection{Proof of Theorem \ref{thm:guarantee_2}}
The high-level idea behind the proof of Theorem \ref{thm:guarantee_2} is similar to that of Theorem \ref{thm:guarantee_1}, but with a few more challenges. Fix $P \in \mathcal{P}_{p}(P_{X}, \sigma^{2}, \mu)$. We prove an upper bound on the risk of $\hat{w}_{n, k}$ under this fixed $P$. Define $H \defeq \nabla^{2} E(w^{*})$, $c \defeq \essinf(\Exp\brack*{\abs{\xi}^{p-2}\mid X})$, $C \defeq \esssup(\Exp\brack*{\abs{\xi}^{2(p-1)} \mid X})$. Note that 
\begin{equation}
\label{eq:tired_1}
    H = \Exp\brack*{\abs{\xi}^{p-2} XX^{T}} \succeq c \cdot \Sigma
\end{equation}
Define
\begin{equation*}
    \Delta_{p}(\delta) \defeq 50 \sqrt{\frac{m(2p-2)}{m(p-2)} \cdot \frac{\sigma^{p}[d + \log(4/\delta)]}{n}}
\end{equation*}
Our first statement is an analogue to Lemma \ref{lem:g_0}.
\begin{lemma}
    \label{lem:f_0}
    \begin{equation*}
        \Prob\paren*{\sup_{\norm{v}_{H} \leq 1} n^{-1}\varphi_{k}\brack*{(\inp{\nabla e(\inp{w^{*}}{X_i} - Y_{i})}{v})_{i=1}^{n}} > \Delta_{p}(\delta)} \leq \delta/2
    \end{equation*}
\end{lemma}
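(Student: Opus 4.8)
The plan is to reproduce verbatim the structure of the proof of Lemma~\ref{lem:g_0}: build a truncated empirical process indexed by the unit $H$-ball and apply the concentration bound of Lemma~\ref{lem:trunc_7}. For $v$ with $\norm{v}_{H}\leq 1$ and $i\in[n]$ set
\[
    Z_{i,v} \defeq \frac{1}{n}\inp{\nabla e(\inp{w^{*}}{X_i}-Y_i)}{v},
\]
and observe that for $e(t)=\abs{t}^{p}/[p(p-1)]$ we have $e'(t)=\mathrm{sign}(t)\abs{t}^{p-1}/(p-1)$ and $\inp{w^{*}}{X_i}-Y_i=-\xi_i$, so $Z_{i,v}=-n^{-1}(p-1)^{-1}\mathrm{sign}(\xi_i)\abs{\xi_i}^{p-1}\inp{X_i}{v}$. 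As in Lemma~\ref{lem:g_0}, I would first replace the unit $H$-ball by a countable dense subset $\mathcal{T}$, which does not change the supremum because $v\mapsto\varphi_{k}[(\inp{\nabla e(\inp{w^{*}}{X_i}-Y_i)}{v})_{i=1}^{n}]$ is continuous; this legitimizes applying Lemma~\ref{lem:trunc_7}. It then remains to bound the three quantities that drive that lemma.

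The centering condition $\Exp[Z_{i,v}]=n^{-1}\inp{\nabla E(w^{*})}{v}=0$ follows from first-order optimality of $w^{*}$ and differentiability of $E$ at $w^{*}$ with $\nabla E(w^{*})=\Exp[\nabla e(\inp{w^{*}}{X}-Y)]$ (already used in the proof of Theorem~\ref{thm:guarantee_2} via $H=\nabla^{2}E(w^{*})$, and justified under the $p$-th moment assumptions of $\mathcal{P}_{p}(P_{X},\sigma^{2},\mu)$ as in Lemma~\ref{lem:emergency}). For the Wald variance, conditioning on $X$ and using $\Exp[\abs{\xi}^{2(p-1)}\mid X]\leq C\defeq\esssup(\Exp[\abs{\xi}^{2(p-1)}\mid X])$ gives
\[
    \sup_{\norm{v}_{H}\leq 1}\sum_{i=1}^{n}\Exp[Z_{i,v}^{2}] \;=\; \frac{1}{n(p-1)^{2}}\sup_{\norm{v}_{H}\leq 1}\Exp\brack*{\abs{\xi}^{2(p-1)}\inp{X}{v}^{2}} \;\leq\; \frac{C}{n(p-1)^{2}}\sup_{\norm{v}_{H}\leq 1}\norm{v}_{\Sigma}^{2},
\]
and since $H=\Exp[\abs{\xi}^{p-2}XX^{T}]\succeq c\,\Sigma$ with $c\defeq\essinf(\Exp[\abs{\xi}^{p-2}\mid X])$, we have $\norm{v}_{\Sigma}^{2}\leq\norm{v}_{H}^{2}/c\leq 1/c$; combined with $C/c\leq r(p)$ from (\ref{eq:condition}) this gives a variance at most $r(p)/[n(p-1)^{2}]$. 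For the Rademacher complexity, duality in the $H$-inner product gives $\sup_{\norm{v}_{H}\leq 1}\sum_{i}\eps_i Z_{i,v}=n^{-1}\norm*{\sum_{i}\eps_i\nabla e(\inp{w^{*}}{X_i}-Y_i)}_{H^{-1}}$; taking expectations, applying Jensen, using orthogonality of the Rademacher signs, conditioning on $X$, and bounding $\Tr(H^{-1}\Sigma)\leq d/c$ (from $H^{-1}\preceq\Sigma^{-1}/c$) together with (\ref{eq:condition}) yields a Rademacher term at most $(p-1)^{-1}\sqrt{r(p)d/n}$.

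Feeding these three estimates into Lemma~\ref{lem:trunc_7} with $k=8\log(2/(\delta/2))=8\log(4/\delta)$ — exactly the value fixed in Theorem~\ref{thm:guarantee_2}, which satisfies $1\leq k\leq\floor{n/8}\leq\floor{n/2}$ — shows that with probability at least $1-\delta/2$ the supremum in the statement is bounded by $50(p-1)^{-1}\sqrt{r(p)[d+\log(4/\delta)]/n}$, which is at most $\Delta_{p}(\delta)=50\sqrt{r(p)[d+\log(4/\delta)]/n}$ because $p-1>1$ and $r(p)=m(2p-2)\sigma^{p}/m(p-2)$. Since everything reduces to the bookkeeping of Lemma~\ref{lem:trunc_7} and the ratio $C/c$ collapses to $r(p)$ precisely by design of (\ref{eq:condition}), I do not expect a serious obstacle; the only points needing care are the separability reduction to the countable index $\mathcal{T}$ and matching the constants so that the final bound lands exactly on $\Delta_{p}(\delta)$.
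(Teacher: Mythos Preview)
Your proposal is correct and takes essentially the same approach as the paper: apply Lemma~\ref{lem:trunc_7} after verifying the centering condition and bounding the variance proxy and Rademacher complexity via $H\succeq c\,\Sigma$ together with condition~(\ref{eq:condition}). You in fact track the $(p-1)^{-1}$ factor from $e'(t)=\mathrm{sign}(t)\abs{t}^{p-1}/(p-1)$ more carefully than the paper (which silently drops it), so your intermediate bound $50(p-1)^{-1}\sqrt{r(p)[d+\log(4/\delta)]/n}$ is slightly sharper before the trivial upper bound by $\Delta_{p}(\delta)$.
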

\begin{proof}
    For $v \in \R^{d}$ such that $\norm{v}_{H} \leq 1$ and $i \in [n]$, define
    \begin{equation*}
        Z_{i, v} \defeq \frac{1}{n} \inp{\nabla e(\inp{w^{*}}{X_i} - Y_{i})}{v}
    \end{equation*}
    Our aim is to apply Lemma \ref{lem:trunc_7}, so we make the necessary computations here. We have
    \begin{align*}
        \Exp\brack*{Z_{i, v}} &= \frac{1}{n} \inp{\Exp\brack*{\nabla e(\inp{w^{*}}{X_i} - Y_{i})}}{v} = \frac{1}{n}\inp{\nabla E(w^{*})}{v} = 0
    \end{align*}
    and
    \begin{align*}
        \sup_{\norm{v}_{H} \leq 1} \sum_{i=1}^{n} \Exp\brack*{Z_{i, v}^{2}} &= \frac{1}{n} \sup_{\norm{v}_{H} \leq 1} \Exp\brack*{\xi^{2(p-1)}\inp{X}{v}^{2}} \\        
        &\leq \frac{1}{n} \sup_{\norm{v}_{\Sigma} = 1} \frac{\esssup\paren*{\Exp\brack*{\xi^{2(p-1)} \mid X}}}{\essinf(\Exp\brack*{\xi^{p-2} \mid X}))} \Exp\brack*{\inp{X}{v}^{2}} \\
        &\leq \frac{m(2p-2)}{m(p-2)} \frac{\sigma^{p}}{n}
    \end{align*}
    where the first inequality follows from (\ref{eq:tired_1}), and the second from the assumption on the class of distributions. Now, for independent Rademacher variables $(\eps_i)_{i=1}^{n}$, we have
    \begin{align*}
        \Exp\brack*{\sup_{\norm{v}_{H}=1} \sum_{i=1}^{n} \eps_i Z_{i, v}}
        &= \Exp\brack*{\sup_{\norm{v}_{H} = 1} \inp*{\frac{1}{n}\sum_{i=1}^{n} \eps_{i} \nabla e(\inp{w^{*}}{X_i} - Y_{i})}{v}} \\
        &= \Exp\brack*{\norm*{\frac{1}{n} \sum_{i=1}^{n} \eps_i \nabla e(\inp{w^{*}}{X_i} - Y_{i})}_{H^{-1}}} \\
        &\leq \Exp\brack*{\norm*{\frac{1}{n} \sum_{i=1}^{n} \eps_i \nabla e(\inp{w^{*}}{X_i} - Y_{i})}_{H^{-1}}^{2}}^{1/2} \\
        &= \sqrt{\frac{\Exp\brack{\xi^{2(p-1)} \norm{X}_{H^{-1}}^{2}}}{n}} \\
        &\leq \sqrt{\frac{\esssup\paren*{\Exp\brack*{\xi^{2(p-1)} \mid X}}}{\essinf(\Exp\brack*{\xi^{p-2} \mid X}))} \cdot \frac{\Exp\brack*{\norm{X}_{\Sigma^{-1}}^{2}}}{n}} \\
        &\leq \sqrt{\frac{m(2p-2)}{m(p-2)} \cdot \frac{\sigma^{p} d}{n}}
    \end{align*}
    Where again we have used the assumption on the class of distributions, and where we used (\ref{eq:tired_1}) in the penultimate line. Recalling that $k = 8\log(2/(\delta/2))$ from the statement of the theorem, and applying Lemma \ref{lem:trunc_7} with the above constants yields the result.
\end{proof}

The second statement is also similar to Corollary \ref{cor:g_1}. The additional challenge here is that second order Taylor expansion is not exact.

\begin{corollary}
    \label{cor:f_1}
\begin{equation*}
    \Prob\paren*{\sup_{\norm{v - w^{*}}_{H} < r} \psi_{k}(w^{*}, v) > r \cdot \Delta_{p}(\delta)} \leq \delta/2.
\end{equation*}
\end{corollary}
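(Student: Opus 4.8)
\textbf{Proof proposal for Corollary \ref{cor:f_1}.}
The plan is to follow the proof of Corollary \ref{cor:g_1} essentially verbatim, with $\Sigma$, $\Delta(\delta)$ and Lemma \ref{lem:g_0} replaced by $H$, $\Delta_p(\delta)$ and Lemma \ref{lem:f_0}. The only place where the argument for the square error uses $e(t)=t^2/2$ is in replacing $e(\inp{v}{X_i}-Y_i)-e(\inp{w^{*}}{X_i}-Y_i)$ by its exact second-order Taylor expansion and then discarding the (nonnegative, hence sign-correct) quadratic remainder. For $e(t)=\abs{t}^{p}/[p(p-1)]$ with $p\in(2,\infty)$ this expansion is no longer exact, but $e$ is still convex and $C^1$, so the nonnegativity of its Bregman divergence already gives the one-sided inequality we need: for every $i\in[n]$ and every $v\in\R^{d}$,
\[
    e(\inp{w^{*}}{X_i}-Y_i) - e(\inp{v}{X_i}-Y_i) \;\leq\; -\inp{\nabla e(\inp{w^{*}}{X_i}-Y_i)}{v-w^{*}}.
\]
This is the exact analogue of the estimate used in Corollary \ref{cor:g_1}, and it is all that is required.

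From here the steps are mechanical. First I would feed this pointwise bound into $\psi_{k}(w^{*},v)$ and invoke the monotonicity of $\varphi_k$ (third item of Lemma \ref{lem:trunc_3}) to get
\[
    \psi_{k}(w^{*},v) \;=\; n^{-1}\varphi_{k}\brack*{\paren*{e(\inp{w^{*}}{X_i}-Y_i)-e(\inp{v}{X_i}-Y_i)}_{i=1}^{n}} \;\leq\; n^{-1}\varphi_{k}\brack*{\paren*{-\inp{\nabla e(\inp{w^{*}}{X_i}-Y_i)}{v-w^{*}}}_{i=1}^{n}}.
\]
Writing $v-w^{*}=r\cdot u$ with $\norm{u}_{H}<1$ and using the positive homogeneity of $\varphi_k$ (first item of Lemma \ref{lem:trunc_3}), the right-hand side equals $r\cdot n^{-1}\varphi_{k}[(\inp{\nabla e(\inp{w^{*}}{X_i}-Y_i)}{-u})_{i=1}^{n}]$. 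Taking the supremum over $v$ with $\norm{v-w^{*}}_{H}<r$ — equivalently over $u$ with $\norm{u}_{H}<1$, noting $\norm{-u}_{H}=\norm{u}_{H}$ — yields
\[
    \sup_{\norm{v-w^{*}}_{H}<r}\psi_{k}(w^{*},v) \;\leq\; r\cdot \sup_{\norm{u}_{H}\leq 1} n^{-1}\varphi_{k}\brack*{\paren*{\inp{\nabla e(\inp{w^{*}}{X_i}-Y_i)}{u}}_{i=1}^{n}}.
\]

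Finally I would apply Lemma \ref{lem:f_0}, which states precisely that the supremum on the right exceeds $\Delta_p(\delta)$ with probability at most $\delta/2$; combining this with the display above gives the claim. I do not expect any genuine obstacle here: the single new ingredient relative to Corollary \ref{cor:g_1} is recognizing that one needs only the one-sided convexity (Bregman) bound rather than an exact quadratic expansion. The remainder of the second-order expansion only genuinely matters in the \emph{lower} bound on $\psi_k(v,w^{*})$, i.e.\ in the forthcoming $p$-th power analogue of Lemma \ref{lem:g_2}, not in this corollary.
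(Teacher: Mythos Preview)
Your proposal is correct and follows essentially the same route as the paper. The only difference is cosmetic: the paper invokes the lower bound from \citet{adilFastAlgorithmsEll_p2023} (Lemma 2.5) to write $e(\inp{w^{*}}{X_i}-Y_i)-e(\inp{v}{X_i}-Y_i)\leq -\inp{\nabla e(\inp{w^{*}}{X_i}-Y_i)}{v-w^{*}}-\tfrac{\abs{\xi_i}^{p-2}}{8(p-1)}\inp{X_i}{v-w^{*}}^{2}$ and then drops the (negative) quadratic term, whereas you go straight to the convexity inequality; after that both proofs are identical.
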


\begin{proof}
    By Lemma 2.5 in \cite{adilFastAlgorithmsEll_p2023}, we have that, for all $t, s \in \R$,
    \begin{equation*}
        e(t) - e(s) - e'(s)(t-s) \geq \frac{1}{8(p-1)} e''(s) (t-s)^{2}
    \end{equation*}
    Therefore,
    \begin{align*}
    \sup_{\norm{v - w^{*}}_{H} < r} \psi_{k}(w^{*}, v) &= \sup_{\norm{v - w^{*}}_{H} < r} \varphi_{k}\brack*{\paren*{e(\inp{w^{*}}{X_{i}} - Y_{i}) - e(\inp{v}{X_{i}} - Y_{i})}_{i=1}^{n}} \\
    &\leq \sup_{\norm{v - w^{*}}_{H} < r} \varphi_{k}\brack*{\paren*{-\inp{\nabla e(\inp{w^{*}}{X_i} - Y_i)}{v - w^{*}} - \frac{\abs{\xi_i}^{p-2}}{8(p-1)}\inp{X_i}{v - w^{*}}^{2}}_{i=1}^{n}} \\
    &\leq \sup_{\norm{v - w^{*}}_{H} < r} \varphi_{k}\brack*{\paren*{-\inp{\nabla e(\inp{w^{*}}{X_i} - Y_i)}{v - w^{*}}}_{i=1}^{n}} \\
    &= \sup_{\norm{v - w^{*}}_{H} < r} r \cdot \varphi_{k}\brack*{\paren*{\inp*{\nabla e(\inp*{w^{*}}{X_i} - Y_i)}{\frac{v - w^{*}}{r}}}_{i=1}^{n}} \\
    &= r \cdot \sup_{\norm{v}_{H} < 1} \varphi_{k}\brack*{\paren*{\inp{\nabla e(\inp*{w^{*}}{X_i} - Y_i)}{v}}_{i=1}^{n}}
    \end{align*}
    where the second line is by the inequality cited above, and the third by dropping negative terms. Applying Lemma \ref{lem:f_0} to the last line finishes the proof.
\end{proof}
It remains to show the analogue of Lemma \ref{lem:g_2}. This is the most technical part of the proof.
\begin{lemma}
\label{lem:f_2}
    Let $r \in [32(p-1)\Delta_{p}(\delta), \infty)$. Then
    \begin{equation*}
        \Prob\paren*{\inf_{\norm{v - w^{*}}_{H} \geq r} \psi_{k}(v, w^{*}) < \frac{r^{2}}{32(p-1)} - r\Delta_{p}(\delta)} \leq \delta
    \end{equation*}
\end{lemma}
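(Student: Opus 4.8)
The plan is to mirror the proof of Lemma \ref{lem:g_2}, with the extra work coming from the fact that for $p \neq 2$ the second-order Taylor expansion of $e$ is only a one-sided inequality rather than an exact identity. I would first reduce to the sphere $\norm{v - w^{*}}_{H} = r$ and then extend to $\norm{v - w^{*}}_{H} \geq r$ by the same convexity/scaling trick used in Lemma \ref{lem:g_2} (via Lemma \ref{lem:trunc_3}), using that on the relevant event $\psi_{k}(v, w^{*}) \geq 0$. On the sphere, I would invoke Lemma 2.5 of \cite{adilFastAlgorithmsEll_p2023} to get the lower bound
\begin{equation*}
    e(\inp{v}{X_i} - Y_i) - e(\inp{w^{*}}{X_i} - Y_i) \geq \inp{\nabla e(\inp{w^{*}}{X_i} - Y_i)}{v - w^{*}} + \frac{\abs{\xi_i}^{p-2}}{8(p-1)} \inp{X_i}{v - w^{*}}^{2},
\end{equation*}
so that, writing $\widetilde X_i \defeq H^{-1/2} X_i$ and parametrizing $v - w^{*} = r H^{-1/2} u$ for $u \in S^{d-1}$, monotonicity of $\varphi_k$ (Lemma \ref{lem:trunc_3}) and the superadditivity estimate (Lemma \ref{lem:trunc_20}) give
\begin{equation*}
    \inf_{\norm{v - w^{*}}_{H} = r} \psi_{k}(v, w^{*}) \geq \frac{r^{2}}{8(p-1)} \cdot \inf_{u \in S^{d-1}} n^{-1} \sum_{i=1}^{n-2k} \paren*{\abs{\xi_i}^{p-2}\inp{u}{\widetilde X_i}^{2}}^{*}_{i} - r \cdot \sup_{\norm{v}_{H} \leq 1} n^{-1} \varphi_k\brack*{(\inp{\nabla e(\inp{w^{*}}{X_i} - Y_i)}{v})_{i=1}^{n}}.
\end{equation*}

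The second term is controlled with probability at least $1 - \delta/2$ by $r \Delta_{p}(\delta)$ via Lemma \ref{lem:f_0}, exactly as in the square-error case. The first term is the crux: I need a high-probability lower bound on the truncated smallest "weighted" eigenvalue $\inf_{u} n^{-1}\sum_{i=1}^{n-2k}(\abs{\xi_i}^{p-2}\inp{u}{\widetilde X_i}^{2})^{*}_{i}$. Here I would separate the weights $\abs{\xi_i}^{p-2}$ from the quadratic forms. The cleanest route is to bound $\abs{\xi_i}^{p-2} \geq c = \essinf(\Exp[\abs{\xi}^{p-2}\mid X])$ in expectation only and instead work with the matrix $H^{-1/2}\Sigma H^{-1/2}$, using $H \succeq c\Sigma$ from (\ref{eq:tired_1}); but since the truncation is data-dependent and direction-dependent, I expect to instead apply Proposition \ref{prop:lower_2} to the whitened vectors $\Sigma^{-1/2}X_i$ (getting $\inf_u n^{-1}\sum (\inp{u}{\Sigma^{-1/2}X_i}^2)^{*} \geq 1/4$ on the prescribed sample size), then transfer from $\Sigma$-whitening to $H$-whitening via $H^{-1} \preceq c^{-1}\Sigma^{-1}$, and finally handle the $\abs{\xi_i}^{p-2}$ weights by noting that the condition (\ref{eq:condition}) lower bounds their conditional mean by $\mu$ — this is where the $\mu^{-\frac{p}{p-2}}$ and $N(P_X,p)^{\frac{2p}{p-2}}$ factors in the sample-size condition of Theorem \ref{thm:guarantee_2} must enter, presumably through a separate truncated-empirical-process bound on $\inf_u n^{-1}\sum(\abs{\xi_i}^{p-2}\inp{u}{\widetilde X_i}^2)^{*}$ that uses the $L^p$–$L^2$ norm equivalence to control the relevant variance proxy. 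The sample size is chosen precisely so that this infimum is at least, say, $\frac{1}{4(p-1)}$ (or a comparable absolute-times-$(p-1)^{-1}$ constant) with probability at least $1 - \delta/2$.

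Combining the two bounds on the event of probability at least $1-\delta$ gives $\inf_{\norm{v-w^{*}}_H = r}\psi_k(v,w^{*}) \geq \frac{r^2}{32(p-1)} - r\Delta_p(\delta)$ on the sphere; the extension to $\norm{v - w^{*}}_H \geq r$ then follows verbatim from the argument at the end of the proof of Lemma \ref{lem:g_2}, replacing the factor $1/2$ by $1/(8(p-1))$ in the quadratic term and using $R/r > 1$ together with $\psi_k(v,w^{*}) \geq 0$ on this event (which holds by the constraint $r \geq 32(p-1)\Delta_p(\delta)$). The main obstacle I anticipate is the weighted-eigenvalue step: getting a clean high-probability lower bound on $\inf_{u \in S^{d-1}} n^{-1}\sum_{i=1}^{n-2k}(\abs{\xi_i}^{p-2}\inp{u}{\widetilde X_i}^{2})^{*}_i$ with the right dependence on $\mu$, $r(p)$, $p$, and $N(P_X,p)$, since this requires both a truncated lower-tail bound (in the spirit of Proposition \ref{prop:lower_2}, but with random weights) and careful bookkeeping to convert the moment assumptions of the class $\mathcal{P}_p(P_X,\sigma^2,\mu)$ into the variance-type parameters that feed into Lemma \ref{lem:trunc_7}.
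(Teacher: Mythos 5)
Your high-level architecture matches the paper's proof: reduce to the sphere $\norm{v-w^{*}}_{H}=r$, apply the lower Taylor bound from Lemma 2.5 of \cite{adilFastAlgorithmsEll_p2023}, split via Lemma \ref{lem:trunc_20}, control the linear term by Lemma \ref{lem:f_0}, and extend to $\norm{v-w^{*}}_H\geq r$ by the convexity/scaling argument. But there is a genuine gap at the step you flag as "the crux": you do not see how to lower bound $\inf_{u \in S^{d-1}} n^{-1}\sum_{i=1}^{n-2k}\bigl(\abs{\xi_i}^{p-2}\inp{u}{H^{-1/2}X_i}^{2}\bigr)^{*}_i$, and the route you sketch (apply Proposition \ref{prop:lower_2} to $\Sigma^{-1/2}X_i$, then "transfer" from $\Sigma$-whitening to $H$-whitening, then handle the weights separately) does not go through. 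The ordering $H\succeq c\Sigma$ does not let you pass the direction-dependent truncated infimum from $\Sigma^{-1/2}$ to $H^{-1/2}$, since these matrices do not commute and the trimming selects different indices for different directions; and once the sum is trimmed, one cannot factor the product $\abs{\xi_i}^{p-2}\inp{u}{\widetilde X_i}^{2}$ into a weight-only and a geometry-only contribution.

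The key idea the paper uses, and which you are missing, is to absorb the weight into the random vector itself: define $W \defeq \abs{\xi}^{(p-2)/2}X$, so that $\abs{\xi_i}^{p-2}\inp{u}{H^{-1/2}X_i}^2 = \inp{u}{H^{-1/2}W_i}^2$, and crucially $\Exp\brack*{WW^{T}} = \Exp\brack*{\abs{\xi}^{p-2}XX^{T}} = H$. Thus $H$ is exactly the covariance of $W$, so $\widetilde W_i \defeq H^{-1/2}W_i$ is whitened, and the quantity to control becomes a \emph{plain} truncated smallest eigenvalue of the sample covariance of $\widetilde W_i$. Proposition \ref{prop:lower_2} then applies verbatim to $P_W$; the only remaining work is to bound $\lambdamax(S(P_W))+1$ and $R(P_W)+1$ in terms of $\lambdamax(S(P_X))+1$, $R(P_X)+1$, $r(p)$, and $\mu$ using the moment conditions in $(\star)$ and Jensen's inequality, which yields the factor $r(p)^{(p-2)/(p-1)}\mu^{-p/(p-1)}$ appearing in the first part of the sample-size condition of Theorem \ref{thm:guarantee_2}. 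Separately, note your guess that the $\mu^{-p/(p-2)}$ and $N(P_X,p)^{2p/(p-2)}$ factors enter through this eigenvalue step is incorrect: those appear only later, in the final conversion from $\norm{\hat w_{n,k}-w^{*}}_H$ to $\mathcal{E}(\hat w_{n,k})$, via the upper Taylor bound $e(t)-e(s)-e'(s)(t-s)\leq 4e''(s)(t-s)^2+2p^{p-2}\abs{t-s}^p$ and the $L^p$--$L^2$ norm equivalence.
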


\begin{proof}
    We start with the case $\norm{v - w^{*}}_{H} = r$. We have, using the quoted lemma in the proof of Corollary \ref{cor:f_1},
    \begin{align*}
        &\inf_{\norm{v - w^{*}}_{H} = r} \psi_{k}(v, w^{*}) \\
        &= \inf_{\norm{v - w^{*}}_{H} = r} n^{-1} \varphi_{k}\brack*{\paren*{e(\inp{v}{X_{i}} - Y_{i}) - e(\inp{w^{*}}{X_{i}} - Y_{i})}_{i=1}^{n}} \\
        &\geq \inf_{\norm{v - w^{*}}_{H} = r} n^{-1} \varphi_{k}\brack*{\paren*{\inp{\nabla e(\inp{w^{*}}{X_i} - Y_i)}{v - w^{*}} + \frac{\abs{\xi_i}^{p-2}}{8(p-1)} \inp{v - w^{*}}{X_i}^{2} }_{i=1}^{n}} \\
        &= \inf_{v \in S^{d-1}} n^{-1} \varphi_{k}\brack*{\paren*{r \cdot \inp{\nabla e(\inp{w^{*}}{X_i} - Y_i)}{H^{-1/2}v} + r^{2} \cdot \frac{\abs{\xi_i}^{p-2}}{8(p-1)} \inp{v}{H^{-1/2}X_i}^{2} }_{i=1}^{n}}.
    \end{align*}
    Now define the random vector $W \defeq \abs{\xi}^{(p-2)/2} \cdot X$, whose (uncentered) covariance matrix is $H$. Further define $\widetilde{W}_{i} \defeq H^{-1/2} W_i$, and $Z_{i, v} \defeq \inp{v}{\widetilde{W}_i}^{2}$ for $(i, v) \in [n] \times S^{d-1}$. Then we have by Lemma \ref{lem:trunc_20},
    \begin{align*}
        &\inf_{\norm{v - w^{*}}_{H} = r} \psi_{k}(v, w^{*}) \\
        &\geq r \cdot \inf_{v \in S^{d-1}} n^{-1} \varphi_k\brack*{\paren*{\inp{\nabla e(\inp{w^{*}}{X_i} - Y_i)}{H^{-1/2}v}}_{i=1}^{n}} + \frac{r^{2}}{8(p-1)} \cdot \inf_{v \in S^{d-1}} n^{-1} \sum_{i=1}^{n - 2k} Z_{i, v}^{*} \\
        &= \frac{r^{2}}{8(p-1)} \cdot \inf_{v \in S^{d-1}} n^{-1} \sum_{i=1}^{n - 2k} Z_{i, v}^{*} - r\cdot \sup_{\norm{v}_{H} = 1} n^{-1} \varphi_k\brack*{\paren*{\inp{\nabla e(\inp{w^{*}}{X_i} - Y_i)}{v}}_{i=1}^{n}}
    \end{align*}
    The second term is bounded with probability $1-\delta/2$ by $r \cdot \Delta_{p}(\delta)$ by Lemma \ref{lem:f_0}. For the first term, we claim
    that the restriction on the sample size in Theorem \ref{thm:guarantee_2} is chosen such that by Proposition \ref{prop:asymp_lower}, with probability at least $1 - \delta^{2}/2 \geq 1-\delta/2$
    \begin{equation}
    \label{eq:exh}
        \inf_{v \in S^{d-1}} n^{-1} \sum_{i=1}^{n - 2k} Z_{i, v}^{*} \geq \frac{1}{4}
    \end{equation}
    Let us show why this is true. Let $P_{W}$ be the distribution of $W$, and notice that
    \begin{align*}
        \lambdamax(S(P_{W})) + 1 &= \sup_{v \in S^{d-1}} \Exp\brack*{\norm{W}^{2}_{H^{-1}} \inp{v}{H^{-1/2} W}^{2}} \\
        &= \sup_{v \in S^{d-1}} \Exp\brack*{\abs{\xi}^{2(p-2)} \cdot \norm{X}^{2}_{H^{-1}} \inp{H^{-1/2} v}{X}^{2}} \\
        &\leq \esssup(\Exp\brack{\abs{\xi}^{2(p-2)} \mid X}) \cdot \sup_{\norm{v}_{H} = 1} \Exp\brack*{\norm{X}^{2}_{H^{-1}} \inp{v}{X}^{2}} \\
        &\leq \frac{C^{(p-2)/(p-1)}}{c^{2}} \cdot \sup_{\norm{v}_{\Sigma} = 1} \Exp\brack*{\norm{X}_{\Sigma^{-1}} \inp{v}{X}^{2}} \\
        &\leq \paren*{\frac{m(2p-2) \sigma^{p}}{m(p-2)}}^{\frac{p-2}{p-1}} \frac{1}{\mu^{p/(p-1)}} \cdot [\lambdamax(S(P_{X})) + 1]
    \end{align*}
    where the fourth line follows by Jensen's inequality, and the last line by the  properties of the class of distributions. Note that this upper bound holds uniformly over all members of $\mathcal{P}_{p}(P_{X}, \sigma^{2}, \mu)$. Through a  very similar argument, one may show
    \begin{equation*}
        R(P_{W}) + 1 \leq \paren*{\frac{m(2p-2) \sigma^{p}}{m(p-2)}}^{\frac{p-2}{p-1}} \frac{1}{\mu^{p/(p-1)}} \cdot [R(P_{X}) + 1]
    \end{equation*}
    It is then straightforward to apply Proposition \ref{prop:lower_2} under the above bounds and the sample size restriction and conclude that the claim (\ref{eq:exh}) is true.
    
    Therefore, with probability at least $1-\delta$
    \begin{equation*}
        \inf_{\norm{v - w^{*}}_{H} = r} \psi_{k}(v, w^{*}) \geq \frac{r^{2}}{32(p-1)} - r \Delta_{p}(\delta).
    \end{equation*}
    We now extend this to all vectors $w \in \R^{d}$ such that $\norm{w - w^{*}}_{H} \geq r$. On the same event, if $\norm{w - w^{*}}_{H} = R > r$, then $v \defeq w^{*} + \frac{r}{R} (w-w^{*})$ satisfies $\norm{v - w^{*}}_{H} = r$, and
    \begin{align*}
        \psi_k(w, w^{*}) &\geq n^{-1} \varphi_{k}\brack*{\paren*{\inp{\nabla e(\inp{w^{*}}{X_i} - Y_{i})}{w - w^{*}} + \frac{\abs{\xi_i}^{p-2}}{8(p-1)} \inp{w - w^{*}}{X_i}^{2}}_{i=1}^{n}} \\
        &= n^{-1} \varphi_{k}\brack*{\paren*{ \frac{R}{r} \inp{\nabla e(\inp{w^{*}}{X_i} - Y_{i})}{v - w^{*}} + \frac{R^2}{r^2}\frac{\abs{\xi_i}^{p-2}}{8(p-1)} \inp{v - w^{*}}{X_i}^{2}}_{i=1}^{n}} \\
        &\geq n^{-1} \varphi_{k}\brack*{\paren*{ \frac{R}{r} \inp{\nabla e(\inp{w^{*}}{X_i} - Y_{i})}{v-w^{*}} + \frac{R}{r}\frac{\abs{\xi_i}^{p-2}}{8(p-1)} \inp{v - w^{*}}{X_i}^{2}}_{i=1}^{n}} \\
        &= \frac{R}{r} \cdot \paren*{\frac{r^{2}}{32(p-1)} - r\Delta_{p}(\delta)} \\
        &\geq \frac{r^{2}}{32(p-1)} - r\Delta_{p}(\delta)
    \end{align*}
    where the first inequality follows from the fact that $R/r > 1$, $\inp{v - w^{*}}{X_{i}}^{2} > 0$, and Lemma \ref{lem:trunc_3}, and the second inequality follows from the fact that by the condition on $r$, we have $\frac{r^{2}}{32(p-1)} - r\Delta_{p}(\delta) \geq 0$ on the event we are considering. 
\end{proof}
Finally, we present the main proof. For the first step, we localize $\hat{w}_{n,k}$ using the lemmas we just proved. In particular, let $r \defeq 96(p-1)\Delta_{p}(\delta)$. Then following the same argument as in the proof of Theorem \ref{thm:guarantee_1}, we obtain that with probability at least $1-\delta$
\begin{equation*}
    \psi_k(\hat{w}_{n, k}, w^{*}) \leq r \cdot \Delta_{p}(\delta) = 96(p-1)\Delta^{2}_{p}(\delta)
\end{equation*}
On the other hand, and on the same event,
\begin{equation*}
    \inf_{\norm{v - w^{*}}_{H} \geq r} \psi_{k}(v, w^{*}) \geq \frac{r^{2}}{32(p-1)} - r \Delta_p(\delta) = 192 (p-1)^{2} \Delta^{2}_{p}(\delta).
\end{equation*}
Therefore,
\begin{equation*}
    \norm{\hat{w}_{n, k} - w^{*}}_{H} \leq 96(p-1)\Delta_{p}(\delta)
\end{equation*}
It remains to bound the excess expected error. By Lemma 2.5 in \cite{adilFastAlgorithmsEll_p2023}, we have the upper bound
\begin{equation*}
    e(t) - e(s) - e'(s)(t-s) \leq 4 e''(s)(t-s)^{2} + 2 p^{p-2} \abs*{t-s}^{p}
\end{equation*}
Integrating this bound we obtain
\begin{equation*}
    \mathcal{E}(\hat{w}_{n,k}) \leq 4 \norm{\hat{w}_{n,k} - w^{*}}_{H}^{2} + 2p^{p-2} \Exp\brack*{\abs{\inp{\hat{w}_{n,k} - w^{*}}{X}}^{p}}.
\end{equation*}
We have control over the first term. We need to control the second, in a noise-independent way. We have, for any $w \in \R^{d}$, by (\ref{eq:tired_1})
\begin{equation*}
    \norm{w}_{H} \geq \sqrt{c} \cdot \norm{w}_{\Sigma} \geq \sqrt{\mu} \cdot  \Exp\brack*{\inp{w}{X}^{2}}^{1/2} 
\end{equation*}
Therefore
\begin{equation*}
    \sup_{w \in \R^{d} \setminus \brace*{0}} \frac{\Exp\brack*{\abs*{\inp{w}{X}}^{p}}^{1/p}}{\norm{w}_{H}} \leq \frac{1}{\sqrt{\mu}} \sup_{w \in \R^{d} \setminus \brace*{0}} \frac{\Exp\brack*{\abs*{\inp{w}{X}}^{p}}^{1/p}}{\Exp\brack*{\inp{w}{X}^{2}}^{1/2}} = \frac{N(P_{X}, p)}{\sqrt{\mu}}.
\end{equation*}
Using this we obtain
\begin{equation*}
    \mathcal{E}(\hat{w}_{n,k}) \leq 4\norm{\hat{w}_{n,k} - w^{*}}_{H}^{2} + 2p^{p-2} \frac{N^{p}(P_{X}, p)}{\mu^{p/2}} \cdot \norm{\hat{w}_{n,k} - w^{*}}_{H}^{p}
\end{equation*}
Under the restriction on the sample size stated in the theorem, in particular the second term, we have on the same event
\begin{equation*}
    2p^{p-2} \frac{N^{p}(P_{X}, p)}{\mu^{p/2}} \cdot \norm{\hat{w}_{n,k} - w^{*}}_{H}^{p} \leq 4 \norm{\hat{w}_{n,k} - w^{*}}_{H}^{2}.
\end{equation*}
Hence with probability at least $1-\delta$
\begin{equation*}
    \mathcal{E}(\hat{w}_{n,k}) \leq 8 \cdot \brack*{96 (p-1)\Delta_{p}(\delta)}^{2}
\end{equation*}
Replacing $\Delta_{p}(\delta)$ with its value we recover the desired bound.

\section{Proofs of Section \ref{sec:smallest}}
\subsection{Proof of Proposition \ref{prop:asymp_lower}}

\begin{proof}
    \textbf{Asymptotic lower bound.} By the central limit theorem, as $n \to \infty$, and by the finiteness of the fourth moments of $P_{X}$,
    \begin{equation*}
        \sqrt{n} (\widetilde{\Sigma}_{n} - I) \overset{d}{\to} G,
    \end{equation*}
    where $G$ is a centred symmetric Gaussian matrix with covariance
    \begin{equation*}
        \Exp\brack*{g_{ij}g_{st}} = \Exp\brack*{(\widetilde{X}_i\widetilde{X}_j - I_{i,j})(\widetilde{X}_s\widetilde{X}_t - I_{s, t})},
    \end{equation*}
    for $i,j,s,t \in [d]$. Now since $G$ is Gaussian and centred, we have $G \overset{d}{=} -G$. On the one hand, by the continuous mapping theorem, this implies
    \begin{equation*}
        \sqrt{n}(1 - \lambdamin(\widetilde{\Sigma}_{n}))= \sqrt{n}\lambdamax(I - \widetilde{\Sigma}_{n}) \overset{d}{\to} \lambdamax(G).
    \end{equation*}
    On the other, $\lambdamax(G) \overset{d}{=} \lambdamax(-G) = -\lambdamin(G)$, and therefore for $t \geq 0$,
    \begin{multline*}
        \Prob\paren*{\norm{G}_{\text op} > t} = \Prob\paren*{\lambdamax(G) > t \text{ or } \lambdamin(G) < -t} \\ \leq \Prob\paren*{\lambdamax(G) > t} + \Prob\paren*{\lambdamin(G) < -t} = 2 \Prob\paren*{\lambdamax(G) > t}.
    \end{multline*}
    so we conclude that
    \begin{equation*}
        \lim_{n \to \infty} \Prob\paren*{1 - \lambdamin(\widetilde{\Sigma}_{n}) \leq \frac{t}{\sqrt{n}}} = \Prob\paren*{\lambdamax(G) \leq t} \leq \frac{1}{2} \paren*{1 + \Prob\paren*{\norm{G}_{\text op} \leq t}}.
    \end{equation*}
    Now since convergence in distribution implies the pointwise convergence of quantiles, we obtain
    \begin{equation*}
        \lim_{n \to \infty} \sqrt{n} \cdot Q_{1 - \lambdamin(\widetilde{\Sigma}_{n})}(1-\delta) = Q_{\lambdamax(G)}(1-\delta) \geq Q_{\norm{G}_{\text{op}}}(1-2\delta)
    \end{equation*}
    It remains to lower bound this last quantile. We do this by deriving two upper bounds on the CDF of $\norm{G}_{\text{op}}$.
    Let $v_{*} \defeq \argmax_{v \in S^{d-1}} \Exp\brack*{\paren*{\inp{v}{\widetilde{X}}^2 - 1}^2}$ and note that $v^{T}_{*} G v_{*} \sim \mathcal{N}(0, R(P_{X}))$. Therefore by Lemma \ref{lem:new_1},
    \begin{equation*}
        \Prob\paren*{\norm{G}_{\text op} \leq t} \leq \Prob\paren*{\abs*{v_{*}^{T} G v_{*}^{T}} \leq t} \leq \sqrt{1 - \exp\paren*{-\frac{2t^2}{\pi R(P_{X})}}}
    \end{equation*}
    On the other hand, it can be shown that $\norm{G}_{\text{op}}$ is a Lipschitz function of a standard normal vector (see e.g.\ \citet{van2017structured}), with Lipschitz constant $\sqrt{R(P_{X})}$. Therefore by Gaussian concentration (Lemma \ref{lem:new_2})
    \begin{equation*}
        \Prob\paren*{\norm{G}_{op} \leq t} = \Prob\paren*{\Exp\brack*{\norm{G}_{op}} - \norm{G}_{op} >  \Exp\brack*{\norm{G}_{op}} - t } \leq \exp\paren*{-\frac{(\Exp\brack*{\norm{G}_{op}} - t)^2}{2R(P_{X})}}.
    \end{equation*}
    Now note that since $v^{T} G v$ is a Gaussian random variable for any $v \in \R^{d}$,
    \begin{equation*}
        \Exp\brack*{\norm{G}_{\text{op}}} \geq \sup_{v \in \S^{d-1}} \Exp\brack*{\abs*{v^{T}Gv}} = \sqrt{\frac{2}{\pi}} \Exp\brack*{(v^{T}Gv)^{2}}^{1/2} = \sqrt{\frac{2}{\pi}} \sqrt{R(P_{X})}
    \end{equation*}
    where the first equality is an explicit calculation of the first absolute moment of a Gaussian random variable. Bounding the right-most term in the previous display, we obtain
    \begin{equation*}
        \Prob\paren*{\norm{G}_{\text{op}} \leq t} \leq \exp\paren*{-\frac{(\Exp\brack*{\norm{G}_{op}} - t)^2}{\pi \Exp\brack*{\norm{G}_{\text{op}}}^{2}}}
    \end{equation*}
    Using the two bounds on the CDF of $\norm{G}_{\text{op}}$ and the second item of Lemma \ref{lem:prelims-1}, we obtain the following lower bound
    \begin{equation*}
        Q_{\norm{G}_{op}}(1-2\delta) \geq \frac{1}{2} \Exp\brack*{\norm{G}_{\text{op}}} \paren*{1 - \sqrt{\pi \log\paren*{\frac{1}{1-2\delta}}}} + \frac{1}{2} \sqrt{\frac{\pi}{2}} \sqrt{R(P_{X})\log\paren*{\frac{1}{4\delta}}}
    \end{equation*}
    using the restriction on $\delta \in (0, 0.1)$, we obtain
    \begin{equation*}
        Q_{\norm{G}_{op}}(1-2\delta) \geq \frac{1}{20} \Exp\brack*{\norm{G}_{\text{op}}} + \frac{1}{2}\sqrt{R(P_{X}) \log(1/4\delta)}
    \end{equation*}
    Finally by the Gaussian Poincare inequality (Lemma \ref{lem:new_2})
    \begin{equation*}
        \Exp\brack*{\norm{G}^{2}_{\text{op}}} - (\Exp\brack*{\norm{G}_{\text{op}}})^{2} \leq R(P_{X}) \leq \frac{\pi}{2} (\Exp\brack*{\norm{G}_{\text{op}}})^{2}
    \end{equation*}
    rearranging yields
    \begin{equation*}
        \Exp\brack*{\norm{G}_{\text{op}}} \geq \frac{1}{\sqrt{1 + \pi/2}} \Exp\brack*{\norm{G}_{\text{op}}^{2}}^{1/2} \geq \frac{\norm*{\Exp\brack*{G^2}}^{1/2}_{\text{op}}}{\sqrt{1+\pi/2}} = \sqrt{\frac{\lambdamax(S)}{1+\pi/2}}
    \end{equation*}
    and therefore
    \begin{equation*}
        Q_{\norm{G}_{\text{op}}}(1-2\delta) \geq \frac{\sqrt{\lambdamax(S)}}{40} + \frac{1}{2} \sqrt{R(P_{X}) \log(1/4\delta)}
    \end{equation*}
    This concludes the proof of the lower bound.

    \textbf{Upper bound.} 
    We have the variational representation
    \begin{equation}
    \label{eq:one}
        1 - \lambdamin(\widetilde{\Sigma}_{n}) = \lambdamax(I - \widetilde{\Sigma}_{n}) = \sup_{v \in S^{d-1}} \sum_{i=1}^{n} \underbrace{\frac{1}{n} (\Exp\brack*{\inp{v}{\widetilde{X}}^{2}} - \inp{v}{\widetilde{X}_{i}}^{2})}_{\textstyle Z_{i,v} \defeq}.
    \end{equation}
    Now the processes $(\brace{Z_{i, v}}_{v \in S^{d-1}})_{i=1}^{d}$ are \iid, $\Exp\brack*{Z_{i, v}} = 0$, and $Z_{i, v} \leq n^{-1}$ for all $(i, v) \in [n] \times S^{d-1}$, so that by Bousquet's inequality \citep{bousquetBennettConcentrationInequality2002}, with probability at least $1-\delta$
    \begin{equation}
    \label{eq:two}
        \sup_{v \in S^{d-1}} \sum_{i=1}^{n} Z_{i, v} < 2 \Exp\brack*{\sup_{v \in S^{d-1}} \sum_{i=1}^{n} Z_{i, v}} + \sqrt{\frac{2 R(P_{X}) \log(1/\delta)}{n}} + \frac{4 \log(1/\delta)}{3n}
    \end{equation}
    It remains to bound the expectation in (\ref{eq:two}). We may rewrite it as
    \begin{equation*}
        \Exp\brack*{\sup_{v \in S^{d-1}} \sum_{i=1}^{n} Z_{i, v}} = \Exp\brack*{\sup_{v \in S^{d-1}} v^{T} \brace*{\sum_{i=1}^{n}\frac{1}{n}(I - \widetilde{X}_{i}\widetilde{X}_{i}^{T})}v} = \Exp\brack*{\lambdamax\paren*{\sum_{i=1}^{n}\frac{1}{n}(I - \widetilde{X}_{i}\widetilde{X}_{i}^{T})}}
    \end{equation*}
    Define the matrices $Y_{i} \defeq \frac{1}{n}(I - \widetilde{X}_{i}\widetilde{X}_{i}^{T})$ and notice that they are \iid and satisfy $\lambdamax(Y_i) = n^{-1}$, so that by the Matrix Bernstein inequality \cite[Theorem 6.6.1]{troppIntroductionMatrixConcentration2015} we obtain
    \begin{equation}
    \label{eq:three}
        \Exp\brack*{\lambdamax\paren*{\sum_{i=1}^{n} Y_{i}}} \leq \sqrt{\frac{2\lambdamax(S)\log(3d)}{n}} + \frac{\log(3d)}{3n}.
    \end{equation}
    Combining (\ref{eq:one}), (\ref{eq:two}), and (\ref{eq:three}) yields the desired result.
\end{proof}

\subsection{Proof of Proposition \ref{prop:lower_2}}
\begin{proof}
    Define $\tilde{S} \defeq S(P_{X}) + I = \Exp\brack*{\norm{\widetilde{X}}_2^{2}\widetilde{X}\widetilde{X}^{T}}$ and $\tilde{R} \defeq R(P_{X}) + 1 = \sup_{v \in S^{d-1}}\Exp\brack*{\inp{v}{\widetilde{X}}^{4}}$. Let
    \begin{equation*}
        B \defeq \sqrt{\frac{n \lambdamax(\tilde{S})}{4(1 + 2 \ceil{\log(d)})}},
    \end{equation*}
    and define $X_{B} \defeq \widetilde{X} \cdot \mathbbm{1}_{[0, B)}(\norm{\widetilde{X}}_{2}^{2})$, $\Sigma_{B} \defeq \Exp\brack*{X_{B}X_{B}^{T}}$, $\tilde{S}_{B} \defeq \Exp\brack*{(X_{B}X_{B}^{T})^{2}}$, and $\tilde{R}_{B} \defeq \sup_{v \in S^{d-1}}\Exp\brack*{\inp{v}{X_{B}}^{4}}$. Note that $\lambdamax(\tilde{S}_{B}) \leq \lambdamax(\tilde{S})$ and $\tilde{R}_{B} \leq \tilde{R}$. For $(i, v) \in [n] \times S^{d-1}$, define $Z_{i,v} \defeq \inp{v}{X_{B, i}}^{2}$, and note that $(Z_{i, v})_{i=1}^{n}$ are \iid with mean $m(v) \defeq \Exp\brack{\inp{v}{X_{B}}^{2}}$ and $Y_{i, v} \geq Z_{i, v}$. 
    Now we have by Lemma \ref{lem:trunc_2}
    \begin{equation*}
        \sup_{v \in S^{d-1}} \sum_{i=k+1}^{n-k} \Exp\brack*{\inp{v}{\widetilde{X}}^{2}} - Y_{i, v}^{*} \leq (n-2k) \sup_{v \in S^{d-1}} \Exp\brack*{\inp{v}{\widetilde{X}}^{2}} - \Exp\brack*{\inp{v}{X_{B}}^{2}} + \sup_{v \in S^{d-1}} \sum_{i=k+1}^{n-k} \Exp\brack*{\inp{v}{X_{B}}^{2}} - Z_{i,v}^{*}
    \end{equation*}
    The first term is bounded by
    \begin{align*}
        \sup_{v \in S^{d-1}} \Exp\brack*{\inp{v}{\widetilde{X}}^{2}} - \Exp\brack*{\inp{v}{X_{B}}^{2}} &= \sup_{v \in S^{d-1}} \Exp\brack*{\inp{v}{\widetilde{X}}^{2} \mathbbm{1}_{[B, \infty)}(\norm{\widetilde{X}}_{2}^{2})} \\
        &= \sup_{v \in S^{d-1}} \Exp\brack*{\inp{v}{\widetilde{X}}^{2} \norm{\widetilde{X}}_2^2 \frac{1}{\norm{\widetilde{X}}_{2}^{2}} \mathbbm{1}_{[B, \infty)}(\norm{\widetilde{X}}_{2}^{2})} \\
        &\leq \frac{\lambdamax(\tilde{S})}{B} = \sqrt{4(1 + 2 \ceil{\log(d)})} \sqrt{\frac{\lambdamax(\tilde{S})}{n}}
    \end{align*}
    For the second term, define, for $(i,v) \in [n] \times S^{d-1}$, $W_{i,v} \defeq \Exp\brack*{\inp{v}{X_{B}}^2} - Z_{i, v}$, and note that $\Exp\brack*{W_{i, v}} = 0$, $\Exp\brack*{W_{i, v}^{2}} \leq \tilde{R}$. Furthermore
    \begin{align*}
        &2\Exp\brack*{\sup_{v \in S^{d-1}} \sum_{i=1}^{n} \eps_i W_{i, v}} \\
        &=2\Exp\brack*{\sup_{v \in S^{d-1}} v^{T} \brace*{\sum_{i=1}^{n} \eps_i (\Sigma_{B} - X_{B, i}X_{B, i}^{T})}v} \\
        &\leq 2\Exp\brack*{\norm*{\sum_{i=1}^{n} \eps_i (\Sigma_{B} - X_{B, i}X_{B, i}^{T})}_{op}} \\
        &\leq \sqrt{4 (1+ 2\ceil{\log(d)})} \sqrt{n \lambdamax(\tilde{S}_{B})} + 4(1 + 2\ceil{\log(d)}) \Exp\brack*{\max_{i \in [n]} \norm{X_{B, i}X_{B, i}^{T} - \Sigma_{B}}_{op}^{2}}^{1/2}  \\
        &\leq \sqrt{4(1+ 2\ceil{\log(d)})} \sqrt{n \lambdamax(\tilde{S})} + 4(1 + 2\ceil{\log(d)}) \Exp\brack*{\max\brace*{\max_{i \in [n]}\norm{X_{B, i}}_2^2, \lambdamax(\Sigma_{B})}^2}^{1/2} \\
        &\leq \sqrt{4(1+ 2\ceil{\log(d)})} \sqrt{n \lambdamax(\tilde{S})} + 4(1 + 2\ceil{\log(d)}) \max\brace*{B, 1} \\
        &= 4 \sqrt{1+ 2\ceil{\log(d)}} \sqrt{n \lambdamax(\tilde{S})}
    \end{align*}
    where the fourth line follows from the proof of the second item of Theorem 5.1 in \citet{troppExpectedNormSum2015}, the sixth line follows from the fact that $\norm{X_{B}}_{2}^{2} \leq B$, and the last line follows from the condition on $n$ and the fact that $\lambdamax(\tilde{S}) \geq 1$, which itself follows from the positive semi-definiteness of $S(P_{X})$. Defining $W_{v} = (W_{i, v})_{i=1}^{n}$ for $v \in S^{d-1}$, and appealing to Lemma \ref{lem:trunc_7}, we may therefore bound the second term as follows, with probability at least $1-\delta$
    \begin{align*}
        \sup_{v \in S^{d-1}} \sum_{i=k+1}^{n-k} \Exp\brack*{\inp{v}{X_{B}}^{2}} - Z_{i,v}^{*} &= \sup_{v \in S^{d-1}} \sum_{i=k+1}^{n-k} W_{i, v}^{*} \\
        &\leq \sup_{v \in S^{d-1}} \varphi_{k}(W_{v}) + \sup_{v \in S^{d-1}} k (\abs{W_{1+k, v}} + \abs{W_{n-k, v}}) \\
        &\leq 96 \cdot  \paren*{\sqrt{4(1 + 2 \ceil{\log(d)})} \sqrt{n \lambdamax(\tilde{S})} + \sqrt{n \tilde{R} \log(2/\delta)}} 
    \end{align*}
    Combining the bounds, and bounding $\sqrt{4(1+\ceil{\log(d)})} \leq 8 \log(6d)$ yields the result.
\end{proof}

\end{document}